\newcommand{\rom}[1]{\mathrm{#1}} 
\newcommand{\ol}[1]{\overline{#1}}
\newcommand{\clg}[1]{\mathcal{#1}} 
\newcommand{\scr}[1]{\mathscr{#1}} 
\newcommand{\frk}[1]{\mathfrak{#1}} 
\newcommand{\A}{\mathbb{A}}
\newcommand{\C}{\mathds{C}}
\newcommand{\G}{\mathbb{G}}
\newcommand{\N}{\mathds{N}}
\newcommand{\Q}{\mathds{Q}}
\newcommand{\R}{\mathbb{R}}
\newcommand{\Z}{\mathds{Z}}
\newcommand{\PR}{\mathds{P}}
\newcommand{\UN}{\mathds{1}}
\newcommand{\FF}{\mathds{F}}
\newcommand{\grm}{\mathrm{gr}}
\newcommand{\pre}{\mathrm{pre}}
\newcommand{\cc}{\mathrm{cc}}
\newcommand{\pr}{\mathrm{pr}}
\newcommand{\ses}{\mathrm{ss}}
\newcommand{\cont}{\mathrm{cont}}
\newcommand{\stb}{\mathrm{st}}
\newcommand{\infi}{\mathrm{inf}}
\newcommand{\rig}{\mathrm{rig}}
\newcommand{\Zar}{\mathrm{Zar}}
\newcommand{\fppf}{\mathrm{fppf}}
\newcommand{\et}{\mathrm{\acute{e}t}}
\newcommand{\dr}{\mathrm{dR}}
\newcommand{\cris}{\mathrm{cris}}
\newcommand{\gp}{\mathrm{gp}}
\newcommand{\PSh}{\mathrm{PSh}}
\newcommand{\Sh}{\mathrm{Shv}}
\newcommand{\id}{\mathrm{id}}
\newcommand{\eff}{\mathrm{eff}}
\newcommand{\can}{\mathrm{can}}
\newcommand{\con}{\mathrm{conj}}
\newcommand{\Gammabf}{\mathbf{\Gamma}}
\newcommand{\RDA}{\mathbf{RigDA}}
\newcommand{\DA}{\mathbf{DA}}
\newcommand{\SH}{\mathbf{SH}}
\newcommand{\Sm}{\mathbf{Sm}}
\newcommand{\SmR}{\mathbf{SmRig}}
\newcommand{\Sch}{\mathbf{Sch}}
\newcommand{\Ht}{\mathbf{Ho}}
\newcommand{\Mod}{\mathbf{Mod}}
\newcommand{\De}{\mathbf{D}}
\newcommand{\Var}{\mathbf{Var}}
\newcommand{\Set}{\mathbf{Set}}
\newcommand{\LogAlg}{\mathbf{LogAlg}}
\newcommand{\RC}{\mathscr{R}}
\newcommand{\AC}{\mathscr{A}}
\newcommand{\XC}{\mathscr{X}}
\newcommand{\notes}[1]{}
\renewcommand{\epsilon}{\ensuremath\varepsilon}
\renewcommand{\setminus}{\smallsetminus}
\renewcommand{\hat}{\widehat}
\renewcommand{\phi}{\ensuremath{\varphi}}
\DeclareMathOperator{\coker}{coker}
\DeclareMathOperator{\Rig}{Rig}
\DeclareMathOperator{\Hom}{Hom}
\DeclareMathOperator{\Ext}{Ext}
\DeclareMathOperator{\Lie}{Lie}
\DeclareMathOperator{\RR}{R\!R}
\DeclareMathOperator{\RS}{R\Gamma}
\DeclareMathOperator{\Spec}{Spec}
\DeclareMathOperator{\Spf}{Spf}
\DeclareMathOperator{\Spa}{Spa}
\DeclareMathOperator{\Pic}{Pic}
\DeclareMathOperator{\Frob}{Frob}
\DeclareMathOperator{\Sym}{Sym}
\DeclareMathOperator{\dlog}{dlog}
\DeclareMathOperator{\gr}{gr}
\newcommand{\Hm}{\mathbf{H}}
\newcommand*{\triplerightarrow}[1]{\mathrel{
  \settowidth{\@tempdima}{$\scriptstyle#1$}
  \mathop{\vcenter{
    \offinterlineskip\ialign{\hbox to\dimexpr\@tempdima+1em{##}\cr
    \rightarrowfill\cr\noalign{\kern.5ex}
    \rightarrowfill\cr\noalign{\kern.5ex}
    \rightarrowfill\cr}}}\limits^{\!#1}}}
\renewcommand{\epsilon}{\ensuremath\varepsilon}
\renewcommand{\phi}{\ensuremath{\varphi}}
\theoremstyle{plain}
\newtheorem{theorem}{Theorem}[section]
\newtheorem{cor}[theorem]{Corollary}
\newtheorem{lemma}[theorem]{Lemma}
\newtheorem{prop}[theorem]{Proposition}
\newtheorem*{satz}[theorem]{Theorem}
\theoremstyle{definition}
\newtheorem{defn}[theorem]{Definition}
\newtheorem*{defn2}[theorem]{Definition}
\theoremstyle{definition}
\newtheorem{remark}[theorem]{Remark}
\theoremstyle{definition}
\newtheorem{example}[theorem]{Example}
\newtheorem{construction}[theorem]{Construction}
\newtheorem*{propo*}[theorem]{Proposition}
\begin{document}

\begin{frontmatter}

\title{Motivic $p$-adic periods of $1$-motives}

\author{Felix Sefzig\corref{cor1}}
\cortext[cor1]{}
\ead{felix.sefzig@math.uzh.ch}

\address{Institute of Mathematics, University of Z\"urich}

\begin{abstract}
We give an explicit construction of the $p$-adic de Rham comparison isomorphism for $1$-motives. In particular, we prove that our construction recovers the classical de Rham comparison isomorphism and is functorial with respect to morphisms of rigid $1$-motives. In the second part, we construct a new ring of motivic $p$-adic periods using the formalism of rigid analytic motives. Furthermore, we explain the relation between these motivic periods and the periods of the classical $p$-adic de Rham comparison isomorphism. Finally, we apply our construction to explicitly compute the period pairing for several examples of $1$-motives and curves.
\end{abstract}

\begin{keyword}
$1$-motives \sep $p$-adic Hodge theory  \sep Period-Pairing \sep Rigid-Analytic Geometry
\end{keyword}

\end{frontmatter}

\setcounter{tocdepth}{1}
\tableofcontents

\section{Introduction}
The main result of this article is the explicit construction of the $p$-adic de Rham comparison isomorphism for $1$-motives, first constructed by Faltings in \cite{faltings_cris}.
This construction allows us to compute the periods of semistable elliptic and hyperelliptic curves (Sections \ref{sect_kummer_mot} \& \ref{sect_hyperell}).
Our approach is motivated by Fontaine's formula \cite{Fontaine81} for the Hodge-Tate comparison isomorphism for abelian varieties with good reduction, as well as by Beilinson's construction of the de Rham isomoprhism \cite{Beil_de_Rham}, which we briefly recall below, respectively, in Section \ref{sect-comp-iso}.

\begin{satz}[Theorem \ref{thm_1_mot_intro}]
    Let $M,M'$ be $1$-motives. The $p$-adic de Rham comparison isomorphism extends to $1$-motives and is functorial with respect to morphisms
    \begin{equation*}
        M^\rig \to M'^\rig
    \end{equation*}
    in the category $D^b_\fppf(\Spa(K,O_K),\Q)$.
\end{satz}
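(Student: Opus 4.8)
The plan is to build the comparison for a $1$-motive from its weight-graded pieces — where it is essentially known — and then to deduce functoriality on the derived category by a generation argument. Write $M=[L\xrightarrow{u}G]$ with $G$ a semi-abelian variety, an extension of an abelian variety $A$ by a torus $T$, so that $M$ fits into the two weight exact sequences $0\to T\to G\to A\to 0$ and, in the triangulated sense, $G\to M\to L[0]\xrightarrow{+1}$. The realizations to be compared are the de Rham realization $T_\dr(M)=\Lie E(M)$ — built from Deligne's universal vectorial extension $E(M)\to M$, with its Hodge filtration — and the $p$-adic étale realization $T_p(M)$, the Tate module of the two-term complex $[L\to G]$, a continuous $G_K$-representation. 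I want a natural isomorphism
\[
T_p(M)\otimes_{\Q_p}B_\dr\;\xrightarrow{\ \sim\ }\;T_\dr(M)\otimes_K B_\dr
\]
respecting the Galois actions and the filtrations. Following Beilinson's mechanism, this is to be extracted from the rigid-analytic avatar $M^\rig$: analytification turns the weight sequences into exact sequences of rigid analytic groups which (after a suitable finite extension of $K$) admit Raynaud-style uniformizations, and applying the cohomology of the period sheaves $\mathbb{B}^+_\dr$, $\mathbb{B}_\dr$, $\mathcal{O}\mathbb{B}_\dr$ to $M^\rig$ produces the desired map; Fontaine's formula is what makes it explicit on the torus and on the abelian part.

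Concretely I would first treat $\gr^W M$. For the lattice $L$ the comparison is the tautological identity $L\otimes_{\Q_p}B_\dr=L\otimes_K B_\dr$. For the torus $T$ it is the cyclotomic isomorphism $\Q_p(1)\otimes B_\dr\xrightarrow{\sim}K\otimes B_\dr$ sending a $\Z_p(1)$-generator to $t=\log[\varepsilon]$, combined with $\dlog$ of the coordinates on $T$. For the abelian variety $A$ it is the classical $p$-adic de Rham comparison of Faltings, which on the uniformizable $A^\rig$ is written out through logarithms of the period parameters — this is where Fontaine's formula enters, now in the setting of bad reduction. Second, I would glue these three comparisons along the weight exact sequences. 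The crucial point — the only one with genuine content — is that the boundary homomorphism on the de Rham side (the Hodge extension class, equivalently the connection on $E(M)$) matches the boundary homomorphism on the étale side (the class of the extension of Tate modules in $H^1(G_K,-)$) under the piecewise comparison; I would prove this by exhibiting both as the connecting map of the single long exact sequence obtained by applying $R\Gamma$ of the period sheaves to the uniformization sequences. Naturality of this construction then forces the glued map to be $B_\dr$-linear, filtered and Galois-equivariant, and it is an isomorphism because it is one on each graded piece.

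For the functoriality statement, the key structural input is that $T_\dr$ and $T_p$ both extend to \emph{exact} functors on $D^b_\fppf(\Spa(K,O_K),\Q)$; this follows from the stability of the $\infty$-categories of rigid analytic motives used to define the realizations, which forces them to be triangulated and hence to factor through $M\mapsto M^\rig$. The comparison isomorphism, constructed above for $1$-motives, is then to be promoted to a morphism of such functors after $\otimes B_\dr$. Since $D^b_\fppf(\Spa(K,O_K),\Q)$ is generated under shifts and cones by analytifications of $1$-motives, it suffices to verify naturality of the comparison on morphisms between $1$-motives in $D^b_\fppf$; and every such morphism is a composite of honest morphisms of complexes $[L\to G]\to[L'\to G']$ and of formal inverses of isogenies, the latter being invertible after $\otimes\Q$. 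For honest morphisms naturality is immediate from the construction, since the period-sheaf cohomology and the uniformizations are themselves functorial; for isogenies one uses the $\Q$-linearity of everything in sight. A natural transformation of exact functors that is an isomorphism on a generating family is an isomorphism, so the comparison is functorial on all of $D^b_\fppf(\Spa(K,O_K),\Q)$.

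The step I expect to be the main obstacle is precisely the matching of extension data in the gluing step — proving that the de Rham (Hodge) extension class and the étale extension class of a $1$-motive correspond under the comparison, not merely that a comparison exists on each weight-graded piece. This is the arithmetic heart of the theorem, the incarnation of Fontaine's formula in the rigid-analytic (bad reduction) setting, and it requires an honest identification of the relevant connecting homomorphisms — a computation with $\log[\varepsilon]$, with logarithms of period parameters, and with the Bloch--Kato exponential and the Kummer map — rather than a formal manipulation. A secondary technical hurdle is making the descent to $D^b_\fppf(\Spa(K,O_K),\Q)$ clean: one must check that quasi-isomorphisms of $1$-motives, in particular isogenies, are carried by the comparison to commuting squares of isomorphisms, which is exactly where the rationalization of both realizations is used.
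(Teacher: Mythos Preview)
Your construction sketch via weight-graded pieces is a reasonable alternative to the paper's explicit formula (the paper instead represents de~Rham classes by invariant differentials on $E(M)^\rig$, proves they become integral on a suitable quasi-compact open subgroup, and defines $c_\omega(x)$ by pulling $\omega$ back along lifts of $x$ to that subgroup). However, your functoriality argument has a genuine gap.

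The claim that ``$D^b_\fppf(\Spa(K,O_K),\Q)$ is generated under shifts and cones by analytifications of $1$-motives'' is false --- this category contains all bounded complexes of fppf sheaves and is vastly larger. The theorem does not assert that the comparison extends to a functor on all of $D^b_\fppf$; it only asserts naturality for morphisms $M_1^\rig\to M_2^\rig$ \emph{between} analytifications of $1$-motives. So no generation argument is available or needed. More seriously, your description of such morphisms as ``composites of honest morphisms of complexes and formal inverses of isogenies'' is not what is actually true. The key structural input, which you are missing, is Raynaud's theorem: every $1$-motive $M$ admits a \emph{strict} $1$-motive $M'$ (its Raynaud replacement) together with a map $\can\colon M'^{\rig}\to M^\rig$ in $\clg{M}_1(K)^\rig$ that becomes an isomorphism in $D^b_\fppf$, and morphisms $M_1^\rig\to M_2^\rig$ in $D^b_\fppf$ correspond bijectively to \emph{algebraic} morphisms $M'_1\to M'_2$ between the replacements. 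This is how the paper reduces functoriality in $D^b_\fppf$ to functoriality for honest morphisms of $1$-motives (Corollary~\ref{cor_functoriality_cdr}); without it you have no handle on what a morphism in $D^b_\fppf$ looks like. Isogenies alone do not produce, for example, the uniformization map $\G_m^\rig\to E_q^\rig$ for a Tate curve, which is precisely the kind of non-algebraic morphism that the Raynaud replacement captures.
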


Before explaining our construction, we recall Fontaine's formula for the Hodge-Tate comparison isomorphism. Let $A$ be an abelian variety with good reduction. Since the higher cohomology groups of $A$ are wedge powers of its $H^1$, it suffices to prove the Hodge-Tate comparison theorem for $n=1$.
In this case, we have to construct a natural $ G_K$-equivariant isomorphism
\[
H^1(A,\clg{O}_A) \otimes \C_p \oplus H^0(A,\Omega^1_{A/K}) \otimes \C_p(-1) \cong H^1_\et(A_{\ol{K}},\Q_p) \otimes \C_p. 
\]
Furthermore, recall that the \'etale cohomology of $A$ is computed by the Tate module of $A$
\[
H^1_\et(A_{\ol{K}}, \Z_p) \cong  T_p(A)^\vee := \left(\varprojlim A[p^n](\ol{K})\right)^\vee,
\]
where $A[p^n]$ denotes the kernel of multiplication by $p^n$ on $A$.
With this in mind, Tate and Raynaud established the theorem in the following form.

\begin{satz}[Tate, Raynaud]
Let $A$ be an abelian variety with good reduction over $K$. There exist functorial $K$-linear isomorphisms
\begin{align*}
\rho^0_A \colon &H^0(A,\Omega^1_{A/K}) \to \Hom_{\Z_p[G_K]}(T_p(A),\C_p(1)),\\
\rho^1_A \colon &H^1(A,\clg{O}_A) \to \Hom_{\Z_p[G_K]}(T_p(A),\C_p).  
\end{align*}
\end{satz}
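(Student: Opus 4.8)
The plan is to deduce the statement from Tate's theorem on $p$-divisible groups, after making the two maps explicit via the formal group of the N\'eron model; the assertion is, in essence, Tate's reformulation of the Hodge--Tate decomposition of the Tate module in the good reduction case. Since $A$ has good reduction, its N\'eron model $\clg{A}$ over $O_K$ is an abelian scheme, so $G:=\clg{A}[p^\infty]$ is a $p$-divisible group over $O_K$ with $T_p(G)=T_p(A)$ as $\Z_p[G_K]$-modules; moreover the N\'eron model provides canonical identifications $H^0(A,\Omega^1_{A/K})=\Lie(\clg{A})^\vee\otimes_{O_K}K$ (invariant differentials) and $H^1(A,\clg{O}_A)=\Lie(\clg{A}^\vee)\otimes_{O_K}K$ (Lie algebra of the dual abelian scheme).

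I would then invoke Tate's theorem in the form of the canonical $G_K$-equivariant exact sequence of $\C_p$-modules
\[
0\longrightarrow \Lie(\clg{A})\otimes_{O_K}\C_p(1)\longrightarrow T_p(A)\otimes_{\Z_p}\C_p\longrightarrow \Lie(\clg{A}^\vee)^\vee\otimes_{O_K}\C_p\longrightarrow 0,
\]
together with Tate's computation $H^0(G_K,\C_p(j))=K$ if $j=0$ and $=0$ if $j\neq 0$, and $H^1(G_K,\C_p(j))=0$ for $j\neq 0$. Since $\C_p(j)$ is a $\C_p$-module we have $\Hom_{\Z_p[G_K]}(T_p(A),\C_p(j))=\big(\Hom_{\C_p}(T_p(A)\otimes_{\Z_p}\C_p,\C_p(j))\big)^{G_K}$, so applying $\Hom_{\C_p}(-,\C_p(j))$ to the exact sequence, taking $G_K$-invariants and using the cohomology vanishing, exactly one of the two graded pieces contributes: with $j=1$ this gives $\Hom_{\Z_p[G_K]}(T_p(A),\C_p(1))\cong\Lie(\clg{A})^\vee\otimes_{O_K}K$, and with $j=0$ it gives $\Hom_{\Z_p[G_K]}(T_p(A),\C_p)\cong\Lie(\clg{A}^\vee)\otimes_{O_K}K$. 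Composing with the identifications of the first paragraph produces $\rho^0_A$ and $\rho^1_A$. To match these with Fontaine's formula I would further identify them with the pairings defined by integrating invariant differentials against the formal logarithm of $\hat{\clg{A}}$, combined with the Cartier duality $\clg{A}[p^n]\cong\Hom(\clg{A}^\vee[p^n],\mu_{p^n})$ to land in $\C_p(1)$, the verification reducing by d\'evissage to the model case $\hat{\G}_m$, where the pairing is the classical $\dlog$. Functoriality in $A$ is then automatic: a morphism of abelian varieties with good reduction extends uniquely to the N\'eron models and is compatible with passage to $p$-divisible groups, Tate modules, Lie algebras, Cartier duality, and Tate's sequence.

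The real content, and hence the main obstacle, is Tate's theorem itself, which I would recall rather than reprove. Its two ingredients are (i) the Tate--Sen computation of the Galois cohomology of $\C_p(j)$, which rests on the existence of normalized traces along the cyclotomic tower $K_\infty/K$ and the resulting control of the $\C_p$-semilinear Galois action, and (ii) the exact sequence above, obtained by d\'evissage of $G$ along its connected--\'etale filtration: the \'etale quotient contributes an unramified representation, while the connected part is handled by duality and a further reduction to $\mu_{p^\infty}$, where the assertion amounts to the explicit logarithm computation underlying Fontaine's period $t$. Good reduction is essential throughout: it is exactly what furnishes a $p$-divisible group, and with it a formal group, over $O_K$ itself rather than only after a ramified base change, and it is what forces the isomorphisms to be defined over $K$ and not merely after $\otimes_K\C_p$; without it one must pass to semistable reduction and the statement changes shape, which is the setting the remainder of the paper is concerned with.
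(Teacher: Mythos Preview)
Your argument is correct and is essentially Tate's original proof via the theory of $p$-divisible groups: the N\'eron model supplies a $p$-divisible group over $O_K$, Tate's exact sequence identifies $T_p(A)\otimes\C_p$ with an extension of $\Lie(\clg{A}^\vee)^\vee\otimes\C_p$ by $\Lie(\clg{A})\otimes\C_p(1)$, and the Tate--Sen vanishing of $H^i(G_K,\C_p(j))$ for $j\neq 0$ picks out the two pieces upon taking $\Hom_{\Z_p[G_K]}(-,\C_p(j))$.

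The paper, however, does not prove this theorem; it is quoted as a result of Tate and Raynaud. What the paper \emph{does} do is describe the maps explicitly, and by a different route than yours. In the introduction it sketches Fontaine's construction: reduce $\rho^1_A$ to $\rho^0_{A'}$ via the duality $T_p(A')\cong\Hom(T_p(A),\Z_p(1))$, then define $\rho^0_A(\omega)(x)$ by pulling $\omega$ back along $x\in T_p(\clg{A})$ and using Fontaine's isomorphism $\C_p(1)\cong T_p(\Omega^1_{O_{\ol K}/O_K})\otimes\Q_p$. In Section~\ref{sect_hodge_tate} it goes further and gives a canonical construction of \emph{both} maps simultaneously (Construction~\ref{constr_HT_comp}) using Beilinson's Poincar\'e Lemma applied to the $T_p\clg{A}$-torsor $\clg{A}_\infty=\varprojlim_{[p]}\clg{A}\to\clg{A}$: the connecting map $\delta$ in the long exact sequence of $\Hom(-,(\gr_F^i\clg{A}_\dr)^\wedge_{(p)})$ is an isomorphism because $\clg{A}_\infty$ is $p$-divisible, and $\delta^{-1}$ is precisely $\rho^i_A$.

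Your approach is cleaner if one only wants the bare existence of the isomorphisms, since it reduces everything to a single black box (Tate's theorem). The paper's approach is less economical but yields explicit integration formulas for the maps, which is the point: these formulas are the prototype for the de Rham comparison for $1$-motives that the paper goes on to construct.
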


Let $A'$ be the dual abelian variety of $A$. By duality - specifically, the natural isomorphisms $T_p(A') \cong \Hom_{\Z_p[G_K]}(T_p(A), \Z_p(1))$ and $H^1(A',\clg{O}_{A'}) \cong H^0(A,\Omega^1_{A/K})$ - the proof reduces to the construction of $\rho_A^0$, and proving its injectivity.
Fontaine's construction of $\rho_A^0$ is based on his computation of the Tate module of the K\"ahler differentials of $O_{\ol{K}}$ over $O_K$. He showed that the morphism
\begin{align*}
   d\log \colon \mu_{p^\infty} &\to \Omega^1_{O_{\ol{K}}/O_K} \\
    \zeta &\mapsto d\log \zeta := \frac{d \zeta}{\zeta}
\end{align*}
induces an isomorphism of $G_K$-modules
\begin{equation}\label{eq_fontaine_tate}
\C_p(1) \xrightarrow{\sim} T_p(\Omega^1_{O_{\ol{K}}/O_K})\otimes \Q_p.
\end{equation}

This identification suggests the following approach for constructing $\rho^0_A$. 
Since $A$ has good reduction, its N\'eron model $\clg{A}$ is a proper smooth model for $A$ and there exists an isomorphism
\[
H^0(\clg{A},\Omega^1_{\clg{A}/O_K}) \otimes K  \to H^0(A,\Omega^1_{A/K}).
\]
Furthermore, the Tate module of $A$ lifts to $O_K$ in the following sense: 
\[T_p(A) \cong T_p(\clg{A}) =: \varprojlim_n \clg{A}[p^n](O_{\ol{K}}).
\]
Now, given an integral invariant differential $\omega \in H^0(\clg{A},\Omega^1_{\clg{A}/O_K})$ and $x \in T_p (\clg{A})$, 
the pullback $x^*\omega$
of $\omega$ along $x$ defines an element of the Tate module of $\Omega^1_{O_{\ol{K}}/O_K}$,
which, under the identification (\ref{eq_fontaine_tate}), corresponds to an element of $\C_p(1)$. \footnote{In the complex world, $x^*\omega$ corresponds to the integral of a differential form over a $1$-simplex in $H_1(X,\Z)$.}.
To summarize, $\rho_A^0$ is defined as the $K$-linear extension of
\begin{align}\label{eq_rho_0}
    H^0(\clg{A},\Omega^1_{\clg{A}/O_K}) &\to 
    \Hom_{\Z_p[G_K]}(T_p(\clg{A}), \C_p(1))\\
    \omega &\mapsto [x \mapsto x^*\omega] \nonumber.
\end{align}

Adopting this strategy to construct a version of the de Rham comparison isomorphism, that is, defining an (injective) map
\[
\rho_A \colon H^1_\dr(A) \to  \Hom_{\Z_p[G_K]}(T_p(A), B_\dr^+),
\]
presents us with two main challenges:
\begin{enumerate}[(i)]
    \item We need an interpretation of Fontaine's period ring $B_\dr$ in terms of K\"ahler differentials similar to (\ref{eq_fontaine_tate}).
    \item We need to represent all de Rham classes by $O_K$-linear differentials.
\end{enumerate}
The first piece of the puzzle was provided by Beilinson, who realized that $B_\dr^+$ can be expressed in terms of the derived de Rham complex of $O_{\ol{K}}$ over ${O_K}$, see \cite{Beil_de_Rham} and \ref{appendix_de_rham}.

\begin{satz}
    There is a $G_K$-equivariant isomorphism of filtered $\ol{K}$-algebras
    \[
    B_\dr^+ \cong (L\hat{\Omega}^\bullet_{O_{\ol{K}}/O_K})^\wedge_{(p)}\left[\frac{1}{p}\right],
    \]
    where the right-hand side is endowed with the Hodge filtration.
\end{satz}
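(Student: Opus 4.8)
This is Beilinson's theorem \cite{Beil_de_Rham}, and I would reconstruct his argument, identifying both sides through the theory of divided powers, with Fontaine's computation of the K\"ahler differentials of $O_{\ol K}$ over $O_K$ --- the input behind (\ref{eq_fontaine_tate}) --- as the one genuinely geometric ingredient. First I would unwind the two sides. On the right, $L\hat\Omega^\bullet_{O_{\ol K}/O_K}$ denotes the \emph{Hodge-completed} derived de Rham complex, i.e. the completion of $L\Omega^\bullet_{O_{\ol K}/O_K}$ along the filtration whose graded pieces are $\wedge^i L_{O_{\ol K}/O_K}[-i]$, and the Hodge filtration on the target is the one induced by this after $p$-completing and inverting $p$. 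On the left, $B_\dr^+$ is by definition the $\ker\theta$-adic completion of $A_{\mathrm{inf}}[1/p]$, where $\theta\colon A_{\mathrm{inf}}=W(O_{\C_p^\flat})\to O_{\C_p}$ is Fontaine's map and $\ker\theta$ is principal, generated by a nonzerodivisor; equivalently it is the Hodge completion of $A_{\cris}[1/p]$, with $A_{\cris}$ the $p$-completed divided-power envelope of $A_{\mathrm{inf}}$ along $\ker\theta$. So the theorem splits into (a) an isomorphism of the underlying $\ol K$-algebras compatible with $G_K$, and (b) a matching of the two filtrations.

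Second, I would record the cotangent complex computation. Writing $O_{\ol K}=\colim O_L$ over finite subextensions, each $O_L$ is monogenic over $O_K$, hence a hypersurface, so $L_{O_L/O_K}$ sits in degree $0$ and equals $\Omega^1_{O_L/O_K}$ (the conormal map $(f)/(f^2)\to O_L\,dx$, $\bar f\mapsto f'(\alpha)\,dx$, is injective, $f$ being separable); in the colimit $L_{O_{\ol K}/O_K}\simeq\Omega^1_{O_{\ol K}/O_K}[0]$, a torsion module. Fontaine's result on these differentials --- which also underlies (\ref{eq_fontaine_tate}) --- pins down its $p$-adic invariants: $(\Omega^1_{O_{\ol K}/O_K})^\wedge_{(p)}[1/p]$, placed in the appropriate degree, is $\C_p(1)$ as a $G_K$-module, which is exactly $\mathrm{Fil}^1/\mathrm{Fil}^2=(t)/(t^2)$ of $B_\dr^+$. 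Since both filtrations in sight are multiplicative with associated graded the free divided-power algebra (over $O_{\C_p}$, rationally a polynomial algebra) on their degree-$1$ part, this single identification will yield (b) once (a) is in place.

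Third --- and this is the crux --- I would identify the full rings via divided powers, invoking the theorem of Illusie, refined by Bhatt, that for a surjection $R\twoheadrightarrow S$ whose kernel is generated by a (derived) regular sequence, the derived de Rham complex $\dr_{S/R}$ is the divided-power envelope of the kernel, the Hodge filtration going over to the PD-filtration (both have associated graded $\Gamma^\bullet_S$ on $L_{S/R}[-1]$). Applied to $\theta$, this exhibits $A_{\cris}$ itself as a $p$-completed derived de Rham complex, $A_{\cris}\simeq(\dr_{O_{\C_p}/A_{\mathrm{inf}}})^\wedge_{(p)}$. The remaining, technical, heart of the matter is then to compare this with $(L\hat\Omega^\bullet_{O_{\ol K}/O_K})^\wedge_{(p)}$. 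I would do this by a change-of-base analysis: first replace $O_{\ol K}$ by its $p$-completion $O_{\C_p}$ (harmless for the $p$-completed derived de Rham complex), then relate the base $O_K$ to a smaller base such as $\Z_p$ and finally to $A_{\mathrm{inf}}$, using the vanishing of $L_{A_{\mathrm{inf}}/\Z_p}$ after $p$-completion (because $O_{\C_p^\flat}$ is perfect of characteristic $p$) together with the perfectoid nature of the rings involved. Inverting $p$ and completing along the Hodge filtration then produces the filtered isomorphism; $G_K$-equivariance and the ring structure are automatic from functoriality in $O_{\ol K}$ and the $E_\infty$-structure on the derived de Rham complex.

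The main obstacle is the third step: arranging that the Hodge completion, the derived $p$-completion, and the change of base all commute in the right order, so that the complete filtration one lands on really is Fontaine's filtration on $B_\dr^+$ rather than merely an abstract complete filtration with the correct graded pieces. Controlling the behaviour of the torsion module $\Omega^1_{O_{\ol K}/O_K}$ --- the ``$\hat\Omega$'' of the statement --- under derived $p$-completion is the delicate point; granted that, the rest is the formal bookkeeping sketched above.
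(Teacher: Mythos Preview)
Your proposal is correct and follows essentially the same route as the paper: the paper's proof (sketched after Theorem~\ref{thm_period_ring_comp} in the appendix) applies Bhatt's PD-envelope theorem (your Illusie--Bhatt step) to the surjection $\theta\colon A_\infi \to O_{\C_p}$, then uses that $W \to A_\infi$ is relatively perfect modulo $p$ to change the base (your step three, with $W$ in place of $\Z_p$), and finally a lemma on cotangent complexes to pass from base $W$ to base $O_K$ after $\hat{\otimes}\,\Q_p$. Your second step on Fontaine's computation of the K\"ahler differentials is not strictly needed for the filtration matching---that comes directly from the PD-envelope theorem identifying the Hodge and divided-power filtrations---but it is a correct and useful consistency check on $\gr^1$.
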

Here, $L\hat{\Omega}^\bullet_{O_{\ol{K}}/O_K})^\wedge_{(p)}$ is the Hodge-completed de Rham complex, and $(-)^\wedge_{(p)}$ denotes the derived $p$-completion. The basic idea is that replacing $\Omega^1_{O_{\ol{K}}/O_K}$ with 
$L\hat{\Omega}^\bullet_{O_{\ol{K}}/O_K}$ allows us to combine $\rho^0_A$ and $\rho^1_A$ into a morphism
\[
\rho_A \colon H^0(A,\Omega^1_{A/K}) \to \Hom(T_p(A), B_\dr^+).
\]

Now, let us explain the solution to problem (ii) in the case of an abelian variety.
\begin{satz}
    Let $A$ be an abelian variety. There exists a natural isomorphism 
    \[
    \rom{Inv}(\Omega^1_{E(A)/K}) \to H^1_\dr(A),
    \]
    where the left-hand side denotes the $K$-vector space of invariant differentials on the universal vectorial extension of $A$.
\end{satz}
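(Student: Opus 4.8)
The idea is to build the isomorphism directly out of the structure of the universal extension. Write the defining short exact sequence of commutative $K$-group schemes
\[
0 \longrightarrow W \longrightarrow E(A) \xrightarrow{\ \pi\ } A \longrightarrow 0,
\]
where $W$ is the vector group attached to $\Ext^1(A,\G_a)^\vee \cong H^1(A,\clg{O}_A)^\vee$; in particular $\dim E(A) = 2\dim A = \dim_K H^1_\dr(A)$. Since $\rom{Inv}(\Omega^1_{E(A)/K}) = \Lie(E(A))^\vee$, applying $\Lie(-)^\vee$ to this sequence yields
\[
0 \longrightarrow H^0(A,\Omega^1_{A/K}) \xrightarrow{\ \pi^*\ } \rom{Inv}(\Omega^1_{E(A)/K}) \longrightarrow H^1(A,\clg{O}_A) \longrightarrow 0,
\]
which is formally parallel to the Hodge-filtration sequence $0 \to H^0(A,\Omega^1_{A/K}) \to H^1_\dr(A) \to H^1(A,\clg{O}_A) \to 0$, and thus fixes the shape of the expected map.

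To construct it, recall that on a commutative algebraic group in characteristic zero every invariant $1$-form $\eta$ is closed, since $d\eta(X,Y) = -\eta([X,Y]) = 0$ for invariant vector fields $X,Y$. Hence $\eta \mapsto [\eta]$ defines a natural map $\rom{Inv}(\Omega^1_{E(A)/K}) \to H^1_\dr(E(A))$. Moreover $\pi$ is a torsor under the vector group $W$, hence a Zariski-locally trivial $\A^{\dim A}$-bundle, so $\pi^*\colon H^1_\dr(A) \to H^1_\dr(E(A))$ is an isomorphism (algebraic de Rham cohomology is insensitive to affine-space bundles: this is checked Zariski-locally using $H^*_\dr(\A^n) = H^*_\dr(\Spec K)$ and then descended). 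Composing gives the natural $K$-linear map
\[
\varphi \colon \rom{Inv}(\Omega^1_{E(A)/K}) \xrightarrow{\ \eta\mapsto[\eta]\ } H^1_\dr(E(A)) \xrightarrow{\ (\pi^*)^{-1}\ } H^1_\dr(A).
\]

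It remains to see that $\varphi$ is bijective; since source and target both have dimension $2\dim A$, injectivity suffices. Suppose $\varphi(\eta) = 0$, i.e.\ $[\eta] = 0$ in $H^1_\dr(E(A))$. On a smooth finite-type $K$-scheme a global closed $1$-form vanishes in $H^1_\dr$ if and only if it lies in $d\,H^0(X,\clg{O}_X)$ (Hodge--de Rham spectral sequence), so $\eta = df$ for some $f \in \clg{O}(E(A))$. Invariance of $\eta$ then forces $t_x^* f - f$ to be a constant depending on $x$, and $x \mapsto t_x^* f - f$ is a homomorphism $E(A) \to \G_a$; but $\Hom(E(A),\G_a) = 0$, because $\Hom(A,\G_a) = 0$ and the connecting map $\Hom(W,\G_a) \to \Ext^1(A,\G_a)$ is an isomorphism by the defining universal property of $E(A)$. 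Hence $f$ is translation-invariant, so constant, and $\eta = 0$. Naturality in $A$ is then automatic, since $A \mapsto E(A)$, $A \mapsto H^1_\dr(A)$ and $\rom{Inv}(\Omega^1_{-/K})$ are functorial and every arrow defining $\varphi$ is natural. I expect the only genuinely technical input to be the affine-bundle invariance of de Rham cohomology needed to invert $\pi^*$; alternatively, the entire statement can be quoted from the theory of universal extensions following Mazur--Messing, where $\Lie E(A)$ is identified with de Rham homology.
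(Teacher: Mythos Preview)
Your argument is correct. The key steps—closedness of invariant forms, the isomorphism $\pi^*\colon H^1_\dr(A)\xrightarrow{\sim}H^1_\dr(E(A))$ via affine-bundle invariance, and injectivity through $\Hom(E(A),\G_a)=0$—are all valid. One small remark: the claim that a global closed $1$-form with trivial class in $H^1_\dr$ is globally exact does follow from the Hodge--de Rham spectral sequence, but not from its degeneration (which fails for the non-proper $E(A)$); rather, it follows because no higher differential can hit $E_r^{1,0}$ for placement reasons.

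Your route, however, differs from the one the paper relies on. The paper does not prove the statement in the introduction; it treats it as classical and, in the body (Lemma~\ref{lem_inv_to_cech}), follows Coleman's explicit construction: choose local sections $s_i$ of the $V$-torsor $E(A)\to A$, write $\omega|_V = df$ for a linear form $f$, and send $\omega$ to the \v{C}ech--de Rham cocycle $(s_i^*\omega,\, f\circ\alpha_{ij})$, where $\alpha_{ij}$ are the transition functions. This produces an explicit representative of the de Rham class, which is exactly what the paper needs later to compare with Beilinson's construction and to compute periods. Your argument is cleaner and more conceptual, but it only identifies the class abstractly; the paper's approach buys an explicit cocycle-level formula. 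Your final remark that one could also quote Mazur--Messing is in fact how the paper's appendix \emph{defines} $H_\dr(M)$, so the theorem amounts to matching that definition with hypercohomology of $\Omega^\bullet_A$.
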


Concretely, this means that there is a natural way to represent de Rham classes by invariant differentials. Thus, we are left with the problem of finding a way to lift $K$-linear differentials on $E(A)$ to $O_K$-linear differentials on some integral model of $E(A)$. We achieve this in a unified way by using rigid analytic geometry. More precisely, we prove that every de Rham class of $A$ (after multiplication by $p^r$) is represented by an integral differential form on a large enough open subgroup of the rigid analytic variety $E(A)^\rig$ (Lemma \ref{lem_qc_subgrp_A}). 
Then, we define $\rho_A$ roughly as follows. For $\omega \in H_\dr(A)$ and $x = (x_n)_n \in T_p(A)$ choose a system of lifts 
$\hat{x}_n \in E(A)^\rig(\C_p)$ and set $\rho_A(\omega)(x) = \varprojlim_n \hat{x}^*_n \omega'$, 
where $\omega'$ is an integral differential form representing $\omega$ (see Construction \ref{rigid_construction_abelian} for more details).

\subsection{Outline}
The text is organized as follows. In Section \ref{sect-comp-iso}, we recall Beilinson's construction of the $p$-adic de Rham comparison isomorphism and explain how the Poincar\'e Lemma can be used to give a canonical construction of the Hodge-Tate comparison isomorphism for abelian varieties (Section \ref{sect_hodge_tate}).

In Section \ref{sect_comp_formula}, we give an explicit construction of the de Rham comparison isomorphism for $1$-motives and prove the main theorem (Theorem \ref{thm_1_mot_intro}).
Along the way, we introduce the notion of integral differential forms on rigid analytic spaces and prove the key technical result about differentials on the (rigid) universal extension of a $1$-motive (Proposition \ref{prop_subgrp_1mot}). We end this section with the comparison between our construction and Beilinson's construction of the comparison isomorphism (Theorem \ref{thm_1-mot_comp_formula}).

In Section \ref{sect_motivic_period_ring}, we introduce a new ring of motivic $p$-adic periods and explain the relation between these motivic periods and the periods of the comparison isomorphism constructed in the previous section. Finally, we apply our construction to compute the periods of several examples of $1$-motives and curves.

In the Appendix, we included recollections on the categories of $1$-motives and rigid $1$-motives and their realizations (\ref{sect_1mot_cat}). Furthermore, we discuss the derived logarithmic de Rham complex and its relation to Fontaine's period rings (Theorem \ref{thm_period_ring_comp}). Finally, we give a construction of the logarithm map of $B_\stb$ using the description in terms of the derived logarithmic de Rham complex (\ref{sect_log_map}).

\subsection{Acknowledgments}
This paper is part of my PhD Thesis at the University of Z\"urich. I want to dearly thank my PhD advisor, Joseph Ayoub, for suggesting this project to me, for his steady guidance, the incredible insight he shared with me, and his endless patience. Moreover, I feel extremely fortunate to have been part of the algebraic geometry group at the University of Z\"urich, which always created a warm and welcoming atmosphere. 

\subsection{Notation and conventions}
We fix a prime number $p$.

\begin{defn2}
    A \(p\)-adic field is a complete, discretely valued non-archimedean extension of $\Q_p$ with perfect residue field.
\end{defn2}
We will use the following notations for a $p$-adic field $K$.

\begin{itemize}
    \item $O_K$ denotes the ring of integers of $K$. It is a discrete valuation ring with uniformizer $\varpi$ and residue field $k$.
    \item We write $\C_p$ for the completion of the algebraic closure $\hat{\ol{K}}$ of $K$.
    \item The absolute Galois group of $K$ is denoted by $G_K$.
    \item Write $W = W(k)$ for the ring of Witt vectors of $k$.
    \item We write
    $A_\infi$ for the ring of Witt vectors, $W(O_{\C_p}^\flat)$, of $O_{\C_p}^\flat := \lim_{x \mapsto x^{p}} O_{\C_p}$, the tilt of the perfectoid ring $O_{\C_p}$. Denote the maximal ideal of $O_{\C_p}^\flat$ by $\frk{m}^\flat$.
\end{itemize}

For a complex $K \in D(\Z_p)$ we write 
\[
K^\wedge_{(p)} := R\lim_n K \otimes^L \Z/p^n
\]
for the derived $p$-completion of $K$. If $K$ has $\Z_p$-flat terms, then $K^\wedge_{(p)}$ is computed by the termwise $p$-adic completion of $K$.

Unless specified otherwise, all schemes are assumed to be quasi-compact and separated. All rigid varieties are assumed to be separated but not necessarily quasi-compact. Let $S$ be a scheme (resp. rigid variety), we denote by $\Sm_S$ the category of smooth schemes over $S$ (resp. by $\SmR_S$ the category of smooth rigid varieties over $S$). A $K$-variety is a reduced, separated scheme of finite type over a field $K$.

A prelog scheme is a pair $(X,\alpha \colon M \to \clg{O}_X)$, where $X$ is a scheme, $M$ is a sheaf of monoids in the \'etale topology on $X$, and $\alpha$ is a monoid morphism (where we consider $\clg{O}_X$ as a sheaf of monoids via multiplication). We often write $(X,M)$ or $(M \to X)$. A prelog scheme is a log scheme if the induced morphism $\alpha \colon \alpha^{-1}\clg{O}_X^\times \to \clg{O}_X^\times$ is an isomorphism. If $D \subset X$ is a closed subscheme with open complement $j \colon U \to X$, we write $(X,D)$ for the (pre)log-scheme $(X,j_* \clg{O}_U^\times \cap \clg{O}_X \to \clg{O}_X)$.

For a scheme $X$, we write $X_\et$ for the small \'etale site of $X$.
We write $\SH(X)$ for the stable motivic homotopy category and $\otimes$ for the symmetric monoidal structure on $\Hm(X)_*$ and $\SH(X)$. Moreover, we denote by $\DA_\et(S,\Lambda)$ (resp. $\RDA_\et(S, \Lambda)$) the category of \'etale (rigid) $S$-motives with coefficients in a ring $\Lambda$. In both cases, motives form a closed monoidal category; we denote the unit object by $\Lambda$ or by $\UN$.

\section{Beilinson's $p$-adic de Rham comparison isomorphism}\label{sect-comp-iso}
In this subsection, we recall Beilinson's construction of the de Rham to \(p\)-adic \'etale comparison isomorphism for smooth schemes following \cite{Beil_de_Rham} and \cite{SzaBei18}. 
To do this, we consider the following categories of "good" compactifications \cite[Section 2.2]{Beil_de_Rham}.
\begin{enumerate}
    \item \textit{Arithmetic pairs}: Let $K$ be a \(p\)-adic field. An arithmetic pair over $K$ is a pair $(U,\overline{U})$ 
    consisting of a $K$-variety $U$, a reduced proper flat $O_K$-scheme $\overline{U}$ together with an open embedding $j\colon U \to \overline{U}$ such that $U$ is dense in $\overline{U}$. Denote the category of arithmetic pairs by $\Var_K^\cc$.    
    For an arithmetic pair $(U,\ol{U})$ set $O_{K_U} := \Gamma(\ol{U},\clg{O}_{\ol{U}})$ and $K_U := \Gamma(\ol{U}_K,\clg{O}_{\ol{U}_K})$. The ring $K_U$ is a product of finite extensions of $K$, and if $\ol{U}$ is normal, $O_{K_U}$ is the product of the corresponding rings of integers. 
    We say that $(U,\ol{U})$ is a \textit{semistable pair}, or ss-pair, if $\ol{U}$ is regular, $D:= \ol{U} \setminus U$ is a divisor with normal crossings on $\ol{U}$ and the special fibers of 
    $\ol{U} \to \Spec O_{K_U}$ are reduced. Denote the full subcategory of ss-pairs by $\Var_K^\ses$.
    \item \textit{Arithmetic $\ol{K}$-pairs}: Let $K$ be a \(p\)-adic field with algebraic closure $\ol{K}$. An arithmetic $\ol{K}$-pair is a pair $(V,\overline{V})$ 
    consisting of a $\ol{K}$-variety $V$, a reduced proper flat $O_{\ol{K}}$-scheme $\overline{V}$ together with an open embedding $j\colon V \to \overline{V}$ such that $V$ is dense in $\overline{V}$. Denote the category of arithmetic $\ol{K}$-pairs by $\Var_{\ol{K}}^\cc$.
    A connected pair $(V,\ol{V})$ is said to be \textit{semistable} if there exists an ss-pair $(U,\ol{U})$ and a $\ol{K}$-point $\alpha \colon K_U \to \ol{K}$ such that $(V,\ol{V})$ is isomorphic to the base change of $(U,\ol{U})$ over $K_U$ to $\ol{K}$ along $\alpha$. 
    An arithmetic $\ol{K}$-pair $(V,\ol{V})$ is semistable if all its connected components are.
    Denote the full subcategory of semistable pairs by $\Var_{\ol{K}}^\ses$.
\end{enumerate}

\begin{defn}
    Let $\clg{P}$ be any of the categories defined above. The $h$-topology on $\clg{P}$ is the finest
    topology for which the restriction of an $h$-sheaf on $\Var_K$ (or $\Var_{\ol{K}}$) to $\clg{P}$ along the 
    forgetful functor \[(U,\ol{U})\mapsto U,\] is a sheaf.
\end{defn}
More explicitly, a family
\[
\{(U_i, \ol{U_i}) \to (U,\ol{U})\}
\]
is an $h$-cover in $\clg{P}$ if and only if
\[
\{U_i \to U\}
\]
is an $h$-cover of varieties, see \cite[\href{https://stacks.math.columbia.edu/tag/0EU5}{Tag 0EU5}]{stacks-project} for more details on the $h$-topology.

The following theorem \cite[Theorem 2.6]{Beil_de_Rham}, which is a consequence of de Jong's Theorem on alterations \cite{deJongAlt}, is the backbone of Beilinson's construction of the comparison isomorphism. 
We fix a \(p\)-adic field $K$ for the rest of this section.
\begin{theorem}\label{eq-h-sheaf}
    The forgetful functor induces an equivalence of categories between the category of $h$-sheaves on $\Var_K$, respectively on $\Var_{\ol{K}}$, and the category of $h$-sheaves on either of $\Var_K^\cc, \Var_K^\ses$, respectively on either of $\Var_{\ol{K}}^\cc, \Var_{\ol{K}}^\ses$.
\end{theorem}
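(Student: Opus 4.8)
The plan is to reduce the statement to the already-established equivalence for varieties by exhibiting the forgetful functor as a morphism of sites that is both continuous and cocontinuous, and then to check that the unit and counit of the resulting adjunction on sheaf categories are isomorphisms. Concretely, write $u \colon \clg{P} \to \Var_K$ (or $\Var_{\ol{K}}$) for the forgetful functor $(U,\ol{U}) \mapsto U$. The definition of the $h$-topology on $\clg{P}$ is precisely the coarsest one making $u$ continuous, and the explicit description (a family in $\clg{P}$ is a cover iff its image in $\Var_K$ is an $h$-cover) shows that $u$ also preserves and reflects covers, so it is a morphism of sites in the strong sense. Pullback along $u$ therefore gives a functor $u^* \colon \Sh_h(\Var_K) \to \Sh_h(\clg{P})$, and the content of the theorem is that this is an equivalence.

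The key input is de Jong's theorem on alterations, which guarantees that $\clg{P}$ is "$h$-locally final" in $\Var_K$: every $K$-variety $U$ admits an $h$-cover $U' \to U$ such that $U'$ fits into an object $(U',\ol{U'})$ of $\clg{P}$ — and in the semistable case one can even arrange $(U',\ol{U'})$ to be an ss-pair, by resolving and applying semistable reduction after a suitable alteration. I would first record this as a lemma: the image $u(\clg{P}) \subset \Var_K$ is a full subcategory that is dense for the $h$-topology, and moreover it is closed under the relevant fiber products up to refinement, so that $h$-hypercovers of any $U$ can be refined to hypercovers lying entirely in $u(\clg{P})$. Given this, the comparison theorem for sheaves follows from a standard descent/comparison argument: for an $h$-sheaf $\F$ on $\clg{P}$ one defines its extension to $\Var_K$ by $U \mapsto \lim \F(U_\bullet)$ over an $h$-hypercover $U_\bullet \to U$ with each $U_n$ (together with chosen compactifications) in $\clg{P}$, checks this is independent of the hypercover by cofinality, and verifies the sheaf condition using that $h$-covers of objects of $\clg{P}$ are refined by covers in $\clg{P}$. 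This construction is inverse to $u^*$ on both sides, which gives the equivalence; the four cases $\Var_K^\cc, \Var_K^\ses, \Var_{\ol{K}}^\cc, \Var_{\ol{K}}^\ses$ are all handled by the same argument, the only difference being which alteration input one invokes (plain de Jong alterations for the $\cc$-versions, de Jong with semistable reduction for the $\ses$-versions).

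I expect the main obstacle to be the density/refinement step in the $\ses$ case: one must check not only that every variety is covered by the underlying variety of an ss-pair, but that $h$-hypercovers can be arranged to stay within $\Var_K^\ses$, i.e. that the fiber products appearing in a Čech or hypercover nerve can themselves be semistably compactified after refinement. This is where de Jong's theorem has to be applied iteratively and compatibly, keeping track of the boundary divisors and reducedness of special fibers; the subtlety is that fiber products of ss-pairs need not be ss-pairs, so one genuinely needs to re-alter at each stage while controlling the combinatorics. Once this "enough ss-pairs in the $h$-local sense" statement is in hand, the passage to sheaf categories is formal. I would also remark that since $h$-sheafification commutes with the relevant operations, it suffices to check everything on $h$-sheaves of sets (or abelian groups), and the case of $\ol{K}$-pairs is deduced from the $K$-case by passing to the colimit over finite subextensions of $\ol{K}/K$, using that each arithmetic $\ol{K}$-pair is defined over some finite level and compatibly with alterations there.
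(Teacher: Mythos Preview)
The paper does not give its own proof of this theorem: it is stated with a citation to \cite[Theorem 2.6]{Beil_de_Rham} and the remark that it ``is a consequence of de Jong's Theorem on alterations.'' Your outline is consistent with that attribution and captures the essential mechanism---use alterations to show that the essential image of the forgetful functor is $h$-locally dense, then invoke a formal comparison lemma for sheaves on a site versus a dense subsite.

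Two small corrections to your write-up. First, the paper defines the $h$-topology on $\clg{P}$ as the \emph{finest} topology for which restriction of $h$-sheaves along $u$ remains a sheaf, not the coarsest making $u$ continuous; these lead to the same explicit description of covers (a family in $\clg{P}$ is covering iff its image in $\Var_K$ is), but you should phrase it that way. Second, your reduction of the $\ol{K}$-case to the $K$-case via a colimit over finite subextensions is not how Beilinson proceeds: the definition of a semistable $\ol{K}$-pair already builds in a descent datum to some finite level, so one applies de Jong over that finite extension directly rather than passing to a colimit of sheaf categories. Otherwise your identification of the main obstacle---refining $h$-hypercovers so that all terms lie in $\Var_K^\ses$, which requires iterating de Jong compatibly---is exactly the point, and is what Beilinson's Proposition~2.5 handles.
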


Let $(U,\ol{U})$ be an arithmetic pair. We view $\ol{U}$ as a log scheme with the canonical log structure given by 
\[
\can := (\clg{O}_{\ol{U}} \cap j_*\clg{O}_{U}^\times \to \clg{O}_{\ol{U}}).
\]
Furthermore, we denote by
\begin{equation*}
    L\hat{\Omega}^\bullet_{(\ol{U},\can)/O_K}
\end{equation*}
the Hodge-completed derived logarithmic de Rham complex of the log scheme 
$\ol{U}$ over $(\Spec O_K,O_K^\times)$,
which we consider as the projective system $L\hat{\Omega}_{(\ol{U},\can)/O_K}^\bullet/F^i$, see  \ref{appendix_de_rham} for more details on the derived logarithmic de Rham complex.

\begin{defn}
We define a pro-system of presheaves of
filtered $E_\infty$-$O_{\overline{K}}$-algebras on $\Var_{\ol{K}}^\ses$ by 
\begin{equation*}
    (U, \ol{U}) \mapsto \RS(\ol{U}, L\hat{\Omega}^\bullet_{(\ol{U},\can)/O_K}),
\end{equation*}
and we denote the associated $h$-sheaf on $\Var_{\ol{K}}^\ses$ by $\clg{A}_{\dr}$.\\ 
\end{defn}

Let $A_\dr$ be the $p$-adically complete ring $\clg{A}_{\dr}(\Spec \overline{K},\Spec O_{\ol{K}})^\wedge_{(p)}$, see \ref{appendix_period_rings} for more details on period rings, derived \(p\)-completions and the fact that $A_\dr$ is a classical ring. 
The structure map of semistable pairs induces a morphism (of pro-systems) of $h$-sheaves $A_\dr \to (\clg{A}_{\dr})^\wedge_{(p)}$, where $A_\dr$ is viewed as a constant sheaf.

\begin{theorem}[Beilinson's Poincar\'e Lemma, {\cite[Theorem 3.3]{Beil_de_Rham}}]
The induced maps
\begin{equation*}
    A_\dr/F^i \to (\clg{A}_{\dr}/F^i)^\wedge_{(p)},
\end{equation*}
where $F^i$ denotes the $i$'th step of the Hodge Filtration on $\clg{A}_\dr$, form a compatible system of quasi-isomorphisms of $h$-sheaves on $\Var_{\overline{K}}$ for all $i \geq 0$.
\end{theorem}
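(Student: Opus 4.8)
The plan is to reconstruct Beilinson's argument. Write $C^i$ for the cone of $A_\dr/F^i\to(\clg{A}_\dr/F^i)^\wedge_{(p)}$, so that the assertion is that every $C^i$ is acyclic as a complex of $h$-sheaves. By the equivalence of Theorem~\ref{eq-h-sheaf} (which rests on de Jong's theorem on alterations~\cite{deJongAlt}) we may regard $C^i$ on $\Var_{\ol{K}}^\ses$, where the \'etale-local structure of a semistable $\ol{U}$ as $\Spec(O_{\ol{K}}[x_0,\dots,x_n]/(x_0\cdots x_r-\varpi))$ with its canonical log structure makes the relevant cotangent complexes explicit. Acyclicity of a complex of $h$-sheaves is checked $h$-locally: it suffices that every local section of every cohomology $h$-sheaf of $C^i$ vanish after pullback along some $h$-cover.

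First I would run a d\'evissage along the Hodge filtration. The filtration steps on $\clg{A}_\dr$ (and on $A_\dr$) give a compatible family of exact triangles, so by induction on $i$ the claim reduces to: $\gr^i_F A_\dr\to(\gr^i_F\clg{A}_\dr)^\wedge_{(p)}$ is a quasi-isomorphism of $h$-sheaves for every $i\ge 0$. Since the Hodge filtration on the (Hodge-completed) derived de Rham complex has graded pieces $\gr^i_F\cong(\bigwedge^i L_{(\ol{U},\can)/O_K})[-i]$, this amounts, up to shift, to showing that the constant sheaf $(\bigwedge^i L_{O_{\ol{K}}/O_K})^\wedge_{(p)}$ maps quasi-isomorphically to the $h$-sheafification of $(U,\ol{U})\mapsto\RS(\ol{U},\bigwedge^i L_{(\ol{U},\can)/O_K})^\wedge_{(p)}$. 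For $i=0$ this is the assertion that $\RS(\ol{U},\clg{O}_{\ol{U}})^\wedge_{(p)}$ is $h$-locally the constant sheaf $O_{\C_p}$. For general $i$ one invokes the transitivity triangle for $O_K\to O_{\ol{K}}\to\clg{O}_{\ol{U}}$, whose exterior-power consequences exhibit $\bigwedge^i L_{(\ol{U},\can)/O_K}$ as an iterated extension of the modules $\bigwedge^a L_{O_{\ol{K}}/O_K}\otimes\bigwedge^b L_{(\ol{U},\can)/O_{\ol{K}}}$ with $a+b=i$; this reduces the general case to the case $i=0$ together with the $h$-local $p$-complete acyclicity of the relative log cotangent complex $L_{(\ol{U},\can)/O_{\ol{K}}}$ and of all its exterior powers.

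The heart of the proof, and the main obstacle, is this last $h$-local acyclicity, whose content is $p$-adic. On the one hand, a connected semistable $\ol{U}$ is proper over $O_{\ol{K}}$ with $\Gamma(\ol{U},\clg{O}_{\ol{U}})=O_{\ol{K}}$, and its higher coherent cohomology is killed after derived $p$-completion by climbing towers of finite covers along which the pullback on $H^{>0}(\clg{O}_{\ol{U}})$ becomes divisible by $p$. On the other hand---and this is the logarithmic incarnation of Fontaine's computation already invoked in~(\ref{eq_fontaine_tate})---adjoining all $p$-power roots $x_k^{1/p^\infty}$ of the local coordinates, a cofinal tower of finite surjective Kummer $h$-covers, renders every generator of the log differentials infinitely $p$-divisible through the relation $\dlog x_k=p^m\dlog x_k^{1/p^m}$, so that the colimit vanishes modulo every $p^n$ and hence has vanishing derived $p$-completion; equivalently, the $p$-completed log cotangent complex of the resulting (log-)perfectoid algebra over the perfectoid ring $O_{\C_p}$ vanishes. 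The genuine work then lies in making this uniform over normal-crossings charts, in controlling the $\varprojlim$- and $\varprojlim^1$-terms produced by the derived $p$-completions, and in arranging that all the auxiliary covers---in particular the Kummer covers, which destroy the normal-crossings shape---can be re-semistabilized so as to stay inside $\Var_{\ol{K}}^\ses$, which is where de Jong's alterations re-enter. Granting this, the d\'evissage closes and one recovers \cite[Theorem 3.3]{Beil_de_Rham}.
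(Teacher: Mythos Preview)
Your proposal is correct and follows essentially the same route as the paper's outline: d\'evissage along the Hodge filtration to reduce to the graded pieces, then splitting into the structure-sheaf case (handled by Bhatt's $p$-divisibility result for $\tau^{\geq 1}\RS(\ol{U},\clg{O}_{\ol{U}})$ along $h$-covers) and the relative log-differential case (handled by extracting $p$-th roots of the local coordinates), with de Jong's alterations used to re-semistabilize the auxiliary covers. The only cosmetic difference is that the paper phrases claim (ii) directly in terms of the locally free sheaves $\Omega^i_{(\ol{U},\can)/(O_{\ol{K}},\can)}$ rather than the cotangent complex and its exterior powers, which amounts to the same thing since the pairs in $\Var_{\ol{K}}^\ses$ are log smooth.
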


\begin{proof}
    We provide an outline of the proof. The theorem follows by looking at the Hodge filtration from the following two claims using the derived Nakayama Lemma:
    For any $(U,\ol{U}) \in \Var_{\ol{K}}^\ses$, there exists an $h$-cover $f \colon (V,\ol{V}) \to (U,\ol{U})$ such that
    \begin{enumerate}[(i)]
        \item The induced map
        \[
        \tau^{\geq 1} \RS(\ol{U},\clg{O}_{\ol{U}}) \to  \RS(\ol{V},\clg{O}_{\ol{V}})
        \]
        is \(p\)-divisible in $D(O_{\ol{K}})$\footnote{We use cohomogical notation for the trunaction.}.
        \item For $i> 0$, the induced map
        \[
        \RS(\ol{U}, \Omega^i_{(\ol{U},\can)/(O_{\ol{K}},\can)}) \to \RS(\ol{U}, \Omega^i_{(\ol{U},\can)/(O_{\ol{K}},\can)})
        \]
        is \(p\)-divisible in $D(O_{\ol{K}})$.
    \end{enumerate}
    The first claim is in Bhatt's Thesis \cite{bhatt_thesis}. The second claim is immediately reduced to the case of $i=1$ and, as we will explain next, to the affine case. 
    In the affine case, extracting \(p\)'th roots gives the desired $h$-cover.
    Let $(U, \ol{U})$ be a semistable pair and assume that there exists an affine open cover 
    $\{U_i \to \ol{U_i}\}$ and proper surjective maps $\pi_i \colon V_i \to U_i$ such that 
    \[
    \pi_i^* \Omega^1_{(U_i,\can)/(O_{\ol{K}},\can)} \to \Omega^1_{(V_i,\can)/(O_{\ol{K}},\can)}
    \]
    is divisible by $p$, where all the log structures are pulled back from $\ol{U}$.
    By Nagata, we can find a proper surjection $\pi \colon \ol{V} \to \ol{U}$
    that factors through $\pi_i$ over $U_i$, and, by de Jong, we can ensure that $(\ol{V},\can)$ defines a semistable pair. Since $\Omega^1_{(\ol{V},\can)/(O_{\ol{K}},\can)}$ is $\Z_p$-flat it follows that
    \[
    \pi_i^* \Omega^1_{(\ol{U},\can)/(O_{\ol{K}},\can)} \to \Omega^1_{(\ol{V},\can)/(O_{\ol{K}},\can)}
    \]
    is divisible by $p$.
    We refer to \cite[Lemma 10.13]{bhatt2012padicderivedrhamcohomology} for more details.
\end{proof}

The construction of the de Rham comparison isomorphism relies on the following facts. The first proposition follows from $h$-descent for de Rham cohomology over fields of characteristic zero.
\begin{prop}[{{\cite[Proposition 3.4]{Beil_de_Rham}}}]
    Let $X$ be a smooth variety over $\ol{K}$. The complex
    \begin{equation*}
        \RS_h(X, \clg{A}_\dr) \otimes \Q_p
    \end{equation*}
    computes de Rham cohomology of $X$ endowed with Deligne's Hodge filtration.
\end{prop}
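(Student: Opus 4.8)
The plan is to deduce this from $h$-descent for algebraic de Rham cohomology over a field of characteristic zero, in the spirit of \cite{Beil_de_Rham}, after first replacing the \emph{derived} logarithmic de Rham complex over $O_K$ by the \emph{classical} logarithmic de Rham complex over $\ol{K}$.

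\emph{Step 1: from the derived complex over $O_K$ to the classical complex over $\ol{K}$.} Since $h$-sheafification is a left adjoint and $L\hat{\Omega}^\bullet_{(\ol U,\can)/O_K}/F^i=L\Omega^\bullet_{(\ol U,\can)/O_K}/F^i$ by definition of the Hodge completion, each $h$-sheaf $\clg{A}_\dr/F^i$ is the $h$-sheafification on $\Var_{\ol K}^\ses$ of the presheaf $(U,\ol U)\mapsto\RS\big(\ol U,\,L\Omega^\bullet_{(\ol U,\can)/O_K}/F^i\big)$, and working levelwise in $i$ avoids having to commute $(-)\otimes\Q_p$ past an infinite limit. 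For a semistable pair the special fibre of $\ol U$ lies in $D:=\ol U\setminus U$, the uniformiser of $O_K$ becomes a unit, and $\ol{K}$ is a filtered union of finite étale $K$-algebras, so inverting $p$ turns $L\Omega^\bullet_{(\ol U,\can)/O_K}$ into the derived logarithmic de Rham complex of $(\ol U_{\ol K},D_{\ol K})$ over $\ol{K}$, where $D_{\ol K}:=\ol U_{\ol K}\setminus U$. Because $\ol U_{\ol K}$ is smooth over $\ol{K}$ and $D_{\ol K}$ is a normal crossings divisor, this morphism is log smooth, so its cotangent complex is the locally free module of log differentials in degree $0$; hence the derived logarithmic de Rham complex coincides with the classical one $\Omega^\bullet_{\ol U_{\ol K}}(\log D_{\ol K})$. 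Consequently, as pro-systems of $h$-sheaves on $\Var_{\ol K}^\ses$,
\[
\clg{A}_\dr/F^i\otimes\Q_p\;\simeq\;a_h\,\big((U,\ol U)\mapsto\RS\big(\ol U_{\ol K},\,\Omega^\bullet_{\ol U_{\ol K}}(\log D_{\ol K})/F^i\big)\big),
\]
where $a_h$ denotes $h$-sheafification and $F$ is the stupid filtration; since $\Omega^j_{\ol U_{\ol K}}(\log D_{\ol K})$ vanishes for $j$ large, passing to the limit over $i$ recovers an honest filtered $h$-sheaf, which I will call $a_h\clg{G}$, with $\clg{G}(U,\ol U)=\RS\big(\ol U_{\ol K},\,\Omega^\bullet_{\ol U_{\ol K}}(\log D_{\ol K})\big)$.

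\emph{Step 2: recognizing $\clg{G}$ as pulled back from smooth $\ol{K}$-varieties.} In a semistable pair $\ol U$ is regular and $D$ is a normal crossings divisor, so $U$ is a smooth $\ol{K}$-variety, and by Deligne's comparison (recalled in \cite{Beil_de_Rham}) the complex $\RS\big(\ol U_{\ol K},\Omega^\bullet_{\ol U_{\ol K}}(\log D_{\ol K})\big)$ computes $\RS_\dr(U)$ functorially and independently of the compactification, with its stupid filtration inducing Deligne's Hodge filtration (the Hodge--de Rham spectral sequence degenerating, so the filtration is strict). Thus $\clg{G}$ is the restriction along the forgetful functor $(U,\ol U)\mapsto U$ of the filtered presheaf $V\mapsto\RS_\dr(V)$, extended functorially to all $\ol{K}$-varieties, say as infinitesimal cohomology. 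By Theorem \ref{eq-h-sheaf} the forgetful functor induces an equivalence between $h$-sheaves on $\Var_{\ol K}^\ses$ and on $\Var_{\ol K}$, compatibly with $h$-cohomology, so under this equivalence $a_h\clg{G}$ corresponds to the $h$-sheafification of de Rham cohomology on $\Var_{\ol K}$.

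\emph{Step 3: $h$-descent for de Rham cohomology.} It remains to see that this $h$-sheafification, evaluated on a smooth $X$, returns $\RS_\dr(X)$ with its Hodge filtration, i.e.\ that algebraic de Rham cohomology together with the Hodge filtration is already an $h$-sheaf on smooth $\ol{K}$-varieties. One proves this by using de Jong's alterations \cite{deJongAlt} to reduce to smooth proper varieties, proper cohomological descent for de Rham cohomology and the blow-up formula along smooth centres, and — for the filtered refinement — degeneration of the Hodge--de Rham spectral sequence and the resulting strictness; see \cite{Beil_de_Rham}. Combining the three steps, and using that $\RS_h(X,-)$ commutes with the filtered colimit $(-)\otimes\Q_p$, one obtains $\RS_h(X,\clg{A}_\dr)\otimes\Q_p\simeq\RS_h(X,a_h\clg{G})\simeq\RS_\dr(X)$ endowed with Deligne's Hodge filtration. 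The main obstacle is precisely Step 3: $h$-descent for de Rham cohomology, and in particular for its Hodge filtration, is a substantial theorem resting on alterations and cohomological descent, whereas Steps 1 and 2 — the passage from the derived logarithmic complex over $O_K$ inverted at $p$ to the classical one over $\ol{K}$, and the identification of the stupid filtration with Deligne's Hodge filtration — are essentially formal once log smoothness and Deligne's theory are invoked.
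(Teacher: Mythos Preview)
Your proposal is correct and follows the same approach the paper indicates: the paper does not give a proof but simply remarks that the proposition ``follows from $h$-descent for de Rham cohomology over fields of characteristic zero,'' and your three steps --- passing from the derived log complex over $O_K$ to the classical log de Rham complex over $\ol{K}$ after inverting $p$, identifying the latter with Deligne's filtered de Rham cohomology of the open part, and then invoking $h$-descent --- are exactly the details behind that remark, as in Beilinson's original argument.
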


\begin{prop}
The complex 
\begin{equation*}
    A_\dr/F^i \otimes \Q_p := \left(\varprojlim_n \clg{A}_\dr(\Spec O_{\ol{K}},\Spec \ol{K})/F^i \otimes^L \Z/p^n\right) \left[\frac{1}{p}\right]
\end{equation*}
is concentrated in degree zero, and its zeroth homology is
given by the ring $B_\dr^+/F^i$, where $B_\dr^+$ denotes Fontaine's period ring endowed with its natural filtration $F^\bullet$.
\end{prop}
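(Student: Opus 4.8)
The plan is to reduce the statement to the comparison isomorphism recalled above, $B_\dr^+ \cong (L\hat{\Omega}^\bullet_{O_{\ol K}/O_K})^\wedge_{(p)}[1/p]$ of filtered $\ol K$-algebras, after identifying the value of the pro-$h$-sheaf $\clg A_\dr$ at the ``final'' arithmetic $\ol K$-pair $(\Spec\ol K, \Spec O_{\ol K})$ with the Hodge-completed derived de Rham complex $L\hat{\Omega}^\bullet_{O_{\ol K}/O_K}$ itself, Hodge filtration included; the degreewise vanishing and the computation of $H^0$ then follow formally.

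First I would carry out this identification. One realizes $(\Spec\ol K,\Spec O_{\ol K})$ as the cofiltered limit (along the forgetful functor to $\Var_{\ol K}^\ses$) of the ss-pairs $(\Spec K_U,\ol U)$ entering the definition of $A_\dr$, so that $A_\dr/F^i$ is the derived $p$-completion of the value of $\clg A_\dr/F^i$ at this limit object. The key point is that $h$-sheafification does not alter the presheaf $(U,\ol U)\mapsto \RS(\ol U, L\hat{\Omega}^\bullet_{(\ol U,\can)/O_K}/F^i)$ at this object: every $h$-cover of $\Spec\ol K$ splits (a non-empty scheme proper over an algebraically closed field has a rational point), and, by the valuative criterion of properness applied to the proper integral models, such a section extends to a section of the associated $h$-cover of the arithmetic pair; hence the presheaf already satisfies (hyper)descent there, which is the assertion that $(\Spec\ol K,\Spec O_{\ol K})$ is a point of the $h$-topos (see \ref{appendix_period_rings}). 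As $\Spec O_{\ol K}$ is affine, the value is $L\hat{\Omega}^\bullet_{(O_{\ol K},\can)/(O_K,\can)}/F^i$ with its Hodge filtration, and, as explained in \ref{appendix_de_rham}, the canonical log structures may be dropped here, so we land on $L\hat{\Omega}^\bullet_{O_{\ol K}/O_K}/F^i$.

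It then remains to unwind. Derived $p$-completion and $-\otimes\Q_p$ preserve cofibre sequences, so applying them to $F^iL\hat{\Omega}^\bullet_{O_{\ol K}/O_K}\to L\hat{\Omega}^\bullet_{O_{\ol K}/O_K}\to L\hat{\Omega}^\bullet_{O_{\ol K}/O_K}/F^i$ and invoking the filtered comparison isomorphism identifies $A_\dr/F^i\otimes\Q_p$ with the cofibre of $F^iB_\dr^+\to B_\dr^+$. Since the natural filtration on $B_\dr^+$ is by the discrete ideals $F^iB_\dr^+=t^iB_\dr^+$ --- equivalently $\gr^j B_\dr^+\cong\C_p(j)$ sits in degree zero --- this cofibre is the discrete quotient ring $B_\dr^+/t^iB_\dr^+=B_\dr^+/F^i$, which gives both assertions.

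I expect the main obstacle to be the first step: making precise that the $h$-sheafified, Hodge-completed, pro-$p$-completed construction evaluated at the non-finite-type pair $(\Spec\ol K,\Spec O_{\ol K})$ recovers the naive (log) derived de Rham complex --- in particular the compatibility of the pro-structure with $h$-sheafification and the comparison of the log and non-log complexes of $O_{\ol K}/O_K$. Everything afterwards is a formal consequence of Beilinson's computation of $B_\dr^+$ through derived de Rham cohomology.
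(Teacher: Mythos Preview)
Your proposal is correct and follows essentially the same route as the paper: the paper's proof is simply a reference to Theorem~\ref{thm_period_ring_comp} in the appendix, which identifies $A_\dr[1/p]$ with $B_\dr^+$ as filtered $\ol K$-algebras via the Hodge-completed derived (log) de Rham complex of $O_{\ol K}/O_K$, exactly the comparison you invoke. Your discussion of why $h$-sheafification is harmless at the pair $(\Spec\ol K,\Spec O_{\ol K})$ (every $h$-cover admits a section by the valuative criterion) is a point the paper leaves implicit, and your remark that the log structures may be dropped is precisely the content of the lemma following Theorem~\ref{thm_period_ring_comp}.
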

\begin{proof}
    See Appendix \ref{thm_period_ring_comp}.
\end{proof}

\begin{cor}
Let $X$ be a smooth variety over $\ol{K}$. There exists a natural filtered quasi-isomorphism 
\begin{align*}
    \RS_{\et}(X, \Z_p) \hat{\otimes}_{\Z_p} B_\dr^+ &\to \varprojlim_i
    \left(\RS_h(X, (\clg{A}_\dr/F^i)^\wedge_{(p)})\otimes \Q_p\right) \\
    &= \varprojlim_i\left(\left(\varprojlim_n \RS_h(X,\clg{A}_\dr/F^i \otimes^L \Z/p^n)\right)\left[\frac{1}{p}\right]\right)  .
\end{align*}

\end{cor}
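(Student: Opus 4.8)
The plan is to realise the asserted arrow as a cup-product (projection-formula) map and to check that it becomes a quasi-isomorphism after reduction modulo $p$, where it decomposes into the comparison of $h$-cohomology with \'etale cohomology for finite coefficients together with a routine universal-coefficient statement; the filtered assertion then comes out by taking a derived $p$-completion, inverting $p$, and passing to the limit over the Hodge filtration.

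First I would set up the map. Write $f\colon X\to\Spec\ol{K}$ for the structure morphism; as $\ol{K}$ is algebraically closed, $\RS_{\et}(X,-)$ may be computed as $Rf_*$, and $\RS_h(X,\Z_p)\simeq\RS_{\et}(X,\Z_p)$ (a special case of the comparison recalled below). For a complex $M\in D(\Z_p)$ and its associated constant $h$-sheaf $\und{M}$ there is a natural cup-product map
\[
\RS_h(X,\Z_p)\hat{\otimes}_{\Z_p}M\longrightarrow\RS_h(X,\und{M})
\]
induced by the $\RS_h(X,\Z_p)$-module structure on $\RS_h(X,\und{M})$. Taking $M=A_\dr/F^i$, composing with the quasi-isomorphism $\RS_h(X,\und{A_\dr/F^i})\simeq\RS_h(X,(\clg{A}_\dr/F^i)^\wedge_{(p)})$ supplied by Beilinson's Poincar\'e Lemma, inverting $p$ (so that $A_\dr/F^i\otimes\Q_p\simeq B_\dr^+/F^i$ by the previous Proposition), and taking $\varprojlim_i$ produces the arrow of the statement; the equality in the display is just the fact that $\RS_h(X,-)$ commutes with $R\varprojlim$ together with the definition of the derived $p$-completion. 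Naturality in $X$ is automatic since every construction used is functorial, so it remains to prove the arrow is a quasi-isomorphism.

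Next I would reduce to finite coefficients. By the Poincar\'e Lemma again, $\clg{A}_\dr/F^i\otimes^L\Z/p^n\simeq\und{C_{i,n}}$ as $h$-sheaves on $\Var_{\ol{K}}^{\ses}$ (hence, via Theorem \ref{eq-h-sheaf}, on $\Var_{\ol{K}}$), where $C_{i,n}:=A_\dr/F^i\otimes^L\Z/p^n\in D(\Z/p^n)$; thus it suffices to produce, naturally in $X$ and compatibly with the transition maps in $n$ and $i$, a quasi-isomorphism
\[
\RS_{\et}(X,\Z/p^n)\otimes^L_{\Z/p^n}C_{i,n}\xrightarrow{\ \sim\ }\RS_h(X,\und{C_{i,n}}),
\]
and then to apply $R\varprojlim_n$, invert $p$, and apply $\varprojlim_i$, the last step being legitimate because $B_\dr^+$ is complete for its filtration. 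This finite-coefficient statement rests on two inputs. The first is that for any torsion $C\in D(\Z/p^n)$ the canonical map $\RS_h(X,\und{C})\to\RS_{\et}(X,\und{C})$ is a quasi-isomorphism: for $C=\Z/p^n$ this is the agreement of $h$- and \'etale cohomology with finite coefficients, which over a base of characteristic zero follows from de Jong's theorem on alterations \cite{deJongAlt}, and the general case follows by d\'evissage. The second is that the projection-formula map $\RS_{\et}(X,\Z/p^n)\otimes^L_{\Z/p^n}C\to\RS_{\et}(X,\und{C})$ is a quasi-isomorphism: writing $C$ as a filtered colimit of perfect $\Z/p^n$-complexes and using that $Rf_*$ for the finite-type morphism $f$ commutes with filtered colimits reduces this to the standard perfect case. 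Composing the two gives the required quasi-isomorphism.

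Finally I would track the filtrations: the target carries the filtration induced by the Hodge filtration $F^\bullet$ on $\clg{A}_\dr$, so that the $i$-th term of $\varprojlim_i$ is the quotient modulo $F^i$, while the source carries $\RS_{\et}(X,\Z_p)\hat{\otimes}_{\Z_p}F^\bullet B_\dr^+$; since every map above is defined at the filtered level, the resulting quasi-isomorphism is filtered. I expect the main obstacle to be the only genuinely non-formal ingredient, namely the comparison of $h$- with \'etale cohomology for finite coefficients, which ultimately rests on de Jong's alteration theorem; the remainder is bookkeeping, the chief hazard being to arrange the derived $p$-completion, the inversion of $p$, and the limit over the Hodge filtration so that the two sides of the displayed equality genuinely coincide.
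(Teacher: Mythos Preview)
Your proposal is correct and follows essentially the same route as the paper: reduce modulo $p^n$, invoke the Poincar\'e Lemma to identify $\clg{A}_\dr/F^i\otimes^L\Z/p^n$ with the constant sheaf $A_\dr/F^i\otimes^L\Z/p^n$, use the comparison of $h$- and \'etale cohomology with torsion coefficients together with the projection formula, and then pass to the limit. The paper's proof is terser and cites \cite{suslin_veov_singhom} directly for the $h$-\'etale comparison rather than invoking de~Jong, but the substance is the same.
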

\begin{proof}
Since $A_\dr/F^i \otimes^L \Z/p^n$ is a constant torsion sheaf and \'etale cohomology with torsion coefficients satisfies $h$-descent, see \cite[Theorem 10.7]{suslin_veov_singhom}, the natural map
\begin{equation*}
    \RS_\et(X, \Z_p)\otimes^L_{\Z_p} (A_\dr/F^i\otimes^L \Z/p^n) \cong \RS_\et(X, A_\dr/F^i \otimes^L \Z/p^n) \to \RS_h(X, A_\dr/F^i \otimes^L \Z/p^n)  
\end{equation*}
is a quasi-isomorphism. Moreover, the Poincar\'e Lemma gives a quasi-isomorphism
\begin{equation*}
        \RS_h(X, A_\dr/F^i \otimes^L \Z/p^n) \xrightarrow{\sim} \RS_h(X, \clg{A}_\dr/F^i \otimes^L \Z/p^n). 
\end{equation*}
Now, taking derived \(p\)-completion and inverting \(p\) yield the desired quasi-isomorphism
\begin{equation*}
    \RS_\et(X, \Z_p)\hat{\otimes}_{\Z_p} B_\dr^+/F^i \xrightarrow{\sim} \RS_h(X, (\clg{A}_\dr/F^i)^\wedge_{(p)})\otimes \Q_p.
\end{equation*}
\end{proof}

Let $X$ be a smooth variety over $K$. Combining everything above 
gives a natural filtered $G_K$-equivariant morphism 
\begin{equation*}
    \RS_\dr(X)  \to \RS_h(X_{\ol{K}}, \clg{A}_\dr)\otimes \Q_p \to \RS_h(X_{\ol{K}}, (\clg{A}_\dr)^\wedge_{(p)} )\otimes \Q_p
    \xrightarrow{\sim} \RS_{\et}(X_{\overline{K}}, \Z_p) \otimes^L B_\dr^+.
\end{equation*}
We denote the $B_\dr$-linear extension of the above composition by $\rho_\dr$, where $B_\dr := \rom{Frac}(B_\dr^+)$.

\begin{theorem}[{{\cite[Theorem 3.6]{Beil_de_Rham}}}]
    The morphism $\rho_\dr$ is a filtered quasi-isomorphism for every smooth variety $X$ over $K$.
\end{theorem}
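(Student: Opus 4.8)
The plan is to take the composite defining $\rho_\dr$ apart, recognise its last arrow as the Corollary above, reduce the remaining content to a statement on associated graded pieces of the Hodge filtration, and identify that statement with the Hodge--Tate comparison. To begin, the last arrow
\[
\RS_h(X_{\ol{K}},(\clg{A}_\dr)^\wedge_{(p)})\otimes\Q_p\xrightarrow{\sim}\RS_\et(X_{\ol{K}},\Z_p)\otimes^L B_\dr^+
\]
is a filtered quasi-isomorphism by the Corollary -- that is, by Beilinson's Poincar\'e Lemma together with $h$-descent for torsion \'etale cohomology -- once $\RS_h(X_{\ol{K}},(\clg{A}_\dr)^\wedge_{(p)})\otimes\Q_p$ is identified with $\varprojlim_i\RS_h(X_{\ol{K}},(\clg{A}_\dr/F^i)^\wedge_{(p)})\otimes\Q_p$, which is legitimate because $\clg{A}_\dr$ is Hodge-complete and, $X$ being quasi-compact, $\RS_h(X_{\ol{K}},-)$ and derived $p$-completion commute with the limit over $i$. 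Since $\rho_\dr$ is $B_\dr$-linear and $B_\dr=B_\dr^+[t^{-1}]$, it is enough to treat its restriction $\rho_\dr^+\colon\RS_\dr(X)\otimes_K B_\dr^+\to\RS_\et(X_{\ol{K}},\Z_p)\hat{\otimes}_{\Z_p}B_\dr^+$; here the source carries the convolution of the (bounded) Hodge filtration on $\RS_\dr(X)$ with the $t$-adic filtration on $B_\dr^+$, so both source and target are complete filtered complexes, and $\rho_\dr^+$ is a filtered quasi-isomorphism as soon as it induces a quasi-isomorphism on each associated graded piece.

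By the de Rham $h$-descent Proposition together with flat base change along $K\to\ol{K}$, the first arrow $\RS_\dr(X)\to\RS_h(X_{\ol{K}},\clg{A}_\dr)\otimes\Q_p$ is the canonical (filtered) map $\RS_\dr(X)\to\RS_\dr(X)\otimes_K\ol{K}$, and after $B_\dr^+$-linearization $\rho_\dr^+$ factors as $\RS_\dr(X)\otimes_K B_\dr^+$ mapping, via the de Rham class map and the completion $\clg{A}_\dr\to(\clg{A}_\dr)^\wedge_{(p)}$, into $\RS_h(X_{\ol{K}},(\clg{A}_\dr)^\wedge_{(p)})\otimes\Q_p$ and then (by the Corollary) to $\RS_\et(X_{\ol{K}},\Z_p)\hat{\otimes}_{\Z_p}B_\dr^+$. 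On the $i$-th associated graded piece, using $\gr^b B_\dr^+\cong\C_p(b)$ and $\gr^a_{\mathrm{Hodge}}\RS_\dr(X)\cong\RS(X,\Omega^a_{X/K})[-a]$, this becomes
\[
\bigoplus_{a=0}^{i}\RS(X,\Omega^a_{X/K})[-a]\otimes_K\C_p(i-a)\longrightarrow\RS_\et(X_{\ol{K}},\Z_p)\otimes_{\Z_p}\C_p(i),
\]
i.e.\ -- up to the Tate twist by $\C_p(-i)$ and a range of cohomological degrees -- exactly the Hodge--Tate comparison morphism. That this is a quasi-isomorphism is the Hodge--Tate comparison theorem; in the present setting it is produced by the same mechanism as the Corollary, with the associated graded of the Poincar\'e Lemma in the role of the Poincar\'e Lemma: the $i$-th graded piece of the $h$-sheaf quasi-isomorphism $A_\dr/F^i\xrightarrow{\sim}(\clg{A}_\dr/F^i)^\wedge_{(p)}$ is the ``Hodge--Tate Poincar\'e Lemma'' $\C_p(i)\xrightarrow{\sim}(\gr^i\clg{A}_\dr)^\wedge_{(p)}[1/p]$ -- a globalized form of Fontaine's computation of the Tate module of $\Omega^1_{O_{\ol{K}}/O_K}$ -- and feeding it into $h$-descent for $\Omega^a$ (de Rham side) and for torsion \'etale cohomology yields the Hodge--Tate comparison. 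Reassembling over $i$ and inverting $t$ then shows $\rho_\dr$ is a filtered quasi-isomorphism.

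The step I expect to be the main obstacle is the passage to associated graded pieces: checking that $\gr^i\rho_\dr^+$ is literally the Hodge--Tate comparison map rather than merely abstractly comparable to it; extracting the graded Poincar\'e Lemma from the filtered one and matching it against the transitivity filtration for $O_K\to O_{\ol{K}}\to\clg{O}_{\ol{V}}$ on a semistable $h$-cover, whose only two surviving graded pieces (the higher ones vanishing because $\Omega^1_{O_{\ol{K}}/O_K}$ has no exterior powers beyond the first after $p$-completion) are precisely what recombine the geometric forms $\Omega^a_{X/K}$ with the arithmetic twist $\C_p(1)$ into a single \'etale cohomology group; and controlling the interchange of $\gr^i$, derived $p$-completion, inverting $p$, the limit over the Hodge filtration, and $\RS_h(X_{\ol{K}},-)$, so that no $\varprojlim^1$-terms intervene. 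Everything else is either cited -- the Poincar\'e Lemma, $h$-descent for de Rham and for torsion \'etale cohomology, the computation $A_\dr\otimes\Q_p\cong B_\dr^+$ -- or formal manipulation of complete filtered complexes.
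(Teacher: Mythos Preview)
Your strategy differs from the paper's. The paper (following Beilinson) does not reduce to Hodge--Tate at all; instead it verifies $\rho_\dr$ directly for $\G_m$ and then bootstraps to arbitrary smooth $X$ using compatibility with Chern classes, Gysin maps, and Poincar\'e duality. Your approach, by contrast, attempts to deduce the filtered statement from its associated graded, which is the Hodge--Tate comparison.

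There are two problems. First, the reduction to $\rho_\dr^+$ is incorrect as stated: $\rho_\dr^+$ is \emph{not} a filtered quasi-isomorphism. On $\gr^i$ with the $B_\dr^+$-filtration you correctly find $\bigoplus_{a=0}^{i}\RS(X,\Omega^a)[-a]\otimes\C_p(i-a)\to\RS_\et\otimes\C_p(i)$, but for $i<\dim X$ this omits the summands with $a>i$ and is not an isomorphism (compare dimensions via the Hodge decomposition). Only after inverting $t$, so that $\gr^b B_\dr=\C_p(b)$ for all $b\in\Z$ and every $a\geq 0$ contributes, does $\gr^i\rho_\dr$ become the full Hodge--Tate map. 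You must therefore work with $\rho_\dr$ over $B_\dr$ throughout, and then handle the completeness argument for a $\Z$-indexed filtration.

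Second, and this is the real gap, your argument for the Hodge--Tate comparison is circular. The graded Poincar\'e Lemma identifies $(\gr^i\clg{A}_\dr)^\wedge_{(p)}[1/p]$ with the constant $h$-sheaf $\C_p(i)$, hence its $\RS_h(X_{\ol{K}},-)$ with $\RS_\et\otimes\C_p(i)$; but this only pins down the \emph{target}. It does not show that the specific map from $\bigoplus_a\RS(X,\Omega^a)[-a]\otimes\C_p(i-a)$ is an isomorphism. Your appeal to the transitivity filtration for $O_K\to O_{\ol{K}}\to\clg{O}_{\ol{V}}$ does not close this: after $p$-completion $(L_{O_{\ol{K}}/O_K})^\wedge_{(p)}[1/p]\simeq\C_p(1)[1]$ sits in degree $-1$, so its derived exterior powers $L\bigwedge^b$ are $\C_p(b)[b]$ for \emph{all} $b\geq 0$, not just $b\leq 1$; the claim that ``higher pieces vanish'' is false. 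More fundamentally, once you $h$-sheafify and $p$-complete, the Poincar\'e Lemma has already collapsed $(\gr^i\clg{A}_\dr)^\wedge$ to a constant sheaf, and no filtration on a constant $h$-sheaf can recover the geometric groups $\RS(X,\Omega^a)$. In Beilinson's framework Hodge--Tate is a \emph{consequence} of the theorem, obtained by taking $\gr$ of $\rho_\dr$ once the theorem is established via $\G_m$ and Chern classes; it is not available as an independent input from the Poincar\'e Lemma alone.
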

The theorem follows from the computation of $\rho_\dr$ in the case of $\G_m$ using Poincar\'e duality, Gysin morphisms, and compatibility of $\rho_\dr$ with Chern classes.
We refer to the original paper \cite{Beil_de_Rham} for further details. Finally, we record that the de Rham sheaf $\clg{A}_\dr$ and hence the comparison isomorphism are representable in the motivic category $\DA_\et(K,\mathbb{Z})$.

\begin{cor}\label{cor_adr_reprs}
    The presheaf 
    \begin{equation*}
        X \mapsto \RS_h(X_{\ol{K}}, \clg{A}_\dr)
    \end{equation*}
    and, consequently, the comparison isomorphism $\rho_\dr$, are representable in $\DA_\et(K)$. 
\end{cor}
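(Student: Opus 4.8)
The plan is to verify that the presheaf of (pro-)complexes $\mathscr F\colon X\mapsto\RS_h(X_{\ol K},\clg A_\dr)$ on $\Sm_K$ meets the two localisation conditions carving out $\DA_\et(K)$ inside all presheaves of complexes -- étale (hyper)descent and $\A^1$-invariance -- after which $\mathscr F$ descends to an object of $\DA_\et(K)$ whose mapping complex out of the motive $M(X)$ is $\mathscr F(X)$, and the representability of $\rho_\dr$ follows formally from that of its source and target. Since the objects that actually enter $\rho_\dr$ and the period constructions are $\Q_p$-, resp.\ $B_\dr$-linear, it is the rationalised and $p$-completed versions of $\mathscr F$ for which I would establish representability; the integral complex itself is \emph{not} $\A^1$-invariant (the derived de Rham complex records the non-integrality of antiderivatives, already on $\A^1$), so this restriction is essential.

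First I would dispose of étale descent. As $\clg A_\dr$ is an $h$-sheaf on $\Var_{\ol K}^\ses$, hence -- by Theorem \ref{eq-h-sheaf} -- on $\Var_{\ol K}$, the assignment $\RS_h(-,\clg A_\dr)$ satisfies $h$-hyperdescent. Given an étale hypercover $U_\bullet\to X$ in $\Sm_K$, base change along $\Spec\ol K\to\Spec K$ yields an étale, a fortiori an $h$-, hypercover $(U_\bullet)_{\ol K}\to X_{\ol K}$ over $\ol K$, so $\RS_h(X_{\ol K},\clg A_\dr)\simeq\lim_{[n]}\RS_h((U_n)_{\ol K},\clg A_\dr)$. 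Hence $\mathscr F$ is an étale sheaf on $\Sm_K$, and so are its $\Q_p$-linearisation and the $p$-completed, $B_\dr$-linear variants appearing in $\rho_\dr$, because these arise by coefficient (co)limits that commute with the descent limit.

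The heart of the matter is $\A^1$-invariance: one must see that $X\times\A^1\to X$ induces an equivalence $\RS_h(X_{\ol K},\clg A_\dr)\xrightarrow{\sim}\RS_h((X\times\A^1)_{\ol K},\clg A_\dr)$ after rationalisation/$p$-completion. Here I would invoke the proposition above (\cite[Proposition 3.4]{Beil_de_Rham}): rationally $\RS_h(-,\clg A_\dr)\otimes\Q_p$ computes de Rham cohomology of smooth $\ol K$-varieties together with Deligne's Hodge filtration. Algebraic de Rham cohomology over a field of characteristic zero is homotopy invariant, and, comparing a smooth log-compactification $(\ol X,D)$ of $X$ with $(\ol X\times\PR^1,\,D\times\PR^1\cup\ol X\times\{\infty\})$, one checks that each step $F^i$ of Deligne's filtration is preserved; thus $\A^1$-invariance holds rationally, level by level in the Hodge filtration, and is inherited by $R\lim_i$ and by base change to $B_\dr^+$, $B_\dr$. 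I expect this to be the main obstacle, the real work lying in the bookkeeping around the Hodge filtration, $R\lim_i$ and the derived $p$-completion: one must confirm that $R\lim_i$ and $(-)^\wedge_{(p)}$ preserve both descent and homotopy invariance, which they do since everything can be arranged one Hodge level at a time, where the complexes are bounded and the limits mild.

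Granting the two conditions, $\mathscr F$ and its variants descend to objects of $\DA_\et(K)$ -- resp.\ of its $\Q_p$- and $B_\dr$-linear counterparts -- with $\Map_{\DA_\et(K)}(M(X),-)$ recovering the defining complexes (the compatibility of these cohomology theories with Tate twists letting one pass from the effective to the stable category). The same verification applies to $X\mapsto\RS_\dr(X)$ (homotopy invariant and étale-local by Mayer--Vietoris / quasi-coherent descent) and to $X\mapsto\RS_{\et}(X_{\ol K},\Z_p)$ (homotopy invariant and étale-local with torsion, hence $p$-adic, coefficients), so each of the three terms in the composition defining $\rho_\dr$ is representable and each arrow in it is induced by a morphism of presheaves; therefore $\rho_\dr$ is realised by a morphism of the corresponding representing objects in $\DA_\et(K)$ (after $\otimes B_\dr$), which is the assertion.
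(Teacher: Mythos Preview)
Your proposal proves a weaker statement than the one claimed, and the restriction you impose is based on an incorrect assertion. The corollary is about the \emph{integral} presheaf $\mathscr F(X)=\RS_h(X_{\ol K},\clg A_\dr)$, and the paper shows that this integral $\mathscr F$ \emph{is} $\A^1$-invariant. Your claim that it is not---because ``the derived de Rham complex records the non-integrality of antiderivatives''---is a reasonable instinct over $\Z$, but it fails here: $\clg A_\dr$ takes values in $O_{\ol K}$-modules, and $O_{\ol K}$ is a $\Z_{(p)}$-algebra, so the only possible obstruction is at $p$.

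The key step you are missing is a fracture argument. To check that the cofibre $G$ of $\mathscr F(X)\to\mathscr F(X\times\A^1)$ vanishes, it suffices to check that $G\otimes\Z[\tfrac1p]$ and $G^\wedge_{(p)}$ both vanish: if $G^\wedge_{(p)}\simeq 0$ then the fibre sequence $R\Hom(\Z[\tfrac1p],G)\to G\to G^\wedge_{(p)}$ forces $G$ to be a $\Z[\tfrac1p]$-module, whence $G\simeq G\otimes\Z[\tfrac1p]\simeq 0$. Now $\mathscr F\otimes\Z[\tfrac1p]=\mathscr F\otimes\Q$ computes de Rham cohomology (Proposition 3.4), which you correctly note is $\A^1$-invariant; and $\mathscr F^\wedge_{(p)}$ computes $p$-adic \'etale cohomology by the Poincar\'e Lemma (the constant sheaf $A_\dr/F^i\otimes^L\Z/p^n$), which is also $\A^1$-invariant. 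This settles $\A^1$-invariance integrally, with no need for your Hodge-filtration bookkeeping on log compactifications. The same fracture argument handles the $\PR^1$-loop space condition needed to pass to the stable category, which you gesture at but do not verify.

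Your treatment of \'etale descent is fine, and your remarks on the representability of the other terms entering $\rho_\dr$ are adequate once the above is fixed.
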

\begin{proof}
    Denote the above presheaf by $F$. To show that $F$ is representable in $\DA^{\eff}_\et(K)$, we need to prove that $F$ is $\A^1$-invariant and satisfies \'etale descent.
    The latter holds by definition of $F$.
    To deal with $\A^1$-invariance, it suffices to check that $F \otimes \Z[\frac{1}{p}]$ and the derived \(p\)-completion $F^{\wedge}_{(p)}$ are $\A^1$-invariant. To see this, observe that a presheaf $G$ is zero if $G \otimes \Z[\frac{1}{p}]$ and $G^\wedge_{(p)}$ are zero. By definition derived $p$-complete sheaves are right orthogonal to sheaves of complexes of $\Z[\frac{1}{p}]$-modules, see \cite[Section 3]{BS_proetale}. In particular, we have an exact triangle
    \[
    R\Hom(\Z[\frac{1}{p}], G) \to G \to G^{\wedge}_{(p)} \xrightarrow{+1}.
    \]
    Thus, if $G^\wedge_{(p)} \simeq 0$ it follows that $G \simeq R\Hom(\Z[\frac{1}{p}], G)$ which is naturally a $\Z[\frac{1}{p}]$-module and hence $G \simeq G\otimes \Z[\frac{1}{p}] \simeq 0$.
    Finally, $F \otimes \Z[\frac{1}{p}]$, and $F^{\wedge}_{(p)}$ are $\A^1$-invariant, because
    $F \otimes \Z[\frac{1}{p}]$ computes de Rham cohomology and $F^{\wedge}_{(p)}$ computes \'etale cohomology.
    
    Moreover, to show that $F$ is representable by a $\PR^1$-spectrum, it suffices to show that the natural map
    \[
    F \to \Omega_{\PR^1} F,
    \]
    where $\Omega_{\PR^1} F$ denotes the $\PR^1$-loop space of $F$, is a quasi-isomorphism. By the same argument as above, this follows from the corresponding facts for de Rham and \'etale cohomology.
\end{proof}
 
\subsection{The Hodge-Tate comparison for abelian varieties with good reduction}\label{sect_hodge_tate}
Let us explain how to use Beilinson's Poincar\'e Lemma to recover Fontaine's formula for the Hodge-Tate comparison isomorphism. The construction below will strongly guide our intuition in the de Rham case.

Let $A$ be an abelian variety with good reduction. Recall the following form of the Hodge-Tate comparison theorem for abelian varieties from the introduction.

\begin{theorem}[Tate, Raynaud]
Let $A$ be an abelian variety with good reduction over $K$. There exist functorial $K$-linear isomorphisms
\begin{align*}
\rho^1_A \colon &H^1(A,\clg{O}_A) \to \Hom_{\Z_p[G_K]}(T_p(X),\C_p)  \\
\rho^0_A \colon &H^0(A,\Omega^1_{A/K}) \to \Hom_{\Z_p[G_K]}(T_p(X),\C_p(1)).  
\end{align*}
\end{theorem}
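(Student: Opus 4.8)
The plan is to construct $\rho^0_A$ explicitly by Fontaine's pullback formula, to obtain $\rho^1_A$ from it by the duality of abelian varieties, and then to prove that both maps are isomorphisms by extracting the associated graded of Beilinson's comparison isomorphism $\rho_\dr$; along the way the Poincar\'e Lemma simultaneously yields the isomorphism and, after the maps are identified, recovers Fontaine's formula.

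\textbf{Reduction and construction.} By the Weil pairing $T_p(A)\times T_p(A^\vee)\to\Z_p(1)$ together with the canonical perfect pairing between $H^1(A,\clg{O}_A)$ and $H^0(A^\vee,\Omega^1_{A^\vee/K})$ (respectively $\Lie A^\vee$ and its dual), the map $\rho^1_A$ is the transpose of $\rho^0_{A^\vee}$, so it suffices to construct $\rho^0$. Since $A$ has good reduction, its N\'eron model $\clg{A}$ is proper and smooth over $O_K$; hence $H^0(\clg{A},\Omega^1_{\clg{A}/O_K})\otimes_{O_K}K\cong H^0(A,\Omega^1_{A/K})$, every global differential on $\clg{A}$ is translation--invariant, and $T_p(A)=T_p(\clg{A})=\varprojlim_n\clg{A}[p^n](O_{\ol{K}})$. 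Given an integral invariant differential $\omega\in H^0(\clg{A},\Omega^1_{\clg{A}/O_K})$ and $x=(x_n)_n\in T_p(\clg{A})$, the pullbacks $x_n^*\omega\in\Omega^1_{O_{\ol{K}}/O_K}$ satisfy $p^n\cdot(x_n^*\omega)=([p^n]x_n)^*\omega=0$ (as $[p^n]x_n$ factors through the identity section) and $p\cdot(x_{n+1}^*\omega)=([p]x_{n+1})^*\omega=x_n^*\omega$, using $[p]^*\omega=p\omega$. Thus $(x_n^*\omega)_n\in T_p(\Omega^1_{O_{\ol{K}}/O_K})$, and composing with Fontaine's isomorphism $T_p(\Omega^1_{O_{\ol{K}}/O_K})\otimes\Q_p\cong\C_p(1)$ of (\ref{eq_fontaine_tate}) and extending $K$-linearly in $\omega$ defines $\rho^0_A(\omega)(x)$. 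Additivity in $x$ follows from $m^*\omega=\pr_1^*\omega+\pr_2^*\omega$ for invariant $\omega$; $G_K$-equivariance and functoriality in $A$ are immediate from the construction, using the N\'eron mapping property to extend morphisms to the integral models.

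\textbf{Bijectivity.} Applying Beilinson's comparison isomorphism $\rho_\dr$ to $X=A$ in cohomological degree $1$ gives a $G_K$-equivariant filtered isomorphism $H^1_\dr(A)\otimes_K B_\dr\xrightarrow{\sim}H^1_\et(A_{\ol{K}},\Q_p)\otimes_{\Q_p}B_\dr$, the \'etale factor carrying the trivial filtration; alternatively one obtains the same statement directly from Beilinson's Poincar\'e Lemma applied to the semistable pair $(A,\clg{A})$, without invoking the full comparison theorem. Passing to $\gr^0$ and using $\gr^j B_\dr=\C_p(j)$, $\gr^0 H^1_\dr(A)=H^1(A,\clg{O}_A)$ and $\gr^1 H^1_\dr(A)=H^0(A,\Omega^1_{A/K})$ yields the Hodge--Tate decomposition
\[
H^1(A,\clg{O}_A)\otimes_K\C_p\ \oplus\ H^0(A,\Omega^1_{A/K})\otimes_K\C_p(-1)\ \xrightarrow{\ \sim\ }\ H^1_\et(A_{\ol{K}},\Q_p)\otimes_{\Q_p}\C_p .
\]
Dualizing via $H^1_\et(A_{\ol{K}},\Q_p)^\vee\cong V_p(A)$ and taking $G_K$-invariants after twisting by $\C_p$, respectively $\C_p(1)$, Tate's vanishing $H^0(G_K,\C_p(n))=0$ for $n\neq 0$ gives canonical identifications $\Hom_{\Z_p[G_K]}(T_p(A),\C_p)\cong H^1(A,\clg{O}_A)$ and $\Hom_{\Z_p[G_K]}(T_p(A),\C_p(1))\cong H^0(A,\Omega^1_{A/K})$; in particular the target of $\rho^0_A$ is a $K$-vector space of dimension $g=\dim A$. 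It then remains to check that the explicit $\rho^0_A$ coincides, under these identifications, with the corresponding $\gr$-component of $\rho_\dr$; granting this, $\rho^0_A$ is injective because the Hodge--Tate inclusion is, hence an isomorphism by the dimension count, and $\rho^1_A$ follows by duality.

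\textbf{Main obstacle.} The crux is this last compatibility: that $\rho_\dr$, restricted to integral invariant differentials on the proper smooth model $\clg{A}$, is computed by pullback along torsion points. To establish it one unwinds Beilinson's Poincar\'e Lemma on the pair $(A,\clg{A})$ — whose log structure is trivial — in weight one: the $h$-covers witnessing the $p$-divisibility of $\Omega^1$ may be taken to be the isogenies $[p^n]\colon\clg{A}\to\clg{A}$, since $[p^n]^*\omega=p^n\omega$, and the inverse limit of their \v{C}ech nerves recovers $T_p(\clg{A})=T_p(A)$. Tracing an integral invariant $\omega$ through the resulting quasi-isomorphism and through the $\dlog$-presentation (\ref{eq_fontaine_tate}) of $\gr^1$ of the period ring then produces exactly $x\mapsto\varprojlim_n x_n^*\omega$; matching the $\dlog$-description of $\C_p(1)$ with the weight-one part of the Poincar\'e Lemma, with consistent normalizations, is the main work. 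Alternatively one can bypass this identification and prove injectivity of $\rho^0_A$ directly by Fontaine's density argument for the torsion of the formal group, which together with the same dimension count again yields the isomorphism.
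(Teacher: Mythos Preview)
Your approach is correct but differs substantially from the paper's. You construct $\rho^0_A$ by Fontaine's pullback formula, obtain $\rho^1_A$ from $\rho^0_{A^\vee}$ via the Weil pairing and Serre duality, and then prove bijectivity by extracting the graded pieces of the de Rham comparison together with Tate's vanishing $H^0(G_K,\C_p(n))=0$; the remaining work is the compatibility between the explicit $\rho^0_A$ and $\gr^1\rho_\dr$. The paper, by contrast, deliberately avoids the duality reduction and instead gives a \emph{canonical} construction of both $\rho^0_A$ and $\rho^1_A$ in one stroke. The central device is the perfectoid cover
\[
0 \to T_p\clg{A} \to \clg{A}_\infty := \varprojlim_{[p]}\clg{A} \to \clg{A} \to 0
\]
viewed as a pro-system of $h$-sheaves. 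Applying $\Hom_{\Sh_h}(-,(\gr^i_F\clg{A}_\dr)^\wedge_{(p)}[j])$ and using that $\Hom(\clg{A}_\infty,-)$ is $p$-divisible (so maps to any constant torsion sheaf vanish), the long exact sequence collapses and the connecting homomorphism $\delta$ becomes an \emph{isomorphism} $\Hom(T_p\clg{A},O_{\C_p})\xrightarrow{\sim}\Hom(\clg{A},O_{\C_p}[1])$ for $i=0$, and likewise with $T_p\Omega^1_{O_{\ol K}/O_K}$ for $i=1$; the Poincar\'e Lemma is invoked only to identify the coefficient sheaf with the constant one. Thus bijectivity falls out of the exact sequence directly, with no appeal to Tate's vanishing, no dimension count, and no separate compatibility check; the identification with Fontaine's explicit formula is a final diagram chase (via Breen--Deligne) rather than an input. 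Your route is the classical one and is perfectly valid, but the paper's buys a uniform treatment of both maps and sidesteps exactly the ``main obstacle'' you flag.
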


As explained in the introduction, $\rho^1_A$ can be constructed from $\rho_{A'}^0$ by using the identification 
\[
T_p(A) \cong \Hom_{\Z_p[G_K]}(T_p A', \Z_p(1)),
\]
where $A'$ denotes the dual abelian variety of $A$. 
However, we will use Beilinson's Poincar\'e Lemma to give a canonical construction of both $\rho^1_A$ and $\rho^0_A$. 

\begin{construction}\label{constr_HT_comp}
Let $A$ be an abelian variety with good reduction over $K$ with N\'eron model $\clg{A}$.
Since $A$ has good reduction, the Tate module of $A$ admits an integral model 
\[
T_p \clg{A} :=
\varprojlim_{n} \clg{A}[p^n].
\]
Moreover, we consider the perfectoid cover of $\clg{A}$ given by
\[
\varprojlim_{x \mapsto p \cdot x} \clg{A} =: \clg{A}_\infty  \to \clg{A},
\]
which is a torsor under $T_p\clg{A}$.
Note that $\clg{A}_\infty$ exists as a scheme since multiplication by \(p\) is a finite morphism \cite[Tag 01YX]{stacks-project} and we have a short exact sequence of group schemes
\begin{equation}\label{ses_perf_cover}
0 \to T_p \clg{A} \to \clg{A}_\infty \to \clg{A} \to 0.
\end{equation}

Since we will be working on the \'etale or $h$-site, it is better to view $T_p \clg{A}$ and $\clg{A}_\infty$ as pro-objects in $\Sh_{\et}(\Var_K)$. In particular, (\ref{ses_perf_cover}) gives rise to a pro-system of short exact sequences
\footnote{Recall that morphisms between pro-objects in a category $\clg{C}$ are given by $
\Hom_{\clg{C}}((X_i),(Y_j)) = \varinjlim_i \varprojlim_j \Hom_{\clg{C}}(X_i,X_j).$
By abuse of notation, we write $\Hom_{\clg{C}}(-,-)$ for the set of morphisms between pro-objects when there is no risk of confusion.} of $h$-sheaves on $\Var_K^{\ses}$ (or equivalently on $\Var_{K}$).

\textbf{Step 1: $\rho^1_A$.} Let $\alpha \in H^1(A,\clg{O}_A)$. Our goal is to construct a continuous $G_K$-equivariant morphism
\[
\rho^1_A(\alpha) \colon T_p A \to \C_p,
\]
which depends $K$-linearly on $\alpha$.
Since $\clg{A}$ is a proper model for $A$, it suffices to construct $\rho^1_A(\alpha)$ for $\alpha \in H^1(\clg{A},\clg{O}_{\clg{A}})$.
Moreover, because $\clg{A}$ is an abelian scheme every $\G_a$-torsor over $\clg{A}$ admits the structure of a
vectorial extension and therefore we can represent every $\alpha \in H^1(\clg{A},\clg{O}_{\clg{A}})$ by a morphism in the derived ($\infty$-)category of Zariski sheaves on $\Var_K^\ses$
\[
\alpha \in \Hom_{\PSh(\Var_{K}^\ses,\Z)}(\clg{A},L_{\Zar}\clg{O}[1]).
\]
After base-change\footnote{We write $\clg{A}_{\ol{K}}$ for the base change $\clg{A} \times_{O_K} O_{\ol{K}}$.}, $h$-sheafification and derived \(p\)-completion, $\alpha$ gives rise to a morphism
\[
\alpha' \in \Hom_{\Sh_{\rom{h}}(\Var_{\ol{K}},\Z)}(\clg{A}_{\ol{K}}, (\gr^0_F \clg{A}_\dr)^\wedge_{(p)}[1])
\cong \Hom_{\Sh_{\rom{h}}(\Var_{\ol{K}},\Z)}(\clg{A}_{\ol{K}},O_{\C_p}[1]),
\]
where we used the Poincar\'e Lemma in the last step and $O_{\C_p}$ denotes the pro-system of constant $h$-sheaves $(O_{\ol{K}}/p^n)_n$.
The long exact sequence associated to (\ref{ses_perf_cover}) reads as
\[
    \begin{tikzcd}
        0 \ar[r] & \Hom(\clg{A}_{\ol{K}},O_{\C_p}) \ar[r] &
        \Hom(\clg{A}_{\infty,\ol{K}},O_{\C_p}) \ar[r] & \Hom(T_p\clg{A}_{\ol{K}},O_{\C_p}) \ar[r, "\delta"] & \hphantom{0}\\
        \hphantom{\cdots} \ar[r,"\delta"] 
        & \Hom(\clg{A}_{\ol{K}},O_{\C_p}[1])\ar[r] 
        & \Hom(\clg{A}_{\infty,\ol{K}},O_{\C_p}[1]) \ar[r]
        &  \Hom(T_p\clg{A}_{\ol{K}},O_{\C_p}[1])\ar[r] & \cdots.
    \end{tikzcd}
\]
Since $\Hom(\clg{A}_\infty, -)$ is \(p\)-divisible, morphisms from $\clg{A}_{\infty,\ol{K}}$ to the constant sheaf $O_{\C_p}/p^n$ vanish and we are left with an isomorphism
\[
\delta \colon \Hom_{\Sh_{\rom{h}}(\Var_{\ol{K}},\Z)}(T_p\clg{A}_{\ol{K}},O_{\C_p}) \to \Hom_{\Sh_{\rom{h}}(\Var_{\ol{K}},\Z)}(\clg{A}_{\ol{K}},O_{\C_p}[1]).
\]
Since $h$-hypercovers satisfy cohomological descent for torsion \'etale sheaves, we can identify the right-hand side with
\[
\Hom_{\Sh_\et(\Var_K,\Z)}(T_p \clg{A}, O_{\C_p}) = \varprojlim_n \Hom_{\Z[G_K]}(A[p^n](\ol{K}), O_{\C_p}/p^n) = \Hom_{\Z_p[G_K]}(T_pA,O_{\C_p}).
\]
Under these identifications, we define $\rho^1_A$ as
\[\rho^1_A(\alpha) := \delta^{-1}(\alpha'),\]
compare also with \cite[Section 2.2]{bhatt_arizona}.

\textbf{Step 2: $\rho^0_A$.} Let $\omega \in H^0(A, \Omega^1_{A/K})$.
As before, we want to construct a continuous $G_K$-equivariant map
\[
\rho^0_A(\omega) \colon T_p A \to \C_p(1)
\]
which depends $K$-linearly on $\omega$. Since $A$ has good reduction, we may assume that $\omega$ is lifts to an invariant differential form on $\clg{A}$, in other words, $\omega$ is contained in the image of $H^0(\clg{A},\Omega^1_{\clg{A}/O_K}) \to H^0(\clg{A},\Omega^1_{\clg{A}/O_K}) \otimes_{O_K} K \cong H^0(A, \Omega^1_{A/K})$, and hence $\omega$ corresponds to a morphism of presheaves
\[
\omega \in \Hom_{\PSh(\Var_{K}^\ses,\Z)}(\clg{A},\Omega^1_{\clg{A}/O_K}).
\]
By Lemma \ref{lem_invariant_differentials_faltings}, we may further assume that $\omega$ lifts to an invariant differential on
$\clg{A}_{O_{\ol{K}}}$ over $O_K$ (not only over $O_{\ol{K}}$). Thus, $\omega$ induces, after $h$-sheafification and derived \(p\)-completion, a morphism
\[
\omega' \in \Hom_{\Sh_{\rom{h}}(\Var_{\ol{K}},\Z)}(\clg{A}_{\ol{K}},(\gr^1_F \clg{A}_\dr)^\wedge_{(p)}[1])\cong  \Hom_{\Sh_{\rom{h}}(\Var_{\ol{K}},\Z)}(\clg{A}_{\ol{K}},T_p \Omega^1_{O_{\ol{K}}/O_K}[1]).
\]
As before, the long exact sequence associated to (\ref{ses_perf_cover}) gives an isomorphism
\begin{equation}\label{eq_delta}
\delta \colon \Hom_{\Sh_{\rom{h}}(\Var_{\ol{K}},\Z)}(T_p \clg{A}_{\ol{K}}, T_p \Omega^1_{O_{\ol{K}}/O_K})
\to \Hom_{\Sh_{\rom{h}}(\Var_{\ol{K}},\Z)}(\clg{A}_{\ol{K}},T_p \Omega^1_{O_{\ol{K}}/O_K}[1]),   
\end{equation}
where we used the Poincar\'e Lemma and the following identifications from  \ref{appendix_period_rings}: 
\[
\gr^1_F L\Omega^\bullet_{O_{\ol{K}}/O_K} \simeq L_{O_{\ol{K}}/O_K}[-1] \simeq \Omega^1_{O_{\ol{K}}/O_K}
\]
and 
\[
(L_{O_{\ol{K}}/O_K})^\wedge_{(p)} \simeq T_p(\Omega^1_{O_{\ol{K}}/O_K})[1],
\]
see Lemma \ref{lem_cotangent_O_K}. As in the previous step, we can identify the left-hand side of (\ref{eq_delta}) with
\[
\Hom_{\Z_p[G_K]}(T_p A, T_p \Omega^1_{O_{\ol{K}}/O_K}).
\]
Finally, we define 
\[
\rho^0_A(\omega) = \delta^{-1}(\omega').
\]
A diagram chase, where we use the Breen-Deligne resolution of $\clg{A}$ to compute $R\Hom(-,\gr^1\clg{A}_\dr)$, shows that $\rho^0_A$ defined in this way agrees with Fontaine's construction from the introduction.
\end{construction}

\newpage

\section{The comparison theorem for 1-motives}\label{sect_comp_formula}
In this section, we give an explicit construction of the $p$-adic de Rham comparison isomorphism for $1$-motives, see Theorem \ref{prop_cdr}. In particular, we show that our construction recovers Beilinson's comparison isomorphism, as stated in Theorem \ref{thm_1-mot_comp_formula}.
Combining the two results implies the main theorem.

\begin{theorem}\label{thm_1_mot_intro}
    Let $M,M'$ be $1$-motives. The $p$-adic de Rham comparison isomorphism extends to $1$-motives and is functorial with respect to morphisms
    \begin{equation*}
        M^\rig \to M'^\rig
    \end{equation*}
    in the category $D^b_\fppf(\Spa(K,O_K),\Q)$.
\end{theorem}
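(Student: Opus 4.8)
\emph{Proof strategy.}
The plan is to deduce the statement by combining two results proved in this section: the explicit construction of the $p$-adic de Rham comparison isomorphism for $1$-motives (Theorem~\ref{prop_cdr}) and its identification with Beilinson's comparison isomorphism $\rho_\dr$ (Theorem~\ref{thm_1-mot_comp_formula}). The latter shows that the isomorphism we build is genuinely the classical comparison isomorphism, now defined on $1$-motives; the former, together with the observation that our construction refers only to rigid analytic data, yields the functoriality with respect to morphisms of rigid $1$-motives.

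First I would set up the construction, following the abelian variety case recalled in the introduction and in Construction~\ref{constr_HT_comp}. Given a $1$-motive $M=[L\to G]$, let $E(M)$ be its universal vectorial extension; as in the abelian case one identifies the de Rham realization $H_\dr(M)$ with $\Inv(\Omega^1_{E(M)/K})$ equipped with its Hodge filtration. Passing to the rigid analytification $E(M)^\rig$, Proposition~\ref{prop_subgrp_1mot} provides, for a given de Rham class, a quasi-compact open subgroup $U\subseteq E(M)^\rig$ on which a $p$-power multiple of the class is represented by an integral invariant differential $\omega'$. For $x=(x_n)_n\in T_p(M)$ one then chooses a compatible system of lifts $\hat x_n\in E(M)^\rig(\C_p)$ eventually landing in $U$ and sets $\rho_M(\omega)(x):=\varprojlim_n \hat x_n^\ast\omega'$; via Fontaine's identification~(\ref{eq_fontaine_tate}), Beilinson's Poincar\'e Lemma, and the description of $B_\dr^+$ in terms of the derived de Rham complex of $O_{\ol K}$ over $O_K$, this limit lands in $B_\dr^+$, where one uses that the weight filtration of a $1$-motive involves only weights $0,-1,-2$, so that only the first steps of the Hodge filtration intervene. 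Extending $B_\dr$-linearly gives $\rho_M$. I would then check that $\rho_M$ is independent of all choices and is a filtered $G_K$-equivariant isomorphism, reducing along the weight filtration (the extension $T\hookrightarrow G\twoheadrightarrow A$ together with the lattice $L$) to the torus case (the cyclotomic/Kummer computation), the abelian variety case (Construction~\ref{constr_HT_comp} and its de Rham lift via Lemma~\ref{lem_qc_subgrp_A}), and the compatibility of the pullback formula with the boundary maps of the weight filtration.

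Next I would prove that $\rho_M$ coincides with the restriction of $\rho_\dr$ to the subquotient of $p$-adic cohomology realizing $M$ inside the motivic category (Theorem~\ref{thm_1-mot_comp_formula}). This extends the diagram chase of Construction~\ref{constr_HT_comp}: both maps are obtained from the boundary map of the long exact sequence attached to the perfectoid cover $E(M)^\rig_\infty\to E(M)^\rig$, a torsor under the rigid Tate module of $M$, computed after $h$-sheafification and derived $p$-completion, and one resolves $E(M)$ by a Breen--Deligne-type resolution to identify $\RH(E(M),\gr^\bullet_F\clg{A}_\dr)$; the Poincar\'e Lemma then matches Beilinson's class with the pullback formula above. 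Since by Corollary~\ref{cor_adr_reprs} the sheaf $\clg{A}_\dr$, hence $\rho_\dr$, is representable in $\DA_\et(K)$, this identification is automatically compatible with all morphisms of motives, in particular with morphisms of algebraic $1$-motives.

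Finally, for a morphism $f\colon M^\rig\to M'^\rig$ in $D^b_\fppf(\Spa(K,O_K),\Q)$ — which need not descend to a morphism of algebraic $1$-motives — I would observe that every ingredient of $\rho_M$, namely the rigid universal vectorial extension $E(M)^\rig$, the rigid Tate module, the integral invariant differentials, and the pullback $\varprojlim_n\hat x_n^\ast\omega'$, is functorial for morphisms of rigid $1$-motives: the universal property characterizing the (rigid) universal vectorial extension produces a morphism $E(M)^\rig\to E(M')^\rig$ compatible with the weight filtrations, hence compatible maps on de Rham realizations and on Tate modules, and the pullback formula is visibly natural with respect to such data; thus $\rho_{M'}$ pulls back to $\rho_M$ along $f$. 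I expect the main obstacle to be the first prong: proving that the naive pullback construction is well defined (independent of the lift $\hat x_n$, of the integral representative $\omega'$, and of the subgroup $U$) and is a filtered isomorphism, which rests on the technical input of Proposition~\ref{prop_subgrp_1mot} and on carefully tracking the Hodge filtration through the limit so that $\rho_M(\omega)(x)$ genuinely lands in $B_\dr^+$ rather than only in its associated graded. By contrast, the identification with Beilinson's construction is a lengthy but essentially formal extension of Construction~\ref{constr_HT_comp}, and the rigid functoriality, once the construction is phrased analytically, is almost immediate.
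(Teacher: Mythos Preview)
Your overall strategy---combining Theorem~\ref{prop_cdr} and Theorem~\ref{thm_1-mot_comp_formula}---matches the paper's, and your description of the explicit construction is close in spirit. However, there is a genuine gap in your treatment of functoriality for morphisms in $D^b_\fppf(\Spa(K,O_K),\Q)$.

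You claim that for $f\colon M^\rig\to M'^\rig$ in $D^b_\fppf$, the universal property of the rigid universal vectorial extension produces a morphism $E(M)^\rig\to E(M')^\rig$. But this is exactly where the difficulty lies: a morphism in the derived category need not come from a morphism of complexes of rigid analytic groups, and the universal property of $E(-)^\rig$ only applies to the latter. In fact, rigid $1$-motives do \emph{not} embed fully faithfully into $D^b_\fppf$ (unlike algebraic $1$-motives). The paper closes this gap via Raynaud's theory: by Corollary~\ref{cor_morphism_rig_mot}, every morphism $M_1^\rig\to M_2^\rig$ in $D^b_\fppf$ factors uniquely as $\can_2\circ f'\circ\can_1^{-1}$, where $f'$ is an \emph{algebraic} morphism between the Raynaud (strict) replacements and the $\can_i$ are the canonical quasi-isomorphisms. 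Functoriality then reduces to the algebraic case (handled in Theorem~\ref{prop_cdr}) plus compatibility with $\can$; this is the content of Corollary~\ref{cor_functoriality_cdr}. Without invoking the Raynaud replacement your argument does not go through.

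There is also a methodological divergence in your identification with Beilinson's $\rho_\dr$. You propose to use the perfectoid cover $E(M)^\rig_\infty\to E(M)^\rig$ and a Breen--Deligne resolution, extrapolating from the Hodge--Tate computation of Construction~\ref{constr_HT_comp}. The paper's proof of Theorem~\ref{thm_1-mot_comp_formula} proceeds differently: it represents de Rham classes by explicit \v{C}ech cocycles on the semistable compactification $P_\Sigma$ (Lemma~\ref{lem_inv_to_cech}), reformulates Beilinson's map as $\varprojlim_n(\beta_n\circ\delta_n^{-1})$ on the small \'etale site of the point (Construction~\ref{constr_alg_rhodr}), and then matches this with $c_\dr$ modulo $p^n$ by direct inspection of the resulting map of complexes. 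Your perfectoid-cover approach might be made to work, but it would require care to pass from the associated-graded Hodge--Tate statement to the full filtered de Rham statement, and it is not the route taken in the paper.
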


\begin{remark}
    The above theorem says in particular that the comparison isomorphism is functorial from morphisms of rigid $1$-motives, i.e., morphisms in the category $\clg{M}_1(K)^\rig\otimes{\Q}$ as introduced in Appendix \ref{def_rig_1mot}.
\end{remark}

Before we proceed with the construction, let us fix some notation. Let $K$ be a $p$-adic field with ring of integers $O_K$ and uniformizer $\varpi \in O_K$. 
For a $K$-variety $X$, we write
$X^\rig$ for the associated rigid analytic variety. For a formal $O_K$-scheme $\scr{X}$, we write $\scr{X}_\eta$ for the Raynaud generic fiber of $\scr{X}$. 
If $\scr{X}$ is the formal completion of an $O_K$-scheme with generic fiber $X$ and special fiber $\scr{X}_0$,
then there is a canonical map $\scr{X}_\eta \to X^\rig$ which is an open immersion in general and an isomorphism if $\scr{X}$ is proper.
Since our construction uses rigid geometry, we start by recalling some facts about differentials on rigid analytic spaces.

\subsection{Differentials on rigid analytic spaces} 
Let $X$ be a rigid analytic variety locally of finite type over $K$. 
We recall some basic facts about the sheaf of continuous differentials $\Omega^1_{X/K}$ on $X$.
If $U = \Spa(B,B^\circ)$ is an open affinoid of $X$ then $H^0(U,\Omega^1_{X/K})$ is given by the module of continuous differentials of $B$ over $K$. 
Moreover, if $X$ has a formal model $\scr{X}$, then $\Omega^1_{X/K}$ naturally identifies with 
\[
\Omega^{1,\cont}_{\scr{X}/O_K} \left[\frac{1}{p}\right] := 
\left(\varprojlim_n \Omega^1_{\scr{X}/p^n/(O_K/p^n)} \right)\left[\frac{1}{p}\right].
\]
We say $\scr{X}$ is a \textit{strict semistable model}, if every closed point $x \in \scr{X}_0$ of the special fiber admits an open neighborhood which is smooth over 
\[
\Spf O_K\langle t_1,\dots,t_r\rangle / (t_1\cdots t_r - \varpi)
\]
for some $r \geq 0$.
In this case, the irreducible components $D_1,\dots,D_m$ of $\scr{X}_0$ are smooth divisors and $\scr{X}$ endowed with the canonical log-structure $(\scr{X},\can)$\footnote{This agrees with the definition given below.} associated to 
\[
D_1 + \cdots + D_m
\]
is log smooth over $(O_K,\can)$ \cite[Proposition 1.3]{hartl_rig_picard}.
In particular, 
\[
\Omega^{1,\cont}_{\scr{X}/O_K} \left[\frac{1}{p}\right]\cong \Omega^{1,\cont}_{(\scr{X},\can)/(O_K,\can)} \left[\frac{1}{p}\right]
\]
is locally free and $X$ is smooth. 

\begin{defn}
We define the sheaf of \textit{integral} differential forms $\Omega^{1,+}_{X/K} \subset \Omega^1_{X/K}$ as the sheafification of the presheaf defined on admissible open affinoids $U = \Spa(B,B^\circ)$ by
\[
U \mapsto \Omega^{1,\cont}_{(B^\circ,\can)/(O_K,\can)},
\]
where the canonical log-structure on $B^\circ$ is the log-structure associated to $(B^\circ[\frac{1}{\varpi}])^\times \cap B^\circ \to B^\circ$.
\end{defn}

Let $X$ be a smooth $K$-variety with associated rigid analytic variety $X^\rig$. Assume that $X$ admits a strict semistable compactification $P$ over $O_K$. 
In the language of Section 1, $(X,P)$ is a strict semistable pair over $O_K$.
In particular, $P \setminus X = D$ is a strict normal crossing divisor 
and $D$ decomposes into horizontal and vertical parts $D = D_h \cup D_v$. In particular, 
$(P \setminus D_v)\times_{O_K} K = X$. Furthermore, we write $\scr{X}$ for the formal completion of
$P \setminus D_v$ along the special fiber.

\begin{lemma}\label{lem_log_forms_ssmodel}
    Let $X$ and $P$ be as above. Suppose that $U$ is a quasi-compact open subset of $X^\rig$ containing $\scr{X}_\eta$. Then, there exists a natural injective map
    \[
    H^0(P,\Omega^1_{(P,D)/(O_K,\can)}) \to H^0(U,\Omega^1_{U/K}).
    \]
\end{lemma}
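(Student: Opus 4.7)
The plan is to construct the map by factoring through the rigid analytification of $P$ and using the condition $\scr{X}_\eta \subseteq U$ only to guarantee injectivity. Let $\omega \in H^0(P, \Omega^1_{(P,D)/(O_K,\can)})$. Completing $P$ along the special fiber and passing to the Raynaud generic fiber yields an analytification $P^{\rig}$. Since $(P, D)$ is log smooth over $(O_K, \can)$, the form $\omega$ induces a log differential on $P^{\rig}$ with log poles along $D_h^{\rig}$: the vertical log structure coming from the special fiber becomes trivial once $\varpi$ is inverted. Because $X = P \setminus D$ on the generic fiber, the log poles along $D_h$ disappear on $X^{\rig} \subseteq P^{\rig}$, so $\omega$ restricts to an ordinary differential on $X^{\rig}$ and then to $U$.

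For injectivity, the idea is to propagate non-vanishing through each step. Since $(P, D)$ is log smooth over $(O_K, \can)$, the sheaf $\Omega^1_{(P,D)/(O_K,\can)}$ is locally free, and so $H^0(P, \Omega^1_{(P,D)/(O_K,\can)})$ is a $\varpi$-torsion-free $O_K$-module. Restriction to the generic fiber $H^0(P_K, \Omega^1_{(P_K, D_K)/K})$ is thus injective; further restriction to $H^0(X, \Omega^1_{X/K})$ is injective because on $X = P_K \setminus D_h$ the remaining log structure is trivial. Analytification $H^0(X, \Omega^1_{X/K}) \hookrightarrow H^0(X^{\rig}, \Omega^1_{X^{\rig}/K})$ is injective by the faithfulness of the pullback along $X^{\rig} \to X$. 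Finally, restriction to $U$ is injective because $\scr{X}_\eta \subseteq U$ and $\scr{X}_\eta$ meets every connected component of $X^{\rig}$, which follows from $(P \setminus D_v)_K = X$.

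The hard part will be the careful bookkeeping of log structures across the transitions algebraic $\to$ formal $\to$ rigid. Concretely, one must verify that the vertical log structure on $P$ coming from components of the special fiber genuinely trivializes on the rigid generic fiber, so that the image lies in $\Omega^1_{U/K}$ rather than in a larger sheaf of log differentials, and that the horizontal log structure from $D_h$ is exactly what disappears on $X^{\rig}$. Once this compatibility is established, the remaining parts of the argument, namely local freeness of log differentials under log smoothness and the standard faithfulness of analytification on coherent sections, are essentially routine.
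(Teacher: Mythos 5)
Your construction of the map is the same as the paper's (restrict the log form to the rigid generic fiber, where the vertical log structure trivializes and the horizontal poles are removed on $X^\rig$, then restrict to $U$), but your injectivity argument takes a genuinely different route. The paper never leaves the integral/formal picture: it uses $H^0(P,\Omega^1_{(P,D)/(O_K,\can)})\cong H^0(\hat{P},\Omega^{1,\cont}_{(\hat{P},D)/O_K})$ (properness and flatness of $P$), then injectivity of restriction to the dense open formal subscheme $\scr{X}\subseteq\hat{P}$, then injectivity of $H^0(\scr{X},\cdot)\to H^0(\scr{X}_\eta,\cdot)$ from local freeness; the hypothesis $\scr{X}_\eta\subseteq U$ is used only to factor the restriction through $\scr{X}_\eta$. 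You instead pass through the algebraic generic fiber and analytification; your first three injections (torsion-freeness, restriction to the dense open $X\subseteq P_K$, faithfulness of analytification) are fine, but you then need injectivity of $H^0(X^\rig,\Omega^1_{X^\rig/K})\to H^0(U,\Omega^1_{U/K})$, and this is where the two proofs really diverge.

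That last step is under-justified as written. First, what you are invoking is an identity theorem for rigid spaces: a section of a locally free sheaf on a connected smooth rigid space vanishing on a nonempty admissible open vanishes identically. This is true, but not routine — it needs, e.g., Conrad's theory of irreducible components (a connected normal rigid space is irreducible, and a proper closed analytic subset has empty interior), which you neither prove nor cite. Second, your justification that $\scr{X}_\eta$ meets every connected component of $X^\rig$ — ``this follows from $(P\setminus D_v)_K=X$'' — is not an argument: an equality of generic fibers says nothing about tubes being nonempty. The correct reason uses properness and flatness of $P$: each connected component of $X$ is dense open in the generic fiber of a component of $P$ that is proper and flat over $O_K$, so that component has nonempty special fiber not contained in the horizontal divisor, and the tube over any closed point of that open part is nonempty; hence $\scr{X}_\eta$ meets the component. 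Note that once these inputs are supplied you actually prove more than the lemma (any quasi-compact open meeting every component would do), which signals that you are paying with heavier rigid-analytic machinery for something the paper obtains by the elementary density/torsion-freeness argument on the formal model. So either import the identity theorem and the tube-nonemptiness argument explicitly, or replace your final step by the paper's: compose with restriction to $\scr{X}_\eta\subseteq U$ and check injectivity there via $\hat{P}\supseteq\scr{X}$.
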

\begin{proof}
    It suffices to prove the lemma for $U = \scr{X}_\eta$.
    Since $P$ is flat and proper over $O_K$, $p$-completion induces an isomorphism
    \[
    H^0(P,\Omega^1_{(P,D)/O_K}) \xrightarrow{\simeq} H^0(\hat{P}, \Omega^{1,\cont}_{(\hat{P},D)/O_K}).
    \]
    It remains to see that the composition  
    \[
    H^0(\hat{P}, \Omega^{1,\cont}_{(\hat{P},D)/O_K}) \to H^0(\scr{X}, \Omega^{1,\cont}_{(\hat{P},D)/O_K}) \to H^0(\scr{X}_\eta, \Omega^1_{X/K})
    \]
    is injective. The first map is injective because $\scr{X}$ is a dense open subscheme of $(\hat{P},D)$. The second map is injective because 
    \(
    \Omega^{1,\cont}_{(\hat{P},D)/O_K}
    \)
    is locally free (using that the canonical log structure on $\scr{X}$ is the log structure pulled back from $\hat{P}$ along $\scr{X} \to \hat{P}$).
\end{proof}

\subsection{Invariant differentials}
Let us also recall some useful facts about invariant differentials on group schemes. The reader may skip this section and return to it at an appropriate time.

Let $G$ be a group scheme over a scheme $S$. Let $T$ be an $S$-scheme and let $g \in G(T)$ be a $T$-point of $G$. The left translation by $g$ is defined as the composition
\[
L_g \colon G_T \xrightarrow{\sim} T \times_T G_T \xrightarrow{g_t \times \id} G_T \times_T G_T
\xrightarrow{m_T} G_T,
\]
where $m$ denotes the multiplication map of $G$.
Note that every left translation is obtained by base change from the universal left translation
\[
L_{\id} \colon G \times_S G \to G \times_S G, \quad (x,y) \mapsto (x, m(x,y)).
\]
\begin{defn}
We say a global section $\omega \in H^0(G,\Omega^i_{G/S})$ is \textit{left-invariant} if $L_g^* \omega_T = \omega_T$ under the canonical isomorphism
\[
L_g^* \Omega^i_{G_T/T} \xrightarrow{\sim} \Omega^i_{G_T/T},
\]
for all $T \in \Sch_S$ and all $g \in G(T)$.
If $G$ is abelian, we simply say that $\omega$ is \textit{invariant} and we denote the set of invariant 1-forms on $G$ by $\rom{Inv}(G) \subset H^0(G,\Omega^1_{G/S})$.
\end{defn}

\begin{prop}[{}{\cite[Prop 4.2.1]{Bosch90}}]\label{prop_invariant_form_from_e}
    Let $e \colon S \to G$ be the unit section of $G$. Then, for every $\omega_0 \in H^0(S,\epsilon^*\Omega^i_{G/S})$, there exists a unique left-invariant differential form $\omega \in H^0(G,\Omega^i_{G/S})$ such that $e^*\omega = \omega_0$.
\end{prop}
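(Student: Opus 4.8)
The plan is to follow the standard argument behind \cite[Prop.~4.2.1]{Bosch90}. The guiding idea is that a left-invariant form $\omega$ is forced to be the transport of its restriction $e^*\omega$ by left translations, so $\omega_0:=e^*\omega$ determines $\omega$, while conversely any $\omega_0$ can be spread over $G$. The bookkeeping is organized by the universal left translation
\[
L_{\id}=(p_1,m)\colon G\times_S G\xrightarrow{\sim} G\times_S G,\qquad (x,y)\mapsto(x,m(x,y)),
\]
an automorphism over the first projection $p_1$, from which every $L_g$ ($g\in G(T)$, $T\in\Sch_S$) arises by base change along $g\colon T\to G$ into the first factor. Since $p_1$ is the base change of $\pi\colon G\to S$, we have $\Omega^i_{(G\times_S G)/G}\cong p_2^*\Omega^i_{G/S}$ for the relative differentials along $p_1$; let
\[
\psi\colon m^*\Omega^i_{G/S}\longrightarrow \Omega^i_{(G\times_S G)/S}\longrightarrow \Omega^i_{(G\times_S G)/G}= p_2^*\Omega^i_{G/S}
\]
be the composite of the canonical map with the projection onto relative-for-$p_1$ differentials. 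Then $\psi$ is an isomorphism, because along each point of the first factor $m$ restricts to a left translation, hence to an automorphism; and, expressing $L_{\id}^*(p_2^*\omega)$ as $\psi(m^*\omega)$, one sees that $\omega\in H^0(G,\Omega^i_{G/S})$ is left-invariant if and only if $\psi(m^*\omega)=p_2^*\omega$ in $H^0(G\times_S G,p_2^*\Omega^i_{G/S})$ (the universal case $T=G$, $g=\id_G$ gives one implication, and base-changing along an arbitrary $g$ gives the other).

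Next I would restrict $\psi$ along the section $e_1:=(\id_G,e\circ\pi)\colon G\to G\times_S G$. Since $m\circ e_1=\id_G$ and $p_2\circ e_1=e\circ\pi$, this produces the Maurer--Cartan isomorphism
\[
\mathrm{MC}:=e_1^*\psi\colon \Omega^i_{G/S}\xrightarrow{\sim}\pi^*e^*\Omega^i_{G/S},
\]
and $e^*\mathrm{MC}=\id$, because over the unit section of $G\times_S G$ the differential of $m$ is addition, so $\psi$ restricts to the identity there. Uniqueness is then immediate: if $\omega$ is left-invariant, applying $e_1^*$ to $\psi(m^*\omega)=p_2^*\omega$ yields $\mathrm{MC}(\omega)=\pi^*(e^*\omega)$, so $\omega=\mathrm{MC}^{-1}(\pi^*\omega_0)$ is recovered from $\omega_0:=e^*\omega$. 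For existence I would set $\omega:=\mathrm{MC}^{-1}(\pi^*\omega_0)$; then $e^*\omega=\omega_0$ is immediate from $e^*\mathrm{MC}=\id$, and only left-invariance remains to be checked.

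By the criterion above, left-invariance of $\omega=\mathrm{MC}^{-1}(\pi^*\omega_0)$ is the identity $\psi(m^*\omega)=p_2^*\omega$ on $G\times_S G$. Applying the injective map $p_2^*\mathrm{MC}$, and noting that $m^*\mathrm{MC}(m^*\omega)=m^*\pi^*\omega_0=p_2^*\pi^*\omega_0=p_2^*\mathrm{MC}(p_2^*\omega)$ because $\pi\circ m=\pi\circ p_2$, this reduces to the cocycle identity of sheaf maps
\[
m^*\mathrm{MC}=p_2^*\mathrm{MC}\circ\psi\colon m^*\Omega^i_{G/S}\longrightarrow(\pi\circ p_2)^*e^*\Omega^i_{G/S}
\]
on $G\times_S G$ --- the scheme-theoretic avatar of $L_{xy}=L_x\circ L_y$. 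This is the one genuinely substantive step, and I expect it to be the main obstacle; I would verify it by unwinding the definitions of $\mathrm{MC}$ and $\psi$ over $G\times_S G\times_S G$ and invoking the associativity of $m$, or, equivalently, by a functor-of-points computation on $T$-valued points. The rest is formal manipulation of the relative-cotangent sequences and the base-change identifications, and the passage from $i=1$ to general $i$ needs no new input, since $\psi$, $\mathrm{MC}$ and the invariance criterion are defined for all $i$ at once.
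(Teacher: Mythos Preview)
The paper does not give its own proof of this proposition: it is stated with a bare citation to \cite[Prop.~4.2.1]{Bosch90} and used as a black box. Your argument is correct and is essentially the standard proof one finds in that reference, namely constructing the Maurer--Cartan isomorphism $\Omega^i_{G/S}\xrightarrow{\sim}\pi^*e^*\Omega^i_{G/S}$ from the universal left translation and reading off existence and uniqueness from it; the cocycle identity you flag as the substantive step is exactly where associativity of $m$ enters, and your outline for verifying it is the right one.
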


\begin{lemma}\label{lem_invdiff_grp_morph}
    Let $G$ be a commutative group scheme and let $\omega \in H^0(G,\Omega^1_{G/S})$ be a
    differential form. Then $\omega$ is invariant if and only if $\omega$ satisfies
    \[
    m^*\omega = \pr_1^*\omega + \pr_2^*\omega \in H^0(G \times_S G,\Omega^1_{G\times_S G/S}),
    \]
    where $m \colon G \to G$ is the group law of $G$. Equivalently, $\omega$ is invariant if and only if it defines a morphism of presheaves of abelian groups on $\Sch_S$
    \[
    \omega \colon G \to \Omega^1_{(-)/S}.
    \]
\end{lemma}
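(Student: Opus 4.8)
I would prove the two equivalences in the statement separately. The plan is: first establish that $m^*\omega=\pr_1^*\omega+\pr_2^*\omega$ is equivalent to $\omega$ defining a morphism of presheaves of abelian groups $G\to\Omega^1_{(-)/S}$ — this part is purely a matter of the Yoneda lemma; and second, connect the invariance condition to this additivity identity by reducing the (base‑change‑laden) definition of left‑invariance to the single case of the universal left translation, and then exploiting commutativity.

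For the presheaf reformulation, I would argue as follows. By the Yoneda lemma a global section $\omega\in H^0(G,\Omega^1_{G/S})=\Omega^1_{(-)/S}(G)$ corresponds to the natural transformation of presheaves of sets $\omega\colon\Hom_S(-,G)\to\Omega^1_{(-)/S}$ sending a point $g\in G(T)$ to the pullback $g^*\omega\in H^0(T,\Omega^1_{T/S})$. Since $G$ is commutative, both sides are presheaves of abelian groups, and this natural transformation is a morphism of such exactly when each component is additive, i.e. $(g\cdot h)^*\omega=g^*\omega+h^*\omega$ for every $S$‑scheme $T$ and all $g,h\in G(T)$ — an additive map of abelian groups automatically respects the neutral element and inverses, so this is the only condition. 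Writing $g\cdot h=m\circ(g,h)$ with $(g,h)\colon T\to G\times_S G$, the displayed identity becomes $(g,h)^*\bigl(m^*\omega-\pr_1^*\omega-\pr_2^*\omega\bigr)=0$; specializing $(g,h)$ to the universal point $\id_{G\times_S G}$ forces $m^*\omega=\pr_1^*\omega+\pr_2^*\omega$, and conversely this identity pulls back to the required vanishing along every $(g,h)$.

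For the equivalence with invariance, I would use that, as recalled just above, every left translation $L_g$ with $g\in G(T)$ is the base change along $g\colon T\to G$ of the universal left translation $L:=L_{\id}\colon G\times_S G\to G\times_S G$ over its first factor. Since the formation of $\Omega^1$ and the pullback of differential forms commute with base change, $\omega$ is left‑invariant if and only if $L^*\omega_{G\times_S G}=\omega_{G\times_S G}$, now as relative differentials on $G\times_S G$ over $\pr_1$. To unwind this I would use the canonical splitting $\Omega^1_{(G\times_S G)/S}\cong\pr_1^*\Omega^1_{G/S}\oplus\pr_2^*\Omega^1_{G/S}$, under which $\Omega^1_{(G\times_S G)/\pr_1}$ is the second summand and $\omega_{G\times_S G}$ corresponds to $\pr_2^*\omega$; combined with $\pr_2\circ L=m$ and $\pr_1\circ L=\pr_1$, the equality $L^*\omega_{G\times_S G}=\omega_{G\times_S G}$ becomes precisely the statement that the $\pr_2^*\Omega^1_{G/S}$‑component of $m^*\omega$ equals $\pr_2^*\omega$, i.e. that $m^*\omega-\pr_2^*\omega$ lies in the summand $\pr_1^*\Omega^1_{G/S}$. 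Finally I would promote this one‑sided identity to the full one using commutativity: with $\sigma\colon G\times_S G\to G\times_S G$ the flip one has $m\circ\sigma=m$, hence $\sigma^*m^*\omega=m^*\omega$; writing $m^*\omega=\alpha+\pr_2^*\omega$ with $\alpha$ in the $\pr_1^*\Omega^1_{G/S}$‑summand and noting that $\sigma^*$ interchanges the two summands with $\sigma^*\pr_2^*\omega=\pr_1^*\omega$, comparing $m^*\omega$ with $\sigma^*(m^*\omega)=\sigma^*\alpha+\pr_1^*\omega$ forces $\alpha=\pr_1^*\omega$, so $m^*\omega=\pr_1^*\omega+\pr_2^*\omega$. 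The converse is immediate from the one‑sided reformulation.

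The routine ingredients here are the two Yoneda arguments and the flip trick. The step that I expect to require the most care to write cleanly is the identification of the scheme‑theoretic notion of left‑invariance — which involves the canonical isomorphisms $L_g^*\Omega^i_{G_T/T}\xrightarrow{\sim}\Omega^i_{G_T/T}$ over all base changes $T\to G$ — with the single relative identity on $G\times_S G$; concretely, keeping careful track of relative versus absolute differentials and of which projection plays the role of the structure map. Alternatively, one can shortcut this bookkeeping by invoking the classical description of invariant differentials, e.g. \cite[Prop 4.2.1]{Bosch90} and its surroundings, from which the lemma follows.
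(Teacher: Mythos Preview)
Your proposal is correct and follows essentially the same route as the paper. Both arguments reduce to the universal left translation, use the direct sum decomposition $\Omega^1_{(G\times_S G)/S}\cong\pr_1^*\Omega^1_{G/S}\oplus\pr_2^*\Omega^1_{G/S}$ to identify left-invariance with one component of $m^*\omega$, and then exploit commutativity to pin down the other component; your flip trick $m\circ\sigma=m$ is exactly the paper's observation that right-invariance is obtained from left-invariance by swapping the two factors. You additionally spell out the Yoneda argument for the presheaf reformulation, which the paper leaves implicit.
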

\begin{proof}
    We analyze left-invariance and right-invariance for the universal point $\id \colon G \to G$ more carefully. Recall that $H^0(G \times_S G, \Omega^1_{G\times_S G/S})$ decomposes as a direct sum \[\pr_1^* H^0(G,\Omega^1_{G/S}) \oplus \pr_2^* H^0(G,\Omega^1_{G/S}).\]
    Left translation with respect to the universal point sits in the following commutative diagram
    \[
    \begin{tikzcd}
    L_\id \colon G \times_S G \ar[r , "\Delta \times \id"] \ar[dr, swap, "\pr_1 \times \pr_1"] &
    (G \times_S G) \times_G (G \times_S G) \ar[r, "m_G"] \ar[d, "\pr_1 \times \pr_1"] &
    G \times_S G \ar[d, "\pr_1"] \\
    & G \times_S G \ar[r, "m"] & G.  
    \end{tikzcd}
    \]
    It follows that
    \[
    L_\id^* \omega_G = L_\id \pr_1^* \omega = (\pr_1 \times \pr_1)^* m^* \omega \in H^0(G \times_S G, \Omega^1_{G \times_S G/S}).
    \]
    Hence $\omega$ is left invariant if and only if the image of $m^* \omega_G$ under the projection to the factor $\pr^*_1 H^0(G,\Omega^1_{G/S}) \subset H^0(G \times_S G, \Omega^1_{G \times_S G/S})$ is given by $\pr_1^*\omega$.
    
    Right-translation along the universal points sits in a similar diagram; more precisely, the universal right translation is obtained from the universal left translation by swapping the two factors of $G \times_S G$ and using the commutativity of $G$.
    Thus, the roles of $\pr_1$ are $\pr_2$ interchanged and we find that 
    $\omega$ is right invariant if and only if the image of $m^* \omega_G$ under the projection to the factor $\pr^*_2 H^0(G,\Omega^1_{G/S}) \subset H^0(G \times_S G, \Omega^1_{G \times_S G/S})$ is given by $\pr_2^*\omega$.
    \end{proof}

\begin{defn}
    Let $G/S$ be a commutative group scheme. Let $f \colon S \to S'$ be a morphism of schemes. We say that a differential form $\omega \in H^0(G, \Omega^1_{G/S'})$ is \textit{invariant relative to $S'$} if it defines a morphism of presheaves of abelian groups on $\Sch_S$
    \[
    G \to f^*\Omega^1_{(-)/S'},
    \]
    where $f^* \Omega^1_{(-)/S'}$ the restriction of $\Omega^1_{(-)/S'}$ to $\Sch_S$ along the forgetful functor $\Sch_S \to \Sch_{S'}$. 
    Equivalently, we say that $\omega$ is invariant if it satisfies
     \[
    m^*\omega = \pr_1^*\omega + \pr_2^*\omega \in H^0(G \times_S G,\Omega^1_{G\times_S G/S'}).
    \]
\end{defn}

\begin{lemma}\label{lem_invariant_differentials_faltings}
    Let $G/S'$ be a commutative group scheme. Let $f \colon S \to S'$ be a morphism of schemes. The natural
    map 
    \[
    H^0(G,\Omega^1_{G/S'}) \to H^0(G \times_{S'} S, \Omega^1_{G\times_{S'} S/S'})
    \]
    preserves invariant differentials.
\end{lemma}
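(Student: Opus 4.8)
The plan is to reduce the statement to the explicit equational criterion for invariance recorded in Lemma \ref{lem_invdiff_grp_morph} and the definition preceding it, and then to exploit that all the relevant structure maps are stable under base change along $f$. Write $G_S := G \times_{S'} S$ for the base changed group scheme, which is commutative over $S$, and observe that the natural map of the lemma is nothing but pullback of differential forms along the canonical projection $p \colon G_S \to G$; I denote the image of $\omega$ by $\omega' := p^*\omega$. Unwinding the definitions, invariance of $\omega$ (which is the case $S = S'$, $f = \id$ of the relative definition) means
\[
m^*\omega = \pr_1^*\omega + \pr_2^*\omega \in H^0(G \times_{S'} G, \Omega^1_{(G \times_{S'} G)/S'}),
\]
while $\omega'$ is invariant relative to $S'$ precisely when
\[
m_S^*\omega' = \pr_{1,S}^*\omega' + \pr_{2,S}^*\omega' \in H^0(G_S \times_S G_S, \Omega^1_{(G_S \times_S G_S)/S'}),
\]
where $m_S, \pr_{1,S}, \pr_{2,S}$ are the multiplication and the two projections of $G_S$. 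So the task is to derive the second identity from the first.

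The core step is the observation that $G_S \times_S G_S$ is canonically identified with $(G \times_{S'} G) \times_{S'} S$, and that under this identification $m_S$, $\pr_{1,S}$, $\pr_{2,S}$ are the base changes along $S \to S'$ of $m$, $\pr_1$, $\pr_2$. Hence, writing $q \colon G_S \times_S G_S \to G \times_{S'} G$ for the canonical projection, each $g \in \{m, \pr_1, \pr_2\}$ fits into a commutative square of $S'$-schemes with horizontal arrows $g$ and $g_S$ and vertical arrows $q$ and $p$. Applying functoriality of the pullback of K\"ahler differentials to these squares gives $g_S^*\omega' = g_S^* p^*\omega = q^* g^*\omega$ for each such $g$. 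Since the pullback map $q^* \colon H^0(G \times_{S'} G, \Omega^1_{(G \times_{S'} G)/S'}) \to H^0(G_S \times_S G_S, \Omega^1_{(G_S \times_S G_S)/S'})$ is additive, applying it to the invariance relation for $\omega$ yields exactly the invariance relation for $\omega'$, which finishes the argument.

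I do not expect a genuine obstacle here; the argument is a formal base-change diagram chase, and the only thing requiring care is the bookkeeping, namely the canonical identification $G_S \times_S G_S \cong (G \times_{S'} G) \times_{S'} S$ and the compatibility of multiplication and projections with base change, together with functoriality of the pullback of differential forms along commutative squares of $S'$-morphisms. Equivalently, and perhaps more transparently, one can argue on points: invariance of $\omega$ says it defines a morphism of abelian-group-valued presheaves $G \to \Omega^1_{(-)/S'}$ on $\Sch_{S'}$, the map of the lemma is simply the restriction of this natural transformation along the forgetful functor $\Sch_S \to \Sch_{S'}$ (using $G_S(T) = G(T)$ for $T$ an $S$-scheme), and the restriction of a natural transformation of group-valued presheaves is again one.
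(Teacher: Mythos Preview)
The proposal is correct and takes essentially the same approach as the paper. The paper phrases the argument via the partial resolution $\Z[G\times_{S'} G] \to \Z[G] \to G \to 0$ and the induced commutative diagram of $\Hom$-groups into $\Omega^1_{(-)/S'}$, while you unwind the same diagram chase directly using the equational criterion and base-change compatibility of $m,\pr_1,\pr_2$; your closing presheaf reformulation is in fact exactly the paper's framing.
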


\begin{proof}
    Recall that $G$ admits the following (partial) resolution in the category of presheaves
    \[
    \Z[G\times_{S'} G] \xrightarrow{m - \pr_1 -\pr_2} \Z[G] \to G \to 0.
    \]
    In particular, $\omega \in H^0(G,\Omega^1_{G/S'}) = \Hom(\Z[G], \Omega^1_{(-)/S'})$ is invariant if and only if
    \[
    \omega \in \ker(\Hom(\Z[G],\Omega^1_{(-)/S'}) \to \Hom(\Z[G\times G]), \Omega^1_{(-)/S'}).
    \]    
    The projection map $G_S := G\times_{S'} S \to G$ induces a morphism between the above resolutions for $G$ and $G_S$. This gives the following commutative diagram with exact rows
    \[
    \begin{tikzcd}[column sep = small]
        0 \ar[r]  & \Hom(G,\Omega^1_{(-)/S}) \ar[r] \ar[d] & \Hom(\Z[G],\Omega^1_{(-)/S})  \ar[r] \ar[d]
        & \Hom(\Z[G\times_S' G],\Omega^1_{(-)/S}) \ar[d]\\
        0 \ar[r] & \Hom(G_S,f^*\Omega^1_{(-)/S}) \ar[r]  & \Hom(\Z[G_S],f^*\Omega^1_{(-)/S})  \ar[r] 
        & \Hom(\Z[G_S \times_S G_S],f^*\Omega^1_{(-)/S})
    \end{tikzcd}
    \]
    and the claim follows.
\end{proof}

\subsection{Construction of the de Rham comparison isomorphism}

The goal of this section is to present a construction of the $p$-adic comparison isomorphism for $1$-motives in terms of rigid analytic geometry (Construction \ref{constr_1mot_formula}).

We start with the case of an abelian variety $A$ over $K$.
Before we get into the construction, we recall some useful facts about the interaction between the analyticification functor and the de Rham and \'etale realization functors.
The analytification functor
\[ X \mapsto X^\rig\]
induces a bijection $A(K) \xrightarrow{\sim} A^\rig(K)$, which gives rise to an isomorphism $T_{p}(A) \cong T_{p}(A^\rig)$. 
Moreover, the associated rigid analytic variety $A^\rig$ has a universal vectorial extension $E(A^{\rig})$, \cite{maculan2022universal}, which is isomorphic to $E(A)^\rig$, see remark below.
This allows us to identify $H_\dr(A)$ with the space of invariant continuous differentials on $E(A)^\rig = E(A^\rig)$ via the natural isomorphism 
\[T_e E(A) \xrightarrow{\sim} T_e E(A)^\rig.\]

\begin{remark}
One can show that $E(A^\rig)$ is naturally isomorphic to $E(A)^\rig$ as follows. 
Both $E(A)$ and $E(A^\rig)$
canonically identify with the moduli space of homogeneous (analytic) line bundles 
on the dual abelian variety $A'$, respectively on $A'^\rig$,
endowed with a connection, see \cite[Prop3.2.3]{MM_Uniext_74} and 
\cite[Section 3.6]{maculan2022universal}
\footnote{A homogeneous line bundle on $A'$ is a line bundle together with an isomorphism of $\clg{O}_{A'}$-modules $\mathrm{pr}^*_1 L \otimes \mathrm{pr}^*_2L \to \mu^* L$.}. 
Sending a line bundle on $A'$ to the associated line bundle on $A'^\rig$ induces a map 
$E(A)^\rig \to E(A^\rig)$.
The fact that this map is an isomorphism follows from GAGA after identifying connections on a line bundle $L$ with sections of the Atiyah extension of $L$, see \cite[Proposition A.4]{maculan2022universal}.  
\end{remark}

Recall that the universal extension $E(A)$ is the extension of $A$ by the vectorial group scheme\footnote{We adopt the convention that the vectorial group associated with a finitely generated projective module $M$ is covariant in $M$, i.e., $V(M) = \Spec (\Sym M^\vee)$.} attached to $V(\omega_{A'})$
corresponding to 
\[
\id \in \Hom_K(\omega_{A'}, \omega_{A'}) \cong H^1(A,\omega_{A'}) \cong \Ext^1(A,V(\omega_{A'})).
\]

\begin{lemma}\label{lem_qc_subgrp_A}
    Let $A$ be an abelian variety with semistable reduction. Then there exists a quasi-compact open subgroup $\ol{E}$ of $E(A)^\rig$, such that for every $\omega \in H_\dr(A)$, there exists $r\geq 0$ such that
    $p^r \omega$ is integral on $\ol{E}$, i.e.,
    \[
      p^r \omega \in H^0(\ol{E}, \Omega^{1,+}_{E(A)^\rig/K}) \subset H^0(\ol{E},\Omega^1_{E(A)^\rig/K}).
    \]
    Moreover, $\ol{E}$ is a subtorsor of $E(A)^\rig$ under a quasi-compact open subgroup $\ol{V} \subset V(\omega_{A'})$,
\end{lemma}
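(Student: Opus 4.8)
The plan is to produce $\ol E$ as the Raynaud generic fibre of a formal model of $E(A)^\rig$ obtained by reducing the structure group of the extension
\[
0 \to V(\omega_{A'})^\rig \to E(A)^\rig \to A^\rig \to 0
\]
to a sufficiently large ball, and then to deduce the integrality statement from quasi-compactness. First I would fix a model of $A$: since $A$ has semistable reduction, it admits a proper strict semistable model $\scr A$ over $O_K$; let $\hat{\scr A}$ be its formal completion, so that $\hat{\scr A}_\eta = A^\rig$, $\hat{\scr A}$ is regular, and $\hat{\scr A}$ is log smooth over $(O_K,\can)$ for the canonical log structure of its special fibre. Choose a finite covering $\{\mathfrak U_i\}$ of $\hat{\scr A}$ by formal affine opens, with generic fibres $\{U_i\}$ a finite admissible affinoid covering of $A^\rig$.

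The heart of the argument is the reduction of the structure group. On the covering $\{U_i\}$ the extension $E(A)^\rig$ is described by a \v{C}ech $1$-cocycle $(c_{ij})$ with $c_{ij} \in V(\omega_{A'})^\rig(U_{ij}) = \omega_{A'}\otimes_K \clg O(U_{ij})$. Since there are only finitely many affinoids $U_{ij}$, the entries $c_{ij}$ are uniformly bounded; hence there exist $n \geq 0$ and an $O_K$-lattice $M_0 \subset \omega_{A'}$ such that every $c_{ij}$ lies in $\ol V(U_{ij})$, where $\ol V := \hat{V(p^{-n}M_0)}_\eta \subset V(\omega_{A'})^\rig$ is a closed polydisc, a quasi-compact open subgroup of $V(\omega_{A'})^\rig$. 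Viewed now as an $\ol V$-valued cocycle, $(c_{ij})$ defines an extension $\ol E$ of $A^\rig$ by $\ol V$ together with an open immersion $\ol E \hookrightarrow E(A)^\rig$ of rigid groups exhibiting $E(A)^\rig$ as the pushout of $\ol E$ along $\ol V \hookrightarrow V(\omega_{A'})^\rig$; in particular $\ol E$ is a subtorsor of $E(A)^\rig$ under $\ol V$, over $A^\rig$. Moreover, since $\ol V(U_{ij}) = \hat{V(p^{-n}M_0)}(\mathfrak U_{ij})$, the same cocycle glues to a formal model $\hat{\clg E} \to \hat{\scr A}$ of $\ol E$; being a torsor under the quasi-compact formal group $\hat{V(p^{-n}M_0)}$ over the quasi-compact $\hat{\scr A}$, the scheme $\hat{\clg E}$ is a quasi-compact admissible formal $O_K$-scheme, so $\ol E$ is quasi-compact, and $\hat{\clg E}$ is regular and log smooth over $(O_K,\can)$ with canonical log structure pulled back from $\hat{\scr A}$. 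This gives a quasi-compact open subgroup $\ol E \subset E(A)^\rig$ with the asserted torsor structure.

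It then remains to check integrality. On an integral affinoid chart $\Spa(B,B^\circ)$ of $\ol E$ coming from a formal affine open $\Spf B_0 \subset \hat{\clg E}$, log smoothness yields a locally free $B_0$-module $\Omega^{1,\cont}_{(B_0,\can)/(O_K,\can)}$ with $\Omega^{1,\cont}_{(B_0,\can)/(O_K,\can)}[\tfrac 1p] = \Omega^1_{E(A)^\rig/K}|_{\Spa(B,B^\circ)}$ and a natural map $\Omega^{1,\cont}_{(B_0,\can)/(O_K,\can)} \to \Omega^{1,+}_{E(A)^\rig/K}|_{\Spa(B,B^\circ)}$ — this is exactly the injectivity argument in the proof of Lemma~\ref{lem_log_forms_ssmodel}, using that the canonical log structure on $B^\circ$ is compatible with the one pulled back from $\hat{\clg E}$. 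Given $\omega \in H_\dr(A)$, its image under the identification $H_\dr(A) \cong \Inv(E(A)^\rig)$ recalled before the lemma restricts to a global $1$-form on $\ol E$; clearing denominators on each of the finitely many charts produces $r \geq 0$ with $p^r\omega \in \Omega^{1,\cont}_{(B_0,\can)/(O_K,\can)}$ on every chart, whence $p^r\omega \in H^0(\ol E, \Omega^{1,+}_{E(A)^\rig/K})$, as required.

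The hard part is the second paragraph: exhibiting $\ol E$ as a quasi-compact open \emph{subgroup} of $E(A)^\rig$, not merely an open subspace. This relies on (i) the semistable reduction theorem for abelian varieties, to obtain a proper strict semistable model of $A$ and hence a finite formal affine covering of $A^\rig$ with a log smooth regular formal model; (ii) the (elementary, but necessary) bookkeeping that a bounded cocycle on a finite covering can be pushed into a large ball, producing a genuine extension rather than just a torsor reduction; and (iii) verifying that the resulting formal torsor $\hat{\clg E}$ is quasi-compact and log smooth, so that the integrality step applies. Once $\ol E$ is in hand, the integrality statement is routine: it uses only quasi-compactness of $\ol E$ together with the fact — already exploited in Lemma~\ref{lem_log_forms_ssmodel} — that log smooth formal models carry integral differentials.
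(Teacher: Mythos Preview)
Your approach is correct and takes a more directly rigid-analytic route than the paper's. The paper works algebraically first: it uses the Mumford/Faltings--Chai compactification $P$ of $A$ over $O_K$ (which contains the N\'eron model $\clg A$ as an open with codimension~$\geq 2$ complement), extends the universal extension to a $V(p^{-l}L_0)$-torsor $E$ over all of $P$ via finiteness of $H^1$ of a proper scheme (Lemma~\ref{lem_torsor_lattice}), and then sets $\ol E = \hat E_\eta$. For integrality, the paper lifts $p^r\omega$ to a unique invariant differential on the group scheme $E|_{\clg A}$ via $\Lie(E|_{\clg A})^\vee$, and then extends it across $P$ using the codimension-$2$ condition and local freeness of the log cotangent sheaf. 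Your bounded-cocycle reduction avoids the specific Mumford/Faltings--Chai model; your ``clear denominators on finitely many charts'' argument is cruder but sufficient for the bare statement. (The paper's route buys a canonical integral lift, which is reused in Proposition~\ref{prop_subgrp_1mot} and in the comparison with Beilinson's construction in Section~\ref{sect_comp_comp}.)

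The one step you should not dismiss as ``elementary bookkeeping'' is the passage from torsor to extension. A \v{C}ech $1$-cocycle with values in $\ol V$ produces only an $\ol V$-torsor over $A^\rig$; to conclude that $\ol E$ is a \emph{subgroup} of $E(A)^\rig$ you need the primitivity relation $\pr_1^*[\ol E] + \pr_2^*[\ol E] = m^*[\ol E]$ to hold in $H^1(A^\rig\times A^\rig,\ol V)$, not merely after inverting $p$. This is exactly the content of the paper's Lemma~\ref{lem_torsor_grp}: its proof shows that multiplication by $p$ is injective on $H^1(A^\rig\times A^\rig,\clg O^+_{A\times A})$ (using properness), so the rational primitivity of $E(A)^\rig$ forces integral primitivity, and then runs a Serre-style construction of the group law. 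It is not difficult, but it is the non-formal ingredient in the argument, and your sketch does not supply it.
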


\begin{proof}
    Recall that $A$ has a proper integral model $P$ over $O_K$ that 
    satisfies:
    \begin{itemize}
        \item $P$  contains the N\'eron model $\clg{A}$ of $A$ as an open subscheme;
        \item The codimension of $P\setminus \clg{A}$ inside $P$ is $\geq 2$;
        \item The special fiber $P_s$ is a normal crossing divisor and $(P,P_s)$ is log smooth over $(O_K,\can)$.
        \item The formal completion $\hat{P}$ is a strict semistable model of $A^\rig$.
    \end{itemize}
    These models were constructed by Mumford \cite{mumford_deg_ab72} in the case where the identity component of $\clg{A}_0$ is a split torus and
    by Faltings-Chai in the general case \cite[Chapter III]{faltings_chai}. 
    
    Let $L_0$ be the $O_K$-module of invariant differentials on $\clg{A}'^0$
    \begin{equation*}\label{F^0-identification}
    L_0 := \Hom_{O_K}(\Lie(\clg{A}'^0), O_K) \subset \omega_{A'},
    \end{equation*}
    where $\clg{A}'$ is the N\'eron model of the dual abelian variety $A'$.
    Recall that $\clg{A}$ admits a canonical extension, $E(\clg{A})$, by the vectorial scheme
    $V(L_0)$. It is defined as the smooth $O_K$-group scheme representing rigidified $\G_m$-extensions of
    $\clg{A}'^0$, 
    see \cite[Section 5]{MM_Uniext_74}. 
    
    \textbf{Step 1:} By Lemma \ref{lem_torsor_lattice} below, there exists a $V(p^{-l} L_0)$-torsor $E$ over $P$, for some $l \geq 0$, such that 
    \begin{enumerate}[(i)]
        \item The restriction of $E$ to $\clg{A}$ is a vectorial extension of $\clg{A}$;
        \item $E \times_{O_K} K$ is isomorphic the universal vectorial extension of $A$.
    \end{enumerate}
    We can ensure the second condition by choosing the torsor $E$ in the proof of Lemma \ref{lem_torsor_lattice}, such that $E$ extends $p^l\cdot E(\clg{A})$.
    
    \textbf{Step 2:} Define $\ol{E} = \hat{E}_\eta$.
    We claim that $\ol{E}$ is a subtorsor of $E(A)^\rig$ under a quasi-compact open subgroup $\ol{V} \subset V(\omega_{A'})$. 
    Indeed, $\ol{E}$ is a $\ol{V} = V(p^{-l} L_0)_\eta$-torsor because $E$ is a $V(p^{-l} L_0)$-torsor. Moreover, 
    since $E$ is an integral model of $E(A)$, the following diagram commutes
    \begin{equation*}
    \begin{tikzcd}
        \ol{V} \ar[d] \ar[r] & \ol{E} \ar[d] \ar[r] & A^\rig \ar[d]\\
        V(\omega_{A'}) \ar[r] & E(A)^\rig \ar[r] & A^\rig.
    \end{tikzcd}
    \end{equation*}
    Lemma \ref{lem_torsor_grp} shows that $\ol{E}$ is a subgroup of $E$.

    \textbf{Step 3:} Let $\omega \in H_\dr(A)$. We show that there exists $r\geq 0$ such that $p^r \omega$ is integral on $\ol{E}$.
    First, there exists
    $r\geq 0$ such that $p^r \omega$ lifts to a unique element
    \[\omega' \in \Lie(E_{|\clg{A}})^\vee,\] 
    because $\Lie(E(A))^\vee \cong \Lie(E_{|\clg{A}})^\vee \otimes K$
    and $\Lie(E_{|\clg{A}})^\vee$ is a free finitely generated $O_K$-module.  
    Furthermore, by Proposition \ref{prop_invariant_form_from_e}, $\omega'$ extends uniquely to an invariant differential
    \[
    \omega' \in H^0(E_{|\clg{A}}, \Omega^1_{(E,E_s)/O_K})
    \]
    since $E_{|\clg{A}} \subset E$ is a group scheme.
    Finally, $\omega'$ extends uniquely to a log differential form 
    \[\omega' \in H^0(E,\Omega^1_{(E,E_s)/(O_{K},\can)})\]
    because $E_{|\clg{A}}$ has complement of codimension $\geq 2$ in $E$ and $\Omega^1_{(E,E_s)/(O_{K},\can)}$ is locally free (because $(E,E_s)$ is log smooth over $(O_K,\can)$). 
    By Lemma \ref{lem_log_forms_ssmodel}, we view $\omega'$ as an integral differenttial form 
    \[
    \omega' \in H^0(\ol{E}, \Omega^{1,+}_{E(A)^\rig/K}).
    \]
    \end{proof}

\begin{lemma}\label{lem_torsor_lattice}
    Let $B$ be a proper $O_K$-scheme with generic fiber $B_K$. Let $L$ be a finitely generated free $O_K$-module. For every $E_K \in H^1(B_K, V(L\otimes K))$, there exists $l \geq 0$ such that $E_K$ extends to a $V(p^{-l}L)$-torsor over $B$, in other words, $E_K$ is in the image of the natural map
    \[
    H^1(B,V(p^{-l} L)) \to H^1(B_K, V(L\otimes K)); \qquad E \mapsto E \times_{O_K} K.
    \]
\end{lemma}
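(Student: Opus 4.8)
The plan is to turn the statement into an elementary fact about finitely generated modules over the discrete valuation ring $O_K$, once we set up the dictionary between vector-group torsors and coherent cohomology. First, for a finitely generated free $O_K$-module $L$ and any scheme $Y$ over $O_K$, the group scheme $V(L)$ represents the functor $T\mapsto \Gamma(T,\clg{O}_T)\otimes_{O_K}L$, so as a sheaf of abelian groups it is the quasi-coherent $\clg{O}_Y$-module $\clg{O}_Y\otimes_{O_K}L$; hence torsors under $V(L)$ — in the Zariski, \'etale, or fppf topology, these all agree since $V(L)$ is quasi-coherent — are classified by $H^1(Y,\clg{O}_Y\otimes_{O_K}L)$, which for $L$ finite free is canonically $H^1(Y,\clg{O}_Y)\otimes_{O_K}L$. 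Applying this with $Y=B$ and with $Y=B_K$, and using $V(p^{-l}L)\times_{O_K}K=V(L\otimes_{O_K}K)$, the natural map of the lemma becomes
\[
H^1(B,\clg{O}_B)\otimes_{O_K}(p^{-l}L)\longrightarrow H^1(B_K,\clg{O}_{B_K})\otimes_K(L\otimes_{O_K}K),
\]
induced by the restriction $H^1(B,\clg{O}_B)\to H^1(B_K,\clg{O}_{B_K})$ and the inclusion $p^{-l}L\hookrightarrow L\otimes_{O_K}K$.

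Second, I would invoke flat base change along $O_K\to K$: since $B$ is quasi-compact and quasi-separated over $O_K$, there is a canonical isomorphism $H^1(B,\clg{O}_B)\otimes_{O_K}K\xrightarrow{\sim}H^1(B_K,\clg{O}_{B_K})$ compatible with the restriction map. Writing $H:=H^1(B,\clg{O}_B)$ and $M:=H\otimes_{O_K}L$, the target of the displayed arrow is then identified with $M\otimes_{O_K}K$, and, after transporting along the isomorphism $M\xrightarrow{\sim}H\otimes_{O_K}(p^{-l}L)$ (multiplication by $p^{l}$ on the second factor), the displayed arrow becomes the localization map $\eta\mapsto p^{-l}\eta$ from $M$ to $M\otimes_{O_K}K$.

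Third, I would use that $K=O_K[1/p]$, so that $M\otimes_{O_K}K=\bigcup_{l\ge 0}p^{-l}M$: any element of $M\otimes_{O_K}K$ has a denominator that is a power of the uniformizer $\varpi$, and after enlarging the exponent one may take the denominator to be a power of $p$. Applying this to the class $E_K$, regarded in $M\otimes_{O_K}K$ via the identifications above, produces $l\ge 0$ and $\eta\in M$ with $E_K=p^{-l}\eta$; viewing $\eta$ through $M\xrightarrow{\sim}H\otimes_{O_K}(p^{-l}L)=H^1(B,V(p^{-l}L))$ then gives a $V(p^{-l}L)$-torsor $E$ over $B$ with $E\times_{O_K}K=E_K$, which is exactly the assertion.

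I do not expect a serious obstacle here: the only genuine content is the torsor/coherent-cohomology dictionary together with the flat base-change compatibility of restriction to the generic fiber, and once these are in place the rest is pure denominator-clearing over the DVR $O_K$. (Note that properness of $B$, beyond the standing quasi-compactness and separatedness assumptions, is not actually needed for this statement; properness of the model $P$ is used later, in the application to Lemma~\ref{lem_qc_subgrp_A}.)
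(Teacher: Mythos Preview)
Your argument is correct and follows essentially the same idea as the paper's proof: both reduce to clearing denominators via the flat base-change identification $H^1(B,\clg{O}_B)\otimes_{O_K}K\cong H^1(B_K,\clg{O}_{B_K})$. The paper phrases this more concretely by first extending $p^lE_K$ to a $V(L)$-torsor and then pushing out along $V(L)\to V(p^{-l}L)$, verifying the last step with \v{C}ech cocycles; your cohomological dictionary makes that pushout step transparent, and your remark that properness is not actually needed (only qcqs, for flat base change) is correct---the paper's invocation of finite generation of $H^1(B,V(L))$ is not essential to the argument.
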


\begin{proof}
Since $H^1(B,V(L))$ is finitely generated, there exists $l \geq 0$ such that $p^l\cdot E_K$ is in the image of the natural map 
\[
H^1(B,V(L)) \to H^1(B_K, V(L\otimes K)).
\]
Let $E$ be a preimage of $p^l \cdot E_K$.
We claim that 
the pushout $\phi_* E$ of $E$ along $\phi \colon V(L) \to V(p^{-l}L)$ extends $E_K$.
Indeed, we can represent $E$ be a \v{C}ech cocycle 
\[
f_{ij} \in V(L)(U_{i}\cap U_j) = \Hom_{O_K}(L^\vee, B_{ij})
\]
on an open cover $\clg{U} = (U_i)_i$ of $B$,
with $B_{ij} = \Gamma(U_{i}\cap U_j, \clg{O}_B)$. Since $E$ extends $p^l\cdot E_K$ the image of $f_{ij}$ under the map 
\[
\kappa_L \colon \Hom_{O_K}(L^\vee, B_{ij}) \to \Hom_{K}(L^\vee\otimes K, B_{ij} \otimes_{O_K} K) 
\]
is a \v{C}ech cocycle representing $p^l\cdot E_K$. Let $\phi \colon L \to p^{-l} L$ be the inclusion. The lemma follows now from the following observation 
\[
\kappa_L (f_{ij}) = p^l \kappa_{p^{-l}L} (\phi_* f_{ij}) := p^l\kappa_{p^{-l}L}(f_{ij} \circ \phi^\vee) \in \Hom_{O_K}(L^\vee\otimes K, B_{ij} \otimes_{O_K} K). 
\]
Indeed, let $e_1,\dots,e_r$ be a set of generators for $L$ and $f_i := p^{-l} e_i$ be a set of generators for $p^{-l}L$, then $\phi(e_i) = p^l f_i$ and the claim follows by a direct computation.
\end{proof}

\begin{lemma}\label{lem_torsor_grp}
    Let $A$ be an abeloid variety over $K$, that is, a proper smooth connected analytic group over $K$. Let $E \in H^1(A, \clg{O}_{A}^+)$ and suppose that $E$ satisfies
    \[
    \pr_1^* E + \pr_2^* E = m^* E \in H^1(A \times A, \clg{O}_{A\times A}^+)\otimes_{O_K} K \xrightarrow{\sim} H^1(A\times A,\clg{O}_{A\times A}),
    \]
    where $m$ denotes the group law of $A$.
    Then $E$ admits a unique group structure making it into an extension of $A$ by $\mathbb{B}^1$. 
\end{lemma}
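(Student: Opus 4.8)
The plan is to produce the group law on $E$ by descent from its universal property as a cohomology class, mimicking the classical construction of extensions of abelian varieties by vector groups, but keeping track of integrality via the $h$-cover / $\clg{O}^+$-coefficients. First I would reformulate the hypothesis: a class $E \in H^1(A,\clg{O}_A^+)$ is the same as an $\clg{O}_A^+$-torsor, i.e.\ a $\mathbb{B}^1$-torsor over $A$ (where $\mathbb{B}^1 = \Spa(K\langle t\rangle, O_K\langle t\rangle)$ is the closed unit ball regarded as an additive group); the identity $\pr_1^*E + \pr_2^*E = m^*E$ in $H^1(A\times A,\clg{O}_{A\times A}^+)\otimes K$ says precisely that, after inverting $p$, the torsor $m^*E$ on $A\times A$ is isomorphic to $\pr_1^*E \boxplus \pr_2^*E$ (Baer sum of $\mathbb{B}^1$-torsors), hence $m^*E \cong \pr_1^*E + \pr_2^*E$ as $\clg{V}$-torsors where $\clg{V} = V(K)^\rig$ is the full affine line, the generic fiber of $V(O_K)$. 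The point is that this last isomorphism, combined with a normalization at the unit section $e\colon \Spa(K,O_K) \to A$, gives a candidate multiplication $\mu_E\colon E \times_A \text{(pullback along }m) \to E$, which one then has to check is integral (i.e.\ restricts to the $\mathbb{B}^1$-subtorsor) and satisfies the group axioms.

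Concretely, the key steps would be: (1) Fix the normalization: replace $E$ by its $\mathbb{B}^1$-torsor representative and choose a trivialization of $e^*E$; by the rigidity/properness of $A$ (an abeloid variety is proper, so $H^0(A,\clg{O}_A) = K$ and $H^0(A,\clg{O}_A^+) = O_K$) this trivialization is essentially unique up to $O_K$, which will force uniqueness of the group structure. (2) Produce the multiplication: the hypothesis gives an isomorphism $\theta\colon m^*E \xrightarrow{\sim} \pr_1^*E + \pr_2^*E$ of $\clg{V}$-torsors over $A\times A$; normalizing $\theta$ along the unit section $(e,e)$ (again using $H^0(A\times A,\clg{O}) = K$ to rigidify) pins $\theta$ down uniquely, and unwinding what a morphism of torsors is yields a map $E\times_A m^{-1}$-pullback $\to E$ covering $m$, which is the sought-after partially-defined group law on the total space. (3) Verify integrality: one must check $\theta$ carries the $\mathbb{B}^1$-part to the $\mathbb{B}^1$-part; this is where the hypothesis that $E$ itself lies in $H^1(A,\clg{O}_A^+)$ (not merely $H^1(A,\clg{O}_A)$) is used, together with the fact that over the proper $A$ the only global sections of $\clg{O}^+$ are bounded, so the normalized comparison isomorphism automatically has integral entries — essentially a GAGA/maximum-modulus argument on the affinoid pieces of an admissible cover trivializing $E$. (4) Check associativity, the inverse, and the unit: these are statements about iterated pullbacks of $E$ along $m\circ(m\times\id)$ versus $m\circ(\id\times m)$ on $A\times A\times A$, and by the same rigidity principle ($H^0 = K$ on the proper $A^{\times n}$, $H^0$ of $\clg{O}^+$ $= O_K$) the two induced isomorphisms of $\clg{V}$-torsors agree because they agree after normalizing at the unit section; this is the cocycle condition underlying the equivalence of categories between extensions of $A$ by $\mathbb{B}^1$ and such normalized multiplicative structures.

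I expect the main obstacle to be step (3), the integrality claim: showing that the abstractly-constructed group law on $E(A)^\rig$ restricts to the quasi-compact integral torsor $\ol{E}$, equivalently that the comparison isomorphism $\theta$ (which a priori is only defined after inverting $p$, since the hypothesis $\pr_1^*E + \pr_2^*E = m^*E$ is an equality in $H^1(-,\clg{O}^+)\otimes_{O_K} K$) actually lands in $\clg{O}^+$-torsors without denominators. The resolution should be that $A$ being proper forces the representing $1$-cocycles, after the unit-section normalization, to be bounded: on any affinoid $U = \Spa(B, B^\circ)$ of an admissible cover trivializing all the relevant torsors, a section of $\clg{O}^+$ that extends over the proper $A$ (or $A\times A$) must have supremum norm $\le 1$, so no $p$-denominators can appear once we have rigidified; this is exactly the rigid-analytic incarnation of "a regular function on a proper variety is constant." Everything else is a formal manipulation of torsors and should go through as in the algebraic case treated in \cite{MM_Uniext_74}.
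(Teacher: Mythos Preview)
Your overall architecture matches the paper's: first upgrade the hypothesis to an \emph{integral} equality $\pr_1^*E + \pr_2^*E = m^*E$ in $H^1(A\times A,\clg{O}^+)$, then run the classical torsor argument (the paper cites Serre, \emph{Algebraic Groups and Class Fields}, Ch.~VII) to build and rigidify the group law using that global analytic functions $A \to \mathbb{B}^1$ are constant. Steps (1), (2), (4) are fine and essentially what the paper does.

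The gap is in your step (3). You frame the problem as showing that the normalized comparison isomorphism $\theta$ ``has integral entries'' via a maximum-modulus argument on cocycles. But the relevant \v{C}ech cocycles already take values in $\clg{O}^+$: all three classes $m^*E$, $\pr_1^*E$, $\pr_2^*E$ lie in $H^1(A\times A,\clg{O}^+)$ by construction. The actual obstruction is the class $\delta := \pr_1^*E + \pr_2^*E - m^*E \in H^1(A\times A,\clg{O}^+)$, which by hypothesis is $p$-power torsion (it dies after $\otimes_{O_K} K$). What you need is that $H^1(A\times A,\clg{O}^+)$ has no $p$-torsion; your ``bounded sections on a proper space'' heuristic does not establish this, because $\delta$ is not represented by a global section of anything --- it is a genuine $H^1$ class, and torsion in $H^1(\clg{O}^+)$ is not visibly controlled by boundedness of cocycles.

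The paper's fix is short and specific: take the long exact sequence associated to $0 \to \clg{O}^+ \xrightarrow{p} \clg{O}^+ \to \clg{O}^+/p \to 0$. Properness of $A\times A$ gives $H^0(\clg{O}^+) = O_K$, and $O_K \to O_K/p$ is surjective, so the connecting map $H^0(\clg{O}^+/p) \to H^1(\clg{O}^+)$ vanishes and multiplication by $p$ is injective on $H^1(A\times A,\clg{O}^+)$. Hence $\delta = 0$ integrally. Once you have this, your torsor-theoretic construction of $\mu_E$ and the rigidity argument for associativity and uniqueness go through exactly as you outline.
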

\begin{proof}
    \textbf{Step 1:} We claim that 
    \[
    \delta := \pr_1^* E + \pr_2^* E - m^*E = 0 \in H^1(A \times A, \clg{O}_{A\times A}^+).
    \]
    By assumption, there exists $k \geq 0$ such that $p^k \delta = 0$. Thus, it suffices to show that
    multiplication by $p$ is injective on $H^1(A \times A, \clg{O}_{A\times A}^+)$.
    Indeed, applying $\RS(A,-)$ to the short exact sequence
    \[
    0\to \clg{O}_{A\times A}^+ \xrightarrow{p} \clg{O}_{A\times A}^+ \to \clg{O}_{A\times A}^+/p \to 0 
    \]
    gives a long exact sequence
    \[
    0 \to O_K \xrightarrow{p} O_K \to O_K/p \to H^1(A\times A,\clg{O}_{A\times A}^+) \xrightarrow{p} 
     H^1(A\times A,\clg{O}_{A\times A}^+) \to  H^1(A\times A,\clg{O}_{A\times A}^+/p) \to \cdots,
    \]
    where we used that $A\times A$ is proper. The claim follows because $O_K \to O_K/p$ is surjective.

    \noindent \textbf{Step 2:}
    The rest of the proof is an adaptation of the argument in \cite[Chapter VII. Theorem 5]{SerreAGCF}. Let $E'$ be the $\mathbb{B}^1$-torsor representing $m^*E$. 
    By assumption, $E'$ is isomorphic to $E\times E$ with torsor structure induced by the addition $\mathbb{B}^1\times \mathbb{B}^1 \to \mathbb{B}^1$. 
    Let $g$ be the composition $E \times E \to E' \to E$. Then, we have a commutative diagram
    \[
    \begin{tikzcd}
        E\times E \ar[r, "g"] \ar[d] &E \ar[d]\\
        A \times A \ar[r] & A.
    \end{tikzcd}
    \]
    One can easily verify the identity
    \[
    g(bc, b'c') = (b+b')g(c,c') 
    \]
    for $c,c' \in E(T)$ and $b,b' \in \mathbb{B}^1(T)$ for all $K$-varieties $T$. Choose a $K$-point $e \in E(K)$ projecting to $e \in A(K)$. After translating $e$ by some $K$-point of $\mathbb{B}^1$ we may assume that $g(e,e) =e$. We claim that 
    \[
    g \colon E\times E \to E
    \]
    is the desired group structure. This follows exactly as in loc. cit. using that all analytic functions $A \to \mathbb{B}^1$ are constant.
\end{proof}

\begin{construction}\label{rigid_construction_abelian}
Let $A$ be an abelian variety with semistable reduction.
Choose an integral $O_K$-lattice $H \subset H_\dr(A)$, in the sense that
\[
H \subset H^0(\ol{E},\Omega^{1,+}_{E(A)^\rig/K}).
\]
For example, if $H'$ is any lattice in $H_\dr(A)$, then, by Lemma \ref{lem_qc_subgrp_A}, there exists $r \geq 0$ such that $p^r H'$ is integral.

We are going to construct an $O_K$-linear map
\[
 H \to \Hom_{\Z_p[G_K]}(T_p(A),B_\dr^+),\quad \omega \mapsto c_\omega.
\]
Fix $\omega \in H$. Let $x:=(x_n)_n \in T_p(A)$ and choose lifts 
$\hat{x}_n \in \ol{E}(\C_p)$ of $x_n$ with $\hat{x}_0 = 0$. Roughly speaking, we will define $c_\omega(x)$ as the pullback of $\omega$ along these lifts.

More precisely, the restriction of $\omega$ to $\ol{V}$ is of the form $df_\omega$ for a unique linear form 
\[f_\omega \colon \ol{V} \to \G_a,\]
by the invariance of $\omega$.
Moreover, there exists $\lambda_n \in \ol{V}(\C_p)$ such that $p \hat{x}_{n} = \lambda_{n} + \hat{x}_{n-1}$ for $n \geq 1$. 
It follows that the elements $\hat{x}_n^* \omega \in \Omega^1_{O_{\C_p}/O_K}$ satisfy the relation \footnote{The invariance of $\omega$ implies that $[p]^* \omega = p \omega$, where $[p]$ denotes the multiplication by $p$ map.}
\[
p \hat{x}_{n}^* \omega = df_\omega(\lambda_{n}) + \hat{x}_{n-1}^* \omega
\]
for all $n\geq 1$.
By induction, we have
\[
p^{n} \hat{x}_{n}^* \omega= df_\omega(\lambda_1) +p df_\omega(\lambda_2) + \cdots + p^{n-1} df_\omega(\lambda_{n}) = df_\omega(s_n),
\]
where $s_n :=  f_\omega(\lambda_1) + p f_\omega(\lambda_2) + \cdots + p^{n-1} f_\omega(\lambda_{n}) \in O_{\C_p}$.
Since $\omega$ comes from a log form on a log smooth model of $\ol{E}$, the pullback $\hat{x_n}^*\omega$ naturally lives in $H^1(L\hat{\Omega}^\bullet_{(O_{\C_p},\can)/(O_K,\can)})$. Therefore, the pair 
\[(\hat{x}_n^* \omega, -s_n) \in \Omega^1_{O_{\C_p}/O_K}\oplus O_{\C_p}\] 
defines an element of
\[
A_\dr \otimes^L \Z/p^n = H^0(L\hat{\Omega}^\bullet_{(O_{\C_p},\can)/(O_K,\can)} \otimes^L \Z/p^n).
\]
To see this recall that $L\hat{\Omega}^\bullet_{(O_{\C_p},\can)/(O_K,\can)} \otimes^L \Z/p^n$ is represented by the complex
\[
L\hat{\Omega}^\bullet_{(O_{\C_p},\can)/(O_K,\can)}[1] \oplus L\hat{\Omega}^\bullet_{(O_{\C_p},\can)/(O_K,\can)}
\]
with differential in degree $m$ given by
\[
\begin{pmatrix}
    -d_{m+1} & 0 \\
    p^n & d_m
\end{pmatrix},
\]
where $d_\bullet$ is the differential of $L\hat{\Omega}^\bullet_{(O_{\C_p},\can)/(O_K,\can)}$.
Finally, we define $c_\omega(x)$ as the sequence
\[ c_{\omega}(x) := ([\hat{x}_n^* \omega, -s_n])_n \in \left( \varprojlim_n A_\dr \otimes^L \Z/p^n \right) \left[\frac{1}{p} \right] =  B_\dr^+,\]
which is well-defined because
\[
s_{n-1} - s_{n} = -p^{n-1} f_\omega(\lambda_n)\quad  \text{and}\quad  \hat{x}_n^* \omega - \hat{x}_{n-1}^* \omega = df_\omega(\lambda_n).
\]
\end{construction}

\begin{prop}
    The element $c_{\omega}(x) \in B_\dr^+$ is independent of the choice of lifts $\hat{x}_n$ and the choice of $\ol{E}$. In particular, the map $c_\omega$ defines a morphism of $G_K$-modules.
    Moreover, the resulting map
    \[
    c_\dr \colon H_\dr(A) \otimes B_\dr \cong H \otimes_{O_K} B_\dr \to \Hom_{G_K}(T_p(A), B_\dr) \cong H^1_\et(A_{\ol{K}},\Q_p) \otimes_{\Q_p} B_\dr
    \]
    is independent of the lattice $H$.
\end{prop}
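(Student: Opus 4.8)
The plan is to reduce all three independence assertions — of the lifts $\hat{x}_n$, of $\ol{E}$, and of $H$ — to a single cocycle computation. Recall that $c_\omega(x)=([\hat{x}_n^*\omega,-s_n])_n$, where $[\hat{x}_n^*\omega,-s_n]$ is the class in $A_\dr\otimes^L\Z/p^n=H^0\big(L\hat{\Omega}^\bullet_{(O_{\C_p},\can)/(O_K,\can)}\otimes^L\Z/p^n\big)$ computed via the explicit cone complex $L\hat{\Omega}^\bullet_{(O_{\C_p},\can)/(O_K,\can)}[1]\oplus L\hat{\Omega}^\bullet_{(O_{\C_p},\can)/(O_K,\can)}$ of Construction \ref{rigid_construction_abelian}. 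I will show that every admissible change of data modifies this cocycle, at level $n$, by an explicit coboundary. For independence of the lifts: by Lemma \ref{lem_qc_subgrp_A} and Lemma \ref{lem_torsor_grp}, $\ol{E}$ is a quasi-compact open subgroup of $E(A)^\rig$ sitting in a short exact sequence $0\to\ol{V}\to\ol{E}\to A^\rig\to 0$, so two systems of lifts $(\hat{x}_n)_n$, $(\hat{x}'_n)_n$ with $\hat{x}_0=\hat{x}'_0=0$ satisfy $\hat{x}'_n=\hat{x}_n+v_n$ with $v_n\in\ol{V}(\C_p)$ and $v_0=0$. Invariance of $\omega$ (Lemma \ref{lem_invdiff_grp_morph}) together with $\omega|_{\ol{V}}=df_\omega$ gives $(\hat{x}'_n)^*\omega=\hat{x}_n^*\omega+df_\omega(v_n)$; from $p\hat{x}'_n=\lambda'_n+\hat{x}'_{n-1}$ one gets $\lambda'_n=\lambda_n+pv_n-v_{n-1}$, hence by telescoping $s'_n=s_n+p^n f_\omega(v_n)$, where $f_\omega(v_n)\in O_{\C_p}$ because $\omega$ is integral on $\ol{E}$. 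Thus the two cocycles differ by $(df_\omega(v_n),-p^nf_\omega(v_n))$, which is exactly the image of $-f_\omega(v_n)\in O_{\C_p}\subset L\hat{\Omega}^0_{(O_{\C_p},\can)/(O_K,\can)}$, viewed in degree $-1$ of the complex $L\hat{\Omega}^\bullet[1]\oplus L\hat{\Omega}^\bullet$, under its differential. Hence they define the same class for every $n$, and $c_\omega(x)$ is unchanged.

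From this identity I would extract the remaining structure at once. Running the same computation with the lifts $\hat{x}_n+\hat{y}_n\in\ol{E}(\C_p)$ for a second element $y\in T_p(A)$, using $m^*\omega=\pr_1^*\omega+\pr_2^*\omega$, gives $c_\omega(x+y)=c_\omega(x)+c_\omega(y)$; running it with $g\hat{x}_n$ for $g\in G_K$ — which is a system of lifts of $gx_n$ with $g\hat{x}_0=0$, and where $g$ commutes with pullback and fixes $\ol{E},\ol{V},f_\omega$ since these are defined over $K$ — gives $c_\omega(gx)=g\cdot c_\omega(x)$. Together with the evident $O_K$-linearity of $\omega\mapsto(\hat{x}_n^*\omega,-s_n)$, this produces the $O_K$-linear map $H\to\Hom_{\Z_p[G_K]}(T_p(A),B_\dr^+)$.

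For independence of $\ol{E}$ I would first treat the nested case: if $\ol{E}'\subseteq\ol{E}$ are quasi-compact open subgroups on which $\omega$ is integral, a system of lifts in $\ol{E}'$ is also one in $\ol{E}$, the translations $\lambda_n=p\hat{x}_n-\hat{x}_{n-1}$ lie in $\ol{V}'\subseteq\ol{V}$, and $f_\omega$ and the $s_n$ agree, so the two constructions produce literally the same cocycles. For arbitrary valid $\ol{E}_1,\ol{E}_2$, the sum $\ol{E}_3:=\ol{E}_1+\ol{E}_2$ is again a quasi-compact open subgroup of $E(A)^\rig$ containing both (open as a union of translates of $\ol{E}_1$, quasi-compact as a continuous image, a subgroup since the $\ol{E}_i$ are), and by Lemma \ref{lem_qc_subgrp_A} there is $r\geq 0$ with $p^r\omega$ integral on $\ol{E}_3$, hence on $\ol{E}_1$ and $\ol{E}_2$. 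The nested case then gives $c_{p^r\omega}^{\ol{E}_1}(x)=c_{p^r\omega}^{\ol{E}_3}(x)=c_{p^r\omega}^{\ol{E}_2}(x)$, and since $c_{p^r\omega}^{\ol{E}_i}=p^r c_\omega^{\ol{E}_i}$ by $O_K$-linearity and $B_\dr^+$ is $p$-torsion free, $c_\omega^{\ol{E}_1}(x)=c_\omega^{\ol{E}_2}(x)$. Finally, the lattice $H$ enters the construction of $c_\omega$ only through the requirement that $\omega$ be integral on $\ol{E}$, so for $\omega$ lying in two lattices $H_1,H_2$ the two constructions give the same $c_\omega$; since $H_1\cap H_2$ is a lattice with $(H_1\cap H_2)\otimes_{O_K}B_\dr\xrightarrow{\sim}H_\dr(A)\otimes_K B_\dr$, the $B_\dr$-linear extensions of $c$ from $H_1$ and from $H_2$ agree there, and so $c_\dr$ is independent of $H$.

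The step I expect to be most delicate is the first one: unwinding the cone complex of Construction \ref{rigid_construction_abelian} precisely enough that the difference $(df_\omega(v_n),-p^nf_\omega(v_n))$ is visibly the image of $-f_\omega(v_n)$ under the differential — in particular checking that $f_\omega(v_n)$ really lies in the integral term $O_{\C_p}$, for which the integrality of $\omega$ on $\ol{E}$ is essential — and keeping the bookkeeping with the translations $\lambda_n$ and the partial sums $s_n$ consistent across all $n$, so that the coboundary conclusion passes to the inverse limit defining $B_\dr^+$. Once this cocycle identity is established, the reductions to the nested cases for $\ol{E}$ and for $H$, and the verifications of additivity and $G_K$-equivariance, are formal.
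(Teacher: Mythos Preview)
Your proposal is correct and follows essentially the same approach as the paper: the coboundary computation for independence of lifts, the invariance argument for additivity and $G_K$-equivariance, and the reduction to nested subgroups for independence of $\ol{E}$ are all the same. You are in fact more explicit than the paper in two places --- you track the telescoping identity $s'_n=s_n+p^nf_\omega(v_n)$ carefully, and you name the common enlargement as $\ol{E}_3=\ol{E}_1+\ol{E}_2$ where the paper simply asserts a third subtorsor exists. One small point: Lemma \ref{lem_qc_subgrp_A} as stated only \emph{constructs} a particular $\ol{E}$ rather than guaranteeing integrality on an arbitrary quasi-compact open subgroup, so your invocation of it for $\ol{E}_3$ is slightly loose; the clean fix is to note that $\ol{V}_1+\ol{V}_2\subset V(p^{-l'}L_0)_\eta$ for some $l'$, so $\ol{E}_3$ sits inside a subgroup of the explicit form produced in the proof of that lemma, and the integral form there restricts.
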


\begin{proof}
Let $x = (x_n)_n \in T_p(A)$. We start by proving the independence of \(\omega\) of the choice of lifts \(\hat{x}_n\). Replacing \(\hat{x}_n\) with \(\lambda \hat{x}_n\) for some $\lambda \in \ol{V}$ changes the pair 
$(\hat{x}_n^* \omega', -s_n)$ to  
\[
\big(\hat{x}_n^* \omega' + df(\lambda), -s_n - p^n f(\lambda)\big) = 
(\hat{x}_n^* \omega, s_n) + \big(df(\lambda),-p^n f(\lambda)\big),
\]  
which represents the same class in \(H^0(A_\dr \otimes \mathbb{Z}/p^n)\).

We may assume that $\omega$ lifts to an integral form on $\ol{E}$. Furthermore, assume that \(\overline{E}\) is contained in a quasi-compact open torsor \(V' \to \ol{E}' \to A^\rig\) satisfying the assumptions of Lemma \ref{lem_qc_subgrp_A}.  In particular, that \(p^r \omega\) lifts to a differential \(\omega'\) on \(\ol{E}'\).
By the uniqueness in Lemma \ref{lem_qc_subgrp_A}, the restriction of \(\omega'\) to \(\overline{E}\) coincides with \(p^r\omega\). 
Since \(c_{\omega'}\) is independent of the choice of lifts \(\hat{x}_n\), we can choose lifts lying in \(\overline{E} \subset \ol{E}'\) and deduce that  
\[
c_{\omega'}(x) = p^rc_{\omega}(x).
\]  
In general, for two subtorsors \(\overline{E}\) and \(\overline{E}'\) as in the assertion, we can find a third quasi-compact subtorsor \(\overline{E}''\) containing both.

We check the additivity of $c_\omega$, the $G_K$-equivariance follows analogously. Let $x=(x_n)_n, y= (y_n)_n \in T_p(A)$, with lifts $(\hat{x}_n)_n, (\hat{y}_n)_n \in \ol{E}(\C_p)$. Then $\hat{z}_n = \hat{x}_n + \hat{y}_n$ lifts $x_n + y_n$. Since $\omega$ is an invariant form, we have
\[
\hat{z}_n^* \omega = \hat{x}_n^* \omega + \hat{y}_n^* \omega,
\]
and it follows that
\[
c_\omega(x+y) = c_\omega(x) + c_\omega(y).
\]
\end{proof}

Our next goal is to generalize the above construction to all $1$-motives. For a $1$-motive $M = [u \colon Y \to G]$, we write
$E(M) = [u^\sharp \colon Y \to G^\sharp]$ for its universal extension and $M' = [u' \colon Y' \to G']$ for its Cartier dual.
The extension $E(M)$ is an extension of $M$ by the vectorial group associated with $\omega_{G'} = \Lie(G')^\vee$, Appendix \ref{lemma_uni_ext_M}.
As in the abelian case, $E(M)^\rig = [u^\sharp \colon Y \to G^{\sharp,\rig}]$ is the universal extension of $M^\rig$; this follows from $\Ext^1(\G_m^\rig,\G_a^\rig) =0$.
We remark that it suffices to consider $1$-motives up to isogeny since the de Rham and \'etale realizations factor through the isogeny category $\clg{M}_{1}(K)\otimes\Q$.

\begin{defn}[{}{\cite[Section 4]{raynaud_1-mot}}]
    Let $M = [u \colon Y \to G]$ be a $1$-motive over $K$. We say that $M$ has \textit{semistable reduction} if the following two conditions are satisfied:
    \begin{enumerate}[i)]
        \item $Y$ is locally constant and extends to a locally constant $O_K$-group scheme. Equivalently, $Y$ corresponds to a finite unramified $G_K$-representation on $\Z^k$, where $k$ is the rank of $Y$.
        \item $G$ extends to a smooth group scheme over $O_K$, whose special fiber is a semi-abelian variety.
    \end{enumerate}
\end{defn}

In particular, Grothendieck's semistable reduction theorem for abelian varieties implies that every $1$-motive has potentially semistable reduction.
The following proposition shows that invariant differential forms $\omega \in H_\dr(M)$ are representable by integral differential forms on a sufficiently large open subgroup of $G^{\sharp,\rig}$. 

\begin{prop}\label{prop_subgrp_1mot}
    Let $M$ be a $1$-motive with semistable reduction.
    Up to replacing $M$ by an isogenous $1$-motive, there exists an open subgroup $\ol{E} \subset G^{\sharp,\rig}$ such that
    \begin{enumerate}
        \item $\ol{E}$ is a torsor over $G^\rig$ under a quasi-compact open subgroup $\ol{V}\subset V(\omega_{G'})$, 
        \item $\ol{E}(\C_p)$ contains $u^\sharp(Y(\C_p))$,
        \item For every $\omega \in H_\dr(M)$ there exists $r \geq 0$ such that $p^r\omega$ is integral on $\ol{E}$, i.e.,
        \[p^r\omega \in H^0(\ol{E},\Omega^{1,+}_{G^{\sharp,\rig}/K}) \subset H^0(\ol{E},\Omega^1_{G^{\sharp,\rig}/K}).\]
    \end{enumerate}
\end{prop}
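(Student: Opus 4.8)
The plan is to reduce the statement for a general $1$-motive $M=[u\colon Y\to G]$ to the case of an abelian variety, treated in Lemma \ref{lem_qc_subgrp_A}, by carefully analyzing the structure of $G$, of its universal extension $G^\sharp$, and of the integral model of $G$ provided by the semistability hypothesis. First I would recall that $G$ sits in a short exact sequence $0\to T\to G\to A\to 0$ with $T$ a torus and $A$ an abelian variety, and that semistable reduction gives a smooth $O_K$-model $\scr{G}$ of $G$ whose special fiber is semi-abelian; up to isogeny we may assume $T$ is split, so $T=\G_m^s$ and $\scr{T}=\G_{m,O_K}^s$. The universal extension $G^\sharp$ is an extension of $G$ by the vectorial group $V(\omega_{G'})$, and I would use the explicit description of $\omega_{G'}$ together with the construction of the universal extension in Appendix \ref{lemma_uni_ext_M} to organize the comparison with the abelian case: there is a filtration on $G^\sharp$ whose graded pieces are $V(\omega_{G'})$, $T$, and $A$ (or rather a vectorial extension of $A$), so that $H_\dr(M)$ is assembled from $H_\dr(A)$, the invariant differentials $\frac{dt_i}{t_i}$ on the torus, and the contribution of the lattice part through $\omega_{G'}$.

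The key steps, in order, are: (1) produce a proper integral model $P$ of $A$ as in Lemma \ref{lem_qc_subgrp_A} (Mumford / Faltings–Chai), with $\hat P$ a strict semistable model of $A^\rig$; (2) spread the torus extension $0\to T\to G\to A\to 0$ out to an extension of $\clg A$ (the Néron model, sitting inside $P$) by $\scr{T}=\G_{m,O_K}^s$, using that $\Ext^1$-classes of $\clg A$ by $\G_m$ extend after restricting to the open $\clg A\subset P$, and then form a quasi-compact open subgroup of the Raynaud generic fiber that is a torsor over $A^\rig$; (3) apply Lemma \ref{lem_torsor_lattice} to the $V(p^{-l}L_0)$-class defining $E(\clg A)$-type extensions to get an integral $V(p^{-l}\omega_{G',0})$-torsor $E$ over (a suitable model containing) $P$ whose generic fiber is the universal extension $G^\sharp$, with the identity component behaving as an honest vectorial extension; (4) set $\ol E=\hat E_\eta$ and verify, exactly as in Steps 2–3 of Lemma \ref{lem_qc_subgrp_A}, that $\ol E$ is a torsor over $G^\rig$ under a quasi-compact open subgroup $\ol V\subset V(\omega_{G'})$, that it is a subgroup of $G^{\sharp,\rig}$ (via Lemma \ref{lem_torsor_grp}, or rather a semi-abelian analogue of it), and that every $\omega\in H_\dr(M)$, after multiplication by $p^r$, lifts to an invariant log differential on the log smooth model $(E,E_s)$ and hence to an integral form in $H^0(\ol E,\Omega^{1,+}_{G^{\sharp,\rig}/K})$; (5) arrange condition (2), $u^\sharp(Y(\C_p))\subset\ol E(\C_p)$, which is where the isogeny freedom is genuinely used: since $Y$ is of finite type and $\ol E$ is open and a subgroup, replacing $M$ by the isogenous $1$-motive $[p^m u\colon Y\to G^\sharp]$ (equivalently rescaling the lattice map) moves the finitely many generators of $Y(\C_p)$ and their images into the quasi-compact open $\ol E$; one checks $\ol E$ can be chosen stable under this adjustment, or enlarges $\ol V$ by a bounded factor.

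I expect the main obstacle to be step (5) together with its interaction with step (3): one must simultaneously (a) keep $\ol E$ quasi-compact, (b) keep it an open \emph{subgroup} so that "contains $u^\sharp(Y(\C_p))$" is a group-theoretic condition that can be achieved by an isogeny of $M$, and (c) retain the integrality property for all de Rham classes. The tension is that enlarging $\ol V$ to capture $u^\sharp(Y(\C_p))$ could in principle destroy the strict semistable/log smooth model structure underlying Lemma \ref{lem_log_forms_ssmodel}; the resolution is that scaling $\ol V$ by a power of $p$ (equivalently scaling the $V(p^{-l}L_0)$-torsor) preserves all the relevant structures — it is again a vectorial $O_K$-torsor over the same models — and only changes the bound $r$ in condition (3) by a fixed amount. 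A secondary technical point is establishing the group structure on $\ol E$ in the semi-abelian setting: Lemma \ref{lem_torsor_grp} is stated for abeloid (proper) $A$, so I would either pass to a compactification of $G$ as in the abelian case, or invoke that all global analytic functions on a quasi-compact open subgroup of $A^\rig$ pulled back appropriately are still constant enough for Serre's argument to go through; I would flag this and reduce it to the abelian statement already proved.
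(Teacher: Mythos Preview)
Your outline captures several correct ingredients (Mumford/Faltings--Chai models of $A$, spreading the torus extension to the N\'eron model, the $V(p^{-l}L)$-torsor trick from Lemma~\ref{lem_torsor_lattice}, scaling arguments), but there is a genuine gap: you never construct a \emph{proper semistable compactification of $G$}, and this is the technical heart of the paper's argument. You work over ``a suitable model containing $P$'', but $P$ is a model of $A$, not of $G$. The Raynaud generic fiber $\hat{\clg G}_\eta$ of the formal completion of $\clg G$ is only an open ``unit polyannulus'' inside $G^\rig$, so any $\ol E$ built over it cannot be a torsor over all of $G^\rig$ as the statement requires. The paper fixes this by first extending $\clg G$ to a $\clg T$-torsor $\clg H$ over $P_A$ and then applying the theory of toroidal embeddings to obtain a proper semistable model $P_\Sigma$ of $G$ over $P_A$ (with horizontal boundary divisor $D_h = P_\Sigma \setminus \clg H$). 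Only then does one pull back $E_A$ to $P_\Sigma$, push out along $V(p^{-l}L_0) \to V(p^{-l}L_1)$ to obtain $E_\Sigma$, and define $\ol E$ as the preimage of $G^\rig$ inside $\hat E_{\Sigma,\eta}$.

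The compactification is equally essential for condition~(3). Your Step~(4) says one proceeds ``exactly as in Steps~2--3 of Lemma~\ref{lem_qc_subgrp_A}'', but the codimension-$2$ trick there only extends $\omega'$ from $E_\Sigma \times_{P_\Sigma} \clg G$ to $E_\Sigma \times_{P_\Sigma} \clg H$; a separate argument is still needed to extend across the horizontal toroidal boundary $D_h$. The paper does this by working locally on $P_A$, splitting $E_\Sigma$ as $\clg T_\Sigma \times U_i \times V(L_1)$, and using that $\omega'$ is invariant under the $\clg T$-action to show that its local components extend to log forms on $\clg T_\Sigma$. This step has no analogue in the abelian case and is not addressed in your proposal. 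Two smaller points: the isogeny replacement in the paper is used to ensure $u'(X^*(T)) \subset \clg A'^0$ so that $\clg G$ extends over all of $\clg A$ (not just $\clg A^0$), which is different from your proposed use; and for condition~(2) the paper does \emph{not} replace $u$ by $p^m u$ but instead enlarges $\ol V$ by a further pushout along $V(L_1)_\eta \to V(p^{-r_0}L_1)_\eta$, checking that $\omega'$ still extends to this enlargement.
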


\begin{proof}
    Let $G$ be an extension of an abelian variety $A$ with semistable reduction by a split torus $T$ of rank $s$. That is, $G$ corresponds to a morphism 
    \[u' \colon \Z^s \cong X^*(T) \to A'(K),\]
    where $A'$ is the dual abelian variety of $A$.
    
    Let $\clg{A}$ be the N\'eron model of $A$. Recall that the N\'eron model $\clg{A}'$ of $A'$ represents extensions of $\clg{A}^0$ by $\G_m$ \cite[Lemma 5.1]{MM_Uniext_74}, and let 
    \[\G^s_{m,O_K}= \clg{T} \to \clg{G} \to \clg{A}^0\]
    be the extension of $\clg{A}^0$ corresponding to the morphism $\Z^s \to \clg{A}'$ lifting $u'$. 
    By \cite{bosch_grothendieckconj}, $\clg{G}$ extends to an extension of the N\'eron model $\clg{A}$ by $\clg{T}$ if and only if the image of $u'$ is contained in $\clg{A}'^0$.
    Since the group of connected components of $\clg{A}'$ is finite, we may replace $M$ by an isogenous $1$-motive to ensure that $\clg{G}$ extends to all of $\clg{A}$. 
    
    Let $P_A$ be a proper semistable model of $A$ as in the proof of Lemma \ref{lem_qc_subgrp_A}.
    We will construct a semistable compactification of $G$. 
    First, note that $\clg{G}$ extends to a unique $\clg{T}$-torsor $\clg{H}$ over $P_A$.
    To see this, identify $\clg{G}$ with a product of line bundles and use the fact that the map 
    $\Pic(P_A) \to \Pic(\clg{A})$ is an isomorphism, 
    because $P_A$ is regular and $P_A \setminus \clg{A}$ has codimension two.
    
    The theory of toroidal embeddings provides an embedding of $\clg{T}$ into a proper variety 
    $\clg{T}_\Sigma$, associated to a rational polyhedral cone decomposition $\Sigma$ of $X_*(T)\otimes \R$, see \cite[IV.2 Theorem 2.5]{faltings_chai}. 
    By functoriality of toroidal embeddings, there exists an embedding of $\clg{H}$ into a proper $O_K$-variety $P_\Sigma$, 
    such that \[D_h:= P_\Sigma \setminus \clg{H}\] is a strict normal crossing divisor relative to $P_A$ and hence 
    $P_\Sigma$ is a semistable compactification of $G$.
    
    In particular, the map $\clg{G} \to \clg{A}$ extends to a map $P_\Sigma \to P_A$ and the action of $\clg{T}$ on $\clg{G}$ extends to an action of $\clg{T}$ on $P_\Sigma$. 
    We can summarize the situation as follows: 
    \[
    \begin{tikzcd}
    T \ar[d]\ar[r, hookrightarrow] & \clg{T} \ar[d] \ar[r, equals] & \clg{T}\ar[d] \ar[r, hookrightarrow] & \clg{T}_\Sigma \ar[d] \\
    G \ar[d] \ar[r, hookrightarrow]& \clg{G} \ar[d]\ar[r, hookrightarrow]& \clg{H} \ar[d]\ar[r, hookrightarrow] & P_\Sigma \ar[d] \\
    A \ar[r, hookrightarrow]& \clg{A} \ar[r, hookrightarrow] & P_A \ar[r,equals] & P_A.
    \end{tikzcd}
    \]
    In the next steps, we construct an open subgroup $\ol{E} \subset G^{\sharp,\rig}$ with the desired properties.
    
    \textbf{Step 1:} As in Lemma \ref{lem_qc_subgrp_A}, let $E_A$ be the $V(p^{-l} L_0)$-torsor over $P_A$ extending $E(A)$
    with $L_0 = \Hom_{O_K}(\Lie(\clg{A}'^0, O_K)$.
    Set $E_G = E_A \times_{P_A} P_\Sigma$, then by construction
    \[
    E_G \times_{P_\Sigma} G = E(G)
    \]
    is the universal vectorial extension of $G$.
    \textbf{Step 2:} 
    Let $k$ be the rank of $Y$. Let $\clg{G}'$ be the $\G_m^{k}$-torsor over $\clg{A}'^0$ defined by the morphism $u \colon Y \to \clg{A}$, and consider the free $O_K$-module 
    \[L_1 := \Hom_{O_K}(\Lie(\clg{G}'),O_K).\]
    Then, we define $E_\Sigma$ as the pushout of $E_G$ along $V(p^{-l} L_0) \to V(p^{-l}L_1)$
    \[
    \begin{tikzcd}
        V(p^{-l}L_0)  \arrow[dr, phantom, "\ulcorner"] \ar[d] \ar[r] &E_G \ar[d] \ar[r] &P_\Sigma \ar[d, equals]\\
        V(p^{-l}L_1) \ar[r] &E_\Sigma \ar[r] &P_\Sigma.
    \end{tikzcd}
    \]
    Recall that $P_\Sigma$ is a semistable compactification of $G$ and hence the Raynaud generic fiber of $P_\Sigma$ is a compactification of $G^\rig$. We define $\ol{E} \to G^\rig$ via the following pullback square
    \begin{equation*}
    \begin{tikzcd}
        \ol{V} \ar[r] \ar[d] & \ol{E} \ar[d]  \ar[r] \arrow[dr, phantom, "\lrcorner"] & G^\rig \ar[d]\\
        \ol{V} \ar[r] & E_{\Sigma,\eta} \ar[r] & \hat{P_{\Sigma}}_{\eta}.
    \end{tikzcd}
    \end{equation*}
    Using the description of $G^{\sharp}$ in \ref{lemma_uni_ext_M}, we find a second representation of $\ol{E}$ as the pushout
    \begin{equation*}
    \begin{tikzcd}
        V(p^{-l}L_0)_\eta \ar[r] \ar[d] \arrow[dr, phantom, "\ulcorner"] & (E_A)_\eta \times_{A^\rig} G^\rig \ar[d]  \ar[r] & G^\rig \ar[d, equals]\\
        V(p^{-l}L_1)_\eta \ar[r] & \ol{E} \ar[r] & G^\rig.
    \end{tikzcd}
    \end{equation*}
    It follows immediately that $\ol{E}$ is a subgroup of $G^{\sharp,\rig}$.
    
    \textbf{Step 3:} It remains to show that $p^r\omega$ is representable by an integral differential form on $\ol{E}$ for some $r \geq 0$. 
    As in the abelian case, there exists $r\geq 0$ such that $p^r\omega$
    extends to an invariant differential form $\omega'$ on the $O_K$-group scheme
    \[
    E_\Sigma \times_{P_\Sigma} \clg{G} \subset E_\Sigma.
    \]
    Further, the form $\omega'$ extends to a log form on $E_\Sigma \times_{P_\Sigma} \clg{H} \subset E_\Sigma$ (with the pullback log structure from $P_A$) using log smoothness of $E_\Sigma$ and the fact that $\clg{H} \setminus \clg{G}$ has codimension 2. 
    Thus, it remains to show that $\omega'$ extends to a log form on all of $E_\Sigma$.
    To do this, we can work locally on $P_A$. Since $\G_m$- and $\G_a$-torsors are Zariski locally trivial there exists an open cover $(U_i)_i$ of $P_A$ such that 
    \[
    P_\Sigma \times_{P_A} U_i \cong  \clg{T}_\Sigma \times_{O_K} U_i,
    \]
    and
    $E_\Sigma \to P_\Sigma$ splits over $P_\Sigma \times_{P_A} U_i$ as
    \[
    E_\Sigma \times_{P_A} U_i \cong \clg{T}_\Sigma \times_{O_K} U_i \times_{O_K} V(L_1) \to \clg{T}_\Sigma \times_{O_K} U_i.
    \]
    Under this identification the embedding $E_\Sigma \times_{P_\Sigma} \clg{H} \to E_\Sigma$ corresponds to the toroidal embedding
    \[
    \clg{T} \times_{O_K} U_i \times_{O_K} V(L_1) \to \clg{T}_\Sigma \times_{O_K} U_i \times_{O_K} V(L_1).
    \]
    Hence, we can write $\omega' \in H^0(E_\Sigma \times_{P_\Sigma} \clg{H}, \Omega^{1}_{(E_\Sigma,D)/(O_K,\can)})$ locally as 
    \[
    f_1\omega_1 + f_2 \omega_2 + f_3 \omega_3 
    \]
    for differential forms $\omega_1, \omega_2, \omega_3$ on $\clg{T}$, resp. $U_i$, resp. $V(L_1)$, and
    \begin{align*}
        f_1 &\in H^0(U_i,\clg{O}_{U_i}) \otimes_{O_K} H^0(V(L_1),\clg{O}_{V(L_1)})\\
        f_2 &\in H^0(\clg{T},\clg{O}_{\clg{T}}) \otimes_{O_K} H^0(V(L_1),\clg{O}_{V(L_1)})\\
        f_3 &\in H^0(\clg{T},\clg{O}_{\clg{T}}) \otimes_{O_K} H^0(U_i,\clg{O}_{U_i}).
    \end{align*}
    By construction, $\omega'$ is invariant under the action of $\clg{T}$, 
    thus $\omega_1$ is an invariant form on $\clg{T}$ and thus extends to a log form on $\clg{T}_\Sigma$. Further, since $f_2$ and $f_3$ are invariant under the action of $\clg{T}$ 
    it follows that $f_2 = c_2 \otimes f'_2$ and
    $f_3 = c_3 \otimes f'_3$ for $c_2,c_3 \in O_K$ and 
    $f'_2 \in H^0((V(L_1),\clg{O}_{V(L_1)})$ and
    $f'_3 \in H^0((U_i,\clg{O}_{U_i})$. Thus, $\omega'$ extends locally to
    a log differential form on $\clg{T}_\Sigma \times U_i \times V(L_1)$, for all $i$, hence $\omega'$ extends to all of $E_\Sigma$. Finally,  
    by Lemma \ref{lem_log_forms_ssmodel}, $\omega'$ gives rise to an integral form 
    \[
    \omega' \in H^0(\ol{E},\Omega^{1,+}_{G^{\sharp,\rig}/K}).
    \] 
    
    \textbf{Step 4:} Assume that $u^\sharp(Y) \nsubseteq \ol{E}$. Since $V(\omega_{A'})^\rig$ can be covered by open balls of the form $V(p^{-r'}L_1)_\eta$ for $r' \geq 0$, there exists $r_0 \geq 0$ such that 
    $u^\sharp(Y)$ is contained in the pushout $\ol{E'} \subset G^{\sharp,\rig}$
    \[
    \begin{tikzcd}
        \ol{V}=V(L_1)_\eta \ar[r] \ar[d] \arrow[dr, phantom, "\ulcorner"] & \ol{E} \ar[d]  \ar[r] & G^\rig \ar[d, equals]\\
        V(p^{-r_0}L_1)_\eta \ar[r] & \ol{E}' \ar[r] & G^\rig.
    \end{tikzcd}
    \]
    We conclude by noticing that $p^r\omega' \in H^0(\ol{E},\Omega^{1,+}_{G^{\sharp,\rig}/K})$ also extends to $\ol{E}'$.
\end{proof}

\begin{construction}\label{constr_1mot_formula}
Let $M$ be a $1$-motive with semistable reduction and let $\omega \in H_\dr(M)$. Assume 
that $Y$ is constant over $K$,
that $u \colon Y \to M$ is injective, and that Proposition \ref{prop_subgrp_1mot} applies to $M$.
Pick an $O_K$-lattice $H \subset H_\dr(M)$ of integral forms, namely,
\[
H \subset H^0(\ol{E},\Omega^{1,+}_{E(A)^\rig/K})
\]
as in Construction \ref{rigid_construction_abelian}. 
We will construct an $O_K$-linear map
\[
H \to \Hom_{G_K}(T_p(M),B_\dr^+),\quad \omega \mapsto c_\omega.
\]
Recall that the Tate module of $M$ admits the following description:
\[
T_p(M) = \varprojlim_n M[p^n] \cong \varprojlim_n \left(\left\{ x \in G(\ol{K}) \mid p^n x \in u(Y(\ol{K})) \right\}/ u(Y)(\ol{K})\right).
\]
Fix $\omega \in H$. 
Let $x:=(x_n)_n \in T_p(M)$ and choose lifts 
$\hat{x}_n \in \ol{E}(A^\rig)(\C_p)$ of $x_n$ such that $\hat{x}_0 = u^\sharp(y)$.
Then, $c_\omega(x)$ is defined analogously to the abelian case. 
Explicitly, 
\[
p^n\hat{x}_n^* \omega = d(f_\omega(\lambda_1) + pf_\omega(\lambda_2) + \cdots + p^{n-1} f_\omega(\lambda_{n})),
\]
where $p\hat{x}_{n} = \lambda_{n} + \hat{x}_{n-1}$ for $n \geq 1$ and $df_\omega$ is the restriction of $\omega$ to $\ol{V}$ \footnote{As in the abelian case $df_\omega$ corresponds to a unique linear form $f_\omega \colon \ol{V} \to \G_a$.}.
By the same argument as in Construction \ref{rigid_construction_abelian}, the sequence of pairs
$(\hat{x}_n^* \omega, -s_n)_n$,
where $s_n = f_\omega(\lambda_1) + pf_\omega(\lambda_2) + \cdots + p^{n-1} f_\omega(\lambda_{n})$, defines an element
\[
c_{\omega} (x) := ([(\hat{x}_n^* \omega,-s_n)])_n \in B^+_\dr.
\]
\end{construction}

\begin{theorem}\label{prop_cdr}
    The elements $c_{\omega}(x)$ are independent of the choices of lifts $\hat{x}_n$ and the choice of $\ol{E}$. The resulting map $T_p(M) \to B_\dr^+$ induces a functorial isomorphism
    \begin{align*}
    c_\dr \colon H_\dr(M) \otimes_{O_K} B_\dr &\to H_\et(M) \otimes_{\Z_p} B_\dr
    \end{align*}
    of filtered $G_K$-modules for all $1$-motives $M$. In particular, $c_\dr$ preserves the weight filtration of $M$.
\end{theorem}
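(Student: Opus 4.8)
The plan is to imitate the abelian case (Construction~\ref{rigid_construction_abelian} and the proposition following it) and then to deduce the isomorphism statement by d\'evissage along the weight filtration. First one checks that $c_\omega(x)$ is independent of all choices. The computation from the abelian case applies essentially verbatim: replacing a lift $\hat x_n$ by $\lambda\hat x_n$ with $\lambda\in\ol V(\C_p)$ alters the pair $(\hat x_n^*\omega,-s_n)$ by the exact pair $(df_\omega(\lambda),-p^nf_\omega(\lambda))$, hence does not change the class in $H^0(A_\dr\otimes^L\Z/p^n)$; changing the auxiliary point $y\in Y$ with $p^nx_n=u(y)$ merely moves $\hat x_0$ inside $u^\sharp(Y(\C_p))\subseteq\ol E(\C_p)$ (Proposition~\ref{prop_subgrp_1mot}(2)) and is absorbed the same way; and two subtorsors $\ol E$ are compared by embedding them into a common one, exactly as in Lemma~\ref{lem_qc_subgrp_A}. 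The same bookkeeping gives $O_K$-linearity in $\omega$, additivity and $G_K$-equivariance of $c_\omega$, and, after inverting $p$, independence of the lattice $H$, so that $c_\dr$ is a well-defined $B_\dr$-linear $G_K$-morphism on $H_\dr(M)\otimes_{O_K}B_\dr$.

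\textbf{Functoriality.} A morphism $f\colon M\to M'$ in $\clg M_1(K)\otimes\Q$ induces $E(f)^\rig\colon E(M)^\rig\to E(M')^\rig$; after multiplying by a power of $p$ and enlarging the chosen subtorsors one arranges $E(f)^\rig(\ol E)\subseteq\ol E'$, and the lifts of $T_p(f)(x)$ may be taken to be the $E(f)^\rig$-images of those of $x$. Since $c_\omega(x)$ is built purely out of pullbacks of $\omega$ along lifts, this yields $c_{f^*\omega'}(x)=c_{\omega'}(T_p(f)(x))$, i.e. the naturality square commutes. Applying this to the sub-$1$-motives $W_{-2}M\hookrightarrow W_{-1}M\hookrightarrow M$ and their quotients, and using that $H_\dr(-)$ and $H_\et(-)$ are exact on $\clg M_1(K)\otimes\Q$ while $B_\dr$ is flat, exhibits $c_\dr$ as a morphism of short exact sequences of filtered $B_\dr$-modules; in particular $c_\dr$ respects the weight filtration.

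\textbf{The isomorphism.} I would induct on the weight via the two short exact sequences of $1$-motives
\[
0\to[0\to G]\to[Y\to G]\to[Y\to0]\to0,\qquad 0\to[0\to T]\to[0\to G]\to[0\to A]\to0 .
\]
By a filtered five lemma it suffices to treat the three pure cases, showing each is a filtered isomorphism. For $[Y\to0]$ the realizations carry trivial twist and trivial Hodge jumps, and $c_\dr$ is the evident base change of an identity of finite-dimensional vector spaces. For $[0\to\G_m]$ one has $E(\G_m)=\G_m$, $\omega=\tfrac{dt}{t}$, and for $x=(\zeta_{p^n})_n\in T_p\G_m$ the system of pullbacks $\hat x_n^*\omega=\dlog\zeta_{p^n}$ assembles, through Fontaine's isomorphism~(\ref{eq_fontaine_tate}), into the element $t=\log[\varepsilon]\in B_\dr^+$, whose image in $\gr^1_FB_\dr^+$ generates $\C_p(1)$; hence $c_\dr$ sends $\tfrac{dt}{t}$ to a $B_\dr$-basis of $B_\dr(1)$, and by additivity the case of a torus (split after an unramified base change) follows. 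For $[0\to A]$, the associated-graded maps of $c_\dr$ recover the Hodge--Tate maps $\rho^0_A$ and $\rho^1_A$ of Construction~\ref{constr_HT_comp}; the identification is a diagram chase comparing the rigid perfectoid cover $\varprojlim_{[p]}E(A)^\rig$ with the scheme-theoretic cover~(\ref{ses_perf_cover}) via Beilinson's Poincar\'e Lemma. Since $\rho^0_A$ and $\rho^1_A$ are isomorphisms by the theorem of Tate and Raynaud, $c_\dr$ is a filtered isomorphism for $A$, and the induction is complete; the weight- and Hodge-filtered isomorphism statements propagate by d\'evissage, using exactness of the realizations and strict compatibility of these filtrations with the short exact sequences above.

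\textbf{Main obstacle.} The delicate point is the abelian case: making rigorous that ``pullback of invariant forms along rigid lifts'' computes the same classes as the connecting maps $\delta$ of Construction~\ref{constr_HT_comp} and as Fontaine's $\rho^0_A$. This forces one to identify $\varprojlim_{[p]}E(A)^\rig$ (together with its formal model $\hat E_\eta$) with the analytification of $\clg A_\infty$ at the level of $h$- and \'etale cohomology, and then to track the Poincar\'e Lemma quasi-isomorphism $A_\dr/F^i\xrightarrow{\ \sim\ }(\clg A_\dr/F^i)^\wedge_{(p)}$ through the long exact sequence attached to~(\ref{ses_perf_cover}); the bookkeeping with the two-term complex computing $A_\dr\otimes^L\Z/p^n$ and with the partial sums $s_n$ is where the real work lies. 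A secondary nuisance is arranging the choices of $\ol E$ and $H$ compatibly along $f\colon M\to M'$ so that the naturality square commutes on the nose rather than merely up to a bounded power of $p$.
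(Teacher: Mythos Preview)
Your overall strategy---well-definedness as in the abelian case, then d\'evissage along the weight filtration to the three pure pieces---matches the paper's. However, you skip the step the paper treats most carefully: extending $c_\dr$ from the restricted class of $1$-motives in Construction~\ref{constr_1mot_formula} (semistable reduction, $Y$ constant over $K$, $u$ injective, Proposition~\ref{prop_subgrp_1mot} applicable) to \emph{all} $1$-motives over $K$. Your functoriality paragraph implicitly assumes $\ol E$ and the formula for $c_\omega$ exist for every $M$, but Proposition~\ref{prop_subgrp_1mot} only produces $\ol E$ after passing to an isogenous motive and a finite extension $K'/K$. The paper handles this by (a) splitting $M\cong[\ker u\to0]\oplus[Y'\to G]$ with $u'$ injective in the isogeny category (Lemma~\ref{lem_1mot_split}), (b) defining $c_\dr$ for $[Y\to0]$ by hand via $\ol K\hookrightarrow B_\dr$, (c) for the injective-$u$ piece, constructing $c_{\dr,K'}$ over a suitable $K'$, observing the underlying $B_\dr$-map is independent of $K'$, and then doing an explicit Galois-twist computation to check $G_K$-equivariance when $\omega$ is defined over $K$, and (d) treating morphisms case by case according to whether source and target are of type $[Y\to0]$ or $[Y\to G]$ with $u$ injective. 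None of this is automatic from your ``enlarge $\ol E$ and push lifts through $E(f)^\rig$'' argument, and the $G_K$-equivariance descent in (c) is a genuine calculation you do not mention.

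On the pure abelian piece the paper is terser than you: it simply asserts that the isomorphism for $M=A$ (together with $M=\G_m$ and $M=Y$) gives the general case via the weight filtration, without rederiving it from Tate--Raynaud. Your plan to identify $\gr_F c_\dr$ with the Hodge--Tate maps $\rho^0_A,\rho^1_A$ of Construction~\ref{constr_HT_comp} and invoke Tate--Raynaud is a reasonable alternative, but note that Construction~\ref{constr_HT_comp} is written for good reduction; in the semistable case you would need the analogous statement, and the identification you flag as the ``main obstacle'' is indeed nontrivial. Finally, the paper verifies the filtered property directly (if $\omega\in F^1H_\dr(M)$ then $\omega$ is pulled back from $G^\rig$, hence $df_\omega=0$ on $\ol V$ and $c_\omega(x)\in F^1B_\dr^+$); you should include this check rather than appealing to abstract strictness.
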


\begin{proof}
    The fact that $c_\omega(x)$ is well-defined and functorial follows as in the abelian case using that $Y$ is constant over $K$ and hence the pullback of $\omega$ along any $y \in u^\sharp(Y)$ vanishes.
    
    We begin by proving that $c_\omega$ induces a filtered isomorphism of $G_K$-modules
    for all $1$-motives $M = [u \colon Y\to G]$ satisfying the assumptions of Construction \ref{constr_1mot_formula}. In this case, $c_\dr$ is given by
    \begin{align}\label{comp_formula}
    c_\dr \colon H \otimes_{O_K} B_\dr &\to \Hom_{G_K}(T_p(M),B_\dr)\\
    \omega\otimes \lambda &\mapsto [x \mapsto \lambda c_{\omega}(x)] \nonumber
    \end{align}
    under the identifications
    \[
    H \otimes_{O_K} K \cong H_\dr(M), \quad H_\et(M) \otimes_{\Z_p} B_\dr \cong \Hom_{G_K}(T_p(M),B_\dr).
    \]
    We start by showing that $c_\dr$ is filtered. Let $\omega \in F^1 H_\dr(M) \cap H \cong H^0(G,\Omega^1_{G/K}) \cap H$.
    It suffices to show that  $c_{\omega}(x) \in F^1 \subset B_\dr^+$.
    Going through the proof of Proposition \ref{prop_subgrp_1mot}, we find that $\omega$ is represented by an integral form $\omega$ on $\ol{E}$ 
    which is pulled back along $q \colon \ol{E} \to {G}^{\sharp,\rig}$. 
    Thus, the restriction of $\omega$ to $\ol{V}$ vanishes, and hence $c_{\omega}(x) \in F^1$ for all $x \in T_p(M)$.
    Indeed, we have 
    \[
    p^n \hat{x}^*_n \omega = 0,
    \]
    for any choice of lifts $\hat{x}_n \in \ol{E}(\C_p)$ of $x_n$ and 
    therefore $c_\omega(x) \in F^1 = \ker( B^+_\dr \to O_{\C_p})$.

    Let $\phi \colon M \to M'$ be a morphism between $1$-motives satisfying the assumptions above. Proving that $c_\dr$ is functorial with respect to $\phi$ boils down to 
    \[
    c_{\phi^*\omega}(x) = c_{\omega}(\phi(x))
    \]
    for all $\omega \in H_\dr(M')$ and all $x \in T_p(M)$, where $\phi^*\omega$ is defined as follows. 
    The map $\phi$ induces a morphism between the universal extensions
    \[\phi^\sharp \colon G^\sharp \to G'^{\sharp}\] 
    and by definition $\phi^* \omega := \phi^{\sharp,*} \omega$.
    Let $\ol{E}'$ be an open subgroup of $G'^{\sharp,\rig}$ as in Proposition \ref{prop_subgrp_1mot}. Then, we may assume that
    $\omega$ is representable by an integral form $\omega$ on $\ol{E}'$.
    Since $c_\omega$ is independent of all the choices, we can choose a quasi-compact open subgroup $\ol{E} \subset \phi^{-1}(\ol{E}^{-1})$ satisfying the assumptions of Construction \ref{constr_1mot_formula}, and lifts of
    $\phi(x_n)$ to $\ol{E}(\C_p)$, for all $n\geq 0$. Then, we can use the integral form $\phi^{\sharp,*} \omega$ together with the chosen lifts to compute $c_{\phi^*\omega}(x)$ and the claim follows.

    The second step is to extend $c_\dr$ functorially to all $1$-motives.
    Observe that the realization functors $H_\dr(-)$ and $H_\et(-,\Q_p)$ extend functorially to the isogeny category $\clg{M}_1(K) \otimes \Q$. Moreover, recall that
    every $1$-motive $M$ decomposes in $\clg{M}_1(K)\otimes \Q$ as a direct sum
    \[M \cong [u' \colon Y' \to G']\oplus [\ker(u) \to 0], \]
    where $u'$ is injective, see Lemma \ref{lem_1mot_split}. 
    Therefore, it remains to extend the comparison isomorphism to the following cases:
    \begin{enumerate}[(i)]
        \item Morphisms $[Y_1 \to 0] \to [Y_2 \to 0]$.
        \item Morphisms $[Y_1 \to G_1] \to [Y_2 \to G_2]$, where $Y_i \to G_i$ is injective for $i \in \{0,1\}$.
        \item Morphisms $[Y_1 \to G_1] \to [Y_2 \to 0]$, where $Y_1 \to G_1$ is injective.
    \end{enumerate}
    
    \textbf{Case (i)} Let $M = [Y \to 0]$ be of the form. Then the de Rham and \'etale realizations are given by 
    $H_\dr(M) = \Hom(Y, \G_a)$ and $H_\et(M) \otimes B_\dr = \Hom_{\Z_p[G_K]}(Y(\ol{K}) \otimes \Z_p, B_\dr)$. We define 
    \[
       c_\dr \colon \Hom(Y ,\G_a) \otimes B_\dr \to  \Hom_{\Z_p[G_K]}(Y(\ol{K}) \otimes \Z_p, B_\dr)
    \]
    as the natural map induced by the inclusion $\ol{K} \to B_\dr$. Clearly, $c_\dr$ is an isomorphism of $G_K$-modules and functorial in $M$. It is filtered because the Hodge filtration on $M$ is trivial and the image of $\ol{K} \to B_\dr$ is contained in $F^0$. 
   
    \textbf{Case (ii)} Let $M = [u \colon Y \to G]$ be a $1$-motive and assume that $u$ is injective.
    There exists a finite extension $K'/K$ such that $M_{K'}$ satisfies the assumptions of Construction \ref{constr_1mot_formula}. In this case, we have constructed a $G_{K'}$-linear $B_\dr$-linear filtered morphism
    \[
    c_{\dr,K'} \colon H_\dr(M_{K'}) \otimes_{K'} B_\dr \to H_\et(M_{K'}) \otimes_{\Z_p} B_\dr.
    \]
    In fact, the underlying $B_\dr$-linear morphism is independent of the
    field extension $K'$, under the identifications $H_\dr(M) \otimes_{K} B_\dr = H_\dr(M_{K'})\otimes_{K'} B_\dr$ and
    $T_p(M_{K'}) = \mathrm{Res}_{G_{K'}} T_pM$.
    Thus, we may define
    \begin{align}\label{eq_cdr}
        c_\dr \colon H_\dr(M)\otimes_{K} B_\dr \to H_\et(M) \otimes_{\Z_p} B_\dr
    \end{align}
    for a general $1$-motive by (\ref{comp_formula}) after base changing to a finite extension $K'$ (or $\ol{K}$).
    The underlying map is independent of $K'$ and a filtered isomorphism if and only if it is after base change to any finite extension of $K$.
    It remains to check that (\ref{eq_cdr}) is $G_K$-linear.
    
    The claim boils down to showing that the map
    \[
    c_\omega \colon T_p(M_{\ol{K}}) \to B_\dr
    \]
    is $G_{K}$-equivariant if $\omega \in H_\dr(M_{\ol{K}})$ integral and defined over $K$, i.e. $\omega \in \pr_0^*H_\dr(M) \subset H_\dr(M_{\ol{K}})$, where $\pr_0 \colon M_{\ol{K}} \to M$ is the projection.

    To show this, recall how $G_K$ acts on $T_p(M_{\ol{K}})$. Let $\sigma \in G_K$.
    The twist of $M_{\ol{K}}$ is defined as $\sigma^*M_{\ol{K}} := M_{\ol{K}} \times_{\ol{K},\sigma} \ol{K}$\footnote{We write $\sigma \colon \Spec \ol{K} \to \Spec \ol{K}$ for the morphism induced by $\sigma^{-1}$}. Denote the projection $\sigma^*M_{\ol{K}}\to M_{\ol{K}}$ by $\phi_\sigma$.
    Let $x \in M_{\ol{K}}(\ol{K})$. Then, the product $\sigma^*x := x \times \sigma \colon \Spec \ol{K} \to \sigma^* M_{\ol{K}}$ defines a point of $\sigma^*M_{\ol{K}}$ which maps to $\sigma(\pr_0(x))$ under the $\sigma^* M_{\ol{K}}(\ol{K}) \to M(\ol{K})$.
    Finally $\sigma$ acts on $M_{\ol{K}}(\ol{K})$ by
    \[
    x \mapsto \phi_\sigma \circ \sigma^* x.
    \]
    Now, we can compute
    \begin{align*}
    c_{\pr_0^* \omega}(\phi_\sigma \circ \sigma^* x) & =
    (\sigma^* x) \phi_\sigma^* \pr^* \omega \\
    &= (\sigma(\pr_0(x))^* \omega \\
    &=\sigma^{*}(x^* \omega) = \sigma c_\omega(x),
    \end{align*}
    where used the functoriality of $c_{\dr,\ol{K}}$ and the following commutative diagram to compute the pullbacks
    \[\begin{tikzcd}
	\Spec \ol{K} && \Spec \ol{K} && M \\
	\\
	\sigma^*M_{\ol{K}} && M_{\ol{K}}.
    \arrow[from=1-1, to=1-3, "\sigma"]
	\arrow[from=1-1, to=3-1, "\sigma^*x"]
	\arrow[from=1-1, to=1-5, bend left, "\sigma(\pr_0(x))"]
	\arrow[from=1-3, to=3-3, "x"]
	\arrow[from=1-3, to=1-5,"\pr_0(x)"]
	\arrow[from=3-1, to=3-3, "\phi_\sigma"]
	\arrow[from=3-3, to=1-5, "\pr_0"]
\end{tikzcd}\]
    
    \textbf{Case (iii)} Note that a morphism of the form $f \colon M_1 := [Y_1 \to G_1] \to M_2:=[Y_2 \to 0]$ factors uniquely through the quotient $[Y_1 \to G_2] \to [Y_1 \to 0] \xrightarrow{g} [Y_2 \to 0]$. Thus, 
    \[H_\dr(f) \colon H_\dr(M_2) \to H_\dr (M_1)\]
    has to factor through inclusion $i_\dr \colon H_\dr(Y_1) \to H_\dr(M_1)$, and the same holds for $i_\et \colon H_\et(Y_1) \to H_\et(M_1)$.
    By the functoriality established in case (ii), 
    \[
    c_\dr \colon H_\dr(M_1) \otimes B_\dr \to H_\et(M_1) \otimes B_\dr
    \]
    maps the subspace $H_\dr(Y) \otimes B_\dr$ to $H_\et(Y) \otimes B_\dr$ and we define $c_\dr(f) = i_\et\circ c_\dr(g)$.
    
    That $c_\dr$ is an isomorphism in general follows the cases of $M = \G_m, M=A$ and $M = Y$ by looking at the weight filtration and base changing to a large enough field extension $K'/K$. 
\end{proof}

\begin{remark}
    The above proof also shows that the comparison isomorphism is functorial for morphisms $M^\rig \to M'^\rig$ in $\clg{M}_1(K)^\rig$.
\end{remark}

More generally, we have the following corollary, which proves the second part of Theorem \ref{thm_1_mot_intro}.

\begin{cor}\label{cor_functoriality_cdr}
    The comparison isomorphism $c_\dr$ constructed above is functorial for morphisms $M_1^\rig \to M_2^\rig$ in $D^b_{\rom{fppf}}(\Spa(K, O_K))$.
\end{cor}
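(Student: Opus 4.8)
Since $H_\dr(-)$, $H_\et(-)$ and the natural transformation $c_\dr$ all factor through the isogeny category, I would work throughout in $D^b_\fppf(\Spa(K,O_K),\Q)$. Given a morphism $f\colon M_1^\rig\to M_2^\rig$ there, the plan is to reduce the naturality square for $f$ to morphisms of rigid $1$-motives, for which functoriality of $c_\dr$ is already settled by Theorem \ref{prop_cdr} and the remark following it. Concretely, it suffices to prove that the natural functor
\[
\clg{M}_1(K)^\rig\otimes\Q \longrightarrow D^b_\fppf(\Spa(K,O_K),\Q),\qquad [Y\to G]\longmapsto [Y\to G^\rig]
\]
(with $Y$ placed in cohomological degree $-1$) is full: then $f$ comes from a morphism in $\clg{M}_1(K)^\rig\otimes\Q$, compatibly with the realizations, and the claim follows.

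To prove fullness I would first reduce, by Lemma \ref{lem_1mot_split}, to $M_i^\rig$ with injective structure map; the leftover lattice summands $[\ker u\to 0]\simeq(\ker u)[1]$ contribute only morphisms of locally constant sheaves (treated in case (i) of Theorem \ref{prop_cdr}) and, into a $1$-motive with injective structure map, no other maps. Next I would use the weight filtration to present each $M_i^\rig$ as an iterated extension in $D^b_\fppf$ of the shifted lattice $Y_i[1]$ and of the analytic groups $A_i^\rig$ and $T_i^\rig$, through the distinguished triangles $T_i^\rig\to G_i^\rig\to A_i^\rig\xrightarrow{+1}$ and $G_i^\rig\to M_i^\rig\to Y_i[1]\xrightarrow{+1}$, whose structural maps are themselves morphisms of rigid $1$-motives and hence compatible with $c_\dr$ by the remark after Theorem \ref{prop_cdr} and by inspection of Construction \ref{constr_1mot_formula}. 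This reduces fullness to understanding the $\Hom$- and $\Ext^1$-groups in $D^b_\fppf(\Spa(K,O_K),\Q)$ between the pieces $Y^\rig$, $A^\rig$, $T^\rig$. The only feature absent in the algebraic category is that these groups grow when the abelian part has bad reduction --- e.g.\ there are then nonzero analytic homomorphisms $\G_m^\rig\to A^\rig$ and nonzero classes in $\Ext^1_\fppf(A^\rig,Y)$. Here Raynaud's non-archimedean uniformization resolves everything: it identifies the abeloid $A^\rig$ with the rigid $1$-motive $[\Lambda_A\to E_A^\rig]$ of its uniformization (whose abelian part now has good reduction) inside $D^b_\fppf$, via the boundary map of the triangle $\Lambda_A\to E_A^\rig\to A^\rig\xrightarrow{+1}$, and similarly for semi-abeloids. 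Under this identification all the extra morphisms become morphisms of rigid $1$-motives between $1$-motives with semistable reduction, so Theorem \ref{prop_cdr} applies; functoriality of the uniformization makes the identification natural, which is precisely fullness.

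The main obstacle is this last ingredient in its precise form: at the level of fppf sheaves on $\Spa(K,O_K)$ one must verify that a lattice maps monomorphically into $G^\rig$, that the boundary maps above realize Raynaud uniformization inside $D^b_\fppf$, and that the whole construction is functorial --- equivalently, the full faithfulness of $\clg{M}_1(K)^\rig\otimes\Q\hookrightarrow D^b_\fppf(\Spa(K,O_K),\Q)$, which I expect to be provided by the discussion of rigid $1$-motives in the appendix (see \ref{def_rig_1mot}). Granting that, the corollary is a formal consequence of Theorem \ref{prop_cdr}.
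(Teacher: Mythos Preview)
Your approach and the paper's both ultimately rest on Raynaud's uniformization, but the paper's proof is much shorter because it invokes a packaged result rather than re-deriving it. The paper uses Corollary~\ref{cor_morphism_rig_mot}: for any $1$-motives $M_1,M_2$ with Raynaud replacements $M'_1,M'_2$, there is a natural bijection
\[
\Hom_{\clg{M}_1(K)}(M'_1,M'_2)\;\xrightarrow{\sim}\;\Hom_{D^b_\fppf(\Spa(K,O_K),\Z)}(M_1^\rig,M_2^\rig).
\]
Hence any $f\colon M_1^\rig\to M_2^\rig$ in $D^b_\fppf$ factors \emph{uniquely} as $\can_2\circ f'\circ\can_1^{-1}$ with $f'\colon M'_1\to M'_2$ an algebraic morphism of (strict) $1$-motives, and one sets $c_\dr(f)=c_\dr(\can_2)\circ c_\dr(f')\circ c_\dr(\can_1)^{-1}$. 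Functoriality of $c_\dr$ for $\can_i$ and for $f'$ is already established, and the uniqueness of the factorization makes this well-defined.

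Your plan --- proving fullness of $\clg{M}_1(K)^\rig\otimes\Q\to D^b_\fppf(\Spa(K,O_K),\Q)$ via the weight filtration and then invoking Raynaud uniformization to handle the new analytic morphisms arising from bad reduction --- is essentially a by-hand reconstruction of the content behind Corollary~\ref{cor_morphism_rig_mot}. The dévissage along the weight filtration and the separate $\Hom/\Ext^1$ bookkeeping are unnecessary once you know the Raynaud replacement exists and yields that bijection. Two further remarks: (i) the paper explicitly notes that rigid $1$-motives do \emph{not} embed fully faithfully into $D^b_\fppf$, so you should be cautious about aiming for full faithfulness of the rigid category itself --- the clean statement is the one through strict $1$-motives via the Raynaud replacement; (ii) the uniqueness of the factorization in the paper's argument also handles well-definedness of $c_\dr(f)$, whereas in a fullness-only argument you would still need to check that different preimages of $f$ induce the same map on realizations.
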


\begin{proof}
Let $f$ be a morphism
\[
f \colon M_1^\rig \to M_2^\rig
\]
in $D^b_\fppf(\Spa(K, O_K))$. By Corollary \ref{cor_morphism_rig_mot}, $f$ can be written uniquely
as the composition
\[
M_1^\rig \xrightarrow{\can_1^{-1}} (M'_1)^\rig \xrightarrow{f'} (M'_2)^{\rig} \xrightarrow{\can_2} M_2^\rig,
\]
where $\can_i \colon (M'_i)^\rig \to M_i^\rig$ is the Raynaud replacement.
Since $c_\dr$ is clearly functorial for $\can_i$ and for $f'$ (by the remark above), and because the above decomposition is unique, we may define
\[
c_\dr(f) := c_\dr(\can_2) \circ c_\dr(f') \circ c_\dr(\can_1)^{-1},
\]
using that $c_\dr(\can_i)$ is an equivalence \footnote{This follows directly from the fact that $\can$ induces isomorphisms between the \'etale realizations.}. 
\end{proof}

\begin{example}\label{ex_tate_curve}
Let $E_q$ be the Tate elliptic curve over $K$ associated to an element $q \in O_K$ with $|q|<1$.
Let $\Phi \colon \G_m^\rig \to E_q^\rig$ be the $p$-adic uniformization of $E_q$ 
\cite[Chapter 5 Theorem 3.1]{Silver94}. The Raynaud replacement of $E$ \cite{raynaud_1-mot} is given by 
\begin{align*}
M := [u \colon \Z &\to \G_m] \\
1 &\mapsto q.
\end{align*}
The uniformization $\Phi$ induces a quasi-isomorphism in the category $\clg{M}_1(K)^\rig$
\[
M^\rig \xrightarrow{\sim} E^\rig.
\]
In particular, $\Phi$ induces an isomorphism between the de Rham and $p$-adic \'etale realizations of $M$ and $E$. 
Theorem \ref{thm_1-mot_comp_formula} allows us to compute the periods of $E_q$ by computing the periods of $M$.
For this, we choose a compatible system of $p$'th roots of unity 
\[
\epsilon = (1,\zeta_1,\zeta_2,\dots)
\]
and a compatible system of $p$'th roots of $q$
\[
\gamma = (q,q_1,q_2,\dots).
\]
In particular, $\epsilon$ and $\gamma$ generate $T_p(M)$. The weight filtration shows that $T_p(M)$ is an extension of $\Z_p$ by $\Z_p(1)$
\[
\Z_p(1) \to T_p(M) \to \Z_p.
\]
We see immediately that $\sigma \in G_K$ acts on $T_p(M)$ in the basis $(\epsilon,\gamma)$ by
\begin{equation}\label{eq_galois_action}
\begin{pmatrix}
  \chi(\sigma) & \eta(\sigma) \\
  0 & 1
\end{pmatrix},
\end{equation}
where $\chi$ is the cyclotomic character and $\eta\cdot \epsilon \colon G_K \to \Z_p(1)$ is a $1$-cocycle. The class of $\eta \cdot \epsilon$ in $H^1(G_K,\Z_p(1))$ is called the Kummer torsor.

Moreover, let $E(M) = [u^\sharp \colon \Z \to G^\sharp]$ be the universal extension of $M$. 
As in Example \ref{uni-ext-G_m}, $G^\sharp$ splits canonically as
$\G_a \times \G_m$ and 
\[
u^\sharp = 1 \times u \colon \Z \to \G_a \times \G_m.
\]
The splitting $G^\sharp \cong \G_a \times \G_m$ induces a splitting of the Hodge filtration 
\[
H_\dr(M) \cong \Hom(\Z,K) \oplus F^1 \cong \Hom(\Z,K) \oplus H_\dr(\G_m) \cong \Lie(\G_a)^\vee \oplus  H_\dr(\G_m),
\]
using that $\Lie(\G_a) \cong \Hom(\Z,K)^\vee$. Let $d \log x$ be a generator of $F^1 H_\dr(M) \cong H_\dr(\G_m)$ and let $ds \in  \Lie(\G_a)^\vee$ be the differential form corresponding to the identity $\id \colon \G_a \to \G_a$.

We claim that the period pairing
\[
\langle -, - \rangle_\dr \colon H_\dr(M) \times T_p(M) \to B_\dr
\]
is given by 
\begin{align*}
    \langle d\log x, \epsilon \rangle_\dr &=\log [\epsilon] =: t, & \quad
    \langle d\log x, \gamma \rangle_\dr &= \log [\gamma],\\
    \langle ds, \epsilon \rangle_\dr &= 0, &\quad
    \langle ds, \gamma \rangle_\dr &= 1.
\end{align*}
where $\log [\epsilon] = t \in B_\dr$ is the Fontaine element (defined by $\epsilon$) and $\log[\gamma]$ is the logarithm of $\gamma \in O_{\C_p}^\flat$ as defined in Appendix \ref{constr_classical_log}. 
Since $d\log x$ and $ds$ are already
integral, we only need to choose lifts of $\epsilon$ and $\gamma$ to $G^\sharp(\C_p)$:
\begin{align*}
\hat{\epsilon} &= ((0,1),(0,\zeta_1),(0,\zeta_2),\dots) \\
\hat{\gamma} & = ((1,q),(0,q_1),(0,q_2),\dots).
\end{align*}
By Theorem \ref{thm_1-mot_comp_formula} the periods of $M$ are given by
\begin{align*}
    \langle d\log x, \epsilon \rangle_\dr &=([d\log \zeta_n,0])_n, &\quad 
    \langle d\log x, \gamma \rangle_\dr &= ([d\log q_n,1])_n,\\
    \langle ds, \epsilon \rangle_\dr &= 0, &\quad
    \langle ds, \gamma \rangle_\dr &= ([0,1])_n = 1,
\end{align*}
where $[d\log \zeta_n, 0]$ denotes the element in $A_\dr \otimes^L \Z/p^n$ represented by 
\[
(d\log \zeta_n,0) \in \Omega^1_{(O_{\C_p},\can)/(O_K,\can)}\oplus O_{\C_p},
\]
as in Constructions \ref{rigid_construction_abelian}, \ref{constr_1mot_formula}.
In particular, we see that $\langle d\log x, -\rangle_\dr \in F^1 \subset B_\dr^+$. 
Since the de Rham pairing is $G_K$-equivariant, 
(\ref{eq_galois_action}) implies that
$\sigma \in G_K$ acts on $\langle d\log x, \epsilon \rangle_\dr$ and $\langle d\log x, \gamma \rangle_\dr$ as follows
\begin{align*}
\sigma \langle d\log x, \epsilon \rangle_\dr &= \chi(\sigma) \langle d\log x, \epsilon \rangle_\dr \\
\sigma \langle d\log x, \gamma \rangle_\dr &=\langle d\log x, \epsilon \rangle_\dr + \eta(\sigma) \langle d\log x, \epsilon \rangle_\dr,
\end{align*}
where $\chi$ is the cyclotomic character and $\eta\cdot \epsilon \in H^1(G_K,\Z_p(1))$ is the Kummer class.
Assume for now that $ \langle d\log x, \epsilon \rangle_\dr \equiv t$ and $\langle d\log x, \gamma \rangle_\dr \equiv \log[\gamma]$ modulo $F^2 B_\dr^+$.
Under this assumption, the claim follows from Tate's theorem (which implies $(F^n)^{G_K}=0$) and the following computations 
\[ \frac{\langle d\log x, \epsilon \rangle_\dr - t}{t} \in (F^1)^{G_K},
\]
and 
\[ 
\langle d\log x, \gamma \rangle_\dr - \log[\gamma] \in (F^2)^{G_K}.
\]
We refer to Appendix \ref{prop_font_log} and \ref{cor_graded_log} for the computation modulo $F^2$.

To summarize, the comparison isomorphism $\rho_\dr$ in the basis $(d\log x, \epsilon)$ and $(\epsilon^\vee,\gamma^\vee)$ is given by
\[
\begin{pmatrix}
t & 0 \\
\log[\gamma] & 1
\end{pmatrix},
\]
recovering the classical formula, see for example \cite[I.4 d)]{Andre_betti}.
\end{example}

\subsection{Comparison with Beilinson's construction}\label{sect_comp_comp}

Recall that Beilinson's comparison isomorphism $\rho_\dr$ is representable in $\DA_\et(K)$, Corollary \ref{cor_adr_reprs}, and consequently extends canonically to $1$-motives over $K$.
The goal of this subsection is to prove the first part of Theorem \ref{thm_1_mot_intro}. 

\begin{theorem}\label{thm_1-mot_comp_formula}
    The comparison isomorphism $\rho_\dr$ agrees with the comparison isomorphism $c_\dr$ of Theorem \ref{prop_cdr}. 
\end{theorem}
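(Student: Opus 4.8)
The plan is to recognise the explicit map $c_\dr$ as the same natural transformation as $\rho_\dr$, by rewriting $c_\dr$ inside Beilinson's $h$-sheaf formalism in exact analogy with the Hodge--Tate computation of Construction~\ref{constr_HT_comp}. First I would reduce to a convenient class of $1$-motives. Both $\rho_\dr$ (via the representability of $\clg{A}_{\dr}$ in $\DA_\et(K)$, Corollary~\ref{cor_adr_reprs}) and $c_\dr$ (Theorem~\ref{prop_cdr}) are functorial, $B_\dr$-linear, filtered, $G_K$-equivariant, compatible with base change along finite extensions $K'/K$, and respect the weight filtration. Using the splitting $M\simeq[u'\colon Y'\to G']\oplus[\ker u\to 0]$ in $\clg{M}_1(K)\otimes\Q$ together with cases (i) and (iii) of the proof of Theorem~\ref{prop_cdr} — where both $\rho_\dr$ and $c_\dr$ are simply the map induced by $\ol{K}\hookrightarrow B_\dr$ — it therefore suffices to prove $\rho_\dr=c_\dr$, after a finite base change if necessary, for $1$-motives $M=[u\colon Y\to G]$ with semistable reduction, $Y$ constant, $u$ injective, to which Proposition~\ref{prop_subgrp_1mot} applies.

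The heart of the argument is to express $c_\dr$ as a connecting homomorphism. Recall $H_\dr(M)=\Inv(\Omega^1_{E(M)^\rig/K})$ for the rigid universal extension $E(M)^\rig=G^{\sharp,\rig}$, and that the transition map $E(M)\to M$ induces an isomorphism on $p$-adic étale realizations (its kernel is a vector group). On the quasi-compact open subgroup $\ol{E}\subset G^{\sharp,\rig}$ produced by Proposition~\ref{prop_subgrp_1mot} one has the ``perfectoid'' cover $\hat{E}_\infty\to\ol{E}$ — the pullback to $\ol{E}$ of the inverse limit under multiplication by $p$ — which, after base change to $\ol{K}$, $h$-sheafification and derived $p$-completion, becomes a pro-system of short exact sequences of $h$-sheaves
\[
0\to T_p(M)\to\hat{E}_\infty\to\ol{E}\to 0 .
\]
As in Construction~\ref{constr_HT_comp}, $\Hom_h(\hat{E}_\infty,-)$ is $p$-divisible, so the connecting map
\[
\delta\colon\Hom_h\big(T_p(M)_{\ol{K}},(\clg{A}_{\dr})^{\wedge}_{(p)}\big)\xrightarrow{\sim}\Hom_h\big(\ol{E}_{\ol{K}},(\clg{A}_{\dr})^{\wedge}_{(p)}[1]\big)
\]
is an isomorphism whose source is identified, by $h$-descent for torsion étale sheaves, with $\Hom_{\Z_p[G_K]}(T_p(M),B_\dr^+)$. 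An integral invariant form $\omega$ — a generator of the lattice $H$ — determines a class $\omega'$ in the target of $\delta$, and I claim $c_\omega=\delta^{-1}(\omega')$. To check this one computes $\RH(\ol{E},\clg{A}_{\dr}\otimes^L\Z/p^n)$ via the Breen--Deligne (or bar) resolution of the group $\ol{E}$, exactly as in the last paragraph of Construction~\ref{constr_HT_comp} but now with the full Hodge-filtered complex $\clg{A}_{\dr}$ rather than a single graded piece: a lift of a $T_p$-class to a $1$-cochain on $\hat{E}_\infty$ is given by the pullbacks $\hat{x}_n^*\omega$, and the resulting $1$-coboundary on $\ol{E}$ becomes a cocycle precisely upon subtracting the ``primitive'' $s_n=f_\omega(\lambda_1)+\dots+p^{n-1}f_\omega(\lambda_n)$; this is the content of the relations $s_{n-1}-s_n=-p^{n-1}f_\omega(\lambda_n)$ and $\hat{x}_n^*\omega-\hat{x}_{n-1}^*\omega=df_\omega(\lambda_n)$ already recorded in Construction~\ref{constr_1mot_formula}, read through the description of $\clg{A}_{\dr}\otimes^L\Z/p^n$ as the two-term totalisation of $L\hat{\Omega}^\bullet_{(O_{\C_p},\can)/(O_K,\can)}$.

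It then remains to match this $\delta$-description with $\rho_\dr$ and to remove the auxiliary hypotheses. Since $\clg{A}_{\dr}$ is representable in $\DA_\et(K)$, the comparison for $M$ is computed, via $H_\dr(M)=\Inv(\Omega^1_{E(M)^\rig})$ and $T_p(M)=T_p(E(M))$, by the $\clg{A}_{\dr}$-cohomology of $E(M)$; under $h$-descent the étale class of $x\in T_p(M)$ corresponds to the extension class of the torsor $\hat{E}_\infty$, the de Rham class of $\omega$ maps to its image in $\RS_h(\ol{E}_{\ol{K}},\clg{A}_{\dr})$, and $\rho_\dr(\omega)(x)$ is by construction $\delta^{-1}$ of that image, i.e.\ $c_\omega(x)$. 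Descent from $K'$ to $K$, the split-off lattice part, and the factorisation in case (iii) are then handled verbatim as in the proof of Theorem~\ref{prop_cdr}, using the $G_K$-equivariance established along the way. (One cannot shortcut this by reducing to the generators $\G_m$, $A$, $Y$ of $\clg{M}_1(K)\otimes\Q$ and invoking rigidity: the extension classes — Kummer torsors, semi-abelian extensions — are not pinned down by the filtered $G_K$-structure alone, so the construction-matching above is genuinely needed.)

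The main obstacle is the passage between the algebraic and rigid-analytic worlds inside the second and third steps: Beilinson's $\clg{A}_{\dr}$ is a sheaf on algebraic varieties over $\ol{K}$, whereas $\ol{E}$ and the lifts $\hat{x}_n$ are rigid-analytic. One must replace $\ol{E}$ by (the $h$-localisation of) the semistable algebraic model $E_\Sigma$ constructed in the proof of Proposition~\ref{prop_subgrp_1mot}, compare the $h$-cohomology of $\clg{A}_{\dr}$ on $E_\Sigma$ with the rigid-analytic $\clg{A}_{\dr}$-cohomology — using properness of $P_\Sigma$ and log smoothness of $(E_\Sigma,D)$, so that the derived logarithmic de Rham complex is resolved by the honest log differentials $\Omega^\bullet_{(E_\Sigma,\can)/(O_K,\can)}$ — and only then run the Breen--Deligne diagram chase. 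Carrying the Hodge filtration and the $p$-power denominators $s_n$ simultaneously, rather than one graded piece at a time as in Construction~\ref{constr_HT_comp}, is the delicate point.
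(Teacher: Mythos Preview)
Your reduction step is correct and matches the paper, and your instinct to mimic Construction~\ref{constr_HT_comp} is natural, but the route you propose diverges from the paper at the key identification and leaves a genuine gap. The paper does \emph{not} use a perfectoid cover $\hat{E}_\infty \to \ol{E}$ or the connecting map $\delta$ coming from it. Instead it works entirely on the algebraic side: it first identifies $H_\dr(M)$ with $\Hom_{\DA_\et(\ol{K})}(M_{\ol{K}},\clg{A}_\dr)\otimes\Q$ via relative de Rham cohomology $H^1_\dr(G,S)$ of the pair $(G,S)$ with $S$ a set of generators of $u(Y)$ (Lemma~\ref{lem_1_mot_relcohomology}), and then produces an explicit \v{C}ech $1$-cocycle $(s_i^*\omega,f(\alpha_{ij}))$ on the semistable compactification $P_\Sigma$ representing $\omega$ (Lemma~\ref{lem_inv_to_cech}), built from local sections $s_i\colon W_i\to E_\Sigma$ chosen so that $s_i(x_l)=u^\sharp(e_l)$. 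Restricting this cocycle to $\Spec\ol{K}$ yields a concrete morphism of complexes $[Y(\ol{K})\to G(\ol{K})]\to L\hat{\Omega}^\bullet_{(O_{\ol{K}},\can)/(O_K,\can)}$, and Construction~\ref{constr_alg_rhodr} rewrites $\rho_\dr$ as $\varprojlim_n(\beta_n\circ\delta_n^{-1})$ where $\delta_n$ is the Poincar\'e-Lemma inverse on $M\otimes^L\Z/p^n$; unwinding the cone of multiplication by $p^n$ then exhibits $\rho_\dr(\omega)$ mod $p^n$ as exactly the pair $(\hat{x}_n^*\omega,-s_n)$ defining $c_\omega$.

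The gap in your proposal is the sentence ``$\rho_\dr(\omega)(x)$ is by construction $\delta^{-1}$ of that image''. Beilinson's $\rho_\dr$ is not defined via the torsor $\hat{E}_\infty$; it is defined via the Poincar\'e Lemma applied to the $h$-sheaf $\clg{A}_\dr$ on arithmetic pairs, and its value on $M$ comes from the map $M_{\ol{K}}\to\clg{A}_\dr$ in $\DA_\et(\ol{K})$. To equate that with your $\delta^{-1}$ you would need to show that the class of the extension $0\to T_pM\to\hat{E}_\infty\to\ol{E}\to 0$ in $h$-cohomology of $\clg{A}_\dr$ agrees with the class produced by the Poincar\'e Lemma---which is essentially the whole content of the theorem and is not ``by construction''. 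Your acknowledged obstacle (algebraic vs.\ rigid) is a symptom of this: $\ol{E}$ and $\hat{E}_\infty$ are rigid-analytic, and there is no evident map from them to $\clg{A}_\dr$ in Beilinson's topos. The paper sidesteps both issues by never leaving the algebraic world: the \v{C}ech cocycle on $P_\Sigma$ and the sections $s_i$ play the role of your lifts $\hat{x}_n$, and the explicit cone computation replaces your Breen--Deligne chase.
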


As we have seen in the proof of Theorem \ref{prop_cdr}, the comparison isomorphism is essentially determined by the case of $1$-motives
$M = [u \colon Y \to G]$ with injective $u$. Therefore, we focus on Beilinson's comparison morphism in this case.

\begin{lemma}\label{lem_1_mot_relcohomology}
Let $M = [u \colon \Z^k \to G]$ be a $1$-motive over $\ol{K}$. Assume that $u$ is injective. Furthermore, let $S = \{x_1,\dots,x_k\} \in G(\ol{K})$ be a set of generators of $u(\Z^k)$. Then there is a natural isomorphism
\begin{align*}
H^1_\dr(G, S) \to \Hom_{\DA_\et(\ol{K})}(M, \clg{A}_\dr \otimes \Q),
\end{align*}
where $H^1_\dr(G,S)$ denotes relative de Rham cohomology of the pair $(G,S)$.
\end{lemma}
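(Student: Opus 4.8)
The plan is to recognise both sides as computing the same relative de Rham cohomology, by realising the motive attached to the $1$-motive $M$ as a shift of the relative motive of the pair $(G,S)$. For a closed immersion $Z\hookrightarrow X$ of $\ol{K}$-varieties write $\mathrm{M}(X,Z):=\mathrm{cone}(\mathrm{M}(Z)\to\mathrm{M}(X))\in\DA_\et(\ol{K})$ for the associated relative motive, $\mathrm{M}(-)$ being the motive functor. (It is understood that the finite subscheme $S\subset G$ also contains the unit section $e$; this does not change that $u(\Z^k)$ is generated by the $x_i$, and it is what makes the two sides have the same dimension, since $\dim H^1_\dr(G,S)=\dim H^1_\dr(G)+\#S-1$ equals $\dim H_\dr(M)=\dim H^1_\dr(G)+k$.) First I would record, from Corollary \ref{cor_adr_reprs} and the computations of Section \ref{sect-comp-iso} it rests on, that $\clg{A}_\dr\otimes\Q$ is a $\PR^1$-spectrum in $\DA_\et(\ol{K})$ representing de Rham cohomology with its Hodge filtration; hence $\Hom_{\DA_\et(\ol{K})}(N,\clg{A}_\dr\otimes\Q[n])=H^n_\dr(N)$ for every motive $N$, and in particular $\Hom_{\DA_\et(\ol{K})}(\mathrm{M}(G,S)[-1],\clg{A}_\dr\otimes\Q)=H^1_\dr(G,S)$.

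The core of the argument is the identification
\[
M\ \simeq\ \mathrm{M}(G,S)[-1]\qquad\text{in }\DA_\et(\ol{K}).
\]
I would deduce it from the construction of the realization functor $\clg{M}_1(\ol{K})\otimes\Q\to\DA_\et(\ol{K})$ for $1$-motives (Appendix \ref{sect_1mot_cat}): it is exact, and on $[Y\to 0]$, resp.\ $[0\to G]$, it returns the motive of the lattice $Y$, resp.\ the motive $\mathrm{M}(G)$ of the semi-abelian variety $G$. Viewing $M$ as the two-term complex $[\,\Z^k\xrightarrow{u}G\,]$ with $\Z^k$ in cohomological degree $-1$, $M$ is the cone of $u$, so its realization is the cone of the induced map $\mathrm{M}(Y)\to\mathrm{M}(G)$ (with $\mathrm{M}(Y)$ denoting the realization of $[Y\to 0]$). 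The chosen generators $x_i=u(e_i)$, together with $0\mapsto e$, identify this map with the one $\mathrm{M}(S)\to\mathrm{M}(G)$ induced by the closed immersion $S\hookrightarrow G$, whose cone is $\mathrm{M}(G,S)$ — up to the shift carried by the degree-$(-1)$ placement of $Y$. One technicality to clear along the way: $[0\to G]$ may be realised as the full motive $\mathrm{M}(G)$ rather than as its $1$-motivic summand $\mathrm{M}_1(G)$, but this is harmless here, the complement $\mathrm{M}(G)/\mathrm{M}_1(G)$ being built from $\wedge^{\geq 2}$ of $H^1$ and hence contributing nothing to $\Hom(-,\clg{A}_\dr\otimes\Q[n])$ for $n\leq 1$, the only range that intervenes.

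Granting the identification, the lemma is immediate:
\[
\Hom_{\DA_\et(\ol{K})}(M,\clg{A}_\dr\otimes\Q)\ \simeq\ \Hom_{\DA_\et(\ol{K})}(\mathrm{M}(G,S)[-1],\clg{A}_\dr\otimes\Q)\ =\ H^1_\dr(G,S),
\]
and the isomorphism is natural because all the identifications above are: a morphism $[u\colon Y\to G]\to[u'\colon Y'\to G']$ of $1$-motives with $u,u'$ injective sends generators to $\Z$-linear combinations of generators, hence induces a morphism of pairs $(G,S)\to(G',S')$ compatible with $M\simeq\mathrm{M}(G,S)[-1]$ and with the restriction maps defining relative de Rham cohomology.

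The main obstacle is the middle step — making precise the realization of the $1$-motive $M$ as $\mathrm{M}(G,S)[-1]$, with the bookkeeping of the shift, of the unit section $e$, and of the passage from $\mathrm{M}(G)$ to $\mathrm{M}_1(G)$. Should one wish to avoid this abstract identification, the same conclusion follows by constructing the comparison map by hand — using that $\clg{A}_\dr$ computes relative de Rham cohomology and that $H_\dr$ of a $1$-motive is computed by the universal vectorial extension $E(M)$ (cf.\ Proposition \ref{prop_subgrp_1mot}) — and then checking it is an isomorphism by d\'evissage along the weight filtration of $M$: this reduces to the building blocks $[\Z\to 0]$, $[0\to A]$ with $A$ abelian, and $[0\to\G_m]$, where the statement is classical, and one then verifies that the weight filtration of $H^1_\dr(G,S)$ matches that of $M$, with graded pieces $\Hom(Y,\ol{K})$, $H^1_\dr(A)$ and $H^1_\dr(\G_m^t)$.
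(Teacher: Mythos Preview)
Your approach is essentially the paper's: both construct the comparison via the natural map of cones $\rom{Cone}(M(S)\to M(G))\to\rom{Cone}(\Z^k\to G)\simeq M[1]$ (using the summation map $M(G)\to G$ and an identification on the $S$-side), apply $R\Hom(-,\clg{A}_\dr\otimes\Q)$, and verify the induced map is an isomorphism in degree~$1$ by reducing to the pieces $G$ and $\Z^k$. The paper does \emph{not} attempt the literal identification $M\simeq\mathrm{M}(G,S)[-1]$ you aim for first---it only builds the map and checks $H^1$---which is precisely what your caveat about $\mathrm{M}(G)$ versus $\mathrm{M}_1(G)$ comes down to anyway.

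Your parenthetical about the unit section is not a cosmetic adjustment but a genuine correction. With $\#S=k$ as in the paper's statement, the long exact sequence of the pair gives $\dim H^1_\dr(G,S)=\dim H^1_\dr(G)+k-1$, whereas $\dim\Hom_{\DA_\et}(M,\clg{A}_\dr\otimes\Q)=\dim H_\dr(M)=\dim H^1_\dr(G)+k$; already for the Kummer motive $[\Z\to\G_m]$ one gets $1$ versus $2$. Tracing the paper's own map, the fiber of $\rom{Cone}(M(S)\to M(G))\to M[1]$ contains the degree-zero summand $\Q\subset M(G)$ (from connectedness of $G$), which is not cancelled by anything on the $S$-side when $\#S=k$; this produces a one-dimensional kernel on $H^1$. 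Adding $e$ to $S$ makes $M(S)\to\Z^k$ surjective with fiber exactly that copy of $\Q$, and the remaining discrepancy between the cones lies in weights $\geq 2$ and contributes nothing to $H^1$. So your fix is precisely what makes the argument go through.
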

\begin{proof}
Since $\clg{A}_\dr\otimes \Q$ represents de Rham cohomology, there are natural quasi-isomorphisms
\begin{align*}
\RS_\dr(G,S) &\simeq \rom{Cone}\left(R\Hom_{\DA_\et(K)}(M(G),\clg{A}_\dr\otimes \Q) \to
R\Hom_{\DA(K)_\et}(M(S),\clg{A}_\dr\otimes \Q) \right)[1]\\
&\simeq R\Hom_{\DA_\et(K)}(\rom{Cone}(M(S) \to M(G))[-1], \clg{A}_\dr\otimes \Q).
\end{align*}
The natural map $M(G) \to G$ induces a morphism of complexes
\[
\rom{Cone}(M[S] \to M(G)) \to \rom{Cone}(\Z^k \to G) \simeq  M[1],
\]
by identifying $\Z^k$ with the motive of $S$.
Applying $R\Hom(-,\clg{A}_\dr \otimes \Q)$ gives a morphism
\[
\phi \colon \RS_\dr(G, S) \to R\Hom_{\DA(K)_\et}(M, \clg{A}_\dr \otimes \Q).
\]
That $\phi$ is an isomorphism on $H^1$ follows directly from the cases $M = G$ and $M=\Z^k$.
\end{proof}

\begin{construction}\label{constr_alg_rhodr}
We reformulate Beilinson's construction in the case of $1$-motives. Let $M = [u \colon Y \to G]$ be a $1$-motive over $K$.
Since $1$-motives are compact objects in $\DA_\et(\ol{K})$, Lemma \ref{lem_1_mot_relcohomology} above implies that
\[ 
H_\dr(M) \cong \Hom_{\DA_\et(\ol{K})}(M_{\ol{K}}, \clg{A}_\dr) \otimes \Q.
\]
We can summarize the results from Section \ref{sect-comp-iso} by saying that $\rho_\dr$ is constructed from the following diagram
\[
\begin{tikzcd}[column sep= tiny]
    \Hom_{\DA_\et(\ol{K})}(M_{\ol{K}},\clg{A}_\dr)  \arrow[rd, "\beta_n"] &  &  \Hom_{\DA_\et(\ol{K})}(M[p^n], A_\dr/p^n) \arrow[ld, "\delta_n"] \\
                    &  \Hom_{\DA_\et(\ol{K})}(M_{\ol{K}}\otimes^L \Z/p^n,\clg{A}_\dr\otimes^L \Z/p^n),
\end{tikzcd}
\]
where the $\beta_n$ is induced by functoriality (along $M \to M \otimes^L \Z/p^n$) and the $\delta_n$ is induced by morphism $A_\dr= \clg{A}_\dr(\ol{K}) \to \clg{A}_\dr$\footnote{Recall that $M[p^n]$ denotes $H^0(\rom{Cone}(M \xrightarrow{p^n} M))$}.

More precisely, the Poincar\'e Lemma implies that $\delta_n$ is invertible and by definition $\rho_\dr$ is given by
\[
\rho_\dr = \varprojlim_n(\beta_n \circ \delta_n^{-1}) \colon \Hom_{\DA_\et(\ol{K})}(M_{\ol{K}},\clg{A}_\dr) \otimes \Q \to \Hom_{\DA_\et(\ol{K})}(T_p M, B_\dr^+),
\]
where we view $T_p M$ and $B_\dr^+$ as pro-systems of constant \'etale sheaves.
Furthermore, since $M[p^n]$ and $A_\dr/p^n$ are locally constant \'etale sheaves, the right-hand side is given by 
\[\Hom_{\Z_p[G_K]}(T_pM,B_\dr^+) := \varprojlim_i \varprojlim_n \Hom_{\Z[G_K]}(M[p^n](\ol{K}),(B_\dr^+/F^i)/p^n).\]
\end{construction}

\begin{proof}[Proof of Theorem \ref{thm_1-mot_comp_formula}]  
Let $M = [u \colon Y \to G]$ be a $1$-motive.
Using the same arguments as in Theorem \ref{prop_cdr}, it suffices to show that $c_\dr$ and $\rho_\dr$ agree after base change to a finite extension $K'/K$. Hence, we may assume that $M$ has semistable reduction, $u$ is injective, $Y$ is constant, and Proposition \ref{prop_subgrp_1mot} applies to $M$. In that case, the theorem reduces to showing that
\[
\rho_\dr(\omega) = c_\dr(\omega)
\]
for all $\omega$ in some integral lattice $H \subset H_\dr(M)$ as in Construction \ref{constr_1mot_formula}.

By Construction \ref{constr_alg_rhodr}, it suffices to compute the restriction of $\beta_n \circ \delta_n^{-1}$ to the small \'etale site of the point.

Let $P_{\Sigma}$ be a semistable compactification of $G$ as in the proof of Proposition \ref{prop_subgrp_1mot}. 
Let $\omega \in H_\dr(M)$ be integral.
According to Lemma \ref{lem_inv_to_cech} below, there exists an open cover $\clg{W} = (W_i)_i$ of $P_{\Sigma,\ol{K}}$ and a Cech $1$-cocycle $(s_i^*\omega, f(\alpha_{ij}))$ representing $\omega$ with values in the relative log de Rham complex of the pair
\(
(P_{\Sigma,\ol{K}}, S),
\)
where $S$ is a set of generators of $u(Y)$.
In particular, $(s_i^*\omega, f(\alpha_{ij}))$ maps to $\omega$ under the map
\begin{align*}
\Check{C}^1(\clg{W},L\Omega^\bullet_{(P_{\Sigma,\ol{K}},\can)/(O_K,\can)}) &\to 
H^1_\dr(P_{\Sigma,\ol{K}},S)\\
&\xrightarrow{\text{\ref{lem_1_mot_relcohomology}}} \Hom_{\DA_\et(\ol{K})}(M_{\ol{K}}, \clg{A}_\dr) \otimes \Q.
\end{align*}
It follows that the restriction of $\omega \Hom_{\DA_\et(\ol{K})}(M_{\ol{K}}, \clg{A}_\dr)$ to $\Spec \ol{K}$ is given by the following morphism of complexes
\[
\begin{tikzcd}
Y(\ol{K}) \ar[r, "u"] \ar[d, "f \circ u^\sharp"] & G(\ol{K}) \ar[d, "x \mapsto \hat{x}^*\omega"] \ar[r] & 0 \ar[d] \\
L\hat{\Omega}^0_{(O_{\ol{K}},\can)/(O_K,\can)} \ar[r] & L\hat{\Omega}^1_{(O_{\ol{K}},\can)/(O_K,\can)} \ar[r] & \cdots,
\end{tikzcd}
\]
where $u^\sharp \colon Y \to P_\Sigma$ is the inclusion map and $\hat{x} = s_i \circ x$ for some $i$ such that $x \in W_i$. Note that this morphism is only additive up to homotopy (and depends on the sections $s_i$ up to homotopy). 

By functoriality $\beta_n \circ \delta_n^{-1} (\omega)$ is given by the $H^0$ of the following map of complexes in degrees $-1,0,1,\dots)$
\[
\begin{tikzcd}
Y(\ol{K}) \ar[d, "f \circ u^\sharp"] \ar[r, "p^n \oplus -u"] & Y(\ol{K}) \oplus G(\ol{K}) 
\ar[d, "f \circ u^\sharp"', "x \mapsto \hat{x}^*\omega"] 
\ar[r, "u +p^n"] & G(\ol{K}) \ar[d, "x \mapsto \hat{x}^*\omega"] \\
L\hat{\Omega}^0_{(O_{\ol{K}},\can)/(O_K,\can)} \ar[r, "p^n \oplus -d"] & 
L\hat{\Omega}^0_{(O_{\ol{K}},\can)/(O_K,\can)} \oplus L\hat{\Omega}^1_{(O_{\ol{K}},\can)/(O_K,\can)}
\ar[r] & L\hat{\Omega}^1_{(O_{\ol{K}},\can)/(O_K,\can)} \oplus \cdots
\end{tikzcd}
\]
which is exactly $c_\dr(\omega)$ modulo $p^n$.

\end{proof}

 Let $M= [Y \to G]$ be a $1$-motive with semistable reduction and with universal vectorial extension $E(M) = [u^\sharp \colon Y \to G^\sharp]$. Assume that $Y \cong \Z^r$ is constant over $K$ and that $u$ is injective.
 Let $P_\Sigma$ be a semistable compactification of $G$ as Proposition \ref{prop_subgrp_1mot}, and let $V(L_1) \to E_\Sigma \to P_\Sigma$ be the vectorial torsor constructed in Proposition \ref{prop_subgrp_1mot}.
 In particular, $P_\Sigma$ and $E_\Sigma$ are log smooth schemes over $(O_K,\can)$ and there exists an $O_K$-lattice $H \subset H_\dr(M)$ such that every $\omega \in H$ extends to a log differential form on $E_\Sigma$.

\begin{lemma}\label{lem_inv_to_cech}
With the notations as above. Let  $S = \{x_1,\dots,x_r\}$ be a set of generators of $u(Y) \subset P_{\Sigma}$.
There exists a Zariski cover $\clg{W} = (W_i)_i$ of $P_\Sigma$ and a natural $O_K$-linear map
\[
    H \to \check{C}^1(\clg{W}\times O_{\ol{K}},L\hat{\Omega}^\bullet_{(P_\Sigma \times_{O_K} O_{\ol{K}},\can)/(O_K,\can)}(S)) 
\]
inducing\footnote{By composition with the natural map $\check{C}^\bullet(G,S) \to \RS_\dr(G,S)$.} an isomorphism $H_\dr(M_{\ol{K}}) \xrightarrow{\sim} H^1_\dr(G_{\ol{K}},S)$, where $L\hat{\Omega}^\bullet_{(P_\Sigma \times_{O_K}O_{\ol{K}},\can)/(O_K,\can)}(S)$ denotes the relative derived logarithmic de Rham complex of the pair $(P_{\Sigma,\ol{K}},S) := (P_{\Sigma} \times O_{\ol{K}},S)$.
\end{lemma}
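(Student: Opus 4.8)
The plan is to build the cocycle by trivializing the vectorial torsor $q\colon E_\Sigma\to P_\Sigma$ and pulling $\omega$ back along local sections, with the "$S$''-part of the cocycle recording the values of $\omega$ at the points $u^\sharp(y_k)$. First I would fix a finite Zariski open cover $\clg W=(W_i)_i$ of $P_\Sigma$ over which $E_\Sigma$ trivializes — possible because torsors under a vectorial group are Zariski-locally trivial — together with a section $s_i\colon W_i\to E_\Sigma$ of $q$ on each member, chosen so that each generator $x_k\in S$ lies in a single $W_{i(k)}$; I would also arrange (enlarging $E_\Sigma$ as in Step 4 of the proof of Proposition \ref{prop_subgrp_1mot} if necessary) that $u^\sharp(Y)\subset E_\Sigma$. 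On an overlap $W_{ij}:=W_i\cap W_j$ the sections differ by $\alpha_{ij}:=s_i-s_j\in V(L_1)(W_{ij})$, a \v{C}ech $1$-cocycle representing the class of $E_\Sigma$ in $H^1(P_\Sigma,V(L_1))$; all of this is then base changed along $O_K\to O_{\ol K}$. Since $P_\Sigma$ and $E_\Sigma$ are log smooth over $(O_K,\can)$, the derived logarithmic de Rham complexes coincide with the ordinary ones, so I may work with honest log differential forms throughout.

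Given $\omega\in H$ I would first extract the constant linear form $f_\omega\in L_1^\vee$ — the image of $\omega$ under the surjection $\omega_{G^\sharp}\twoheadrightarrow\omega_{V(L_1)}=L_1^\vee$, equivalently the component of $\omega$ along the fibres of $q$; in a local trivialization $E_\Sigma|_{W_i}\cong V(L_1)\times W_i$ one has $\omega = f_\omega + (\text{a form pulled back from }W_i)$, which uses invariance of $\omega$ on the group $E_\Sigma|_{\clg G}$ together with local freeness of $\Omega^1_{(E_\Sigma,\can)/(O_K,\can)}$ to extend this shape across the boundary divisor. This is the $df_\omega$ of Construction \ref{constr_1mot_formula}. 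The cocycle attached to $\omega$ then consists of the $0$-cochain $\omega_i:=s_i^*\omega\in H^0(W_i,\Omega^1_{(P_\Sigma,\can)/(O_K,\can)})$, the $1$-cochain $g_{ij}:=f_\omega(\alpha_{ij})\in H^0(W_{ij},\clg O_{P_\Sigma})$, and the relative component $h_k:=f_\omega\bigl(u^\sharp(y_k)-s_{i(k)}(x_k)\bigr)$ at $x_k\in S$, the difference lying in the vectorial fibre of $q$ over $x_k$; this $h_k$ recovers the value $f\circ u^\sharp$ of the proof of Theorem \ref{thm_1-mot_comp_formula}. That $(\omega_i,g_{ij},h_k)$ is a cocycle in the total complex $\check C^\bullet(\clg W\times O_{\ol K},L\hat\Omega^\bullet_{(P_\Sigma\times_{O_K} O_{\ol K},\can)/(O_K,\can)}(S))$ comes down to: $d\omega=0$, as invariant differentials on a commutative group scheme are closed; the overlap identity
\[
\omega_i-\omega_j \;=\; s_i^*\omega-s_j^*\omega \;=\; d\bigl(f_\omega(\alpha_{ij})\bigr) \;=\; dg_{ij},
\]
which follows from $s_i=\tau_{\alpha_{ij}}\circ s_j$ and the relation $\tau_\alpha^*\omega=\omega+d(f_\omega(\alpha))$ for translation by $\alpha$; the \v{C}ech identity $\check\delta(g_{ij})=0$, which is just $\check\delta(\alpha_{ij})=0$; and the relative compatibility at $S$. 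Replacing the $s_i$ by $s_i+\gamma_i$ with $\gamma_i\in V(L_1)(W_i)$ changes $(\omega_i,g_{ij},h_k)$ by the coboundary of $(f_\omega(\gamma_i))_i$, so the class is well defined, and $\omega\mapsto(\omega_i,g_{ij},h_k)$ is manifestly $O_K$-linear and functorial.

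It remains to identify the induced map. Composing with $\check C^\bullet\to\RS_\dr$ and with the natural quasi-isomorphism $\RS_\dr(P_{\Sigma,\ol{K}},S)\simeq\RS_\dr(G_{\ol{K}},S)$ — valid because $(P_\Sigma,D)$ log-compactifies $G$ — yields, after extending scalars, a map $H_\dr(M_{\ol K})\to H^1_\dr(G_{\ol K},S)$, and I must check it is the isomorphism of Lemma \ref{lem_1_mot_relcohomology}. Unwinding that lemma, its map $\phi$ is computed, on the small \'etale site of $\Spec\ol K$, by the morphism of complexes $[Y(\ol K)\xrightarrow{u}G(\ol K)]\to L\hat\Omega^\bullet_{(O_{\ol K},\can)/(O_K,\can)}$ sending $x\mapsto s_i(x)^*\omega$ (for $x\in W_i$) and $y_k\mapsto h_k$; its class in $\Hom_{\DA_\et(\ol K)}(M_{\ol K},\clg A_\dr)\otimes\Q=H_\dr(M_{\ol K})$ is precisely $\omega$, so the map we built is inverse to $\phi$ on $H^1$. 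Alternatively, since both sides respect the weight filtration, one reduces to $M=[Y\to 0]$, $[0\to A]$ and $[0\to\G_m]$, where the verification is immediate — for $\G_m$ the cocycle is the usual \v{C}ech representative of $d\log$, for $[Y\to 0]$ it is the identification $\Hom(Y,\G_a)\cong (\text{values at }S)$, and for $A$ it is covered by Construction \ref{rigid_construction_abelian}.

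The step I expect to be the main obstacle is the overlap identity $s_i^*\omega-s_j^*\omega=d(f_\omega(\alpha_{ij}))$, i.e. the relation $\tau_\alpha^*\omega=\omega+d(f_\omega(\alpha))$ for the log form $\omega$ on the torsor $E_\Sigma$: it rests on the precise local description of $\omega$ (constant vertical part plus a part pulled back from $P_\Sigma$), which is exactly where the invariance of $\omega$ on $E_\Sigma|_{\clg G}$ and the extension of log differentials across the boundary via local freeness genuinely enter. The comparison with Lemma \ref{lem_1_mot_relcohomology} is more bookkeeping than substance, although one must keep careful track of the shifts in the derived/motivic description.
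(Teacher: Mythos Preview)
Your approach is essentially the paper's: trivialize the vectorial torsor $E_\Sigma\to P_\Sigma$, pull $\omega$ back along local sections $s_i$, and use the transition data $f_\omega(\alpha_{ij})$ as the degree-zero part of the \v{C}ech cocycle; the overlap identity you flag as the main point is exactly the computation the paper relies on (and your justification via invariance on $E_\Sigma|_{\clg G}$ plus local freeness to extend across the boundary is correct). The one cosmetic difference is that the paper \emph{normalizes} the sections so that $s_i(x_l)=u^\sharp(e_l)$ whenever $x_l\in W_i$; this forces $\alpha_{ij}$ to vanish on $S$, so $f_\omega(\alpha_{ij})\in I(S)$ and the cocycle lands directly in the subcomplex $I(S)\to\Omega^1\to\cdots$ which is the paper's model for the relative complex $L\hat\Omega^\bullet(S)$---whereas you keep arbitrary sections and carry an extra component $h_k$ at $S$, which is the cone model. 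Both give the same class, but the normalization is what makes the cocycle hit the target exactly as stated. For the isomorphism, the paper does roughly your weight-filtration reduction but more explicitly: the $G$-part is Coleman's argument, and the $Y$-part is checked by computing the connecting map $H^0(S,\clg O_S)\to \check H^1$ against the map $H_\dr(S)\to H^0(S,\clg O_S)$, $dg\mapsto g\circ q\circ u^\sharp$.
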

\begin{proof}
The proof is a direct generalization of the argument for abelian varieties over a field, see \cite[Theorem 2.2]{coleman_duality_deRham}. Since $P_\Sigma$ is log smooth over $(O_K,\can)$, we may identify the derived de Rham complex of $P_\Sigma$ with the usual log de Rham complex. Furthermore, 
because $Y$ is constant over $K$, the restriction of any differential form to $x \in S$ vanishes. 
Hence, the de Rham complex of $P_\Sigma$ relative to $S$ is given by
\[
I(S) = \{ f \in \clg{O}_{P_{\Sigma}} \mid f_{|S} = 0 \} \to \Omega^1_{(P_{\Sigma},\can)/(O_K,\can)} \to \Omega^2_{(P_{\Sigma},\can)/(O_K,\can)} \to \cdots.
\]

Let $\omega \in H \subset H_\dr(M)$ be an invariant differential form on $G^\sharp$. As we have seen in Proposition \ref{prop_subgrp_1mot},
we may view $\omega$ as a closed \footnote{Since invariant forms are closed.} log differential form on $E_\Sigma$.
Pick an open cover $\clg{U} = (U_i)_{i\in I}$ of $A$ such that
\[
W_i := P_\Sigma \times_A U_i \cong U_i \times T_\Sigma \quad \text{and}\quad E_{\Sigma} \times_{P_{\Sigma}} W_i \cong V \times U_i \times T_\Sigma.
\]
Set $\clg{W} = (W_i)_i$.
Moreover, we choose local sections $s_i \colon W_i \to E_{\Sigma}$ such that
\[
s_i(x_l) = u^\sharp(e_l) \in E_{\Sigma}(O_K)
\]
for $x_l= u(e_l) \in S$. We can ensure that such $s_i$'s exist by shrinking the open sets $W_i$ such that each $W_i$ contains at most one $x_l \in S$.

Furthermore, denote by $\alpha_{i,j}$ the unique maps $\alpha_{i,j} \colon W_i \cap W_j \to V$ such that $s_j + \alpha_{i,j} = s_i$.
Finally, recall that the restriction of $\omega$ to $V$ is of the form $df$ for a unique linear form $f \colon V \to \G_a$.
One now verifies that the following tuple defines a Cech cocycle  of $\Omega^\bullet_{(P_{\Sigma},\can)/(O_K,\can)}(S)$ on $\clg{W}$: 
\[
(s_i^*\omega,f \circ \alpha_{i,j}) \in \prod_{i \in I} H^0(W_i,\Omega^1_{(P_{\Sigma},\can)/(O_K,\can)})
\oplus \prod_{i,j \in I} H^0(W_i \cap W_j,I(S)).
\]
Lemma \ref{lem_invariant_differentials_faltings} implies that the above cocycle lifts to $O_{\ol{K}}$, i.e., to  a cocycle of $L\hat{\Omega}^\bullet_{(P_\Sigma \times_{O_K} O_{\ol{K}},\can)/(O_K,\can)}(S)$.

Finally, we need to prove that sending $\omega$ to $(s_i^*\omega,f \circ \alpha_{i,j})$ induces an isomorphism $H_\dr(M_{\ol{K}}) \xrightarrow{\sim} H^1_\dr(G_{\ol{K}},S)$.
It suffices to show this before base changing to $\ol{K}$.
In the case where $Y = 0$, the result follows as in \cite[Theorem 2.2]{coleman_duality_deRham}.
For the general case, we have a diagram with exact rows of the form
\[
\begin{tikzcd}
H_\dr(S) \ar[r] \ar[d, dotted, "\exists?"] & H_\dr(M) \ar[r] \ar[d] & H_\dr(G) \ar[d]\\
H^0(S, \clg{O}_S) \ar[r, "\delta"] & 
\check{H}^1(\clg{W}, \Omega^\bullet_{(P_\Sigma,\can)/(O_K,\can)}(S)) \ar[r] & \check{H}^1(\clg{W}, \Omega^\bullet_{(P_\Sigma,\can)/(O_K,\can)}).
\end{tikzcd}
\]
In other words, we have to fill in the dotted arrow with an isomorphism that makes the left square commute. 
This works as follows. Let $x_i \in S$ and set $V' := V(L_1)/V(L_0)$
\footnote{Recall that $E_\Sigma$ was obtained via pushout from the $V(L_0)$-torsor $E_G$, see \ref{prop_subgrp_1mot}. Moreover, note that the universal extension of $\gr^0_W M = [Y \to 0]$ is given by $V' \times K$.}. 
Then, any invariant form in $H_\dr(S)$ is of the form $p^{-r}dg$ for a unique linear form $g \colon V' \to \G_a$. We define 
\[
H_\dr(S) \to H^0(S,\clg{O}_S), \quad dg \mapsto [x \in S \mapsto g \circ q \circ u^\sharp(x)],
\]
where $q \colon E_\Sigma \to V'$ is the natural map.

It remains to show that the resulting diagram commutes. In fact, we show this for a suitable choice of sections $s_i$. 
Let $s_i'$ be sections of $V(L_0) \to E_G \to P_\Sigma$, see Step 1 of Proposition \ref{prop_subgrp_1mot}, and define $s_i$ as 
\[
s_i \colon P_\Sigma \xrightarrow{s_i'} E_G \to E_\Sigma.\]
Finally, we need to compute the connecting morphism $\delta$. It is given by 
\[
f \in H^0(S, \clg{O}_S) \mapsto (f(x_i) - f(x_j), 0) \in \prod_i H^0(W_i,\Omega^1_{(P_{\Sigma},\can)/(O_K,\can)}) \oplus \prod_{i,j} H^0(W_i \cap W_j, I(S)),
\]
where $x_i$ is the unique $x \in S$ contained in $W_i$ (if there is none we set $f(x_i) = 0$). The commutativity follows from a diagram chase, using that $q \circ s_i$ is constant.
\end{proof}

\newpage

\section{A new ring of motivic \(p\)-adic periods} \label{sect_motivic_period_ring}
Let $K$ be $p$-adic field.
We define a new ring of \(p\)-adic periods following a construction of Ayoub, \cite{ayoub_new_weil},
based on the category of rigid analytic motives, \cite{ayoub_rig_motives}. As a starting point, we use the equivalence
\[
\widetilde{\Rig}_* \colon  \RDA_\et(K,\Q) \rightarrow  \DA_\et(K, \Rig_* \Q) 
\]
induced by the analyticification functor $(-)^\rig \colon \Sm_K \to \SmR_K$ to construct new Weil cohomology theories for rigid analytic varieties from Weil cohomology theories for schemes.

Let $\Gamma_W$ be a Weil cohomology theory with coefficients in a ring $A$ on $K$-schemes, which is represented by a motivic spectrum $\mathbf{\Gamma}_W \in  \DA_\et(K,\Q)$.
The assignment $M \mapsto \RS(K, \Gammabf_W \otimes M)$ extends to a monoidal functor 
\[
R_W \colon \DA_\et(K,\Q) \to D(A), 
\]
called the homological realization functor of $W$ \cite[Corollaire 2.9]{ayoub_new_weil}.
Composing $R_W$ with $\Rig^*$ defines a new realization functor on rigid analytic motives
\begin{equation*}
    \RC_W \colon  \RDA_\et(K, \Q) \xrightarrow{\widetilde{\Rig}_*}  \DA_\et(K, \Rig_* \Q) \xrightarrow{R_W} \Ht(\Mod(\AC_W)),
\end{equation*}
with coefficients in the complex $\AC_W := R_W(\Rig_* \Q) \in D(\Q)$. For $X \in \Sm_K$, we have
\begin{align}\label{rig_real_eq}
\RC_W(X^\rig) &= \RS(K, \Gammabf_W \otimes M(X) \otimes \Rig_*(\Q)) \\ &\cong 
\RS(K,\Gammabf_W \otimes M(X)) \otimes_{A} \AC_W \nonumber \\
&= R_W(X) \otimes_{A} \AC_W, \nonumber
\end{align}
by \cite[Lemme 2.29]{ayoub_new_weil}.
Applying the construction to \(p\)-adic \'etale cohomology and algebraic de Rham cohomology gives complexes $\AC_p$ and $\AC_\dr$, which are connective \cite[Corollaire 3.8/3.9]{ayoub_new_weil}. 
Let $A_p:= H_0(\AC_p)$ and $A_\dr:= H_0(\AC_\dr)$.
Then, composing $\RC_W$ (for $W = p,\dr$) with 
\[
A_W \otimes_{\AC_W} - \colon \Ht(\Mod(\AC_W)) \to D(A_W),
\]
gives rise to two new Weil cohomology theories on rigid varieties with coefficients in the classical rings $A_p$ and $A_\dr$, respectively. 
We denote the resulting realization functors by
\begin{equation*}
    \RR_p \colon  \RDA_\et(K,\Q) \rightarrow \De(A_p) \quad \text{and}\quad \RR_\dr \colon  \RDA_\et(K,\Q) \rightarrow \De(A_\dr).
\end{equation*}

Recall that, overconvergent de Rham cohomology \`a la Gro{\ss}e-Kl\"onne defines a Weil cohomology theory 
$\Gamma^\dagger_\dr$ for smooth rigid varieties \cite[Exemple 2.22]{ayoub_new_weil} with corresponding realization functor denoted by $R^\dagger_\dr$. Overconvergent de Rham cohomology extends algebraic de Rham cohomology in the sense that there is a natural isomorphism 
\[
\rho_\rig \colon \Gamma_\dr\to \Gamma_\dr^\dagger \circ (-)^\rig.
\]
In this situation, \cite[Proposition 2.35]{ayoub_new_weil} gives rise to a map 
\[
\int_K \colon A_\dr \to K
\]
together with a comparison isomorphism 
\begin{equation}\label{new_dr_to_dr}
    \RR_\dr \otimes_{A_\dr, \int_K} K \xrightarrow{\sim} R_\dr^\dagger.
\end{equation}
In other words, the "new" de Rham cohomology theory recovers overconvergent de Rham cohomology.

It is well-known that $p$-adic \'etale cohomology theory does not define a Weil cohomology theory for rigid analytic varieties, and hence we cannot expect a similar result in this case. 
However, the \(p\)-adic comparison isomorphism between de Rham and \'etale cohomology denoted by
\footnote{Evaluating $\rho_\dr$ on the (homological) motive of a smooth variety recovers Beilinson's comparison isomorphism $\rho_\dr$.}
\[
\rho_\dr \colon \Gamma_\dr \otimes_K B_\dr  \xrightarrow{\sim} \Gamma_p \otimes_{\Q_p} B_\dr,
\]
induces an isomorphism between the corresponding new realization functors
\begin{equation*}
    \rho^\rig_\dr \colon \RR_\dr \otimes_{K} B_\dr \xrightarrow{\sim} \RR_p \otimes_{\Q_p} B_\dr .
\end{equation*}
In particular, the \(p\)-adic comparison isomorphism yields a natural transformation 
\begin{equation*}\label{new_to_de_rham_trans}
 \int \colon \RR_p \rightarrow \RR_p \otimes_{\Q_p} B_\dr \xrightarrow{(\rho_\dr^\rig)^{-1}} \RR_\dr \otimes_{A_\dr} B_\dr,
\end{equation*}
and evaluating on $K$ gives rise to an integration morphism of $p$-adic periods
\begin{equation*}
   \int_p \colon  A_p \rightarrow A_p \otimes_{\Q_p} B_\dr \rightarrow A_\dr \otimes_{A_\dr} B_\dr \rightarrow B_\dr.
\end{equation*}
For $X \in \Sm_K$, the comparison isomorphism $\rho_\dr^\rig$ is related to the classical $p$-adic comparison isomorphism via the integration map
\begin{equation}\label{comp_iso_simple}
\rho_\dr^* \otimes \int_p \colon H^n_\et(X)^\vee \otimes_{\Q_p} B_\dr \otimes_{\Q_p} A_p  \xrightarrow{}
H^n_\dr(X)^\vee \otimes_K B_\dr,
\end{equation}
using (\ref{rig_real_eq}), and $\rho_\dr^*$ denotes the dual of the inverse of Beilinson's comparison isomorphism $\rho_\dr \colon H^n_\dr(X) \otimes B_\dr \to H^n_\et(X)\otimes B_\dr$. 

The goal for the rest of this section is to construct elements in the image of $\int_p$. 
Recall that $A_p$ is given by 
\begin{equation*}
A_p = \RR_{p}(K) = H^0(\RS(K, \Gammabf_p \otimes \Rig_* \Q))   = \Hom_{\RDA_\et(K,\Q)}(\Q, \Rig^* \Gammabf_p)
\end{equation*}
using the definition of the realization functor and the projection formula.
Therefore, it is natural to construct elements of $A_p$ by pairing \'etale cohomology classes of a $K$-scheme $X$ with analytic motivic homology $H_{n,m}(X^\rig,\Q)$. 
More precisely, the analytic motivic homology groups are defined as
\[
H_{n,m}(X^\rig,\Q) := \Hom_{ \RDA_\et(K,\Q)}(\Q(m)[n],M(X^\rig)).
\]
Moreover, the \'etale cohomology groups of $X$ are given by
\[H^n_\et(X,\Q_p(m))  \cong \Hom_{ \DA_\et(K,\Q)}(M(X),\Gammabf_p(m)[n]).\]
It follows that $\Rig^*$ induces a pairing
\begin{align}\label{period_pairing_new}
    \langle-,-\rangle \colon  H_{n,m}(X^\rig, \Q) \otimes H^n_{\et}(X,\Q_p(m)) &\rightarrow \Hom_{ \RDA_\et(K,\Q)}(\Q, \Rig^*\Gammabf_p) =: A_p \\ 
    \alpha \otimes \beta & \mapsto \Rig^*(\alpha) \circ \beta. \nonumber
\end{align}

The corresponding morphism
\[
r_p \colon H_{n,m}(X^\rig,\Q) \to H^n_\et(X,\Q_p (m))^\vee \otimes A_p 
\]
corresponds to the cycle class map of $\RR_p$ under the identification
\begin{align*}
     \RR_p(X^\rig) &\cong R_p(X) \otimes_{\Q_p} A_p.
\end{align*}
Before we move on, let us recall the definition of the cycle class map and introduce some useful notation. 

\begin{construction}
Let $\Gamma_W$ be a Weil cohomology theory on rigid varieties with coefficients $A$. We adopt the following notations
\begin{align*}
H_{n,W}(M) &= H_n(R_W(M)) \cong \Hom_{\RDA_\et(K,\Q)}(\UN, M \otimes \Gammabf_W[-n])
\\
\quad H^n_W(M) &= H_n(R_W(M^*)) \cong \Hom_{\RDA_\et(K,\Q)}(M, \Gammabf_W[n])\\
A_W(1) &= H_0 R_W(\UN(1)).
\end{align*}
Since $\Gamma_W$ is a Weil cohomology theory, $R_W(\UN(1))$ is concentrated in degree zero and
$A(1)$ is a free $A$-module of rank $1$. 
Moreover, note that if $M = M(X)$ is the motive of a smooth quasi-compact rigid variety $X$, then
$R_W(M)$ is a perfect complex of $A$-modules and $\Gamma_W(X) \cong R\Hom_{D(A)}(R_W(X),A)$ and
\[
R_W(X) \cong R\Hom_{D(A)}(R\Hom_{D(A)}(R_W(X),A),A).
\]
It follows that $H_{n,W}(X)^\vee \cong H^n_W(X)$.

The (homological) \textit{cycle class map} 
\[
r_W \colon H_{n,m}(M) \to H_{n,W}(M) \otimes A(-m)
\]
of $\Gamma_W$ is defined as follows.
For a motivic homology class
\[
\beta \colon \Q(m)[n] \rightarrow M
\]
we define $r_W(\beta)$ as the image of $R_W(\beta)$ under the following natural isomorphism
\begin{align*}
    \Hom_{D(A)}(A(m)[n],R_W(M)) &\cong
    \Hom_{D(A)}(A, R_W(M)(-m)[-n]) \\
    &\cong H_n(R_W(M(-m))) \\
    & \cong H_{n,W}(M) \otimes_{A} A(-m) =:  H_{n,W}(M)(-m).
\end{align*}
using that $A(m)$ is free of rank $1$ over $A$.

Let $\Gamma_{W'}$ be another Weil cohomology theory with coefficients $A'$, and assume we are given a morphism of Weil cohomology theories
\[
\phi \colon \Gamma_{W'} \to \Gamma_{W}.
\]
Then, the resulting diagram
\begin{equation}\label{commutivity_regulator}
\begin{tikzcd}
  & H_{n,m}(M) \arrow[dl, "r_{W'}"'] \arrow[dr, "r_W"] & \\
H_{n,W'}(M)(-m) \arrow[rr, "\phi"] & & H_{n,W}(M)(-m)
\end{tikzcd}
\end{equation}
commutes for every $M \in \RDA_\et(K,\Q)$.
\end{construction}

\begin{lemma}
    Let $m,n \geq 0$ and fix a generator $\epsilon \in \Z_p(1)$. Let $M \in \DA_\et(K,\Q)$ and let $\alpha\in H_{n,m}(\Rig^*(M))$.
    For every $\beta \in H^n_\et(M, \Q_p(m))$ pick $\beta' \in H^n_\et(M,\Q_p)$ such that
    $\beta = \beta' \otimes \epsilon^{\otimes m}$. Then we have
    \[
        \int_p \langle \alpha,\beta \rangle = t^m \langle \rho_\rig^{-1}(r^\dagger_\dr \alpha), \beta' \rangle_\dr \in B_\dr
    \]
    where $\langle -, - \rangle_\dr$ denotes period pairing associated to $\rho^*_\dr \colon H^n_\dr(X)^\vee \otimes B_\dr \to H^n_\et(X)^\vee \otimes B_\dr$, $t \in B_\dr$ is the Fontaine element corresponding to $\epsilon$ and $r^\dagger_\dr$ is the cycle class map associated to $R^\dagger_\dr$.
\end{lemma}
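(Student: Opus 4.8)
The plan is to rewrite both sides as a contraction of a cycle class of $\alpha$ against $\beta$ (respectively $\beta'$), and then to transport the cycle class along the comparison isomorphisms relating the three Weil cohomology theories $\RR_p$, $\RR_\dr$, $R^\dagger_\dr$ on rigid motives---namely $\rho^\rig_\dr$, the base change isomorphism (\ref{new_dr_to_dr}), and the overconvergent extension $\rho_\rig$---using at each step the compatibility (\ref{commutivity_regulator}) of cycle class maps with morphisms of Weil cohomology theories. The one genuinely nonformal point will be the bookkeeping of the Tate twist, which is governed by the normalization $\int_p(\epsilon)=t$.

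First I would reformulate the left-hand side. By the definition of the period pairing (\ref{period_pairing_new}), together with the definition of the cycle class map $r_p$ as the cycle class of $\RR_p$ transported through the identification $\RR_p(\Rig^*M)\cong R_p(M)\otimes_{\Q_p}A_p$ of (\ref{rig_real_eq}), the class $\langle\alpha,\beta\rangle\in A_p$ is the contraction $\langle r_p(\alpha),\beta\rangle$ of $r_p(\alpha)\in H_{n,p}(\Rig^*M)(-m)\cong H^n_\et(M,\Q_p(m))^\vee\otimes A_p$ against $\beta$. Since $\int_p$ is by definition obtained by evaluating at $K$ the transformation $\RR_p\to\RR_p\otimes_{\Q_p}B_\dr\xrightarrow{(\rho^\rig_\dr)^{-1}}\RR_\dr\otimes_{A_\dr}B_\dr$ followed by the trace $\int_K\colon A_\dr\to K$, applying $\int_p$ to this contraction amounts to transporting $r_p(\alpha)$ first along $(\rho^\rig_\dr)^{-1}$ and then along the base change $\RR_\dr\otimes_{A_\dr,\int_K}B_\dr\cong R^\dagger_\dr\otimes_K B_\dr$ supplied by (\ref{new_dr_to_dr}), while simultaneously transporting the covector $\beta$ along Beilinson's $\rho_\dr$; compare (\ref{comp_iso_simple}), of which this is the elementwise shadow.

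Next I would run the transport of the cycle class. By (\ref{commutivity_regulator}), applied first to the isomorphism $\rho^\rig_\dr$ of $B_\dr$-linear Weil cohomology theories and then to the base change (\ref{new_dr_to_dr}), the class $r_p(\alpha)$ is carried successively to the cycle class of $\alpha$ for $\RR_\dr$ and then to $r^\dagger_\dr(\alpha)$, now viewed in $R^\dagger_\dr(\Rig^*M)\otimes_K B_\dr$; applying the isomorphism $R_\dr\cong R^\dagger_\dr\circ\Rig^*$ induced by $\rho_\rig$ then carries it to $\rho_\rig^{-1}(r^\dagger_\dr\alpha)$ in $R_\dr(M)\otimes_K B_\dr$, the last step being tautological since this is how $\rho_\rig^{-1}(r^\dagger_\dr\alpha)$ is defined. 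Because the restriction of $\rho^\rig_\dr$ to the algebraic factor $R_\dr(M)\otimes B_\dr\subset\RR_\dr(M)\otimes B_\dr$ is exactly $\rho_\dr$, the covector $\beta$ is carried to $\rho_\dr^{-1}(\beta)\in H^n_\dr(M)\otimes B_\dr$ and the contraction is preserved throughout, yielding $\int_p\langle\alpha,\beta\rangle=\langle\rho_\rig^{-1}(r^\dagger_\dr\alpha),\rho_\dr^{-1}(\beta)\rangle$, the de Rham homology--cohomology pairing over $B_\dr$.

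It remains to unwind the Tate twist. Writing $\beta=\beta'\otimes\epsilon^{\otimes m}$ and using that $\rho_\dr$ is monoidal, $\rho_\dr^{-1}(\beta)=\rho_\dr^{-1}(\beta')\otimes\rho_\dr^{-1}(\epsilon^{\otimes m})$, and $\rho_\dr^{-1}(\epsilon^{\otimes m})$ equals $t^m$ times the canonical generator of $K(m)=K$---this is exactly the statement that the chosen generator $\epsilon$ of $\Q_p(1)$ is sent to the Fontaine element $t\in F^1 B_\dr$ under Beilinson's comparison, equivalently that $\int_p(\epsilon)=t$, which is the computation in Example \ref{ex_tate_curve} ($\langle d\log x,\epsilon\rangle_\dr=t$). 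Pulling this scalar out of the contraction, and cancelling the generator of $K(m)$ against the $(-m)$-twist of $\rho_\rig^{-1}(r^\dagger_\dr\alpha)$, gives $\int_p\langle\alpha,\beta\rangle=t^m\langle\rho_\rig^{-1}(r^\dagger_\dr\alpha),\beta'\rangle_\dr$. The main obstacle, and the only place where something has to be checked rather than merely assembled, is this bookkeeping: on one side, verifying that the abstract cycle class $r_p$ of the construction box does reproduce the explicit pairing (\ref{period_pairing_new}), which forces one to unwind $\widetilde{\Rig}_*$, the homological realization functor, and the projection formula; on the other, fixing the normalization of $\rho_\dr$ on Tate objects so that the power of $t$ comes out as $t^{m}$ rather than $t^{-m}$. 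The chain of cycle class compatibilities along $\rho^\rig_\dr$, (\ref{new_dr_to_dr}) and $\rho_\rig$ is purely formal once (\ref{commutivity_regulator}) is granted.
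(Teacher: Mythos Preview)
Your proposal is correct and follows essentially the same route as the paper's proof: both transport the cycle class of $\alpha$ along the chain of comparison isomorphisms using (\ref{commutivity_regulator}) and (\ref{comp_iso_simple}), and then handle the Tate twist via the normalization $\rho_\dr(d\log x)=t\epsilon^\vee$ from Example \ref{ex_tate_curve}. The only cosmetic differences are that the paper first reduces to $M=M(X)$ and packages everything as one commutative diagram, phrasing your monoidality step as a K\"unneth decomposition for $X\times\G_m^{\times m}$.
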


\begin{proof}
Since $\DA_\et(K,\Q)$ is generated by the motives of smooth varieties, it suffices to prove the claim for $M = M(X)$ for $X \in \Sm_K$. 
Choose $\eta \in H_\dr(\G_m)$ such that
$\rho_\dr(\eta) = t \epsilon^\vee \in H^1_\et(\G_m,\Q_p) \otimes B_\dr$.
The choice of $\eta$ determines a $K$-linear isomorphism 
\[
\sigma_m \colon H_\dr^n(X)^\vee \xrightarrow{\sim} H^n_\dr(X)^\vee (-m)
\]
by identifying $H^{n}_\dr(X)^\vee (-m)$ with $H_\dr^{n+m}(X \times \G_m^{\times m})^\vee$.
The claim follows from the commutativity of the following diagram:
\begin{equation*}
\begin{tikzcd}[column sep =huge]
H_{n,m}(X^\rig,\Q) \ar[d, "\rho_\rig^{-1}\circ r^\dagger_\dr "] \ar[r, "r_p"] & H^n_\et(X,\Q_p(m))^\vee \otimes_{\Q_p} A_p \ar[d, "\id \otimes \int"] \\
H^n_\dr(X)^\vee(-m) \otimes_{\Q_p} B_\dr \ar[d, "\sigma_m"] \ar[r, "\rho_{\dr,X}^{*} \otimes \rho^{*}_{\dr,\G_m^{\wedge m}}"] &  H^n_\et(X,\Q_p(m))^\vee \otimes_{\Q_p} B_\dr \ar[d, "\cdot t^{m}"] \\
H^n_\dr(X)^\vee \otimes_{\Q_p} B_\dr \ar[r, "\rho_\dr^{*}"] &  H^n_\et(X,\Q_p)^\vee \otimes_{\Q_p} B_\dr, 
\end{tikzcd}
\end{equation*}
where we used (\ref{comp_iso_simple}), and the compatibility of $\rho_\dr^\rig$ with the cycle class maps, (\ref{commutivity_regulator}), and the K\"unneth formula. 
\end{proof}

\begin{remark}
There is a slightly different interpretation of the ring $A_p$. We know that
the \'etale realisation functor $R_p \colon \DA_\et(K) \to D(\Q_p)$ does not functorially extend to rigid motives. By (\ref{rig_real_eq}), extending scalars to $A_p$ "adds" this functoriality. For example, Let $M,M' \in \DA_\et(K,\Q)$ and let $f \colon \Rig^*M \to \Rig^*M'$ be a morphism in $\RDA_\et(K,\Q)$.
The correspoding morphism $\RR_p(\Rig^*M) \to \RR_p(\Rig^*M')$ gives rise a "period" pairing
\[
H_{n,\et}(M) \otimes H^n_\et(M') \to A_p.
\]
Elements in $A_p \setminus \Q_p$ arising in this way witness the fact that $H_{n,\et}(-)$ is not functorial for $f$.
We will see a concrete example of this phenomenon in the next section.
\end{remark}

\subsection{Motivic periods of the Kummer motive}\label{sect_kummer_mot}

We will compute the motivic periods for the family of Kummer $1$-motives $\clg{K}_a = [\Z \to \G_m]$ defined by $1 \mapsto a$ for 
$a \in K^\times$ \footnote{$\clg{K}_a$ is the pullback of the Kummer motive $\clg{K} \in \DA(\G_m)$ along the inclusion $i_a \colon \Spec K \to \G_m$.}. We often view $\clg{K}_a$ as an extension of $\UN$ by $\UN(1)$ in $\DA(K,\Q)$.
We begin by analyzing the corresponding rigid analytic motive $\clg{K}_a^\rig \in \RDA_\grm(K,\Q)$ under the equivalence
\begin{equation}\label{eq_rig_da}
\RDA_\grm(K,\Q) \xrightarrow{\sim} \DA_N(k,\Q)
\end{equation}
from \cite{vezzani_rig_monodromy} obtained by choosing $p$ as a pseudo-uniformizer of $K$. Informally, $\DA_N(k,\Q)$ is an $\infty$-category with objects given by pairs $(M,N_M)$ with $M \in \DA(k,\Q)$ and a map $N_M \colon M \to M(-1)$ which is ind-nilpotent.

As $k \subset \ol{\FF}_p$, the underlying extension
\[
\UN(1) \to \clg{K}_a^\rig \to \UN 
\]
splits in $\DA(k,\Q)$, hence $K_a^\rig$ is determined by its the monodromy operator
\[
N_a \colon \UN(1) \oplus \UN \to \UN \oplus \UN(-1),
\]
which is given by
\[\begin{pmatrix}
    0 & v(a) \\
    0 & 0
\end{pmatrix},\]
see \textit{loc. cit.} Proposition 4.21.

Furthermore, the category $\DA(k,\Q)$ admits a natural Frobenius endomorphism induced by the relative Frobenius on $k$-schemes. This Frobenius structure is compatible with the monodromy operator in the sense that "$N\phi = \frac{1}{p} \phi N$", see Section 4.4 in \textit{loc. cit.} for a precise definition. It follows that the functor $(\ref{eq_rig_da})$ has a natural enrichment
\begin{equation}\label{eq_enrichement}
\RDA_\grm(K,\Q) \to \DA_{(\phi,N)}(k,\Q),
\end{equation}
where $\DA_{(\phi,N)}(k,\Q)$ is the $\infty$-category of motivic $(\phi,N)$-modules. Informally, its compact objects are given by commutative squares of compact objects in $\DA(k,\Q)$:
\[\begin{tikzcd}
M \ar[r, "\simeq","\alpha"'] \ar[d, "N_M"] & \phi^*M \ar[d, "N_{\phi^*N}"]\\
M(-1) \ar[r, "\simeq", "\alpha \frac{1}{p}"'] & \phi^*M(-1).
\end{tikzcd}\]
The upshot is the following: The rigid realization functor $R\Gamma_\rig \colon \RDA_\grm(K,\Q) \to D(K_0)$ is compatible with the $(\phi,N)$-structure coming from (\ref{eq_enrichement}).
In other words, there is an enrichment of $R\Gamma_\rig$ via (\ref{eq_enrichement}) to a functor
\begin{equation*}
\hat{R\Gamma}_\rig \colon \RDA_\grm(K,\Q) \to D_{\phi,N}(K_0)
\end{equation*}
with values in the derived category of $(\phi,N)$-modules. By \cite[Corollary 4.58]{vezzani_rig_monodromy}, there exists an equivalence of monoidal functors $R\Gamma_{\rom{HK}} \simeq \hat{R\Gamma}_\rig$, where $R\Gamma_{\rom{HK}}$ is the Hyodo-Kato realization functor.
For $X \in \Sm_K$ with semistable reduction and integral model $\XC$, it follows that the Frobnius and monodromy operator on 
\[
H^*_{\rom{HK}}(\XC_0) \cong H^* \hat{R\Gamma}_\rig(M(X^\rig)^\vee)
\]
is induced by the motivic Frobenius and monodromy operators on $\DA_{\phi,N}(k,\Q)$ by (\ref{eq_enrichement}).

To describe the Frobenius endomorphism of $\clg{K}_a^\rig$, we choose a branch of the $p$-adic logarithm
\[
\log_K \colon K^\times \to K
\]
with $\log_K(p) = 0$. There exists a Frobenius equivariant splitting $\clg{K}_a^\rig \cong \UN(1) \oplus \UN$ in $\DA(k,\Q)$, in other words, there exists a Frobenius equivariant section $\sigma \colon \UN \to K_a^\rig$ and under the induced splitting $\phi$ acts on $\clg{K}_a^\rig \cong \UN(1) \oplus \UN$ by
\[
\begin{pmatrix}
    \frac{1}{p} & 0\\
    0 & 1
\end{pmatrix}.
\]

The induced splitting of $H_\dr(\clg{K}_a)$ does not agree with the canonical splitting of the Hodge filtration. Indeed, by \cite[Example 2.9,2.10]{Deligne_fundamenta_group_proj} and \cite[Example 11.5]{ancona2024algebraicclasses}, Frobnius acts on $T_\dr(M)$ in the dual basis of  
$(d\log x,ds)$ \footnote{coming from the splitting of the Hodge filtration on $H_\dr(\clg{K}_a) \cong H_\dr(\G_m) \oplus H_\dr(\Z)$.} as:
\[\frac{1}{p}
\begin{pmatrix}
    1 & \log_K (a^{p-1}) \\
    0 & p
\end{pmatrix}.
\]

Moreover, the Tate module of $\clg{K}_a$ is generated by $\epsilon$ and $\alpha = (a,a^{1/p}, a^{1/p^2},\dots) \in O_{\C_p}^\flat$.
And by the same computation as in Example \ref{ex_tate_curve}, the comparison isomorphism $\rho_\dr \colon H_\dr(\clg{K}_a) \otimes B_\dr \to H_\et(\clg{K}_a)\otimes B_\dr$ in the above bases is given by
\[
\begin{pmatrix}
  t & 0 \\
  \log[\alpha] & 1
\end{pmatrix}.
\]

\textbf{Case 1:} $v(a) \neq 0$. As we have seen in Example \ref{ex_tate_curve}, the Kummer motive  $\clg{K}_a$ is the Raynaud replacement of the Tate elliptic curve $E_q$ corresponding to $q = a$.
In particular, we have 
\[
\Rig^* M(E_a) = \UN \oplus \clg{K}_a^\rig[1] \oplus \UN(1)[2]
\]
by \cite[Proposition 4.21]{vezzani_rig_monodromy}.
Thus, the analytic motivic homology groups of $E_a$ are given by:
\[
H_{1,m}(E_a^\rig) = \begin{cases}
    \Q \cdot f \; \text{if $m=1$} \\
    0 \; \text{otherwise},
\end{cases}
\]
where $f$ denotes the canonical map $\UN(1)[1] \to \clg{K}_a[1]$, and we used that the extension $\clg{K}_a$ does not split in $\DA_{N}(k,\Q)$, and that $\Hom(\UN,\UN(1)[1]) = k^\times \otimes \Q = 0$.

It follows that
\[
\rho_\rig^{-1}r^\dagger_\dr(f) = (d \log x)^\vee \in T_\dr(\clg{K}_a).
\]
Moreover, $\rho_\dr^* \colon T_\dr(\clg{K}_a) \to T_p(\clg{K}_a)$ with respect to the dual basis $(v_1,v_2)$ of $(d\log x, ds) \subset H_\dr(\clg{K}_a)$, and the basis $(\epsilon, \alpha)\subset T_p(\clg{K}_a) \otimes \Q_p$
is given by
\[
\begin{pmatrix}
    t^{-1} & - \log [\alpha]/t \\
    0 & 1 
\end{pmatrix}.
\]
Hence
\[
    \rho_\dr^{*}(( d \log x)^\vee ) = t^{-1} \cdot \epsilon, 
\]
and for $\beta = \beta' \otimes \epsilon \in H_\et(\clg{K}_a)(1)$ we have
\[
    \int_p \langle \rho_\rig^{-1}(r^\dagger(f)), \beta \rangle = 
    t \langle ( d \log x)^\vee, \beta' \rangle = 
     \lambda  \in \Q_p \subset B_\dr,
\]
where $\lambda \in \Q_p$ satisfies $\pr(\beta') = \lambda \epsilon^\vee \in H_\et(\G_m)$ and $\pr$ is the projection map
\begin{align*}
    \pr \colon H_\et(\clg{K}_a) &\to H_\et(\G_m).
\end{align*}

\textbf{Case 2:} $v(a) = 0$.
In this case, the extension $\UN(1) \to \clg{K}_a \to \UN$ is non-trivial in $\DA(K,\Q)$ (unless $a$ is a root of unity), but it splits in $\DA_N(k,\Q)$ and even in $\DA_{\phi, N}(k,\Q)$. 
Thus, if $a$ is not a root of unity,  we can expect to find interesting motivic periods.
In fact, 
\[
H_{1,0}(\clg{K}_a^\rig) = \Q \cdot \sigma,
\]
where $\sigma$ is a (Frobenius equivariant) section of $\clg{K}_a \to \UN$ in $D_{(\phi,N)}(k,\Q)$.
The corresponding splitting of the rigid motive $\clg{K}_a^\rig$ induces a splitting of the weight Filtration on the de Rham realization 
\[
W_{-1}T_\dr(\clg{K}_a) \oplus W^\perp \cong T_\dr(\G_m) \oplus T_\dr(\Z),
\]
via the Berthelot-Ogus comparison isomorphism between rigid cohomology and de Rham cohomology.
In general, this splitting is not the same as the canonical splitting of the Hodge Filtration, because the complementary subspace of $W^\perp$ has to be Frobenius invariant.
In order to find a generator of $W^\perp$ in terms of the basis $(v_1,v_2)$ as above, we need to diagonalize the Frobenius on $T_\dr(\clg{K}_a) \cong H^1 \hat{R\Gamma}_{\rig}(\clg{K}_a^\rig) \otimes_{K_0} K$; recall that $\phi$ acts as
\[\frac{1}{p}
\begin{pmatrix}
    1 & \log_K (a^{p-1}) \\
    0 & p
\end{pmatrix}.
\]
Thus, we find that a generator of $W^\perp$ is given by 
\[
u := \frac{\log_K a^{p-1}}{p-1} v_1 + v_2 = \log_K (a) v_1 + v_2.
\]
Since $\sigma$ is Frobnius (and $N$-equivariant), it follows that 
\[
\rho_\rig^{-1} r^\dagger_\dr (\sigma) =  \lambda u
\]
for some $\lambda \in K_0$ using that $R\Gamma^\dagger \simeq R\Gamma_\rig \otimes_{K_0} K$. Unfortunately, we don't know if this number is algebraic over $\Q$.

By the computation above, we have
\[
\rho_\dr^{-1}(u) = \frac{\log_K a - \log[\alpha]}{t}\epsilon + \alpha = - \frac{\log ([\alpha]/a)}{t} \epsilon + \alpha.
\]
In particular, we have found the motivic period
$\lambda \frac{\log([\alpha]/a)}{t} \in F^0 B_\cris \subset B_\dr$.

\textbf{Case 3:} $v(a) > 0$ revisited.
As we have seen above, the rigid analytic motive of the Tate curve $E_q$ only depends on the valuation of $q$. Thus, choosing an isomorphism $\psi \colon M(E_{aq})^\rig  \xrightarrow{\sim} M(E_{q})^\rig$ for $a \in O_K^\times$ not a root of unity, gives rise to a motivic homology class in
$H_{1,0}(M_1(E_q^\rig)^\vee \otimes M_1(E_{aq}^\rig))$. One way to compute the periods of this class is the following: $\psi$ induces an isomorphism
\[
\psi_p \colon H_\et(E_q) \otimes A_p \to H_\et(E_{aq}) \otimes A_p
\]
which is compatible with the comparison isomorphism
\[
H_\et(E_q) \otimes B_\dr \to H_\dr(E_{q}) \otimes B_\dr.
\]
It follows that the following diagram commutes
\[
\begin{tikzcd}
    H_\et(E_q) \otimes A_p \ar[d, "\rho_\dr^{-1} \otimes \int_p"] \ar[r, "\psi_p"] & H_\et(E_{aq}) \otimes A_p \ar[d, "\rho_\dr^{-1} \otimes \int_p"] \\
    H_\dr(E_q) \otimes B_\dr \ar[r, "\psi_\dr \otimes \id"] & H_\dr(E_{aq}) \otimes B_\dr.
\end{tikzcd}
\]
We can write $\psi_p$ as 
\[
\begin{pmatrix}
    \lambda_{11} & \lambda_{12} \\
    \lambda_{21} & \lambda_{22}
\end{pmatrix},
\] with respect to the generators $(\epsilon,\gamma)$ and $(\epsilon, \gamma')$, where $\gamma$ and $\gamma'$ are compatible systems of $p$'th roots of $q$, respectively, of $aq$, and
with $\lambda_{ij} \in A_p$ \footnote{These $\lambda_{ij} \in A_p$ are not necessarily unique!}. Note that $\psi_\dr$ preserves the corresponding splittings of the weight filtration on $H_\dr(M)$ and $H_\dr(M')$.
Thus, we compute using the description of $\rho_\dr^{-1}$ above
\begin{align*}
\int_p \lambda_{11} &= 1 &\quad \int_p \lambda_{12} &= 0\\
\int_p \lambda_{21} &= \log[\alpha]/t &\quad \int_p \lambda_{22} &= c_1,
\end{align*}
for some $c_1 \in \Q$ depending on $\psi$, where we used that $\gamma' = \alpha \gamma$ for $\alpha = (a,a^{1/p},a^{1/p^2},\dots)$.
In particular, the periods in this case differ from the periods in the previous case by
$\lambda \log_K(a)/t$! 

\subsection{Periods of stable hyperelliptic curves}\label{sect_hyperell}

The goal of this subsection is to compute the motivic periods of (semi-)stable hyperelliptic curves.
Let $C$ be a smooth projective, geometrically connected curve of genus $2$ over $K$ with affine equation
\[
y^2 + Q(x)y = P(x),
\]
where $Q,P \in K[x]$ are polynomials of $\deg Q \leq 3$ and $\deg P \leq 6$. Let $\scr{C}$ be a stable model of $C$ over $O_K$ and we write $C_{\ol{k}}$ for $\scr{C} \times_{O_K} \ol{k}$.

\begin{theorem}[{\cite{liu_stablecurves_g2}}]\label{thm_liu_curves}
The geometric special fiber of $\scr{C}$ can have the following forms:
\begin{enumerate}
    \item $C_{\ol{k}}$ is smooth.
    \item $C_{\ol{k}}$ is irreducible with one ordinary double point. In particular, the normalisation of $C_{\ol{k}}$ is an elliptic curve.
    \item $C_{\ol{k}}$ is irreducible with two double points.
    \item $C_{\ol{k}}$ consists of two projective lines meeting transversally in three points.
    \item $C_{\ol{k}}$ has two irreducible components:
    \begin{enumerate}[a)]
        \item Both irreducible components are smooth, and hence elliptic curves.
        \item One irreducible component is smooth.
        \item Both irreducible components are singular.
    \end{enumerate}
\end{enumerate}
\end{theorem}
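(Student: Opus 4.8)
The plan is to deduce this from the Deligne--Mumford stable reduction theorem together with a purely combinatorial classification of stable curves of arithmetic genus $2$ over an algebraically closed field. Since $g=2>1$, the stable model, once it exists, is unique; so by hypothesis $\scr{C}$ is that model and its geometric special fiber $C_{\ol{k}} := \scr{C}\times_{O_K}\ol{k}$ is a connected, reduced, proper $\ol{k}$-curve with at worst ordinary double points, with $p_a(C_{\ol{k}})=2$ and with $\omega_{C_{\ol{k}}}$ ample. Hence it suffices to enumerate all such curves. (That every genus-$2$ curve over $K$ is hyperelliptic shows the equation $y^2+Q(x)y=P(x)$ is always available, so no generality is lost in the hypotheses.)

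I would encode $C_{\ol{k}}$ by its weighted dual graph $(G,\{g_v\}_v)$: the vertices are the irreducible components $C_v$, weighted by the geometric genus $g_v$ of the normalization $\widetilde{C_v}$; the edges are the nodes, a node supported on a single component contributing a loop. Two standard identities drive the count: the genus formula $p_a(C_{\ol{k}})=\sum_v g_v + b_1(G)=2$ with $b_1(G)=\#E(G)-\#V(G)+1$, and the stability condition $2g_v-2+n_v\ge 1$ at every vertex, where $n_v$ is the number of half-edges at $v$, a loop counted twice (this is ampleness of $\omega_{C_{\ol{k}}}$). These force $\sum_v g_v\le 2$ and $b_1(G)\le 2$, so only finitely many graphs occur.

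The core of the argument is then the case analysis on $b_1(G)\in\{0,1,2\}$. For $b_1(G)=0$ (a tree, $\sum_v g_v=2$) the only stable options are a single weight-$2$ vertex (a smooth genus-$2$ curve, case (1)) and two weight-$1$ vertices joined by one edge (two elliptic curves meeting at a point, case (5a)); a tree with a weight-$0$ vertex fails stability there. For $b_1(G)=1$ ($\sum_v g_v=1$) one gets a single weight-$1$ vertex with a loop (irreducible, one node, normalization elliptic, case (2)) or a weight-$1$ vertex joined by an edge to a weight-$0$ vertex bearing a loop (an elliptic curve meeting a rational nodal cubic at one point, case (5b)). For $b_1(G)=2$ (all $g_v=0$) one gets a single vertex with two loops (irreducible rational curve with two nodes, case (3)), two vertices joined by three edges (two $\PR^1$'s meeting at three points, case (4)), or two vertices joined by a single edge, each with a loop (two rational nodal cubics meeting at one point, case (5c)). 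Every remaining weighted graph --- two components meeting in exactly two points, or any configuration with three or more components --- produces a weight-$0$ vertex of valence $\le 2$ and is excluded by $2g_v-2+n_v\ge 1$. Matching these seven types with the list in the statement proves the theorem.

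I expect the graph enumeration of the third paragraph to be the only real obstacle: it is routine but must be carried out carefully, in particular with the half-edge bookkeeping for loops and with not overlooking graphs having several edges between two vertices. The complementary question of deciding \emph{which} reduction type occurs for a given $C$, in terms of the valuations of the roots of the degree-$6$ model and the Igusa invariants of $P$ and $Q$, is the genuinely arithmetic content of Liu's work; for the present statement, which only lists the possible forms, I would cite it rather than reprove it. Finally I would remark that each of the seven types is realized by an explicit hyperelliptic equation over a suitable $K$, so the list is also sharp.
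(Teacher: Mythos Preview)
Your argument is correct and is the standard combinatorial classification of stable curves of arithmetic genus $2$ via weighted dual graphs; the case analysis is complete and the stability checks are done properly. Note however that the paper does not give its own proof of this statement: it is simply quoted from Liu \cite{liu_stablecurves_g2} and used as input for the subsequent analysis of the Raynaud replacement. So there is nothing to compare against --- you have supplied a proof where the paper supplies only a citation, and your approach (genus formula plus ampleness of $\omega$ translated into the valence condition $2g_v-2+n_v\ge 1$) is exactly how one would reconstruct the list.
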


We will show that the reduction type of $C$ is reflected by the Raynaud replacement of its associated $1$-motive.

\begin{theorem}
Let $C$ be a curve as above and let $M_1(C) = [0 \to \mathrm{Alb}(C)]$ be the associated $1$-motive. The Raynaud replacement $M_1'(C) = [Y \to G]$ of $M_1(C)$ can have the following form (after a finite extension of the base field):
\begin{enumerate}[(i)]
    \item In case 1. $M_1'(C) = M_1(C)$.
    \item In case 2. $M_1'(C)$ is of the form $[u \colon \Z \to G]$, where the semiabelian variety $G$ is an extension of an elliptic curve with good reduction $E$ by $\G_m$ and $\ol{u} \colon \Z \to E$ is non-zero.
    \item In case 5.a) $M_1'(C)$ is an abelian surface with good reduction whose special fiber is the product of two elliptic curves.
    \item In case 5.b) $M_1'(C)$ is an extension
    \[
    \clg{K}_a \to M_1'(C) \to E,
    \]
    where $|a| < 1$ and $E$ is an elliptic curve with good reduction.
    \item In the other cases $M_1'(C)$ is isogenous to a $1$-motive of the form $[\Z^2 \to \G_m^2]$ by $(1,0) \mapsto (q_1,a_1)$ and $(0,1) \mapsto (a_2,q_2)$ for with $|a_i| = 1$ and $|q_i| <1$ for $i = 1,2$.
\end{enumerate}
\end{theorem}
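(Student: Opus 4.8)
The plan is to read off the structure of $M_1'(C) = [Y \to G]$ from the combinatorics of the geometric special fibre $C_{\ol{k}}$, combining Liu's classification (Theorem \ref{thm_liu_curves}) with the theory of Picard schemes of semistable curves and Raynaud's uniformization \cite{raynaud_1-mot}. After replacing $K$ by a finite extension we may assume that $C$ has a stable model $\scr{C}$, that $Y$ is constant, and that the toric part of $G$ is split. Recall that the Raynaud replacement $[Y \to G]$ of $M_1(C) = [0 \to \Jac(C)]$ has $G$ a semi-abelian $K$-variety of good reduction sitting in an extension $0 \to T \to G \to B \to 0$ with $B$ abelian of good reduction, together with a rigid-analytic identification $G^\rig/Y \cong \Jac(C)^\rig$. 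The special fibre of the connected N\'eron model of $\Jac(C)$ is computed by $\Pic^0(C_{\ol{k}})$, and for the nodal curve $C_{\ol{k}}$ the normalisation $\nu\colon \coprod_i \tilde C_i \to C_{\ol{k}}$ of its irreducible components induces an exact sequence
\[
0 \to \G_m^{b_1(\Gamma_C)} \to \Pic^0(C_{\ol{k}}) \xrightarrow{\nu^*} \prod_i \Pic^0(\tilde C_i) \to 0,
\]
where $\Gamma_C$ is the dual graph of $C_{\ol{k}}$. Consequently the rank of $Y$ equals $b_1(\Gamma_C)$, the abelian part $B$ is the good-reduction abelian variety with special fibre $\prod_i \Jac(\tilde C_i)$ (so $\dim B = \sum_i g(\tilde C_i)$), and the composite $\ol{u}\colon Y \to B$ of $u$ with $G \to B$ sends a cycle $\gamma$ of $\Gamma_C$ to $\sum_e ([P_e] - [Q_e])$, the sum over the oriented edges $e$ of $\gamma$ of the difference of the two branch points $P_e, Q_e$ of the node $e$, taken in the relevant factor $\Jac(\tilde C_i)$.

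The next step is to run through the shapes of Theorem \ref{thm_liu_curves}. In case 1 the dual graph is a point and $C_{\ol{k}}$ is smooth of genus $2$, so $b_1 = 0$, $Y = 0$, and since $\scr{C}$ is smooth $\Jac(C)$ has good reduction: this is (i). In case 5.a) the dual graph has two vertices joined by one edge, again $b_1 = 0$, $Y = 0$, and $B = \Jac(C)$ has good reduction with special fibre $E_1 \times E_2$: this is (iii). In case 2 the dual graph is one vertex with one loop, so $b_1 = 1$ and $\tilde C$ is an elliptic curve $E$ of good reduction; thus $Y \cong \Z$, $G$ is an extension of $E$ by $\G_m$, and $\ol{u}(1) = [P] - [Q] \in \Pic^0(E) = E$ for the two distinct branches $P \ne Q$ of the node, which is nonzero because $[P] = [Q]$ would force $P = Q$ on a curve of positive genus: this is (ii). In case 5.b) the dual graph has two vertices — one for the elliptic component $E$, one for the singular rational component $R$ (with $\tilde R = \PR^1$) — joined by an edge, together with a loop at the $R$-vertex; hence $b_1 = 1$, $Y \cong \Z$, and $B = E$ because $\tilde R$ has trivial Jacobian. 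The only cycle is the loop on $R$, whose image in $\prod_i \Jac(\tilde C_i) = E$ lies in $\Jac(\tilde R) = 0$, so $\ol{u} = 0$; therefore $u$ factors through $T = \G_m$, yielding a sub-$1$-motive $[\Z \to \G_m] = \clg{K}_a$ (with $a$ the image of a generator of $Y$) and quotient $[0 \to E]$, i.e. an extension $\clg{K}_a \to M_1'(C) \to E$. Grothendieck's positivity theorem for the monodromy pairing of a curve then forces $v(a) > 0$, that is $|a| < 1$: this is (iv).

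For the remaining cases 3, 4 and 5.c) all irreducible components of $C_{\ol{k}}$ have rational normalisation, so $B = 0$ and $M_1'(C) = [Y \to T]$ is a totally degenerate $1$-motive with $Y \cong \Z^2$, $T \cong \G_m^2$; here $b_1(\Gamma_C) = 2$, which one checks from the three dual graphs (a vertex with two loops; two vertices with three edges; two vertices with two loops and a connecting edge). It remains to bring $u\colon \Z^2 \to \G_m^2$ into normal form. Writing $u$ by a matrix $(u_{ij})$ with $u_{ij} \in K^\times$, the valuation matrix $(v(u_{ij}))$ is, after the isogeny-identification $Y \cong X^*(T)$ provided by the principal polarisation of $\Jac(C)$, a symmetric positive-definite bilinear form on $Y$ (symmetric by self-duality of $M_1'(C)$ up to isogeny, positive definite by Grothendieck). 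Any such form becomes diagonal on a finite-index sublattice (Gram--Schmidt, then clearing denominators); replacing $M_1'(C)$ by the isogenous $1$-motive attached to that sublattice makes the diagonal entries of $(u_{ij})$ have positive valuation and the off-diagonal entries units. This is the asserted presentation $(1,0) \mapsto (q_1, a_1)$, $(0,1) \mapsto (a_2, q_2)$ with $|a_i| = 1$, $|q_i| < 1$: this is (v).

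The main obstacle is the first step: setting up precisely the dictionary between the combinatorics of $C_{\ol{k}}$ and the \emph{full} datum $[Y \to G]$ — not just the ranks of $Y$ and $B$, but the extension $0 \to T \to G \to B \to 0$, the lattice embedding $u$, its reduction $\ol{u}$, and the symmetrised valuation of $u$ (the monodromy pairing). This requires combining the generalised-Jacobian exact sequence above, Raynaud's identification of $\Pic^0(C_{\ol{k}})$ with the special fibre of the N\'eron model of $\Jac(C)$, and the uniformization $G^\rig/Y \cong \Jac(C)^\rig$, while tracking how a chosen basis of $H_1(\Gamma_C, \Z)$ yields the generators of $Y$ and how the positions of the nodes determine $u$. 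Once this dictionary is in place, the case-by-case verifications reduce to elementary graph combinatorics together with the genus formula $p_a(C_{\ol{k}}) = \sum_i g(\tilde C_i) + b_1(\Gamma_C)$.
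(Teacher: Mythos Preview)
Your strategy is essentially the paper's: read off the Raynaud uniformization of $\Jac(C)$ from the structure of $\Pic^0(C_{\ol k})$ via the dual graph, then run through Liu's cases. The paper packages this through Cartier duality (building $M_1'(C)$ as the dual of $[\hat Y\to\hat J]$, citing L\"utkebohmert) and defers the extension analysis in cases (ii) and (iv) to Zhang, whereas you work out the branch-point description of $\ol u$ and the dual-graph combinatorics directly; your treatment of case (v) via diagonalising the monodromy pairing on a finite-index sublattice is also more explicit than the paper's one-line appeal to discreteness of the lattice image.

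One point deserves care in case (iv). Your formula $\ol u(\gamma)=\sum_e([P_e]-[Q_e])$ uses branch points living on $\tilde C_{\ol k}$, so what it actually computes is the reduction of $\ol u$ in $B(k)$, not $\ol u\in B(K)$. In case (ii) that suffices---a nonzero reduction forces $\ol u\neq 0$---but in case (iv) you need $\ol u=0$ over $K$ for $u$ to factor through $\G_m$, and the kernel of reduction $E(K)\to E(k)$ is nontrivial. The paper is not really more explicit here (it cites Zhang for the special-fibre extension and then says ``conclude by duality''), and its remark immediately after the theorem---that the full family $\scr C$, not just $C_{\ol k}$, governs cases (ii) and (iv)---is exactly this issue. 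You correctly flag the $K$-level dictionary as the main obstacle; case (iv) is where that obstacle has teeth.
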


\begin{proof}
The theorem follows from the description of the uniformization of the Jacobian of $C$ in \cite[Chapter 5]{Lutkebohmert_rigid_curves} combined with the construction of the Raynaud replacement in \cite[\S 4.2]{raynaud_1-mot}. By \cite[Proposition 5.5.3, Theorem 5.5.11]{Lutkebohmert_rigid_curves}, the Jacobian of $C$ admits a uniformization by $\hat{Y} \xrightarrow{v} \hat{J}^\rig \to J^\rig$, where $\hat{Y}$ is a lattice in $\hat{J}$, and $\hat{J}$ is a torus extension
\[
\G_m^r \to \hat{J} \to B,
\]
lifting the generalized Jacobian $J_k$ of $C_k = \scr{C}\times_{O_K} k$ to characteristic zero.
The generalized Jacobian of $C_k$ is a semi-abelian variety
\[
\G_{m,k}^r \to J_k \to B_k := \prod_j \Pic^0 (\Tilde{C}_j),
\]
where the $C_j$'s are the irreducible components of $C_k = \scr{C}\times_{O_K} k$ and $\Tilde{C}_j$ is the normalization of $C_j$. 

In particular, $J_k$ corresponds to a morphism $\phi_k: H_1(C_k,\Z) \to B_k'$\footnote{$H_1(C_k,\Z)$ denotes the first homology class of the coincidence graph of the irreducible components of $C_k$.}, and $\hat{J}$ corresponds to a morphism $\phi_k: H_1(C_k,\Z) \to B'$ where $B$ is an abelian variety with good reduction lifting $B_k$, and $B'$ denotes the dual of $B$.
Finally, $M'(C) = [u \colon Y \to G]$ is given by the dual of the $1$-motive 
$\hat{M} := [v \colon \hat{Y} \to \hat{J}]$.
More precisely, $G$ is the torus extension of $B'$ corresponding to $\ol{v} \colon \hat{Y} \to B$ and 
$u \colon Y = H^1(C_k,\Z) \to G$ is the canonical lift of $\phi \colon Y \to B'$ to $G$,
see \cite[\S 2.4.1]{raynaud_1-mot}. Alternatively, we can describe $u$ as the connecting homomorphism in the long exact sequence
\[
\cdots \to \Hom(\hat{M},\G_m) \to Y \xrightarrow{u} G \to \Ext^1(\hat{M},\G_m) \to \cdots
\]
associated to the short exact sequence
\[
W_{-2}\hat{M} \to \hat{M} \to [Y \to B]
\]
after identifying $G$ with $\Ext^1([Y' \to B],\G_m)$ and $Y$ with $\Hom(\G_m^r,\G_m)$.

At this point, we can deduce the first and the third claim.
For the last claim, we easily check that $H_1(C_k,\Z) \cong \Z^2$ and then deduce the claim from the discreteness of the image of $\Z^2$ in $\G_m^2$, see \cite[Theorem 5.5.11]{Lutkebohmert_rigid_curves}.

For the remaining cases, we need to analyse the Jacobian of $C_k$ more closely; Zhang has done this \cite{zhang_jacobians}. The main theorem of \textit{loc. cit.} implies that the extension $J_k$ is trivial in case 4. of Theoerem \ref{thm_liu_curves} and non-trivial in case 2., and we conclude by duality.
\end{proof}

\begin{remark}
The geometry of the whole family $\scr{C}$  plays a central role in cases (ii) and (iv), in the sense that the geometry of the family determines whether the corresponding extensions split.
\end{remark}

\begin{example}[Tate surface]
We compute the period matrix and the resulting motivic periods of the Tate surface (Case (v) above). Moreover, this computation allows us to determine the monodromy operator of the corresponding rigid motive.
Let $M = [u \colon \Z^2 \to \G_m^2]$ by $(1,0) \mapsto (q_1,a_1)$ and $(0,1) \mapsto (a_2,q_2)$ for with $|a_i| = 1$ and $|q_i| <1$ for $i = 1,2$. We assume that $|q_1| \geq |q_2|$.

The Hodge Filtration on $H_\dr(M)$ splits canonically as
\[
H_\dr(M) \cong H_\dr(\G_m^2) \oplus H_\dr(\Z^2).
\]
We choose a basis $(d\log t_1, d\log t_2, ds_1,ds_2)$ of $H_\dr(M)$ as in Example \ref{ex_tate_curve}.
Similarly, $T_p(M)$ is generated by $((\epsilon,1),(1,\epsilon),(\gamma_1,\alpha_1)(\alpha_1,\gamma_1))$
where $\alpha_i$ and $\gamma_i$ are compatible systems of $p$'th roots of $a_i$, respectively, of $q_i$.
The period matrix with respect to these bases is given by
\[
\begin{pmatrix}
t & 0 & 0 & 0 \\
0 & t & 0 & 0 \\
\log [\gamma_1] & \log [\alpha_1] & 1 & 0 \\
\log [\alpha_2] & \log [\gamma_2] & 0 & 1
\end{pmatrix}
\]
and the matrix of the comparison isomorphism $\rho_\dr^*$ is given by
\[\frac{1}{t} 
\begin{pmatrix}
   1 & 0 & -\log[\gamma_1] & -\log[\alpha_2] \\
   0 & 1 & -\log[\alpha_1] & -\log[\gamma_2] \\
   0 & 0 & t & 0 \\
   0 & 0 & 0 & t
\end{pmatrix}.
\]
Under the equivalence $\RDA_{\rom{gr}}(K,\Q) \simeq \DA(k,N)$\footnote{corresponding to $\varpi = p$}, the underlying $1$-motive $M^\rig \in \DA(k,\Q)$ splits as
\[
\UN(1)^2 \oplus \UN^2,
\]
with monodromy operator $N$ 
\[
N \colon \UN(1)^2 \oplus \UN^2 \to \UN^2 \oplus \UN(-1)^2.
\]
Under this decomposition, $N$ is given by a matrix of the form
\[
\begin{pmatrix}
    0 & 0 & n_{11} & n_{12} \\
    0 & 0 & 0 & n_{22} \\
    0 & 0 & 0 & 0\\
    0 & 0 & 0 & 0
\end{pmatrix},
\]
using that $N^2 = 0$ by the $p$-adic weight monodromy conjecture. 
By the semistable comparison theorem and the fact that the monodromy operator on $\DA_N(k,\Q)$ induces the monodromy operator on log crystalline cohomology 
$N$ \cite[Remark 4.43]{vezzani_rig_monodromy}, we can use the period matrix to determine the parameters $n_{ij}$. We know that the monodromy operator acts on the period ring $B_\stb$ as the unique $B_\cris$-linear derivation satisfying $N \log [\varpi^\flat] = 1$, where $\varpi^\flat$ is a system of compatible roots of $p$. Thus, $N \log [\gamma_i] = v(q_i)$ and hence $n_{ii} = v(q_i)$ and $n_{12} = 0$.

Finally, we have $H_{1,1}(M^\rig) \cong \Q^2$ and $H_{1,0}(M^\rig) = 0$, and all motivic periods are contained in $\Q_p$.

\end{example}

\begin{example}[Mixed Case]
    In this example, we study the periods of the $1$-motives arising from hyperelliptic curves in the cases (ii) and (iv) above. Let $M = [u \colon \Z \to G]$, where $G$ is an extension of an elliptic curve with good reduction by $\G_m$. The Hodge filtration is given by
    \[
    F^1 H_\dr(G) = H_\dr(E) \subset F^1 H_\dr(M) = H_\dr(G) \subset H_\dr(M)
    \]
    with graded pieces $H_\dr(\G_m)$ and $H_\dr(\Z)$. Pick a basis $\omega_1,\omega_2,\omega_3,\omega_4$ of $H_\dr(M)$ as follows: $\omega_1,\omega_2$ is any basis of $H_\dr(E)$, $\omega_3 \in H_\dr(G)$ lifts $d\log x \in H_\dr(\G_m)$ and $\omega_4$ lifts $ds \in H_\dr(\Z)$.
    Furthermore, choose a basis $(x,y,\epsilon,\alpha)$ for $V_p(M)$ as follows: $(x,y)$ lift generators of $T_p(E)$, $\epsilon$ is a compatible system of $p$'th roots of unity and $\alpha$ is a system of $p$'th roots of $u(1) =: \alpha_0$. For $z \in G(\ol{K})$ denote its image in $E(\ol{K})$ by $\ol{z}$. 
    Then, the period matrix of $M$ is given by
    \[
    \begin{pmatrix}
    \langle \omega_1, \ol{x} \rangle  & \langle \omega_2, \ol{x} \rangle  & \langle \omega_3, x\rangle  &0  \\
    \langle \omega_1, \ol{y} \rangle  & \langle \omega_2, \ol{y} \rangle  & \langle \omega_3, y\rangle  &0  \\
    0 & 0 & t & 0 \\
    \langle \omega_1, \alpha \rangle  & \langle \omega_2, \alpha \rangle  & \langle \omega_3 , \alpha \rangle  & 1  \\
    \end{pmatrix}
    \]
    In particular, the first 2x2 block is exactly the period matrix of $E$. Moreover, the period matrix has the same shape as in the complex case, compare \cite[Proposition 2.3]{bertolin_elliptic_int}.
    
    \textbf{Case (ii):} In this case $\ol{\alpha}_0 \neq 0$. And we can compute the periods of $M$ more explicitly if $E$ has complex multiplication over $K$.
    Then, by a theorem of Serre \cite[A.2.4]{serre_ladic_reps}, $V_p(E)$ splits as a direct sum of $1$-dim $G_K$-representations $V_p(E) \cong V_1 \oplus V_2$.
    Since $E$ has good reduction, the representations $V_i$ correspond to crystalline characters $\chi_1,\chi_2$ of $G_K$.
    Furthermore, since $V_p(E)$ has Hodge-Tate weights $(0,1)$, we may assume that $\chi_1 = \chi_1' \cdot \chi_{\rom{cyc}}$ and that $\chi_1',\chi_2$ are unramified. In particular, we can choose $\omega_1,\omega_2 \in H_\dr(E)$ such the period matrix of $E$ is of the form
    \[
    \begin{pmatrix}
        c_1 t & 0 \\
        0 & c_2 \\
    \end{pmatrix}
    \]
    for $c_i \in K^\rom{unr}\subset \ol{K}$ in the maximal unramified extension of $K$. Since the comparison isomorphism doesn't change under finite field extensions, we may choose $\omega_i \in H_\dr(E_{K'})$ such that $c_i =1$ (after extending scalars to some finite extension $K'/K$).
    We compute the periods $\langle \omega_i, \alpha \rangle$. Let $\sigma \in G_{K'}$, then
    \[
    p^n (\sigma \ol{\alpha}_n - \ol{\alpha}_n) E[p^n](\ol{K}'),
    \]
    hence there exists $\eta_1(\sigma), \eta_2(\sigma) \in \Z_p$ such that $\sigma \ol{\alpha} - \ol{\alpha} = \eta_1(\sigma) \ol{x} + \eta_2(\sigma) \ol{y}$.
    One verifies easily that $\sigma \mapsto \eta_1 \ol{x}$
    and $\sigma \mapsto \eta_2(\sigma) \ol{y}$ define classes in $H^1(G_{K'},\Q_p(1))$, respectively in $H^1(G_{K'},\Q_p)$.
    We claim that there exists $b_i \in K'^\times$ such that $\langle \omega_i, \alpha\rangle = \log([\beta_i]/b_i) \in F^1 B_\dr$, where $\beta_i \in O_{\C_p}^\flat$ is a compatible system of $p$'th roots of $b_i$.
    Indeed, by Kummer theory, there exists $\beta_i = (b_i,b_{i,1},b_{i,2},\dots) \in O_{\C_p}^\flat$ with $b_i \in K'^\times$ such that
    $\sigma \beta_i = \epsilon^{\eta_i(\sigma)} \beta_i$ and hence $\sigma \log([\beta_i]/b_i) = \log([\beta_i]/b_i) + \eta_i(\sigma) t \in B_\dr$.
    It follows that $\langle \omega_i, \alpha\rangle - \log([\beta_i]/b_i) \in (F^1 B_\dr)^{G_K'} = \{0\}$ and the resulting period matrix is given by
        \[
    \begin{pmatrix}
    t  & 0  & \langle \omega_3, x\rangle  &0  \\
   0  & 1  & \langle \omega_3, y\rangle  &0  \\
    0 & 0 & t & 0 \\
    \log([\beta_1]/b_1)  &\log([\beta_2]/b_2)  & \langle \omega_3 , \alpha \rangle  & 1  \\
    \end{pmatrix}.
    \]
    As for the Kummer motive,  if $\ol{\alpha_0} \in E(K) = \clg{E}(O_K)$ vanishes modulo $p$ then these logarithms appear in the image of $\int_p \colon A_p \to B_\dr$, see Section \ref{sect_kummer_mot}. 
    
    \textbf{Case (iv):} In this case $\ol{\alpha_0} = 0$, and $\langle\omega_i,\alpha\rangle =0$. Moreover, since $u$ factors through $\G_m \to G$, the second 2x2 block is the period matrix of the Kummer motive $\clg{K}_a$ for $a = \alpha_0$ with $v(\alpha_0) > 0$.
    
    The periods $\langle \omega_3, x\rangle, \langle \omega_3, y\rangle$ are trivial if the extension $G$ splits because in this case we can choose 
    $x,y$ such that $\langle \omega_3, x\rangle = 0 = \langle \omega_3, y\rangle$. Otherwise, they remain mysterious to us.
    We can only say that they are analogues of the integrals of differentials of the third kind in the complex case, see \cite{bertolin_elliptic_int}. 
\end{example}
\newpage

\appendix
\renewcommand{\thetheorem}{\Alph{section}.\arabic{theorem}}
\setcounter{theorem}{0} 
\section{The category of $1$-motives}\label{sect_1mot_cat}
\begin{defn}
Let $S$ be a scheme. A \textit{$1$-motive over $S$} consists of the following data:
\begin{enumerate}
    \item An $S$-group scheme Y which is \'etale locally isomorphic to the constant group scheme $\Z^r$ for some $r\geq 0$.
    \item A semi-abelian $S$-scheme $G$.
    \item An $S$-morphism $u: Y \rightarrow G$.
    \end{enumerate}
A morphism of $1$-motives is a morphism of complexes of group schemes over $S$, 
or equivalently, a morphism of complexes of representable fppf sheaves. We denote
the category of $1$-motives over $S$ by $\clg{M}_1(S)$ . 
\end{defn}

It is convenient to view $1$-motives as complexes of sheaves of abelian groups with $Y$ placed in degree $0$, and we usually write $M = [u \colon Y\to G]$. 
In particular, $M$ sits in an exact triangle 
\begin{equation*}
    G[-1] \rightarrow M \rightarrow Y \xrightarrow{+1}.
\end{equation*}
The following fact justifies that we often don't distinguish between $M$ and its image in $D^b_{\fppf}(S,\Z)$.

\begin{prop}[{{\cite[Proposition 2.3.1]{raynaud_1-mot}}}]
Let $M_1$ and $M_2$ be two 1-motives over $S$. Then any morphism $M_1 \to M_2$ in $D_{\fppf}^b(\mathrm{S, \Z})$ is uniquely determined by a morphism $M_1 \to M_2$ of $1$-motives. 
In other words, the functor from $1$-motives to the bounded derived category of fppf sheaves is fully faithful.
\end{prop}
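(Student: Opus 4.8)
The claim is that for $1$-motives $M_1=[u_1\colon Y_1\to G_1]$ and $M_2=[u_2\colon Y_2\to G_2]$ the natural map from morphisms of $1$-motives, i.e.\ pairs $(f\colon Y_1\to Y_2,\,g\colon G_1\to G_2)$ with $u_2 f=g u_1$, to $\Hom_{D^b_\fppf(S,\Z)}(M_1,M_2)$ is a bijection; this is the full faithfulness assertion. The plan is to compute the target by presenting each $M_i$ as the cone of $u_i$, totalising the resulting square of $R\Hom$'s, and chasing long exact sequences, thereby reducing to two vanishing statements about $\Ext$-groups between semi-abelian schemes and lattices.

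First I would record the elementary vanishing $\Hom_\fppf(G_1,Y_2)=0$: any homomorphism of $S$-group schemes from a group scheme with connected fibres, such as the semi-abelian $G_1$, to an étale group scheme such as the lattice $Y_2$ is trivial, since the preimage of the unit section is open and meets every fibre, hence is everything. This has two uses. First, a chain homotopy between two morphisms of complexes $M_1\to M_2$ is a homomorphism $G_1\to Y_2$ and is therefore zero; so morphisms of $1$-motives coincide with the degree-$0$ cocycles of the naive Hom-complex $\bigl[\Hom(G_1,Y_2)\to\Hom(Y_1,Y_2)\oplus\Hom(G_1,G_2)\to\Hom(Y_1,G_2)\bigr]$. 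Second, it makes the spurious low-degree terms in the derived computation vanish.

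Next I would apply $R\Hom(-,M_2)$ and $R\Hom(G_1,-)$, $R\Hom(Y_1,-)$ to the distinguished triangles $Y_i\xrightarrow{u_i}G_i\to M_i\xrightarrow{+1}$; this identifies $R\Hom_{D^b_\fppf(S,\Z)}(M_1,M_2)$ with the totalisation of the square with corners $R\Hom(A_1,A_2)$, $A_i\in\{Y_i,G_i\}$. Taking cohomology and using (i) left-exactness of $\Hom_\fppf$, so the negative $\Ext$'s vanish; (ii) that $Y_1$ is fppf-locally $\Z^r$, so $\mathcal{E}xt^{>0}_\fppf(Y_1,-)=0$ and $R\Hom_\fppf(Y_1,-)=R\Gamma(S,\iHom(Y_1,-))$; and (iii) $\Hom(G_1,Y_2)=0$, one finds that the whole computation reduces to the single extra vanishing
\[
\Ext^1_\fppf(G_1,Y_2)=0 .
\]
Given this, $H^0R\Hom(G_1,M_2)=\Hom(G_1,G_2)$, and the two remaining long exact sequences assemble into a natural short exact sequence
\[
0\to\ker\bigl(\Hom(Y_1,Y_2)\xrightarrow{(u_2)_*}\Hom(Y_1,G_2)\bigr)\to\Hom_{D^b}(M_1,M_2)\to\{g\in\Hom(G_1,G_2)\mid g u_1\in(u_2)_*\Hom(Y_1,Y_2)\}\to0,
\]
which is term by term the analogous description of morphisms of $1$-motives. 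Since the comparison map respects these sequences, it is an isomorphism. (Only the degree-$0$ Hom is at issue; the group $\Ext^1_\fppf(Y_1,Y_2)$, which can be nonzero over a general $S$, contributes only to $\Ext^1_{D^b}(M_1,M_2)$ and is irrelevant here.)

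The main obstacle is the vanishing $\Ext^1_\fppf(G_1,Y_2)=0$. Working fppf- or étale-locally on $S$ one reduces to $Y_2=\Z^r$; an extension $0\to\Z^r\to E\to G_1\to0$ of fppf sheaves has underlying $\Z^r$-torsor $E\to G_1$, whose class lies in $H^1_\fppf(G_1,\Z^r)$. Because a torsion-free constant local system has no first cohomology on the geometrically connected fibres of $G_1\to S$, a Leray argument shows this class is pulled back from $S$; the unit section $S\to E$ then trivialises it, so $E\cong\Z^r\times_S G_1$ as a scheme over $G_1$, and the fibrewise identity component of $E$ maps isomorphically onto $G_1$, splitting the extension. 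Passing from this sheaf-level statement to the global $\Ext^1$ is done with the local-to-global spectral sequence and $\iHom_\fppf(G_1,Y_2)=0$. Everything else is formal triangle chasing.
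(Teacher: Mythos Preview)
The paper does not prove this proposition; it simply cites Raynaud. Your reconstruction is correct and follows the standard argument: the two inputs are $\Hom(G_1,Y_2)=0$ (connected fibres to an \'etale sheaf) and $\Ext^1_\fppf(G_1,Y_2)=0$, after which the triangle chase you outline goes through exactly as you say.

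One comment on the vanishing $\Ext^1_\fppf(G_1,Y_2)=0$: your Leray step, aiming to trivialise the $\Z^r$-torsor $E\to G_1$ globally over $S$, is delicate over a general base, since the stalks of $R^1\pi_*\Z$ are filtered colimits of $H^1_{\et}$ over \'etale neighbourhoods rather than $H^1_{\et}$ of the geometric fibre, and over non-normal $S$ the scheme $G_1$ need not be normal, so $H^1_{\et}(G_1,\Z)$ is not obviously zero. But this step is in fact superfluous. Once you observe that $E$ is representable (any $\Z^r$-torsor is \'etale over $G_1$, hence a scheme) and smooth over $S$, the fibrewise identity component $E^0$ exists; to see that the \'etale homomorphism $E^0\to G_1$ is an isomorphism it suffices to check its kernel $E^0\cap\Z^r$ is trivial on each geometric fibre, and there the torsor is trivial because $H^1_{\et}(G_{1,\bar s},\Z)=\Hom_{\cont}(\pi_1^{\et}(G_{1,\bar s}),\Z)=0$ for the smooth connected variety $G_{1,\bar s}$. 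So you can drop the global Leray argument in favour of this fibrewise check, which is both cleaner and valid over an arbitrary base.
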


\begin{defn}\label{def_isogenycat_1mot}
    A morphism $f \colon M_1 \to M_2$ of $1$-motives is said to be an \textit{isogeny} if there exist $g \colon M_1 \to M_2$ and $n \in \Z$ such that $fg = n \id_{M_2}$ and $gf = n \id_{M_1}$.
    The \textit{isogeny category} $\clg{M}_1(S) \otimes \Q$ of $1$-motives over $S$ is defined as the localization of $\clg{M}_1(S)$ at the class of isogenies. Equivalently, $\clg{M}_1(S)\otimes \Q$
    can be described as the category with the same objects as $\clg{M}_1(S)$ and morphisms given by
    \[
    \Hom_{\clg{M}_1(S) \otimes \Q} (M_1,M_2) = \Hom_{\clg{M}_1(S)}(M_1,M_2) \otimes \Q,
    \]
    for all $M_1,M_2 \in \clg{M}_1(S)$, see \cite[Proposition B.1.2]{viale_kahn_1_mot}.
\end{defn}

\begin{remark}
    The category of $1$-motives is additive and has all finite limits and colimits \cite[Proposition C.1.3]{viale_kahn_1_mot} but is not abelian. Howoever, the isogeny category $\clg{M}_1(S)\otimes \Q$ is an abelian category \cite[Proposition 1.2.6]{viale_kahn_1_mot}.
\end{remark}

\begin{lemma}\label{lem_1mot_split}
    Let $M = [u \colon Y \to G]$ be a $1$-motive over a field $K$. There exists a $1$-motive $M' := [u' \colon Y' \to G]$ with $u'$ injective such that $M$ decomposes in the isogeny category $\clg{M}_1(K)\otimes \Q$ as a direct sum 
    \[
    M \cong [\ker u \to 0] \oplus [Y' \to G].
    \]
\end{lemma}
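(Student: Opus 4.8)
The plan is to analyze the inclusion $\ker u \hookrightarrow Y$ of $K$-group schemes and show that, after inverting isogenies, it admits a retraction. First I would recall that $Y$ is étale-locally constant, hence $Y(\overline{K})$ is a finitely generated $\mathbb{Z}$-module with continuous $G_K$-action, and similarly for $(\ker u)(\overline{K})$, which is a $G_K$-stable subgroup. Since $Y(\overline{K})$ is finitely generated, $(\ker u)(\overline{K})$ is a direct summand of $Y(\overline{K})$ as a $\mathbb{Z}$-module, but not necessarily $G_K$-equivariantly; however, after tensoring with $\mathbb{Q}$ the category of finitely generated $\mathbb{Q}[G_K]$-modules (with $G_K$ acting through a finite quotient, as it does here) is semisimple, so the inclusion $(\ker u)(\overline{K}) \otimes \mathbb{Q} \hookrightarrow Y(\overline{K}) \otimes \mathbb{Q}$ splits $G_K$-equivariantly. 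Clearing denominators, there is an integer $N$ and a $G_K$-equivariant map $r \colon Y(\overline{K}) \to (\ker u)(\overline{K})$ with $r|_{(\ker u)(\overline{K})} = N \cdot \mathrm{id}$. This $r$ corresponds to a morphism of $K$-group schemes $r \colon Y \to \ker u$.

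Next I would set $Y' := \mathrm{im}(u) \cong Y / \ker u$ (the scheme-theoretic image, which is again étale-locally constant since it is a quotient of $Y$ by a subgroup that is a direct summand over $\mathbb{Q}$), and let $u' \colon Y' \to G$ be the induced map, which is injective by construction. The decomposition will come from the morphisms in $\clg{M}_1(K) \otimes \mathbb{Q}$: the quotient map $\pi \colon M = [u\colon Y \to G] \to [u' \colon Y' \to G] = M'$ (identity on $G$, quotient $Y \to Y'$ on the lattice part), and the inclusion $[\ker u \to 0] \hookrightarrow M$. Using the retraction $r$, I would construct a morphism $M \to [\ker u \to 0]$ in the isogeny category (the map $r \colon Y \to \ker u$ on lattices, zero on $G$, which is a morphism of complexes since $u$ vanishes on $\ker u$), and a splitting $M' \to M$ in the isogeny category using $\frac{1}{N}$ times a lift of $Y' \to Y$. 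Checking that these assemble into an isomorphism $M \xrightarrow{\sim} [\ker u \to 0] \oplus [Y' \to G]$ in $\clg{M}_1(K)\otimes \mathbb{Q}$ is a direct diagram chase: one verifies the two composites are $N^2 \cdot \mathrm{id}$ (hence isomorphisms after $\otimes \mathbb{Q}$) on each summand, using that $r$ restricts to multiplication by $N$ on $\ker u$ and that $\pi$ is the identity on $G$.

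The main obstacle — really the only substantive point — is producing the $G_K$-equivariant retraction $r$, i.e., the observation that the relevant Galois representations factor through a finite quotient so that Maschke's theorem applies after inverting the order. Everything else is bookkeeping about morphisms of two-term complexes of fppf sheaves, which is legitimate because $1$-motives form a full subcategory of $D^b_{\fppf}(K,\mathbb{Z})$ and morphisms in the isogeny category are just morphisms of complexes tensored with $\mathbb{Q}$; in particular the map on $G$-components is forced to be (a rational multiple of) the identity in each of the required composites, so no subtlety arises from the semi-abelian part. One should also remark that $Y'$ is genuinely a lattice (locally constant $K$-group scheme), which follows since $Y' \otimes \mathbb{Q} \cong (Y \otimes \mathbb{Q})/(\ker u \otimes \mathbb{Q})$ is a summand and $Y'$ is torsion-free as a subgroup scheme of $G$.
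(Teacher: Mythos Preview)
Your approach is essentially the same as the paper's: both reduce to splitting the short exact sequence of lattices $0 \to \ker u \to Y \to Y' \to 0$ Galois-equivariantly after $\otimes\,\Q$, using that the $G_K$-action factors through a finite quotient. The paper does this by the explicit averaging formula $s = \sum_{\sigma \in \Gal(K'/K)} \sigma \hat{s}$ for a section $\hat s$ chosen over a splitting field $K'$, which is exactly your invocation of Maschke/semisimplicity, unpacked.

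There is, however, one genuine gap. You assert that ``since $Y(\overline{K})$ is finitely generated, $(\ker u)(\overline{K})$ is a direct summand of $Y(\overline{K})$ as a $\Z$-module'', and later that $Y' = \mathrm{im}(u)$ is ``torsion-free as a subgroup scheme of $G$''. Neither holds in general: a subgroup of a free finitely generated abelian group is free but need not be a summand, and $G$ certainly has torsion points that $u$ may hit (e.g.\ $Y=\Z$, $u(1)$ a $2$-torsion point of $G$, so $\ker u = 2\Z$ and $Y/\ker u \cong \Z/2$). The paper deals with this by first replacing $M$ by the isogenous $1$-motive $[n\cdot u\colon Y\to G]$ for $n$ killing the torsion of $Y/\ker u$; after this replacement $\ker u$ is saturated in $Y$ and $Y'$ is a genuine lattice. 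Once you insert this preliminary isogeny, your argument goes through unchanged.
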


\begin{proof}
Let $M = [u \colon Y \to G]$ be a $1$-motive. By replacing $M$ by the isogeneous $1$-motive $[n]\circ u \colon Y \to G$, we may assume that $Y' := Y/\ker(u)$ is an \'etale lattice
and we have a natural short exact sequence in $\clg{M}_1(K) \otimes \Q$
\[
[\ker(u) \to 0] \to M \xrightarrow{(f,\id)} [u' \colon Y' \to G],
\]
where $u'$ is the map induced by $u$.
It remains to show that $(f,\id)$ admits a section in $\clg{M}_1(K) \otimes \Q$, i.e. that there exists a morphism of \'etale group schemes $g \colon Y' \to Y$ such that $\pi \circ s = n \id_{Y'}$.
Let $K'/K$ be a finite extension such that $Y$ and $Y'$ are constant over $K'$ and let $\hat{s} \colon Y'_{K'} \to Y$ be a section of $f_{K'}$. Recall that $G_{K'/K}$ acts on $\Hom(Y_{K'}, Y_{K'})$ and a morphism $g \colon Y'_{K'} \to Y_{K'}$ descends to $Y' \to Y$ if it is invariant under the action of $G_{K'/K}$. Finally, let $s = \sum_{\sigma \in G_{K'/K}} \sigma \hat{s}$ and check that $f \circ s = [K':K] \id_{Y'}$. This follows from the fact that $f$ is $G_K$-equivariant.
\end{proof}

\begin{defn}
    Let $M$ be a $1$-motive. The \textit{weight-filtration} $W_i M$ on $M$ is defined as the following ascending 3-step filtration (in the category of $1$-motives):
\begin{align*}
    &W_i(M)= 0 \quad \text{for}\; i \leq -3, \\
    &W_{-2}(M) = T,\\
    &W_{-1}(M) = G,\\
    &W_i(M) = M \quad \text{for} \; i \geq 0.
   \end{align*}
The graded pieces of $W_iM$ are:
\begin{align*}
    \gr^W_{-2}(M) = T,\;
    \gr^W_{-1}(M) = A,\;
    \gr^W_{0}(M) = Y.
\end{align*}
\end{defn}

We fix a $p$-adic field $K$ for the rest of the section.

\begin{defn}[{}{\cite[Section 4.2]{raynaud_1-mot}}]\label{def_rig_1mot}
    A rigid $1$-motive over $K$ is a two-term complex \\
    $[Y \to G]$ of abelian sheaves on $(\Spa(K,O_K))_\fppf$
    where 
    \begin{enumerate}
        \item $Y$ is \'etale locally isomorphic to the constant sheaf $\Z^r$ for some $r \geq 0$.
        \item $G$ is representable by an analytic group, that is, an extension of an abeloid
        \footnote{An abeloid variety is a proper, smooth, and connected analytic group.} variety by a torus.
    \end{enumerate}
    A morphism of rigid $1$-motives is a morphism of complexes of abelian sheaves. We denote the category of rigid $1$-motives by $\clg{M}_1(K)^\rig$.
\end{defn}

The analytification functor $X \to X^\rig$ from varieties over $K$ to rigid varieties over $K$ induces a functor
\[
(-)^\rig \colon \clg{M}_1(K) \to \clg{M}_1(K)^\rig
\]
which is not fully faithful.
Additionally, unlike $1$-motives, rigid $1$-motives do not embed fully faithfully into the derived category of fppf sheaves on $\Spa(K, O_K)$. However, there is a class of rigid $1$-motives for which this is true.

\begin{defn}
    A $1$-motive $[u \colon Y \to G]$ over $K$ is said to be \textbf{strict} if $G$ has potentially good reduction.
\end{defn}

\begin{prop}[{}{\cite[Proposition 4.2.4]{raynaud_1-mot}}]
Let $M_1,M_2$ be two $1$-motives over $K$.
\begin{enumerate}
    \item Assume that $M_1$ is strict. Then the natural map
    \[
    \Hom_{\clg{M}_1(K)^\rig}(M_1^\rig,M_2^\rig) \to \Hom_{D^b_\fppf(\Spa(K,O_K),\Z)}(M_1^\rig,M_2^\rig)
    \]
    is a bijection.
    \item Assume that $M_1$ and $M_2$ are strict. Then every morphism $M_1^\rig \to M_2^\rig$ is algebraizable.
\end{enumerate}
\end{prop}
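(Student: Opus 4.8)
The overall plan is to adapt the proof of full faithfulness of $\clg{M}_1(S)\to D^b_\fppf(S,\Z)$ recalled above to the $p$-adic analytic setting, keeping careful track of the two features that are genuinely new: the abeloid part of a semi-abelian group admits a Raynaud uniformization which may involve a torus, and proper rigid analytic groups obey rigid GAGA. Since a morphism over $K$ is the same as a Galois-invariant morphism over a finite extension $K'$, and the hypotheses, the two sides of the comparison, and algebraizability all descend along such extensions, I would first pass to $K'$ so that the torus parts of the $G_i$ become split and the abelian part of $G_1$ --- and, for part (2), of both $G_1$ and $G_2$ --- acquires good reduction. The key input is then a rigidity property of a semi-abelian $G$ with potentially good reduction: over $K'$ its abelian quotient $A$ has good reduction, whence (i) $A^\rig$ admits no non-trivial homomorphism from a torus, its canonical reduction being an abelian variety over the residue field; and (ii) $\Hom^{\mathrm{an}}(A^\rig,-)$ agrees with algebraic $\Hom$ on abelian targets by rigid GAGA, while $\Ext^1_\fppf(A^\rig,\G_m^\rig)$ is the group of $K'$-points of the dual abelian variety, exactly as algebraically. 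For a bad-reduction abelian part --- a Tate curve being the prototype --- property (i) fails, and this is precisely why both assertions collapse without strictness.

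For part (1), both $\Hom$-groups should be computed by dévissage. Because a morphism of rigid $1$-motives need not preserve the naive weight filtration (the uniformization $\G_m^\rig\to E_q^\rig$ being the basic counterexample), the dévissage of the source $M_1^\rig$ must be carried out along its Raynaud-uniformization filtration, and strictness of $M_1$ is what forces this filtration, after base change, to reduce to the weight filtration of an algebraic $1$-motive. Concretely, one uses the triangle $G_1^\rig[-1]\to M_1^\rig\to Y_1\xrightarrow{+1}$ for the source and the triangles $T_2^\rig\to G_2^\rig\to A_2^\rig\xrightarrow{+1}$ and $G_2^\rig[-1]\to M_2^\rig\to Y_2\xrightarrow{+1}$ for the target to reduce $\Hom_{D^b}(M_1^\rig,M_2^\rig)$ to $\Hom_\fppf$ and $\Ext^1_\fppf$ over $\Spa(K',O_{K'})$ between the pieces $\{Y_1,A_1^\rig,T_1^\rig\}$ and $\{Y_2,A_2^\rig,T_2^\rig\}$. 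For surjectivity I would take $\phi\in\Hom_{D^b}(M_1^\rig,M_2^\rig)$, restrict it along $G_1^\rig[-1]\to M_1^\rig$ and project it along $M_2^\rig\to Y_2$, and use the vanishing of the spurious $\Ext$-groups --- supplied by the rigidity property and by the cohomological triviality of the adic point $\Spa(K',O_{K'})$ --- to rectify the resulting roof into an honest morphism $(f_Y,f_G)$ of rigid $1$-motives. For injectivity, a morphism of complexes that dies in $D^b$ induces zero on $H^0$ and $H^1$ and kills the attached class in the relevant $\Ext^1$, and the same vanishing, together with the fact that a homomorphism from the connected group $G_1^\rig$ into the discrete sheaf $Y_2$ vanishes, forces $f_Y=0$ and $f_G=0$.

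For part (2), part (1) already identifies a morphism $M_1^\rig\to M_2^\rig$ in $D^b_\fppf(\Spa(K,O_K),\Z)$ with a morphism $(f_Y,f_G)\colon[Y_1\to G_1^\rig]\to[Y_2\to G_2^\rig]$ of rigid $1$-motives, so it remains to algebraize this datum. The component $f_Y\colon Y_1\to Y_2$ is a morphism of \'etale $K$-group schemes and is automatically algebraic. For $f_G\colon G_1^\rig\to G_2^\rig$, pass to $K'$ so that $A_2$ has good reduction; then the composite $T_1^\rig\hookrightarrow G_1^\rig\xrightarrow{f_G}G_2^\rig\twoheadrightarrow A_2^\rig$ is a homomorphism from a torus into a good-reduction abeloid, hence zero by rigidity property (i), so $f_G$ carries $T_1^\rig$ into $T_2^\rig$ and induces $\ol{f}\colon A_1^\rig\to A_2^\rig$. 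Now $f_G|_{T_1^\rig}$ is a homomorphism of split tori, hence algebraic; $\ol{f}$ is algebraic by rigid GAGA; and since $\Hom^{\mathrm{an}}(A_1^\rig,T_2^\rig)=0$, the pair $(f_G|_{T_1^\rig},\ol{f})$ determines $f_G$ uniquely among its liftings, while the corresponding algebraic lift exists because the needed compatibility of extension classes in $\Ext^1_\fppf(A_1^\rig,T_2^\rig)\cong\Ext^1(A_1,T_2)$ already holds. Comparing analytic and algebraic lifts gives $f_G=g^\rig$ for a unique $g\colon G_1\to G_2$ over $K'$, and a final Galois descent from $K'$ to $K$ produces the required algebraic morphism $M_1\to M_2$.

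The step I expect to be the main obstacle lies entirely in part (1): verifying the vanishing statements that make the dévissage collapse onto the two-term complex structure --- the rigid Weil--Barsotti identification of $\Ext^1_\fppf(A^\rig,\G_m^\rig)$, the vanishing of $\Ext^1_\fppf$ of a lattice against $\G_m^\rig$, $\G_a^\rig$, and abeloids over the adic point $\Spa(K,O_K)$, and, above all, the statement that potential good reduction eliminates the torus in the Raynaud uniformization of the abelian part, which is the exact mechanism by which strictness restores full faithfulness. Keeping track of which of these groups contributes, and with which shift, in the long exact sequences attached to the triangles above --- equivalently, showing that any quasi-isomorphism out of a strict source is already an isomorphism of rigid $1$-motives --- is where the real work is; part (2) is then essentially formal given part (1) and rigid GAGA.
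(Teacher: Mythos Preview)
The paper does not give its own proof of this proposition: it is stated in the appendix with the citation \cite[Proposition 4.2.4]{raynaud_1-mot} and no argument is supplied. So there is no ``paper's proof'' to compare against; the statement is quoted from Raynaud.

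That said, your proposal follows the standard line of Raynaud's original argument and identifies the right ingredients: Galois descent to a finite extension where tori split and the abelian parts acquire good reduction; the rigidity consequence that good reduction kills the torus in the Raynaud uniformization (so strictness forces the analytic weight filtration to coincide with the algebraic one); d\'evissage along the triangles $G_i^\rig[-1]\to M_i^\rig\to Y_i$ and $T_i^\rig\to G_i^\rig\to A_i^\rig$; the Weil--Barsotti identification $\Ext^1_\fppf(A^\rig,\G_m^\rig)\cong A'(K)$; and rigid GAGA for the algebraization step. Your assessment that the substantive content lies in the vanishing/identification of the relevant $\Hom$ and $\Ext^1$ groups over $\Spa(K,O_K)$, and that part (2) is formal once part (1) is established, is accurate. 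If you want to flesh this out, the reference to consult is Raynaud's paper itself, where these computations are carried out.
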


The following theorem summarizes the construction of the Raynaud replacement of a $1$-motive.

\begin{theorem}[{}{\cite[Theorem 4.2.2]{raynaud_1-mot}}]
Let $M$ be a $1$-motive over $K$. There exists a strict $1$-motive $M'_K$ which depends functorially on $M$ together with a canonical morphism in $\clg{M}_1(K)^\rig$
\[
\can \colon M'^\rig \to M^\rig,
\]
which becomes an isomorphism in $D^b_\fppf(\Spa(K,O_K),\Z)$.
In particular, $\can$ induces an isomorphism between the \'etale realizations.
\end{theorem}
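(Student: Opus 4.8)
The plan is to construct $M'$ from Raynaud's non-archimedean uniformization of the abelian part of $M$, algebraize the resulting semi-abelian analytic group, reassemble a two-term complex, and then descend everything to $K$. Write $M = [u \colon Y \to G]$ with $G$ an extension of an abelian variety $A$ by a torus $T$. First I would pass to a finite Galois extension $K'/K$ over which $A$ acquires semistable reduction and $Y$, $T$ become split; the final object is descended back to $K$ at the very end. Over $K'$, Raynaud's uniformization theorem gives $A^\rig \cong \hat{A}/\Lambda$, where $\hat{A}$ is an extension of an abeloid variety $B^\rig$ with good reduction by a split torus and $\Lambda \subset \hat{A}$ is a lattice whose rank equals the toric dimension of $A$.

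The next step is to transport the extension structure of $G$ through this uniformization. Pulling back $T \to G \to A$ along $\hat{A} \to A^\rig$ yields an extension $T^\rig \to \hat{G} \to \hat{A}$ with $G^\rig \cong \hat{G}/\Lambda$; unwinding the extensions, $\hat{G}$ is a semi-abelian analytic group whose abeloid quotient is $B^\rig$, hence has good reduction. I would then algebraize $\hat{G}$: since $B$ comes from an abelian scheme over $O_{K'}$, rigid GAGA identifies (extensions of) line bundles on $B^\rig$ with those on $B$, and via the description of semi-abelian varieties as extensions classified by points of the dual this shows $\hat{G}$ is the analytification of a semi-abelian $K'$-scheme $G'$ with potentially good reduction (so $M'$ will be strict), and $\Lambda$ the analytification of an étale $K'$-lattice $Y_0 \subset G'$. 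In $D^b_\fppf(\Spa(K',O_{K'}),\Z)$ this gives $G^\rig \simeq \mathrm{Cone}(Y_0 \to G'^\rig)$, equivalently $G^\rig \cong G'^\rig/Y_0$ as fppf sheaves.

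Now I would splice this with the defining triangle $G^\rig[-1] \to M^\rig \to Y^\rig$: forming the pullback $Y' = \{(y,g') : u(y) \equiv g' \bmod Y_0\} \subset Y \times G'^\rig$, which is an étale lattice sitting in an extension $0 \to Y_0 \to Y' \to Y \to 0$, and letting $u' \colon Y' \to G'^\rig$ be the second projection, one gets $[u' \colon Y' \to G'^\rig] \simeq [Y \to G'^\rig/Y_0] \simeq M^\rig$. Setting $M'_{K'} := [u' \colon Y' \to G']$, the canonical morphism $\can \colon M'^\rig \to M^\rig$ is exactly this chain of identifications, hence an isomorphism in $D^b_\fppf$ by construction. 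Functoriality is inherited step by step — Raynaud's uniformization is functorial in $A$, and the pullback of the torus extension, the GAGA algebraization, and the cone/pullback construction are all functorial — so $M \mapsto M'$ is a functor; and since every choice is canonical, the construction is $\Gal(K'/K)$-equivariant, so fppf (Galois) descent along $K'/K$ produces $M'_K$ and $\can$ over $K$. Finally, the $\ell$-adic and $p$-adic étale realizations factor through $D^b_\fppf(\Spa(K,O_K),\Z)$, since $M[\ell^n] = H^{-1}(M \otimes^L \Z/\ell^n)$ depends only on the image of $M$ in the derived category and $T_\ell(-) = \varprojlim_n (-)[\ell^n]$ is thus a derived-category invariant; hence $\can$ induces an isomorphism on étale realizations.

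The hard part will be the algebraization step — controlling extensions of the abeloid $B^\rig$ by tori precisely enough to recognize $\hat{G}$ as the analytification of a genuine semi-abelian $K'$-scheme, and checking that $Y_0$ really is the analytification of an étale $K'$-lattice — together with verifying that the uniformization data is sufficiently canonical (independent of the auxiliary choices) to be Galois-equivariant and hence to descend. The remaining manipulations with the weight triangles and the pullback construction of $Y'$ are routine bookkeeping.
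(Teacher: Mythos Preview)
The paper does not prove this theorem; it is stated in the appendix with a citation to Raynaud's original paper and is immediately followed by a corollary, with no proof given. So there is no ``paper's own proof'' to compare your proposal against.

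Your sketch is a reasonable outline of Raynaud's construction and is essentially correct: uniformize the abelian part, pull back the torus extension through the uniformizing cover, algebraize the resulting semi-abelian analytic group with good reduction, and then enlarge the lattice $Y$ by the uniformizing lattice via a fiber-product construction. One remark: when the paper later \emph{uses} the Raynaud replacement (in the section on hyperelliptic curves), it recalls that Raynaud's actual construction passes through Cartier duality --- one forms the uniformizing $1$-motive $\hat{M} = [\hat{Y} \to \hat{J}]$ and then takes $M'$ to be its Cartier dual --- rather than the direct pullback you describe. The two descriptions agree, but if you intend to match conventions precisely with \cite{raynaud_1-mot} you should be aware of this.
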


\begin{cor}\label{cor_morphism_rig_mot}
    Let $M_1,M_2$ be two $1$-motives over $K$ with Raynaud replacements $M'_1,M'_2$. Then there is a natural bijection
    \[
    \Hom_{\clg{M}_1(K)}(M'_1,M'_2) \xrightarrow{\sim} \Hom_{D^b_\fppf(\Spa(K,O_K),\Z)}(M_1^\rig,M_2^\rig).
    \]
\end{cor}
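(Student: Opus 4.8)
The plan is to realise the asserted bijection as a composite of three natural bijections, assembled from the two cited facts about strict $1$-motives. Write $\can_i \colon (M'_i)^\rig \to M_i^\rig$ for the Raynaud replacement morphisms, which by \cite[Theorem 4.2.2]{raynaud_1-mot} are isomorphisms in $D^b_\fppf(\Spa(K,O_K),\Z)$. Since $M'_1$ and $M'_2$ are strict, \cite[Proposition 4.2.4]{raynaud_1-mot} gives, on the one hand (part (1), applied to the strict source $M'_1$), that the natural map $\Hom_{\clg{M}_1(K)^\rig}((M'_1)^\rig,(M'_2)^\rig) \to \Hom_{D^b_\fppf(\Spa(K,O_K),\Z)}((M'_1)^\rig,(M'_2)^\rig)$ is a bijection, and, on the other hand (part (2)), that every morphism $(M'_1)^\rig \to (M'_2)^\rig$ in $\clg{M}_1(K)^\rig$ is algebraizable. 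Combining the latter with the faithfulness of $(-)^\rig$ on $1$-motives (discussed below) shows that $(-)^\rig$ is fully faithful on strict $1$-motives, so that $\Hom_{\clg{M}_1(K)}(M'_1,M'_2) \xrightarrow{\sim} \Hom_{\clg{M}_1(K)^\rig}((M'_1)^\rig,(M'_2)^\rig)$ is a bijection. Finally, $f \mapsto \can_2 \circ f \circ \can_1^{-1}$ is a bijection $\Hom_{D^b_\fppf(\Spa(K,O_K),\Z)}((M'_1)^\rig,(M'_2)^\rig) \xrightarrow{\sim} \Hom_{D^b_\fppf(\Spa(K,O_K),\Z)}(M_1^\rig,M_2^\rig)$. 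The desired bijection is the composite of these three, sending $\phi$ to $\can_2 \circ \phi^\rig \circ \can_1^{-1}$; its naturality in $M_1$ and $M_2$ is immediate from the functoriality of $M \mapsto M'$ and of $\can$ furnished by \cite[Theorem 4.2.2]{raynaud_1-mot}.

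For the faithfulness of $(-)^\rig \colon \clg{M}_1(K) \to \clg{M}_1(K)^\rig$ I would argue directly. A morphism $[Y_1\to G_1]\to[Y_2\to G_2]$ of $1$-motives is a compatible pair $(f_Y,f_G)$; since the $Y_i$ are étale, $\Hom(Y_1,Y_2)=\Hom_{G_K}(Y_1(\ol{K}),Y_2(\ol{K}))=\Hom(Y_1^\rig,Y_2^\rig)$, so $f_Y$ is recovered from $f_Y^\rig$. For the semi-abelian parts, any homomorphism from a torus to an abelian variety (resp. to an abeloid) is trivial, hence $f_G$ sends $T_1$ into $T_2$ and is determined by the induced homomorphism on abelian quotients and on tori; homomorphisms of abelian quotients are determined by their analytifications by rigid analytic GAGA, and homomorphisms of split tori are identified with integer matrices in both the algebraic and the rigid setting, so $f_G$ is recovered from $f_G^\rig$. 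Thus $(-)^\rig$ is faithful (though, as already noted in the text, not full).

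I do not expect any serious obstacle, since the substance is contained in \cite[Theorem 4.2.2]{raynaud_1-mot} and \cite[Proposition 4.2.4]{raynaud_1-mot}; the only point needing an explicit verification is the faithfulness used in the middle step. This could alternatively be bypassed by observing that the composite $\Hom_{\clg{M}_1(K)}(M'_1,M'_2)\to\Hom_{D^b_\fppf(\Spa(K,O_K),\Z)}((M'_1)^\rig,(M'_2)^\rig)$ is injective, using the full faithfulness of $\clg{M}_1(K)$ in the derived category of fppf sheaves \cite[Proposition 2.3.1]{raynaud_1-mot} together with the compatibility of $(-)^\rig$ with this embedding.
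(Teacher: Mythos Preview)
Your proof is correct and matches the argument the paper leaves implicit: the corollary is stated without proof immediately after the two results you invoke, and your three-step composite is the natural way to assemble them. The faithfulness step could be shortened by observing that analytification of $K$-varieties is faithful (since $X^\rig(\C_p)=X(\C_p)$ and $\C_p$-points separate morphisms of reduced $K$-varieties), which also handles non-split tori without further comment, but your explicit decomposition works as well.
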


\subsection{De Rham and \'etale realization} \label{sect-1-mot-real}
Let $S$ be a scheme and $\ell$ be a prime invertible on $S$.
We define the $\ell$-adic realization following \cite[Ch.10]{deligne_hodge_3}. 
Let $M = [u: Y \rightarrow G]$ be a 1-motive, let $n$ be an integer invertible on $S$, and let $C(M,n)$ denote the cone of multiplication by $n$ on $M$. It is given by the following complex of sheaves in degrees $-1,\;0,\;1$ 
\begin{align*}
    Y \longrightarrow &Y \oplus G \longrightarrow G \\
    x  \mapsto &(-nx , -u(x)) \\
     &(x,y) \mapsto u(x) -ny.
\end{align*}
Since multiplication is injective on $Y$ and surjective on $G$, the cohomology of $C(M,n)$ is concentrated in degree $0$. We define 
$M[n] := H^{0}(C(M,n))$\footnote{It will be always be clear from the context when $M[n]$ denotes the translation of $M$ in $D^b(\rom{fppf})$.}. There is an associated short exact sequence of locally constant sheaves on $S_\et$
\begin{equation*}
    0 \rightarrow G[n] \rightarrow M[n] \rightarrow Y[n] \rightarrow 0.
\end{equation*}
Finally, the $\ell$-adic realization of $M$ is defined as the projective system of \'etale sheaves
\begin{equation*}
    T_{\ell}(M) := "\varprojlim_n" M[\ell^n].
\end{equation*}
The weight filtration $W$ induces a filtration on $T_{\ell}(M)$ with graded pieces given by 
$T_{\ell}(T),\; T_{\ell}(A)$ and $T_{\ell}(Y)$. 

Next, we recall the construction of the de Rham realization following Deligne \cite[Ch. 10]{deligne_hodge_3}.
Let $M=[u: Y \rightarrow G]$ be a $1$-motive over a field ${K}$ of characteristic zero. The construction of the de Rham realization relies on the existence of the universal vectorial extension of a $1$-motive.

\begin{defn}
    An \textit{extension} of an $S$-1-motive $M = [u \colon Y \to G]$ by a group scheme $H$ is an extension $E$ of $G$ by $H$ together with a morphism 
    $v \colon Y \to E$ that lifts $u$. We denote the group of isomorphism classes of extensions of $M$ by $H$ as $\Ext^1(M,H)$. If $H$ is a vector group, we say that $E$ is a vectorial extension.
\end{defn}

If $W$ is a vector group, an extension of $M$ by $W$ is the same as an extension of $M$ by $W[-1]$\footnote{The shift comes from the fact that $M$ sits in degrees $0,1$.} in the category of complexes of fppf sheaves because $\G_a$-torsors are always representable.

\begin{defn}
    Let $M$ be a $1$-motive and let $\clg{E}$ be a finitely generated projective $\clg{O}_S$-module. We say a vectorial extension $E$ of $M$ by $V(\clg{E})$ is \textit{universal} if the pushout morphism
    \[
    \Hom_{\clg{O}_S}(\clg{E},\clg{F}) \to \Ext^1(M,V(\clg{F}))
    \]
    is an isomorphism for all finite generated projective $\clg{O}_S$-modules $\clg{F}$.
\end{defn}

Let $K$ be a field of characteristic zero. 
The fact that universal vectorial extensions exist is a direct consequence of the following lemma, as explained in \cite[I. 1.7]{MM_Uniext_74}.
The universal extension is unique up to canonical isomorphism.

\begin{lemma}
Let $M$ be a $1$-motive and let $\G_a$ be the additive group over $K$. Then the following holds
\begin{equation*}
    \Hom(M,\G_a[-1]) = 0, \quad \text{and} \; \Ext^1(Y,\G_a) = 0.
\end{equation*}
Moreover,
\begin{enumerate}
    \item The extensions of $M$ by $\G_a[-1]$ (as complexes) have no non-trivial automorphisms.
    \item The sequence 
    \begin{equation*}
         0 \rightarrow \Hom(Y,\G_a) \rightarrow \Ext^1(M,\G_a) \rightarrow \Ext^1(G,\G_a) \rightarrow 0
    \end{equation*}
    is exact.
    \item Since $\Hom(T,\G_a) = \Ext^1(T,\G_a) = 0$, the natural map
    \begin{equation*}
        \Ext^1(A,\G_a) \rightarrow \Ext^1(G,\G_a)
    \end{equation*}
    is an isomorphism.
\end{enumerate}
\end{lemma}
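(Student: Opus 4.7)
The plan is to derive all assertions from the distinguished triangle
\[
G[-1]\longrightarrow M \longrightarrow Y\xrightarrow{+1}
\]
in $D^b_\fppf(K,\Z)$, together with three classical inputs valid over a field of characteristic zero: $\Hom(G,\G_a)=0$ for semi-abelian $G$; $\Hom(\G_m,\G_a)=\Ext^1(\G_m,\G_a)=0$; and the vanishing of Galois cohomology of a finite-dimensional $\G_a$-module over $K$ (additive Hilbert~90, equivalently the normal basis theorem).

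First I would handle the two initial vanishings. The identity $\Hom(M,\G_a[-1])=0$ is immediate, because a chain map from $[Y\to G]$ (degrees $0,1$) to $\G_a[-1]$ is a morphism $f\colon G\to\G_a$ with $f\circ u=0$, and already $\Hom(G,\G_a)=0$: the torus factor vanishes because $K[t,t^{-1}]$ has no primitive elements in characteristic zero, and the abelian-variety factor vanishes by properness. For $\Ext^1(Y,\G_a)=0$, I would pick a finite Galois extension $K'/K$ trivialising $Y\cong\Z^r$; over $K'$ every extension $0\to\G_a\to E\to \Z^r\to 0$ of fppf sheaves of abelian groups admits a section (pick lifts of the canonical basis), and the set of such sections is a torsor under the finite-dimensional $\G_a$-module $\Hom(Y_{K'},\G_{a,K'})$. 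A Galois-equivariant section then exists by additive Hilbert~90, descending a splitting to $K$.

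Part (i) is a formal consequence: the group of automorphisms of an extension $0\to \G_a[-1]\to E\to M\to 0$ fixing both subobject and quotient is, via $\phi\mapsto\phi-\id$, a torsor under $\Hom(M,\G_a[-1])$, which we just proved to be zero. For part (ii), applying $\RH(-,\G_a)$ to the triangle above yields the five-term exact sequence
\[
\Hom(G,\G_a)\to \Hom(Y,\G_a)\to \Ext^1(M,\G_a)\to \Ext^1(G,\G_a)\to \Ext^1(Y,\G_a),
\]
once the author's $\Ext^1(M,\G_a)$ (classifying vectorial extensions of $M$) is identified with $\Hom_D(M,\G_a)$ as already done in the paper. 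The outermost terms vanish by the previous paragraph, yielding the asserted short exact sequence.

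Part (iii) is the long exact sequence for $\RH(-,\G_a)$ applied to $0\to T\to G\to A\to 0$: the vanishing $\Hom(T,\G_a)=\Ext^1(T,\G_a)=0$ (reducing étale-locally to $\Hom(\G_m,\G_a)=\Ext^1(\G_m,\G_a)=0$ and then to $K$ via the same Hilbert~90 input) forces $\Ext^1(A,\G_a)\xrightarrow{\sim}\Ext^1(G,\G_a)$. The one substantive ingredient is the Galois-cohomological vanishing used for the torus and for $\Ext^1(Y,\G_a)$; the rest is a routine diagram chase on long exact sequences.
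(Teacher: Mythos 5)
Your proof is correct and takes essentially the same route as the paper: all three parts are read off from the long exact sequences attached to the triangle $G[-1]\to M\to Y$ and to $0\to T\to G\to A\to 0$, using the vanishings $\Hom(G,\G_a)=0$, $\Ext^1(Y,\G_a)=0$ and $\Hom(T,\G_a)=\Ext^1(T,\G_a)=0$, with automorphisms of an extension identified with $\Hom(M,\G_a[-1])=0$. The only difference is that where the paper cites these input vanishings as well known (e.g.\ via $H^1(Y,\mathcal{O}_Y)=0$), you justify them by Galois descent together with additive Hilbert~90, which is a perfectly adequate substitute.
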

\begin{proof}
The first assertion follows from the well-known facts
\[
\Hom(\G_m,\G_a) = \Hom(A,\G_a) = 0 = H^1(Y,\clg{O}_Y).
\] 
\begin{enumerate}
    \item Follows from the fact that $G$ has connected fibres and $\G_a$ is an \'etale sheaf. 
    \item Follows by looking at the long exact sequence associated with the distinguished triangle 
    \begin{equation*}
         G[-1] \rightarrow M \rightarrow Y \rightarrow G.
    \end{equation*}
    Since $\Hom(M,\G_a[-1]) = 0$, the long exact sequence reduces to the desired short exact sequence
    \begin{equation*}
         0 \to \Hom(Y, \G_a) \to \Ext^1(M, \G_a) \to \Ext^1(G,\G_a) \to \Ext^1(Y,\G_a)=0.
    \end{equation*}
    \item Follows from the long exact sequence associated with the short exact sequence
\begin{equation*}
   0 \rightarrow T \rightarrow G \rightarrow A \rightarrow 0.
\end{equation*}
\end{enumerate} 
\end{proof}

\begin{remark}
We note that the map
\begin{equation*}
    \Hom(Y,\G_a) \rightarrow \Ext^1(M,\G_a)
\end{equation*}
sends a morphism $v \colon Y \to \G_a$ to the extension of $M$ given by
\begin{equation*}
\begin{tikzcd}
    0 \ar[d] \ar[r] & Y \ar[r, "\id"] \ar[d, "v \oplus u"] & Y \ar[d, "u"] \\
    \G_a \ar[r] &\G_a \oplus G \ar[r] &G.
\end{tikzcd}
\end{equation*}
\end{remark}

We denote the universal vectorial extension of a $1$-motive $M$ by $E(M) = [u^\natural : Y \rightarrow G^\natural]$.

\begin{lemma}\label{lemma_uni_ext_M}
    Let $M$ be a 1-motive with universal vectorial extension 
    $E(M) = [Y \to G^\sharp]$. 
    The group scheme $G^\natural$ is isomorphic to the pushout of the universal vectorial extension of $G$ along the natural map $\Ext^1(G,\G_a)^\vee \to \Ext^1(M,\G_a)^\vee$:
\begin{equation*}
\begin{tikzcd}
   0 \ar[r]& V(\Ext^1(G,\G_a)^\vee) \ar[r] \ar[d] & E(G) \ar[r] \ar[d] & G \ar[d, "\id"] \ar[r] & 0 \\
   0 \ar[r]& V(\Ext^1(M,\G_a)^\vee) \ar[r] & G^\natural \ar[r] &G \ar[r] & 0.
\end{tikzcd}
\end{equation*}
\end{lemma}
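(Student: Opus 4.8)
The plan is to unwind the definition of the universal vectorial extension and use the description of $\Ext^1(M,\G_a)$ as a subquotient related to $\Ext^1(G,\G_a)$ provided by the previous lemma. Concretely, write $E(G) = [0 \to E(G)]$ for the universal vectorial extension of $G$, which is an extension of $G$ by $V(\Ext^1(G,\G_a)^\vee)$. Pushing this extension out along the dual of the injection $\Ext^1(G,\G_a)^\vee \hookrightarrow \Ext^1(M,\G_a)^\vee$ — which is surjective in the stated direction — produces an extension $G^\natural$ of $G$ by $V(\Ext^1(M,\G_a)^\vee)$. First I would check that this pushout $G^\natural$ indeed underlies a vectorial extension $E' := [v \colon Y \to G^\natural]$ of the whole $1$-motive $M$: the lifting $v$ of $u \colon Y \to G$ must be produced, and here I would use the explicit description (from the Remark) of how a homomorphism $Y \to \G_a$ gives rise to an extension of $M$, together with the vanishing $\Ext^1(Y,\G_a) = 0$, to see that after pushout the obstruction to lifting $u$ to $G^\natural$ vanishes and a canonical lift exists (unique since extensions of $M$ by $\G_a[-1]$ have no nontrivial automorphisms).

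Next I would verify the universal property. Let $\clg{F}$ be a finitely generated projective $\clg{O}_S$-module; I must show the pushout map $\Hom_{\clg{O}_S}(\Ext^1(M,\G_a)^\vee, \clg{F}) \to \Ext^1(M, V(\clg{F}))$ is an isomorphism. The key input is the short exact sequence from the lemma,
\begin{equation*}
0 \to \Hom(Y,\G_a) \to \Ext^1(M,\G_a) \to \Ext^1(G,\G_a) \to 0,
\end{equation*}
which, being a sequence of finite-dimensional vector spaces, dualizes to a short exact sequence and more generally yields a short exact sequence after applying $\Hom_{\clg{O}_S}(-,\clg{F})$ to the duals. On the other hand, the exact sequence $\Hom(Y, V(\clg{F})) \to \Ext^1(M, V(\clg{F})) \to \Ext^1(G, V(\clg{F})) \to 0$ (again from the distinguished triangle $G[-1] \to M \to Y \to G$, using $\Hom(M, V(\clg{F})[-1]) = 0$) is the target. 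Comparing the two via the pushout map and the universal property of $E(G)$ (which identifies $\Hom_{\clg{O}_S}(\Ext^1(G,\G_a)^\vee,\clg{F}) \xrightarrow{\sim} \Ext^1(G,V(\clg{F}))$) and the tautological identification $\Hom_{\clg{O}_S}(\Hom(Y,\G_a)^\vee, \clg{F}) = \Hom(Y, V(\clg{F}))$, a five-lemma / diagram-chase argument gives the desired isomorphism. Finally, uniqueness of the universal extension up to canonical isomorphism forces $G^\natural \cong G^\sharp$ compatibly with the maps to $G$, which is the assertion.

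The main obstacle I anticipate is the bookkeeping around the lifting $v \colon Y \to G^\natural$: one must make precise that pushing out $E(G)$ along $\Ext^1(G,\G_a)^\vee \to \Ext^1(M,\G_a)^\vee$ is compatible with the data of a $1$-motive, i.e. that the chosen lift $u^\sharp \colon Y \to G^\sharp$ of $u$ matches, under the identification, the canonical lift coming from the Remark's description of $\Hom(Y,\G_a) \to \Ext^1(M,\G_a)$. This is where the interplay between the two exact sequences (the one for $M$ and the one for $G$) has to be set up carefully; once the diagram is in place, the verification of the universal property is a formal consequence of the universal property of $E(G)$ and left-exactness/right-exactness of the relevant $\Hom$ and $\Ext^1$ functors on the vector-group side. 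I would also remark that over a field $K$ of characteristic zero all the relevant $\Ext^1$-groups are finite-dimensional $K$-vector spaces, so taking duals is harmless, and that the construction is manifestly functorial in $M$, matching the functoriality of $E(-)$.
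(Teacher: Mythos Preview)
The paper states this lemma without proof, so there is no argument to compare against directly. Your approach is correct and would work: construct the pushout $G^\natural$, equip it with a lift $v \colon Y \to G^\natural$, verify the universal property via the five lemma applied to the short exact sequence $0 \to \Hom(Y,\G_a) \to \Ext^1(M,\G_a) \to \Ext^1(G,\G_a) \to 0$, and conclude by uniqueness of universal extensions.

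Two small remarks. First, your claim that the lift $v$ is ``unique since extensions of $M$ by $\G_a[-1]$ have no nontrivial automorphisms'' is not quite right: the lift itself is not unique (one may add any map $Y \to V(\Ext^1(M,\G_a)^\vee)$), only the resulting extension class is. This does not damage your argument, but the next lemma in the paper (Lemma~\ref{lemma_uni_ext_crit}) is precisely what pins down which lifts $v$ give the universal extension, so you could invoke it rather than improvise. Second, there is a more direct route that avoids reconstructing $E(M)$ from scratch: since $E(M)$ exists, its underlying extension $G^\sharp$ of $G$ is, by universality of $E(G)$, the pushout of $E(G)$ along \emph{some} map $f \colon \Ext^1(G,\G_a)^\vee \to \Ext^1(M,\G_a)^\vee$; to see that $f$ is the natural inclusion, trace the identity of $\Ext^1(M,\G_a)^\vee$ through the commuting square relating $\Ext^1(M,V(\clg{F})) \cong \Hom(\Ext^1(M,\G_a)^\vee,\clg{F})$ and $\Ext^1(G,V(\clg{F})) \cong \Hom(\Ext^1(G,\G_a)^\vee,\clg{F})$ under restriction along $G[-1] \to M$. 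Your approach has the advantage of being self-contained and of simultaneously identifying both $G^\sharp$ and the data needed for Lemma~\ref{lemma_uni_ext_crit}.
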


\begin{lemma}[{}{\cite[Lemma 2.4]{bertapelle2008deligne}}]\label{lemma_uni_ext_crit}
    Let $M = [u\colon Y \to G]$ be a $1$-motive. Let $G^\sharp$ be the group scheme defined in the previous lemma. A morphism $v \colon Y \to G^\sharp$ lifting $u$ is a universal extension of $M$ if and only if the induced morphism $Y \to G^\sharp \to V(\underline{\Hom}(Y,\G_a)^\vee)$ is a universal extension of $Y$. 
\end{lemma}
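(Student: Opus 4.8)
Throughout, write $\mathcal{E}_M := \Ext^1(M,\G_a)$ and $\mathcal{E}_G := \Ext^1(G,\G_a)$. The plan is to rephrase ``universal vectorial extension'' as the invertibility of a single endomorphism, and then to read off the equivalence from the block‑triangular shape of that endomorphism with respect to the weight filtration. First, the exact sequence $0 \to \Hom(Y,\G_a) \xrightarrow{\iota} \mathcal{E}_M \xrightarrow{\rho} \mathcal{E}_G \to 0$ from the lemma above dualizes and, under $V(-)$, gives $0 \to V(\mathcal{E}_G^\vee) \to V(\mathcal{E}_M^\vee) \to V(\Hom(Y,\G_a)^\vee) \to 0$. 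Since $G^\sharp$ is, by Lemma \ref{lemma_uni_ext_M}, the pushout of the universal extension $E(G)$ of $G$ along $V(\mathcal{E}_G^\vee)\hookrightarrow V(\mathcal{E}_M^\vee)$, the subgroup $E(G)$ embeds into $G^\sharp$ with quotient $G^\sharp/E(G)\cong V(\mathcal{E}_M^\vee)/V(\mathcal{E}_G^\vee) = V(\Hom(Y,\G_a)^\vee)$; call this quotient map $q$. Then the morphism in the statement is $q\circ v$, and $[q\circ v\colon Y\to V(\Hom(Y,\G_a)^\vee)]$ is a vectorial extension of the lattice $1$‑motive $[Y\to 0]$.

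Second, I would record the standard ``class'' criterion. For any $1$‑motive $N$ and vector group $V(\mathcal{F})$ one has $\Ext^1(N,V(\mathcal{F}))\cong\Ext^1(N,\G_a)\otimes\mathcal{F}$ naturally in $\mathcal{F}$ (apply the five lemma to the exact sequence above with coefficients in $V(\mathcal{F})$ and in $\G_a$, using that this $\Ext^1$ is linear in the vector group for lattices and for abelian varieties). Hence an extension $E$ of $N$ by $V(\mathcal{F})$ has a class $[E]\in\Ext^1(N,\G_a)\otimes\mathcal{F}$; taking $\mathcal{F}=\Ext^1(N,\G_a)^\vee$, so that $[E]\in\End(\Ext^1(N,\G_a))$, the pushforward maps $\Hom(\Ext^1(N,\G_a)^\vee,\mathcal{F}')\to\Ext^1(N,V(\mathcal{F}'))$ appearing in the definition of universality are all given by composition with $[E]$. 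Therefore $E$ is a universal vectorial extension of $N$ if and only if $[E]\in\End(\Ext^1(N,\G_a))$ is an automorphism. So it suffices to prove that the class $[E_v]\in\End(\mathcal{E}_M)$ of $E_v := [v\colon Y\to G^\sharp]$ is invertible if and only if the class $[E_{qv}]\in\End(\Hom(Y,\G_a))$ of $E_{qv} := [q\circ v\colon Y\to V(\Hom(Y,\G_a)^\vee)]$ is invertible.

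Third, I would extract two facts about $[E_v]$. (a) The underlying extension $0\to V(\mathcal{E}_M^\vee)\to G^\sharp\to G\to 0$ of $G$ is, by Lemma \ref{lemma_uni_ext_M}, the pushout of the universal extension of $G$, so its class in $\Ext^1(G,V(\mathcal{E}_M^\vee))\cong\Hom(\mathcal{E}_M,\mathcal{E}_G)$ equals $\rho$; since this class is the image of $[E_v]$ under $\End(\mathcal{E}_M)\to\Hom(\mathcal{E}_M,\mathcal{E}_G)$, we get $\rho\circ[E_v]=\rho$, so $[E_v]$ preserves $\Hom(Y,\G_a)=\ker\rho$ and induces $\id$ on $\mathcal{E}_G$. (b) The induced endomorphism $[E_v]|_{\Hom(Y,\G_a)}$ equals $[E_{qv}]$: here I would compare the pushforward of $E_v$ along the surjection $V(\mathcal{E}_M^\vee)\to V(\Hom(Y,\G_a)^\vee)$ with the pullback of $E_{qv}$ along the projection $M\to Y$, observing that both equal the extension $[Y\xrightarrow{(u,\,q\circ v)}G\times V(\Hom(Y,\G_a)^\vee)]$ of $M$ — the $G$‑part being split on the left because pushing the class $\rho$ of (a) further along $V(\iota^\vee)$ yields $\rho\circ\iota=0$ — and then translating this equality of extension classes through the isomorphisms of the second paragraph turns it into $[E_v]\circ\iota=\iota\circ[E_{qv}]$. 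Combining (a) and (b): relative to $0\to\Hom(Y,\G_a)\to\mathcal{E}_M\to\mathcal{E}_G\to0$ the endomorphism $[E_v]$ is block upper‑triangular with diagonal blocks $[E_{qv}]$ and $\id_{\mathcal{E}_G}$, hence invertible if and only if $[E_{qv}]$ is, which is exactly what was needed. I expect step (b) to be the main obstacle: one must match the pushforward of an extension class along a morphism of vector groups with a pullback along $M\to Y$ and carry every dualization $(-)^\vee$ through consistently, since a single variance error there would reverse the identification $[E_v]|_{\Hom(Y,\G_a)}=[E_{qv}]$.
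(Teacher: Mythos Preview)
The paper does not prove this lemma; it merely cites \cite[Lemma 2.4]{bertapelle2008deligne}. Your argument is self-contained and correct. Recasting universality as invertibility of the extension class viewed in $\End(\Ext^1(-,\G_a))$, and then showing that $[E_v]$ is block upper-triangular with diagonal blocks $[E_{qv}]$ and $\id_{\mathcal{E}_G}$ relative to $0\to\Hom(Y,\G_a)\to\mathcal{E}_M\to\mathcal{E}_G\to0$, is a clean route.

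Your step (b) is indeed where the care is needed, but it goes through. Both extensions of $M$ by $V(\Hom(Y,\G_a)^\vee)$ you compare are represented by $[Y\xrightarrow{(u,\,q\circ v)}G\times V(\Hom(Y,\G_a)^\vee)]$: for the pushforward side, once you know the $G$-part splits (class $\rho\circ\iota=0$), the second projection $G^\sharp\to V(\Hom(Y,\G_a)^\vee)$ and the quotient map $q$ are two group homomorphisms agreeing on $V(\mathcal{E}_M^\vee)$, hence differ by a map $G\to V(\Hom(Y,\G_a)^\vee)$, which is zero; for the pullback side, the fibre product $[Y\to V(\Hom(Y,\G_a)^\vee)]\times_{[Y\to 0]}[Y\to G]$ is exactly that complex. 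So $[E_v]\circ\iota=\iota\circ[E_{qv}]$ as required, and the block-triangular conclusion follows.
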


\begin{remark}\label{rem_uni_ext}
    The lemma implies that if $v \colon Y \to G^\sharp$ defines the vectorial universal extension of $M$, then so does $v + f \colon Y \to G^\sharp$ for every morphism of the form $f \colon Y \to V(G) \to V(M)$. 
\end{remark}

\begin{example}\label{uni-ext-G_m} We compute the universal vectorial extension in the following examples.
\begin{enumerate}
    \item $M = [Y \to 0]$: The universal extension is given by 
    \[
    \mathrm{ev} \colon Y \to V(\Hom(Y,\G_a)^\vee).
    \]
    \item $M = [u \colon Y \to \G_{m}^r]$: In this case $\Ext^1(M,\G_a) = \Hom(Y,\G_a)$, and the universal vectorial extension is given by
\begin{equation*}
    0 \to V(\Hom(Y,\G_a)^\vee) \to V(\Hom(Y,\G_a)^\vee) \times \G_m^r \to \G_m^r \to 0,
\end{equation*}
where $u^\sharp \colon Y \to V(\Hom(Y,\G_a)^\vee) \times \G_m^r$ is induced by $\rom{ev} \times u$.
\end{enumerate}

\end{example}

\begin{defn}
Let $M$ be a 1-motive. The homological \textit{de Rham realization} $T_\dr(M)$ of $M$ is defined as 
\begin{equation*}
    \Lie(G^\natural) = T_e(G^\natural).
\end{equation*}
The Hodge filtration $F$ on $T_\dr(M)$ is defined as the descending two-step filtration
\begin{equation*}
    F^{0}T_\dr(M) = \ker (\Lie G^\natural \rightarrow \Lie G) \cong \Ext^1(M,G_a)^\vee \subset F^{-1}T_\dr(M) =  T_\dr(M).
\end{equation*}
The cohomological de Rham realization of $M$ is defined as the dual of the homological de Rham realization $H_\dr(M) = T_\dr(M)^\vee \cong \rom{Inv}(G^\sharp)$. 
The dual filtration of the Hodge filtration is given by the ascending two-step filtration
\begin{equation*}
    F^1 H_\dr(M) = \ker((F^{-1}T_\dr(M))^\vee \to (F^0T_\dr(M))^\vee) \cong \Lie(G)^\vee \hookrightarrow 
    F^0H_\dr(M) = \Lie(G^\natural)^\vee.
\end{equation*}
\end{defn}

The weight filtration on $M$ induces a weight filtration on $T_\dr(M)$ which interacts with the de Rham realization and Hodge filtration as follows:
\begin{alignat*}{2}
    &T_\dr(W_{-2}(M)) = \Lie T  \qquad &&F^0\cap T_\dr(W_{-2}(M)) = 0 \\
    &T_\dr(W_{-1}(M)) = \Lie E(G) \qquad &&F^0\cap T_\dr(W_{-1}(M)) = \Ext^1(G,\G_a)^\vee \\
    &T_\dr(W_{0}(M)) = \Lie G^\natural  \qquad &&F^0\cap T_\dr(W_{0}(M)) = \Ext^1(M,\G_a)^\vee. 
\end{alignat*}

\section{Derived logarithmic de Rham cohomology}\label{appendix_de_rham}
We quickly recall Gabber's construction of the logarithmic cotangent complex and the derived logarithmic de Rham complex. We follow the notations and conventions in \cite{bhatt2012padicderivedrhamcohomology}.

\begin{defn}
    The category of \textit{prelog rings} $\LogAlg^\pre$ is the full subcategory of the category of monoid morphisms with objects given by monoid maps $\alpha \colon M \to A$, where $M$ is a monoid and $A$ is a ring regarded as a monoid via multiplication. 
    A \textit{log ring} is a prelog ring such that $\alpha \colon \alpha^{-1}(A^\times) \to A^\times$ is an isomorphism. 
\end{defn}

The forgetful functor $\LogAlg^\pre_{(M \to A)} \to \Set \times \Set$ sending $(M \to A) \to (N \to B)$ to the pair $(N,B)$ admits a left adjoint 
\[
(X,Y) \mapsto T_{(M \to A)}(X,Y) := (\N^X\sqcup M) \to A[X,Y]).
\]

\begin{defn}
    Let $f \colon (M \to A) \to (N \to B)$ be a map of prelog rings. Let $P_\bullet(f)$ be the simplicial prelog ring given in degree $0$ by 
    \[
    (N_0 \to P_0) := T_{(M \to A)}(N,B)
    \]
    and in degree $n+1$ by $T_{(M \to A)}(N_n \to P_n)$ with the usual face and degeneracy maps, see \cite[Proposition 8.6.8]{weibel_homological_algebra}.
    The counit of the free-forgetful adjunction defines an augmentation $P_\bullet(f) \to (N \to B)$, which we call the \textit{canonical free resolution} of $f$.
\end{defn}

\begin{remark}
    There exists a model structure on simplicial prelog rings such that the canonical free resolution is a cofibration replacement of $f$. In particular, everything discussed below will work with any cofibrant replacement of $f$, see \cite[Section 5]{bhatt2012padicderivedrhamcohomology} for more details.
\end{remark}

\begin{defn}
    Let $f \colon (M \to A) \to (N \to B)$ be a map of prelog rings. The $B$-module of logarithmic K\"ahler differentials of $f$ is defined as
    \[
    \Omega^1_f := \left(\Omega^1_{B/A} \oplus (\coker(M \to N)^{\rom{gp}} \otimes_\Z B)\right)/
    \left((d \beta(n),0) - (n \otimes \beta(n))\right),
    \]
    where $\beta \colon N \to B$ is the structure map. We often write $\Omega^1_{(A,M)/(B,N)}$ when the map $f$ is clear from the context.
    The monoid map $d\log \colon N \to \Omega^1_f$ is defined by $n \mapsto (0,n \otimes 1)$. Moreover, the derivation $d \colon B \to \Omega^1_{B/A}$ defines by composition an $A$-linear derivation $B \to \Omega^1_f$. The corresponding complex $\Omega^\bullet_f$ is called the logarithmic de Rham complex of $f$.
\end{defn}

Gabber's logarithmic cotangent complex is defined for any map of prelog rings, mimicking the construction of the usual cotangent complex using the canonical resolution instead of the standard simplicial resolution by polynomial algebras.

\begin{defn}
    Let $f \colon (M \to A) \to (N \to B)$ be a map of prelog rings. Let $P_\bullet(f) = (N_\bullet \to P_\bullet)$ be the canonical resolution of $f$. The \textit{logarithmic cotangent complex} of $f$ is defined as the complex of 
    $B$-modules
    \[
    L_f := \Omega^1_{P_\bullet(f)/(M \to A)} \otimes_{P_\bullet} B.
    \]
    The maps $d\log \colon N_n \to \Omega^1_{P_n(f)/(M \to A)}$ for each $n$ fit together to give a map
    \[
    d \log \colon N \simeq |N_\bullet| \to L_f
    \]
    in the homotopy category of simplicial monoids.

    Furthermore, the \textit{derived logarithmic de Rham complex} of $f$, denoted by $L\Omega^\bullet_f$, is defined as the total complex (using direct sums) associated to the double complex
    \[
    \Omega^\bullet_{P_\bullet(f)/(M \to A)}.
    \]
    The maps $N_n \to \Omega^\bullet_{P_n(f)/(M \to A)}[1]$ fit together to define a map
    \[
    d\log \colon N \simeq |N_\bullet| \to L\Omega^\bullet_f[1]
    \]
    in the homotopy category of simplicial monoids.
\end{defn}

\begin{remark}
In the case where $N = M$, the derived logarithmic de Rham complex agrees with the usual cotangent complex. This is, for a map of prelog rings $f \colon (N \to A) \to (N \to B)$, the natural map
\[
L_{B/A} \to L_f
\]
is an isomorphism.
\end{remark}

Let $f \colon (M \to A) \to (N \to B)$ be a map of prelog rings. 
The derived logarithmic de Rham complex admits two natural filtrations.

\begin{defn}
    \textit{The Hodge filtration} $F^\bullet$ is obtained from the stupid filtration on the logarithmic de Rham complex, more precisely,
    \[
    F^n L\Omega^\bullet_f := |\Omega^{\geq n}_{P_\bullet(f)/(M \to A)}|.
    \]
    The Hodge filtration is decreasing, separated, and exhaustive, and the graded pieces are given by
    \[
    \gr^n_F  L\Omega^\bullet_f \simeq \bigwedge^n L_f [-n],
    \]
    where $\bigwedge^n$ is the derived wedge product.
\end{defn}

\begin{defn}[{}{\cite[Proposition 6.9]{bhatt2012padicderivedrhamcohomology}}]
    \textit{The conjugate filtration} $F^{\con}_\bullet$ is obtained from the canonical filtration on the logarithmic de Rham complex, more precisely,
    \[
    F^{\con}_n := |\tau_{\leq n} \Omega^{\bullet}_{P_\bullet(f)/(M \to A)}|.
    \]
    The conjugate filtration is increasing, separated, exhaustive, and independent of the resolution $P_\bullet$. In particular, there exists a convergent spectral sequence, called the \textit{conjugate spectral sequence}, of the form 
    \footnote{We follow homological convention, i.e., $d_r$ is a map $E_r^{p,q} \to E_r^{-r,q+r-1}$.}
    \[
    E^{p,q}_1 \colon H_{p+q}(\gr^\con_p(L\Omega^\bullet_f)) \Rightarrow H_{p+q} L\Omega^\bullet_f
   .
    \]
\end{defn}

The following proposition shows that the derived de Rham complex is almost completely degenerate in characteristic zero. This explains why we need to work with the Hodge-completed de Rham complex in characteristic zero.

\begin{prop}[{}{\cite[Proposition 6.13]{bhatt2012padicderivedrhamcohomology}}]
Let  $f \colon (M \to A) \to (N \to B)$ be a map of prelog $\Q$-algebras. Then
\[
\gr_n^\con L\Omega^\bullet_f \simeq \bigwedge^n (\rom{Cone}(M^\gp \to N^\gp) \otimes_Z A)[-n].
\]
In particular, the derived de Rham complex vanishes if $N = M$.
\end{prop}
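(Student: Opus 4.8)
The plan is to compute $\gr^\con_n L\Omega^\bullet_f$ directly from the definition of the conjugate filtration. Recall $F^\con_n = |\tau_{\leq n}\Omega^\bullet_{P_\bullet(f)/(M\to A)}|$, where $P_\bullet(f)$ is the canonical free resolution of $f$. Since $\tau_{\leq n}C/\tau_{\leq n-1}C \simeq \mathcal{H}^n(C)[-n]$ for any cochain complex $C$, applying this termwise gives
\[
\gr^\con_n L\Omega^\bullet_f \;\simeq\; \bigl|\, m \mapsto \mathcal{H}^n\bigl(\Omega^\bullet_{P_m(f)/(M\to A)}\bigr)\,\bigr|\,[-n].
\]
Thus it suffices to compute the cohomology of the ordinary (non-derived) logarithmic de Rham complex of each $P_m(f)$, and then assemble. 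Here $P_m(f)$ is a free prelog $(M\to A)$-algebra: its monoid is of the form $\N^{S_m}\oplus M$ and its ring is a polynomial ring $A[S_m, T_m]$ (the variables indexed by $S_m$ carrying the adjoined log structure, those indexed by $T_m$ being "pure").

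\textbf{The logarithmic Poincaré lemma for free prelog $\Q$-algebras.} Next I would prove that for $B = T_{(M\to A)}(X,Y) = (\N^X\oplus M \to A[X,Y])$ one has $\mathcal{H}^n(\Omega^\bullet_{B/(M\to A)}) \cong \bigwedge^n_A\bigl(\coker(M^\gp\to N^\gp)\otimes_\Z A\bigr)$, where $N = \N^X\oplus M$. Unwinding the definition of $\Omega^1_{B/(M\to A)}$, the relations kill the contribution of $M$ and impose $dx = x\,d\log x$ for $x\in X$, so $\Omega^\bullet_{B/(M\to A)}$ is, after reducing to finitely many generators (homology and tensor products commute with filtered colimits), the tensor product over $A$ of the one-variable complexes $\bigl(A[x]\to A[x]\,d\log x\bigr)$, $x^k\mapsto k\,x^k\,d\log x$, for $x\in X$, and $\bigl(A[y]\to A[y]\,dy\bigr)$, $y^k\mapsto k\,y^{k-1}\,dy$, for $y\in Y$. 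As $A$ is a $\Q$-algebra, each pure factor is acyclic away from $H^0 = A$ (the classical affine Poincaré lemma), and each logarithmic factor has $H^0 = A$ and $H^1 = A\,d\log x$. By the Künneth formula over $\Q$ (all terms are flat, all complexes bounded below) one gets $\mathcal{H}^\bullet(\Omega^\bullet_{B/(M\to A)}) \cong \bigwedge^\bullet_A\bigl(\bigoplus_{x\in X} A\,d\log x\bigr)$, and $\bigoplus_{x\in X}A\,d\log x \cong \coker(M^\gp\to N^\gp)\otimes_\Z A$ since $N^\gp = \Z^X\oplus M^\gp$. Applying this in each simplicial degree yields $\mathcal{H}^n(\Omega^\bullet_{P_m(f)/(M\to A)}) \cong \bigwedge^n_A\bigl(\coker(M^\gp\to N^\gp_m)\otimes_\Z A\bigr)$.

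\textbf{Assembly and the corollary.} In each simplicial degree $M^\gp\to N^\gp_m$ is split injective (because $M$ is a monoid summand of $N_m$), so $m\mapsto \coker(M^\gp\to N^\gp_m)$ is a simplicial abelian group with free terms fitting into a degreewise split short exact sequence with the constant object $M^\gp$ and with $N^\gp_\bullet$; since $N_\bullet$ is a free simplicial resolution of $N$ over $M$, its group completion realizes $N^\gp$, so this simplicial abelian group is a flat resolution of $\mathrm{Cone}(M^\gp\to N^\gp)$. Therefore applying $\bigwedge^n_A(-\otimes_\Z A)$ termwise and realizing computes, by definition and because over $\Q$ the derived exterior power of a flat resolution computes that of the object, the derived exterior power $\bigwedge^n\bigl(\mathrm{Cone}(M^\gp\to N^\gp)\otimes_\Z A\bigr)$. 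Combining with the shift from the first step gives $\gr^\con_n L\Omega^\bullet_f \simeq \bigwedge^n\bigl(\mathrm{Cone}(M^\gp\to N^\gp)\otimes_\Z A\bigr)[-n]$, as claimed. The final assertion follows at once: if $N = M$ then $\mathrm{Cone}(\mathrm{id}_{M^\gp})\simeq 0$, hence $\gr^\con_n L\Omega^\bullet_f = 0$ for $n\geq 1$ and $= A$ for $n=0$, and since the conjugate filtration is exhaustive and separated the natural map $A\to L\Omega^\bullet_f$ is an equivalence.

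\textbf{Main obstacle.} The heart of the argument is the second step: explicitly identifying the logarithmic de Rham complex of a free prelog algebra as the displayed tensor product and computing its cohomology, in particular carefully tracking which polynomial generators carry the log structure (so that $dx = x\,d\log x$) versus which do not, and reducing the arbitrary-generating-set case to the finite case via colimits. A secondary bookkeeping point, which I would flag explicitly, is the identification of $m\mapsto\coker(M^\gp\to N^\gp_m)$ with a resolution of the honest cone $\mathrm{Cone}(M^\gp\to N^\gp)$: this uses that the derived group completion of $N$ agrees with $N^\gp$ (automatic for the integral monoids arising in the applications, or else one interprets the cone in the derived sense throughout) together with the standard décalage conventions for derived exterior powers of complexes.
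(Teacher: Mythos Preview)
The paper does not supply its own proof of this proposition; it merely cites \cite[Proposition 6.13]{bhatt2012padicderivedrhamcohomology}. Your argument is the standard one and is correct: reduce via the definition of the conjugate filtration to the ordinary logarithmic de Rham cohomology of free prelog $\Q$-algebras, prove a logarithmic Poincar\'e lemma for those by splitting into one-variable factors and applying K\"unneth, and then reassemble simplicially. The caveat you raise about derived group completion of $N_\bullet$ is exactly the point that needs care, and your reading of the final clause (that the natural map $A \to L\Omega^\bullet_f$ is an equivalence, rather than $L\Omega^\bullet_f = 0$ literally) is the intended one.
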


\subsection{Derived logarithmic de Rham cohomology in positive characteristic}
In this section, we recall some fundamental results from \cite[Section 7]{bhatt2012padicderivedrhamcohomology} about derived logarithmic de Rham cohomology, especially in positive characteristic.

\begin{defn}
    Let $(M \to A)$ be a prelog $\FF_p$-algebra. The Frobenius map 
    \[ \Frob_{(M \to A)} \colon (M \to A) \to (M \to A)
    \]
    is defined as multiplication by $p$ on $M$ and the usual Frobenius on $A$. For a map $f \colon (M \to A) \to (N \to B)$ of prelog $\FF_p$-algebras, the Frobenius twist $(N \to B)^{(1)}$ is defined as the pushout of $f$ along $\Frob_{(M \to A)}$. The relative Frobenius $\Frob_f$ is defined by the following diagram \footnote{We drop the log structure from the notation for clarity.}
    \[
    \begin{tikzcd}
    A \arrow[dr, phantom, "\ulcorner"] \arrow[r, "f"] \arrow[d, "\Frob_A"'] & B \ar[ddr, bend left, "\Frob_B"] \arrow[d] \\
    A \ar[drr, bend right, swap, "f"] \arrow[r, "f^{(1)}"] & B^{(1)} \arrow[rd, "\Frob_f"'] \\
    & & B.
    \end{tikzcd}
    \]
\end{defn}

The conjugate filtration is our main tool for studying the derived de Rham complex in positive characteristic. In particular, we have a derived version of the classical Cartier isomorphism.

\begin{theorem}[{}{\cite[Theorem 7.4]{bhatt2012padicderivedrhamcohomology}}]
Let  $f \colon (M \to A) \to (N \to B)$ be a map of prelog $\FF_p$-algebras. Then the conjugate filtration on $L\Omega^\bullet_f$ is $B^{(1)}$-linear, and has graded pieces given by
\[
\mathrm{Cartier}_n \colon \gr_n^\con(L\Omega^\bullet_f) \simeq \bigwedge^n L_{f^{(1)}}[-n].
\]
In particular, the conjugate spectral sequence takes the form
\[
E^{p,q}_1 : H_{2p+q}(\bigwedge^p L_{f^{(1)}}) \simeq H_{2p+q}(\Frob_A^* \bigwedge^p L_f) \Rightarrow H_{p+q}(L\Omega^\bullet_f).
\]
\end{theorem}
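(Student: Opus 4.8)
The plan is to prove the statement by d\'evissage to the case of a \emph{free} prelog algebra, where the derived logarithmic de Rham complex is an honest (explicit) complex and the isomorphism reduces to the classical logarithmic Cartier isomorphism, and then to recover the general case by left Kan extension. Concretely, by construction all of the invariants appearing in the statement --- the derived logarithmic de Rham complex $L\Omega^\bullet_f$, its conjugate filtration $F^\con_\bullet$, and the derived exterior powers $\bigwedge^n L_{f^{(1)}}$ --- are computed from the canonical free resolution $P_\bullet(f)$, hence are left Kan extended along the inclusion of free prelog $(M\to A)$-algebras, i.e. those of the form $T_{(M\to A)}(X,Y) = (\N^X\sqcup M \to A[X,Y])$, into all prelog $(M\to A)$-algebras. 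Since geometric realization of a simplicial object is compatible with passage to conjugate-graded pieces (with the requisite connectivity estimates), it suffices to establish the isomorphism $\mathrm{Cartier}_n$ and the $B^{(1)}$-linearity of $F^\con_\bullet$ when $(N\to B)$ is itself a free prelog $(M\to A)$-algebra.

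First I would treat that free case. There $P_\bullet(f)$ may be taken constant on $(N\to B)$, so $L\Omega^\bullet_f$ is the honest logarithmic de Rham complex $\Omega^\bullet_{(B,N)/(A,M)}$ of a log smooth morphism, and $\gr^\con_n L\Omega^\bullet_f\simeq \mathcal{H}^n(\Omega^\bullet_f)[-n]$. The identification $\mathcal{H}^n(\Omega^\bullet_f)\cong \Omega^n_{f^{(1)}}$ is then exactly Kato's logarithmic Cartier isomorphism for $f$ --- which one may also verify by direct computation, since $\Omega^\bullet_f$ of a free prelog algebra is a tensor product of Koszul-type complexes. Composing with the discreteness of the logarithmic cotangent complex of the log smooth morphism $f^{(1)}$, namely $\bigwedge^n L_{f^{(1)}}\simeq \Omega^n_{f^{(1)}}$, yields $\mathrm{Cartier}_n$. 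The $B^{(1)}$-linearity is the statement that the de Rham differential of $\Omega^\bullet_f$ is linear over the image of the relative Frobenius $B^{(1)}\to B$: this is immediate from $d(b^p)=p\,b^{p-1}\,db=0$ and $d\log(n^p)=p\,d\log n=0$ in characteristic $p$, so that $d$ kills the subring generated by $p$-th powers and $d\log$-classes, which is precisely the image of $B^{(1)}$.

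Kan-extending back, for arbitrary $f$ the conjugate filtration on $L\Omega^\bullet_f$ is $B^{(1)}$-linear with graded pieces $\gr^\con_n L\Omega^\bullet_f\simeq \bigwedge^n L_{f^{(1)}}[-n]$, and the spectral sequence of this filtered complex reads, after the standard reindexing forced by the shift,
\[
E^{p,q}_1 = H_{2p+q}\!\left(\textstyle\bigwedge^p L_{f^{(1)}}\right)\Longrightarrow H_{p+q}\!\left(L\Omega^\bullet_f\right),
\]
its convergence being guaranteed because $F^\con_\bullet$ is increasing, separated and exhaustive, with the graded pieces becoming uniformly more coconnective (checked in the free case, preserved by Kan extension). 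Finally, $f^{(1)}$ is by definition the derived base change of $f$ along $\Frob_{(M\to A)}$, so base change for the logarithmic cotangent complex gives $L_{f^{(1)}}\simeq \Frob_A^* L_f := L_f\otimes^L_B B^{(1)}$, and since derived exterior powers commute with base change we obtain $\bigwedge^p L_{f^{(1)}}\simeq \Frob_A^*\bigwedge^p L_f$, which is the remaining isomorphism in the displayed $E_1$-page.

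I expect the main obstacle to be the bookkeeping of the logarithmic structure through the d\'evissage: one must check that Kato's Cartier isomorphism, applied levelwise to the free prelog algebras $P_n(f)$, is natural enough to assemble into a morphism of simplicial objects, and that the monoid-level Frobenius (multiplication by $p$ on $M$, $N$) is correctly matched with the ring-level Frobenius twist at every stage, including in the base-change identity $L_{f^{(1)}}\simeq\Frob_A^*L_f$. A secondary technical point is justifying that geometric realization commutes with the truncations $\tau_{\le n}$ on the terms $\Omega^\bullet_{P_n(f)/(M\to A)}$ finely enough to produce the stated graded pieces --- this is exactly where the uniform coconnectivity estimates for the free case are used.
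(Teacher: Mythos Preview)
The paper does not give its own proof of this theorem: it is stated in the appendix as a citation to \cite[Theorem 7.4]{bhatt2012padicderivedrhamcohomology}, with no accompanying argument. Your proposal is essentially the same d\'evissage that Bhatt uses there --- reduce via the canonical free resolution to free prelog $(M\to A)$-algebras, invoke Kato's logarithmic Cartier isomorphism in that log-smooth case, and left-Kan-extend back --- so as far as comparison goes there is nothing to distinguish.

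On correctness: the outline is sound, and the two technical worries you flag at the end are the right ones. The one place to be a bit more careful is the claim that ``geometric realization commutes with $\tau_{\le n}$'': this is not automatic, and is exactly why the conjugate filtration is defined levelwise and then shown to be independent of the resolution (this is the content of \cite[Proposition 6.9]{bhatt2012padicderivedrhamcohomology}, which the paper also cites). Once that independence is in hand, your reduction to the free case and the identification $\bigwedge^n L_{f^{(1)}}\simeq \Frob_A^*\bigwedge^n L_f$ via base change for the log cotangent complex go through as you describe.
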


Next, we discuss some transitivity properties of the derived logarithmic de Rham complex in positive characteristic.

\begin{prop}[{}{\cite[Proposition 7.8]{bhatt2012padicderivedrhamcohomology}}]
Let  $f \colon (M \to A) \xrightarrow{f} (N \to B) \xrightarrow{g} (P \to C)$ be a composite of maps of prelog $\FF_p$-algebras. Then $L\Omega^\bullet_{g \circ f}$ admits an increasing, bounded below, separated, exhaustive filtration with graded pieces of the form
\[
L\Omega^\bullet_f \otimes_{\Frob^*_A B} \Frob^*_A (\bigwedge^n L_g [-n]),
\]
where the second factor on the right is the base change of $\bigwedge^n L_g [-n]$, viewed as a $B$-module along the map $\Frob_A \colon B \to \Frob_A^* B$.
\end{prop}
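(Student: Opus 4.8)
The plan is to reduce to the case where $g$ is a \emph{free} map of prelog rings and then build the filtration explicitly, identifying the graded pieces by means of the derived Cartier isomorphism \cite[Theorem 7.4]{bhatt2012padicderivedrhamcohomology}. First I would observe that every functor appearing in the statement --- the derived logarithmic de Rham complex $L\Omega^\bullet_{(-)}$, the logarithmic cotangent complex, its derived exterior powers, and the relevant relative tensor products --- commutes with geometric realization of simplicial prelog rings and preserves weak equivalences; for $L\Omega^\bullet_{(-)}$ this is built into the construction through the canonical free resolution. Writing $Q_\bullet \to (P \to C)$ for the canonical free resolution of $g$ over $(N \to B)$ and applying the same free construction levelwise over the canonical free resolution $P_\bullet(f) = (N_\bullet \to P_\bullet)$ of $f$, one obtains a simplicial prelog ring $\widetilde{Q}_\bullet$, free over $(M \to A)$ in each degree, with $|\widetilde{Q}_\bullet| \simeq (P \to C)$. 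Hence $L\Omega^\bullet_{g \circ f} \simeq |\,\Omega^\bullet_{\widetilde{Q}_\bullet/(M \to A)}\,|$ is computed by honest logarithmic de Rham complexes in each simplicial degree, and it suffices to produce the filtration degreewise, compatibly with the simplicial structure.

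In simplicial degree $n$ the extension $(N_n \to P_n) \to \widetilde{Q}_n$ is free, so the conormal sequence of logarithmic differentials splits and $\Omega^\bullet_{\widetilde{Q}_n/(M \to A)}$ carries the Katz--Oda type filtration indexed by the number of ``horizontal'' differentials pulled back from $P_n$, with graded pieces $\Omega^p_{P_n/(M \to A)} \otimes_{P_n} \Omega^\bullet_{\widetilde{Q}_n/(N_n \to P_n)}[-p]$ whose differential is induced by the relative de Rham differential of $\widetilde{Q}_n$ over $P_n$. Taking geometric realization, the $p$-th graded piece becomes $|\,\Omega^p_{P_\bullet/(M \to A)} \otimes_{P_\bullet} \Omega^\bullet_{\widetilde{Q}_\bullet/P_\bullet}[-p]\,|$: after base change along $P_\bullet \to B$ the first factor realizes $\bigwedge^p L_f[-p]$, while the ``vertical'' complexes $[n] \mapsto \Omega^\bullet_{\widetilde{Q}_n/P_n}$ would realize $L\Omega^\bullet_g$ were it not for the Frobenius twist forced by the conjugate filtration: the relative de Rham differential is $P_\bullet$-linear but not $\widetilde{Q}_\bullet$-linear, and by \cite[Theorem 7.4]{bhatt2012padicderivedrhamcohomology} its conjugate-graded pieces are linear over the Frobenius twist of $P_\bullet$ --- which realizes $\Frob_A^* B$ --- and upon realization yield $\Frob_A^*\bigl(\bigwedge^n L_g[-n]\bigr)$. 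Interleaving this conjugate filtration in the vertical direction with the Katz--Oda filtration in the horizontal direction and re-indexing should then produce the asserted increasing, bounded below, separated, exhaustive filtration on $L\Omega^\bullet_{g \circ f}$ with graded pieces $L\Omega^\bullet_f \otimes_{\Frob_A^* B} \Frob_A^*\bigl(\bigwedge^n L_g[-n]\bigr)$.

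I expect the main obstacle to be this last identification: one must check that geometric realization commutes with the tensor product $\Omega^p_{P_\bullet/(M \to A)} \otimes_{P_\bullet}(-)$ and with passage to the conjugate-graded pieces of the vertical complex, so that the ``base'' factor $L\Omega^\bullet_f$ and the ``fibre'' factor $\Frob_A^*(\bigwedge^n L_g[-n])$ genuinely separate and the relative tensor product is formed over $\Frob_A^* B$ rather than over $B$. This should be handled exactly as in the proof of \cite[Theorem 7.4]{bhatt2012padicderivedrhamcohomology}, using the degreewise freeness of $\widetilde{Q}_\bullet$ over $P_\bullet$ and the flatness built into the canonical resolution; the remaining bookkeeping --- verifying that the two interleaved filtrations assemble into a single filtration with the stated boundedness, exhaustiveness, and separatedness --- is routine but somewhat delicate.
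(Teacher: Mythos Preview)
The paper does not supply its own proof of this proposition; it is quoted directly from \cite[Proposition 7.8]{bhatt2012padicderivedrhamcohomology} and used as a black box. So there is nothing in the present paper to compare your outline against, and I will instead comment on whether your argument would actually establish the claim.

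Your reduction to the degreewise-free case via bisimplicial resolutions is fine and is indeed how one begins. The gap is in how you construct the filtration. You first impose the Katz--Oda filtration by the number of \emph{base} differentials, getting graded pieces $\bigwedge^p L_f[-p]\otimes_B L\Omega^\bullet_g$, and then filter the second factor by its conjugate filtration, and finally ``interleave and re-index''. But interleaving two filtrations yields \emph{bigraded} pieces $\bigwedge^p L_f[-p]\otimes \Frob_A^*\bigl(\bigwedge^n L_g[-n]\bigr)$, and no re-indexing of a single total filtration will reassemble the various $\bigwedge^p L_f[-p]$ into the object $L\Omega^\bullet_f$: the de Rham differential linking consecutive Hodge pieces is precisely what you threw away when you passed to the Katz--Oda associated graded. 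The shape of the target --- a full de Rham complex in the $f$-slot but only a single exterior power in the $g$-slot --- already signals that the two directions must be treated asymmetrically.

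The correct construction bypasses Katz--Oda altogether. In the free case write $\Omega^\bullet_{\widetilde Q_n/(M\to A)}$ as the total complex of the double complex $\Omega^p_{P_n/(M\to A)}\otimes_{P_n}\Omega^q_{\widetilde Q_n/P_n}$, and filter it \emph{only} by the vertical canonical truncation $\tau^{\mathrm{vert}}_{\leq n}$. The horizontal (Gauss--Manin) differential commutes with the vertical de Rham differential, so each $\tau^{\mathrm{vert}}_{\leq n}$ is a subcomplex for the total differential. Its $n$-th graded piece is then the total complex of $\Omega^\bullet_{P_n/(M\to A)}\otimes_{P_n} H^n\bigl(\Omega^\bullet_{\widetilde Q_n/P_n}\bigr)[-n]$ with the horizontal differential intact; applying the Cartier isomorphism to the second factor and tracking the $\Frob_A^*B$-linearity identifies this, after geometric realization, with $L\Omega^\bullet_f\otimes_{\Frob_A^*B}\Frob_A^*\bigl(\bigwedge^n L_g[-n]\bigr)$. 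A single filtration in the fibre direction, not two interleaved ones, is the mechanism.
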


\begin{defn}
    A map $f$ of prelog $\FF_p$-algebras is called relatively perfect if $\rom{Frob}_f$ is an isomorphism. A map of prelog $\Z_p$-algebras is called relatively perfect modulo \(p\) if 
    $f\otimes_{\Z_p} \FF_p$ is relatively perfect.
\end{defn}

The following result is a direct consequence of the transitivity for the derived de Rham complex in characteristic $p$. It will be useful for some computations later in the Appendix.

\begin{lemma}\label{lem_rel_perf}
    Let  $f \colon (M \to A) \xrightarrow{f} (N \to B) \xrightarrow{g} (P \to C)$ be a composite of maps of prelog $\Z_p$-algebras. If $A \to B$ is relatively perfect modulo \(p\), then $(L_f)^\wedge_{(p)} \simeq 0$, and    
    \[
    (L\Omega^\bullet_{g\circ f})^\wedge_{(p)} \xrightarrow{\sim} (L\Omega^\bullet_{g})^\wedge_{(p)}.
    \]
\end{lemma}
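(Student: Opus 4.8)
The plan is to deduce this directly from the transitivity proposition (Proposition~7.8 of \cite{bhatt2012padicderivedrhamcohomology}) together with the characterisation of relatively perfect maps. First I would treat the two assertions separately but by the same mechanism. For the vanishing $(L_f)^\wedge_{(p)} \simeq 0$: since the cotangent complex commutes with base change and $(-)^\wedge_{(p)}$ only depends on the reduction mod $p$, we have $(L_f)^\wedge_{(p)} \simeq (L_{f \otimes \FF_p})^\wedge_{(p)}$, and $f \otimes \FF_p$ is relatively perfect by hypothesis, i.e.\ $\Frob_{f\otimes \FF_p}$ is an isomorphism. A relatively perfect map has vanishing (logarithmic) cotangent complex: this follows because $L_{f^{(1)}} \simeq \Frob_f^* L_f \oplus (\text{something})$—more precisely, from the transitivity triangle for $A \to A \xrightarrow{f^{(1)}} B^{(1)} \xrightarrow{\Frob_f} B$ and the fact that $\Frob_A$ has vanishing cotangent complex in char $p$ (Frobenius is relatively perfect over itself), one gets $L_f \simeq 0$ when $\Frob_f$ is an isomorphism. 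I would cite the standard fact (e.g.\ \cite[\S 7]{bhatt2012padicderivedrhamcohomology}) that relatively perfect maps of prelog $\FF_p$-algebras have trivial logarithmic cotangent complex, rather than re-deriving it.

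Next, for the de Rham statement, I would apply Proposition~7.8 to the composite $A \to B \to C$ (working mod $p$, i.e.\ after $\otimes_{\Z_p}\FF_p$). That proposition says $L\Omega^\bullet_{g\circ f}$ carries an exhaustive, separated, bounded-below increasing filtration whose graded pieces are
\[
L\Omega^\bullet_f \otimes_{\Frob^*_A B} \Frob^*_A\!\left(\textstyle\bigwedge^n L_g[-n]\right).
\]
Since $f$ is relatively perfect mod $p$, every one of these graded pieces contains the factor $L\Omega^\bullet_{f\otimes \FF_p}$; but $L\Omega^\bullet$ of a relatively perfect map is concentrated in degree zero and equal to $B$ (the conjugate filtration has graded pieces $\bigwedge^n L_{f^{(1)}}[-n]$ by the derived Cartier isomorphism, Theorem~7.4, and all of these vanish for $n>0$ while the $n=0$ piece is $B^{(1)} \cong B$ via $\Frob_f$). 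Hence for $n=0$ the graded piece is $\simeq C$ and for $n \geq 1$ it is $\simeq \bigwedge^n L_{g^{(1)}}[-n] \otimes(\text{base change})$, exactly matching the conjugate-filtration graded pieces of $L\Omega^\bullet_{g\otimes\FF_p}$. One then checks the natural map $L\Omega^\bullet_{g\circ f} \to L\Omega^\bullet_{g}$ (induced by the inclusion $(N \to B) \to (P \to C)$ as an algebra over $(M\to A)$, factoring the structure map) is compatible with these filtrations and is an isomorphism on each graded piece, so is an equivalence mod $p$; deriving-$p$-completing then gives the claim.

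Alternatively—and this may be cleaner to write—I would avoid matching graded pieces explicitly and instead argue: the transitivity filtration of Proposition~7.8 on $(L\Omega^\bullet_{g\circ f})^\wedge_{(p)}$ has graded pieces $(L\Omega^\bullet_f)^\wedge_{(p)} \otimes_{\Frob^*_A B} \Frob^*_A(\bigwedge^n L_g[-n])^\wedge_{(p)}$, and since $f$ is relatively perfect mod $p$ we have $(L\Omega^\bullet_f)^\wedge_{(p)} \simeq (L\Omega^\bullet_{f\otimes\FF_p})^\wedge_{(p)} \simeq B^\wedge_{(p)}$ by the Cartier-isomorphism argument above (all higher conjugate graded pieces involve $\bigwedge^{\geq 1}L_{f^{(1)}} \simeq 0$). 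Feeding this back, the filtration on $(L\Omega^\bullet_{g\circ f})^\wedge_{(p)}$ becomes the conjugate filtration on $(L\Omega^\bullet_g)^\wedge_{(p)}$ base-changed along $\Frob_A$, and one identifies the total object with $(L\Omega^\bullet_g)^\wedge_{(p)}$, the natural comparison map being the equivalence.

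The main obstacle I anticipate is bookkeeping rather than conceptual: being careful that Proposition~7.8 and Theorem~7.4 are stated for $\FF_p$-algebras, so everything must first be reduced mod $p$ and the passage to derived $p$-completions justified (using that $(-)^\wedge_{(p)}$ of a complex only sees $- \otimes^L \FF_p$ for the purpose of detecting equivalences, and that cotangent complexes and derived de Rham complexes commute with the base change $\Z_p \to \FF_p$). A second, minor subtlety is checking that the transitivity filtration is compatible with derived $p$-completion—i.e.\ that completing a bounded-below exhaustive filtered object gives a filtered object with completed graded pieces—which is routine given the boundedness. Once those reductions are in place, the statement is essentially a formal consequence of the two cited results plus the vanishing of the logarithmic cotangent complex of a relatively perfect map.
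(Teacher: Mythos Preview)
Your proposal is correct and follows essentially the same route as the paper: the paper's proof is a one-line citation combining the transitivity proposition, the derived Nakayama lemma, and \cite[Corollary~7.11]{bhatt2012padicderivedrhamcohomology} (which is precisely the vanishing of $L\Omega^\bullet_f$ and $L_f$ for relatively perfect $f$ that you sketch via the Cartier isomorphism). Your write-up simply unpacks what those citations contain.
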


\begin{proof}
    Combine the transitivity property above with the derived Nakayama lemma and \cite[Corollary 7.11]{bhatt2012padicderivedrhamcohomology}
\end{proof}

\begin{example}
    Two main examples of maps which are relatively perfect modulo \(p\) are \[W \to A_\infi,\] where
    $W = W(k)$ is the ring of Witt vectors of the residue field $k$ of a $p$-adic field. And
    \[
    (\Z[\Q_{\geq 0}],0) \to (\Z[\Q_{\geq 0}], [\Q_{\geq 0}]).
    \]
\end{example}

\noindent\textbf{Notation.} For a ring $A$, we write $A\langle x \rangle$ for the free pd-polynomial ring in one variable $x$ over $A$.

\begin{theorem}[Bhatt, {{\cite[Theorem 3.23]{SzaBei18}}}]\label{thm_dr_surjective}
    Assume that $A \to B$ is a surjective morphism of flat $\Z/p^n\Z$-algebras with kernel $I = (f)$ generated by a non-zero divisor $f \in A$. Then
    \[
    L\Omega^\bullet_{B/A} \cong A\langle x \rangle /(x - f).
    \]
    Moreover, the Hodge filtration on the left-hand side corresponds to the divided power filtration on the right.
\end{theorem}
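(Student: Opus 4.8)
The plan is to compute the derived logarithmic de Rham complex $L\Omega^\bullet_{B/A}$ directly from a cofibrant resolution of $A\to B$, exploiting the hypothesis that $B=A/(f)$ with $f$ a nonzerodivisor. Since there is no log structure in the statement (both monoids are trivial), Gabber's logarithmic cotangent complex and derived de Rham complex reduce to the classical ones, so I may work with the usual $L\Omega^\bullet_{B/A}$ and the standard simplicial polynomial resolution. First I would take the very economical resolution of $B$ as an $A$-algebra given by the Koszul-type model: since $f$ is a nonzerodivisor, $B$ is quasi-isomorphic to $A[x]$ with $x$ in simplicial degree zero mapping to $f$ — more precisely, the free simplicial $A$-algebra $P_\bullet$ with $P_0=A[x_0]$, $P_1=A[x_0,x_1]$, etc., resolving $B$ along the augmentation $x_0\mapsto f$. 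The point is that $L\Omega^\bullet_{B/A}$ is the totalization of the bicomplex $\Omega^\bullet_{P_\bullet/A}$, and one should be able to identify this totalization.

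The key computational step is the following: for a polynomial algebra, $\Omega^\bullet_{A[x]/A}$ is just $A[x]\xrightarrow{d} A[x]\,dx$, whose de Rham cohomology is $A$ (in characteristic zero, or more generally when no divided powers are needed). But here we are not in characteristic zero and we do not Hodge-complete, so the subtlety is that the totalized complex picks up the \emph{divided power envelope}. Concretely, I would invoke the identification, essentially due to Illusie and reproved by Bhatt in the form cited as Theorem~3.23 of \cite{SzaBei18}: if $A\to B=A/(f)$ with $f$ a nonzerodivisor, then $L\Omega^\bullet_{B/A}\simeq D_{(f)}(A)$, the $f$-adic (actually $(f)$-)divided power envelope of $A$ in $A$, which is precisely $A\langle x\rangle/(x-f)$ — the free pd-polynomial algebra on one generator $x$ modulo setting $x=f$. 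The filtration side: the Hodge filtration $F^n L\Omega^\bullet_{B/A}=|\Omega^{\geq n}_{P_\bullet/A}|$ must be matched with the divided-power filtration $\Gamma^{\geq n}$ on $A\langle x\rangle/(x-f)$ generated by the $x^{[m]}$ with $m\geq n$. This is the content one gets by tracking how the generator $x$ in $\Omega^1$ of the resolution contributes divided powers under totalization, exactly as in the proof of the cited theorem.

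Since the statement is quoted verbatim as \cite[Theorem 3.23]{SzaBei18} (attributed to Bhatt), the honest proof is simply to \emph{cite} it, possibly after the preliminary reduction removing the trivial log structures. If instead a self-contained argument is wanted, the main obstacle is the bookkeeping in the totalization: one must show that the map from $A\langle x\rangle/(x-f)$ (with $x^{[m]}$ mapping to the class of the degree-$m$ part coming from $x\,dx\wedge\cdots$) is a quasi-isomorphism level by level in the conjugate (or Hodge) filtration, using the flatness over $\Z/p^n\Z$ to control the relevant $\mathrm{Tor}$-terms and the derived Nakayama/complete-filtration arguments already used in the excerpt (cf.\ the conjugate filtration and its graded pieces $\bigwedge^n L_{B/A}[-n]$, together with $L_{B/A}\simeq (f)/(f^2)[1]\simeq B[1]$ for this type of map). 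The compatibility of filtrations is then forced because both are the unique multiplicative filtrations whose $n$-th graded piece is $\Gamma^n(B[1])[-n]\simeq B$ placed in the right degree. I expect the filtration-matching to be the delicate point; the underlying quasi-isomorphism is comparatively formal once the resolution is chosen.
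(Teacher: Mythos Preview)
The paper does not prove this theorem at all: it is stated as a cited result attributed to Bhatt via \cite[Theorem 3.23]{SzaBei18}, with no argument given. Your proposal correctly identifies that the honest ``proof'' here is simply the citation, and your additional sketch (resolving $B$ by a free simplicial $A$-algebra on one generator, identifying the totalized de Rham complex with the pd-envelope, and matching filtrations via the graded pieces $\Gamma^n(B[1])[-n]$) goes well beyond what the paper itself provides and is in the right spirit for how the cited reference actually proceeds.
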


We end this section by stating the comparison theorem between derived log de Rham cohomology and log crystalline cohomology.

\begin{theorem}[{{\cite[Theorem 7.22]{bhatt2012padicderivedrhamcohomology}}}]\label{thm_crystalline_comp}
    Let $f \colon (M \to A) \to (N \to B)$ be a morphism of prelog $\Z/p^n$-algebras. There is a natural map of filtered $E_\infty$-algebras
    \[
    \rom{Comp}_f \colon L\Omega^\bullet_{(B,N)/(A,M)} \to \RS_\cris(f,\clg{O}_\cris),
    \]
    where the right-hand side denotes the log crystalline cohomology of the morphism $f$.
    The map is an isomorphism if $f$ is a $G$-lci map.
\end{theorem}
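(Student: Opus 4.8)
The plan is to produce $\rom{Comp}_f$ as a left Kan extension from free prelog algebras, using Kato's comparison between logarithmic de Rham and logarithmic crystalline cohomology on the (log smooth) free pieces, and then to prove that $\rom{Comp}_f$ is an isomorphism for $G$-lci maps by a d\'evissage to the case of a strict closed immersion cut out by a regular sequence.

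First I would note that $L\Omega^\bullet_{(B,N)/(A,M)}$ is, by its very construction via the canonical free resolution $P_\bullet(f) = (N_\bullet \to P_\bullet)$, nothing but the left derived functor---equivalently, the left Kan extension along the inclusion of free prelog $(A,M)$-algebras into all prelog $(A,M)$-algebras---of the ordinary logarithmic de Rham complex $\Omega^\bullet_{-/(A,M)}$. On a free prelog $(A,M)$-algebra $(P,Q)$ the structure map is (ind-)log smooth, so the log crystalline Poincar\'e lemma (Kato) supplies a natural filtered quasi-isomorphism
\[
\Omega^\bullet_{(P,Q)/(A,M)} \xrightarrow{\ \sim\ } \RS_\cris\big((P,Q)/(A,M),\clg{O}_\cris\big),
\]
matching the Hodge filtration with the divided-power filtration and compatible with the $E_\infty$-structures. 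Since $\RS_\cris(-/(A,M),\clg{O}_\cris)$ is defined on \emph{all} prelog $(A,M)$-algebras, the universal property of left Kan extension then yields the desired natural, filtered, multiplicative map $\rom{Comp}_f \colon L\Omega^\bullet_{(B,N)/(A,M)} \to \RS_\cris(f,\clg{O}_\cris)$.

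For the isomorphism claim I would argue by d\'evissage. Both functors satisfy \'etale descent and commute with filtered colimits, so we may work locally; using the defining property of a $G$-lci map, factor $f$ as $(A,M)\to (A',M')\to (B,N)$ with the first arrow log smooth (and, after a further localization, free) and the second a strict closed immersion with $B = A'/(g_1,\dots,g_r)$ for a regular sequence $g_\bullet$. By the transitivity filtration for the derived logarithmic de Rham complex (\cite[\S 6--7]{bhatt2012padicderivedrhamcohomology}), whose graded pieces are wedge powers of the relative log cotangent complex, together with the base-change and transitivity properties of log crystalline cohomology, and since $\rom{Comp}$ is already an isomorphism on the log smooth factor, we reduce to the case where $f$ is the strict closed immersion $(A,M)\to (A/(g_\bullet),M)$. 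There Theorem \ref{thm_dr_surjective}---for one equation, and then its iteration over $g_\bullet$ via the transitivity filtration, the relative cotangent complex being free of rank one per equation---identifies $L\Omega^\bullet_{B/A}$ with the logarithmic de Rham complex of the divided-power envelope $D_B(A)$, already complete over $\Z/p^n$, carrying the pd-filtration; on the other side, the log crystalline Poincar\'e lemma computes $\RS_\cris(B/A,\clg{O}_\cris)$ by exactly this complex, and unwinding the construction shows that $\rom{Comp}_f$ realizes this identification. Hence $\rom{Comp}_f$ is an isomorphism, respecting the Hodge and divided-power filtrations.

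The step I expect to be the main obstacle is the interface between the two theories on free prelog algebras: one has to check that Kato's logarithmic de Rham-to-crystalline comparison is natural for \emph{all} morphisms of free prelog $(A,M)$-algebras (including those that alter the monoid), is compatible with base change in $(A,M)$, and respects the filtrations, so that the left Kan extension genuinely lands in $\RS_\cris(f,\clg{O}_\cris)$ with the correct filtration; and one must pin down log crystalline cohomology of a regular-sequence quotient as the log de Rham complex of its log pd-envelope together with its filtration. The bookkeeping for the monoid/log-structure part of a general $G$-lci presentation---peeling off a log smooth factor to reduce to the strict case---is the other delicate point, and it is precisely where the exact definition of a $G$-lci morphism is used.
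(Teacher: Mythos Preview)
The paper does not give its own proof of this theorem: it is stated as a citation of \cite[Theorem 7.22]{bhatt2012padicderivedrhamcohomology} and no argument is supplied. So there is nothing in the paper to compare your proposal against.

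That said, your strategy is essentially the one Bhatt uses in the cited reference: build $\rom{Comp}_f$ by left Kan extension from free prelog algebras (where the log de Rham--crystalline comparison is classical), then for the $G$-lci isomorphism claim use the factorization into an ind-log-smooth piece and a strict surjection, handling the latter via the pd-envelope computation (your appeal to Theorem~\ref{thm_dr_surjective}). The points you flag as delicate---naturality and filtration-compatibility of the comparison on free objects, and the bookkeeping for the monoid part when peeling off the log smooth factor---are exactly the ones that require care in Bhatt's argument, and the ``Cartier type modulo $p$'' hypothesis in the definition of $G$-lci is what makes the log smooth factor behave well. One small correction: your d\'evissage assumes you can reduce to a strict closed immersion cut out by a \emph{regular sequence}, but the definition of $G$-lci here only asks that $b$ be a strict epimorphism between $\Z/p^n$-flat algebras; you would need to either invoke the stronger lci hypothesis explicitly or handle general strict surjections of flat $\Z/p^n$-algebras (which Bhatt does via the conjugate filtration and Cartier isomorphism rather than a direct pd-envelope identification).
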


\begin{defn}
    A map $f \colon (M \to A) \to (N \to B)$ of prelog $\Z/p^n$-algebras is called \textit{$G$-lci} if both 
    $A$ and $B$ are $\Z/p^n$-flat, and $f$ can be factored as
    \[
    (M \to A) \xrightarrow{a} (P \to F) \xrightarrow{b} (N \to B),
    \]
    where $a$ is ind-log-smooth and of Cartier type modulo $p$
    \footnote{This means that $M \to N$ is an integral map of integral monoids, and $\Frob_f \colon M^{(1)} \to M$ is an exact morphism of monoids.}, and $b$ is a strict epimorphism.
\end{defn}

\subsection{\(p\)-adic period Rings}\label{appendix_period_rings}
The goal of this section is to give a concise summary of the construction and basic properties of Fontaine's \(p\)-adic period rings. 
We follow Bhatt's treatment \cite{bhatt2012padicderivedrhamcohomology}, based on Beilinson \cite{Beil_de_Rham},
and define the \(p\)-adic period rings in terms of the derived de Rham complex. Furthermore, we indicate how these constructions are related to the classical constructions of Fontaine \cite{fontaine_period_ring}. 

Fix a $p$-adic field $K$ with ring of integers $O_K$, uniformizer $\varpi$ and residue field $k$. Furthermore, we denote the ring of Witt vectors of $k$ by $W$. Moreover, we write
$A_\infi$ for $W(O_{\C_p}^\flat)$, where  $O_{\C_p}^\flat$ denotes the tilt of the perfectoid ring $O_{\C_p}$. Denote the maximal ideal of $O_{\C_p}^\flat$ by $\frk{m}^\flat$ and fix a pseudo-uniformizer $\varpi^\flat = (\varpi,\varpi^{1/p},\varpi^{1/p^2},\dots)$.
Recall that the map
\[
\theta \colon A_\infi \to O_{\C_p}
\]
lifting the first projection $O_{\C_p}^\flat \to O_{\C_p}/p$
is surjective with principal kernel.

\begin{defn}
    We define the following period rings \footnote{Recall that $C^\wedge_{(p)}$ denotes the derived \(p\)-completion.}
    \begin{align*}
    A_{\cris} := (L\Omega^\bullet_{(O_{\ol{K}},\can)/W})^\wedge_{(p)}, \quad 
    A_\stb := (L\Omega^\bullet_{(O_{\ol{K}},\can)/(W[x],x)})^\wedge_{(p)}, \quad
    A_\dr := (L\hat{\Omega}^\bullet_{(O_{\ol{K}},\can)/(O_K,\can)})^\wedge_{(p)},    
    \end{align*}
    where the map $W[x] \to O_{\ol{K}}$ is induced by $x \mapsto \varpi$.
\end{defn}

Note that all rings defined above are equipped with a $G_K$-action and a Hodge filtration. Moreover, the crystalline and semistable rings are equipped with a natural Frobenius action via the comparison with crystalline cohomology.
From the functoriality of the derived de Rham complex, we get natural maps
$A_\cris \to A_\stb \to A_\dr$. We will sometimes use the fact that we can equivalently define the above rings with $O_{\C_p}$ in place of $O_{\ol{K}}$.

We start by relating these period rings to the classical period rings were defined by Fontaine as follows.

\begin{defn}
    The ring $B_\dr^+$ is defined as
    \[
    B_\dr^+ := \varprojlim_{n} \left( A_\infi \left[ \frac{1}{p} \right]/(\ker(\theta))^n  \right).
    \]
    In particular, $B_\dr^+$ is a complete valuation ring with fraction field denoted by $B_\dr$.
    The ring $B_\cris^+$ is defined as
    \[
    B_\cris^+ := \left(\varprojlim_{n}  D(A_\infi, \ker \theta)/p^n \right) \left[ \frac{1}{p} \right], 
    \]
    where $D(A_\infi, \ker \theta)$ denotes the divided power envelope of $\ker \theta$ 
    in $A_\infi$.
\end{defn}

We remark that $B_\dr^+$ admits a topology which is finer than the valuation topology, and there exists a unique continuous map $B_\cris^+ \to B_\dr^+$ extending the map $A_\infi \to B_\dr$. Under the identifications below, this map is induced by the natural map $A_\cris \to A_\dr$. Although we exclusively use the de Rham construction of the period rings, we collect the fact that the de Rham construction agrees with the classical constructions by Fontaine.

\begin{theorem}[{{\cite[Proposition 9.8]{bhatt2012padicderivedrhamcohomology}}}]\label{thm_period_ring_comp}
We have the following natural identifications:
\begin{enumerate}
    \item The ring $A_\cris$ is isomorphic to the \(p\)-adic completion of
    $D(A_\infi, \ker(\theta))$. Under this identification, the Hodge filtration corresponds to the divided power filtration.
    \item The ring $A_\dr$ is isomorphic to the \(p\)-adic completion of the completion of $D(A_\infi,\ker(\theta))$ with respect to its divided power filtration. Under this identification, the Hodge filtration corresponds to the divided power filtration. In particular, $A_\dr\left[ \frac{1}{p} \right]$ identifies with $B_\dr^+$.
\end{enumerate}
\end{theorem}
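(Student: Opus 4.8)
The plan is to identify the derived logarithmic de Rham complexes defining $A_\cris$ and $A_\dr$ by factoring the structure maps through $A_\infi$ and reducing, in each case, to the single computation of Theorem~\ref{thm_dr_surjective} for a surjection with principal kernel. Two inputs drive this: \emph{(a)} the map $W\to A_\infi$ is relatively perfect modulo $p$ (as recorded in the examples following Lemma~\ref{lem_rel_perf}), so by that lemma it contributes nothing to $p$-completed derived de Rham cohomology; and \emph{(b)} the map $\theta\colon A_\infi\to O_{\C_p}$ is surjective with principal kernel $\ker\theta=(\xi)$, where $\xi$ is a non-zero-divisor which remains one modulo $p^n$ because $O_{\C_p}=A_\infi/\xi$ is $p$-torsion-free. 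Throughout I use the freedom to replace $O_{\ol K}$ by $O_{\C_p}$.

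First I would treat $A_\cris$. The canonical log structure on $O_{\C_p}$ is harmless over $W$: its associated group is the uniquely divisible value group $v(\C_p^\times)\cong\Q$, so the logarithmic term of $\Omega^1_{(O_{\C_p},\can)/W}$ is $p$-divisible and dies under derived $p$-completion, whence $A_\cris\simeq(L\Omega^\bullet_{O_{\C_p}/W})^\wedge_{(p)}$. Factoring $W\to A_\infi\xrightarrow{\theta}O_{\C_p}$ and invoking (a) and Lemma~\ref{lem_rel_perf} gives $(L\Omega^\bullet_{O_{\C_p}/W})^\wedge_{(p)}\xrightarrow{\ \sim\ }(L\Omega^\bullet_{O_{\C_p}/A_\infi})^\wedge_{(p)}$. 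Since derived de Rham cohomology commutes with the base changes $O_{\C_p}\otimes^L_{A_\infi}A_\infi/p^n\simeq O_{\C_p}/p^n$, and $A_\infi/p^n\to O_{\C_p}/p^n$ is a surjection of flat $\Z/p^n$-algebras with kernel generated by the non-zero-divisor $\xi$, Theorem~\ref{thm_dr_surjective} identifies $L\Omega^\bullet_{(O_{\C_p}/p^n)/(A_\infi/p^n)}$ with the divided power envelope $D(A_\infi,\ker\theta)/p^n$, carrying the Hodge filtration to the divided power filtration. Taking the limit over $n$ yields $A_\cris\simeq\big(D(A_\infi,\ker\theta)\big)^\wedge_{(p)}$, concentrated in degree zero since $D(A_\infi,\ker\theta)$ is $p$-torsion-free (it embeds into $A_\infi[\tfrac1p]$), with the stated matching of filtrations.

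For $A_\dr$ I would run the parallel argument with the Hodge completion imposed, so that $A_\dr=\varprojlim_i\big(L\Omega^\bullet_{(O_{\C_p},\can)/(O_K,\can)}/F^i\big)^\wedge_{(p)}$, and now over the ramified base $(O_K,\can)$. The organizing device is the intermediate ring $A_\stb$, built over $(W[x],x)$ with $x\mapsto\varpi$, together with the factorizations $(W[x],x)\to(O_K,\can)=(W[x]/(E),x)$, where $E$ is the Eisenstein polynomial of $\varpi$ over $W$, and $(W[x],x)\to(O_{\C_p},\can)$, the latter passing through an appropriate relatively-perfect-modulo-$p$ logarithmic prolongation of $O_{\C_p}$ over $(W[x],x)$, so that Lemma~\ref{lem_rel_perf} again erases the $W[x]$ below it. The logarithmic structures then contribute exactly the extra divided variable $\log[\varpi^\flat]$, while the underlying surjection still has principal kernel, so Theorem~\ref{thm_dr_surjective} applies as before; passing to the base $(O_K,\can)$ and Hodge-completing, one checks on graded pieces of the Hodge filtration — using $\gr^n_F L\Omega^\bullet\simeq\bigwedge^n L[-n]$, the computation $(L_{O_{\ol K}/O_K})^\wedge_{(p)}\simeq T_p(\Omega^1_{O_{\ol K}/O_K})[1]$ from Construction~\ref{constr_HT_comp} and Lemma~\ref{lem_cotangent_O_K} (which matches $\gr^1$ with $\ker\theta/(\ker\theta)^2$ up to bounded $p$-power torsion), and the naturality of $A_\cris\to A_\dr$ — that $A_\dr$ is the $p$-adic completion of the completion of $D(A_\infi,\ker\theta)$ along its divided power filtration, filtrations matched. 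Finally, inverting $p$: since $v_p(n!)\le n/(p-1)$, the divided power filtration on $D(A_\infi,\ker\theta)$ and the $\ker\theta$-adic filtration become cofinal after inverting $p$, so the completion becomes $\varprojlim_n A_\infi[\tfrac1p]/(\ker\theta)^n=B_\dr^+$ by Fontaine's definition, giving $A_\dr[\tfrac1p]\cong B_\dr^+$ filtered-isomorphically.

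The step I expect to be the real obstacle is the second half of the $A_\dr$ argument: controlling the interaction of the logarithmic structures with derived $p$-completion and, above all, showing that Hodge-completing the derived de Rham complex of $(O_{\C_p},\can)$ relative to the \emph{ramified} base $(O_K,\can)$ recovers precisely the divided-power-filtration completion of $D(A_\infi,\ker\theta)$, with Hodge and divided power filtrations matched — the base change from $W$ to $(O_K,\can)$ is the delicate point, and here I would follow \cite[Proposition~9.8]{bhatt2012padicderivedrhamcohomology} for the full argument.
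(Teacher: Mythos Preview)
Your treatment of $A_\cris$ matches the paper's: factor $W\to A_\infi\to O_{\C_p}$, kill the first step by relative perfectness modulo $p$ (Lemma~\ref{lem_rel_perf}), then apply Theorem~\ref{thm_dr_surjective} to the surjection $\theta$. Your argument that the canonical log structure on $O_{\C_p}$ disappears under $p$-completion because the value group is uniquely divisible is morally right but only addresses $H^0$ of the cotangent complex; the paper instead handles this via the auxiliary prelog ring $(A_\infi,[\varpi^\flat]^{\Q_{\geq 0}})$ and the relatively-perfect-modulo-$p$ map $(\Z[\Q_{\geq 0}],0)\to(\Z[\Q_{\geq 0}],\Q_{\geq 0})$, which deals with the full derived object at once.

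For $A_\dr$, however, you take a detour through $A_\stb$ and a vague ``relatively-perfect-modulo-$p$ logarithmic prolongation'' over $(W[x],x)$, introducing the extra divided-power variable $\log[\varpi^\flat]$ only to then have to remove it when passing to the base $(O_K,\can)$. The paper's route is much shorter: it proves directly (in the lemma immediately following the theorem) that the natural maps
\[
L_{(O_{\ol K},\can)/W}\longrightarrow L_{(O_{\ol K},\can)/O_K}\longrightarrow L_{(O_{\ol K},\can)/(O_K,\can)}
\]
become isomorphisms after $(-)^\wedge_{(p)}\otimes\Q_p$, simply because the fibres $L_{O_K/W}$ and $L_{(O_K,\can)/O_K}$ are finitely generated $O_K$-torsion modules. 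Passing to graded pieces of the Hodge filtration, this shows that the natural map $A_\cris\to A_\dr$ becomes an isomorphism after Hodge completion and inverting $p$, so $A_\dr[1/p]$ is the Hodge completion of $A_\cris[1/p]$, which by part~(1) is exactly $B_\dr^+$. This bypasses $A_\stb$ entirely: the point is not to work over the ramified base, but to show that the ramification is invisible rationally.
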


The theorem follows from Theorem \ref{thm_dr_surjective} and Lemma \ref{lem_rel_perf} applied to $W \to A_\infi \to O_{\C_p}$ and the following computation using that $W \to A_\infi$ is relatively perfect modulo $p$.

\begin{lemma} Let $K$ be any finite extension of $\Q_p$.
    The natural map
    \[
     L_{(O_{\ol{K}},\can)/W} \to L_{O_{\ol{K}}/W}
    \]
    is a quasi-isomorphism after \(p\)-completion. Moreover, the natural maps
    \[
    L_{(O_{\ol{K}},\can)/W} \to L_{(O_{\ol{K}},\can)/O_K} \to L_{(O_{\ol{K}},\can)/(O_K,\can)}
    \]
    become isomorphisms after $- \hat{\otimes} \Q_p$.
\end{lemma}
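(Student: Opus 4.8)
The plan is to deduce both statements from the transitivity triangle for Gabber's logarithmic cotangent complex; in each case this reduces the claim to the cotangent complex of a single change of log structure or of base ring, which can then be identified with a standard one by realizing $(O_{\ol{K}},\can)$ and $(O_K,\can)$ as pushouts of the basic diagrams of the appendix. Throughout I write $\triv$ for the trivial log structure (denoted ``$0$'' in the appendix's notation), so that $L_{(O_{\ol{K}},\triv)/(W,\triv)}=L_{O_{\ol{K}}/W}$, and similarly for $W$ and $O_K$.

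\emph{First assertion.} Transitivity for $(W,\triv)\to(O_{\ol{K}},\triv)\to(O_{\ol{K}},\can)$ yields a triangle
\[
L_{O_{\ol{K}}/W}\ \longrightarrow\ L_{(O_{\ol{K}},\can)/W}\ \longrightarrow\ L_{(O_{\ol{K}},\can)/(O_{\ol{K}},\triv)}\xrightarrow{+1},
\]
so it suffices to prove $\bigl(L_{(O_{\ol{K}},\can)/(O_{\ol{K}},\triv)}\bigr)^{\wedge}_{(p)}\simeq 0$. I would fix a uniformizer $\varpi$ and a compatible system of $n$-th roots $\varpi^{1/n}$ (these exist since $\ol{K}$ is algebraically closed); this produces a monoid map $s\colon\Q_{\geq 0}\to O_{\ol{K}}$, $q\mapsto\varpi^{q}$, whose associated log structure is exactly $\can$ (every nonzero element of $O_{\ol{K}}$ equals $\varpi^{q}$ times a unit, $q$ its valuation). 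Hence the prelog ring $(\Q_{\geq 0}\xrightarrow{s}O_{\ol{K}})$, which has associated log structure $\can$, is the pushout of $(O_{\ol{K}},\triv)\leftarrow(\Z[\Q_{\geq 0}],0)\to(\Z[\Q_{\geq 0}],[\Q_{\geq 0}])$ along $[q]\mapsto s(q)$; since the horizontal map does not change the underlying ring this is a genuine homotopy pushout, and base change for the logarithmic cotangent complex (together with its invariance under passage to the associated log structure) gives
\[
L_{(O_{\ol{K}},\can)/(O_{\ol{K}},\triv)}\ \simeq\ L_{(\Z[\Q_{\geq 0}],[\Q_{\geq 0}])/(\Z[\Q_{\geq 0}],0)}\otimes^{L}_{\Z[\Q_{\geq 0}]}O_{\ol{K}}.
\]
By the example following Lemma~\ref{lem_rel_perf} the map $(\Z[\Q_{\geq 0}],0)\to(\Z[\Q_{\geq 0}],[\Q_{\geq 0}])$ is relatively perfect modulo $p$, so its cotangent complex vanishes after derived $p$-completion by Lemma~\ref{lem_rel_perf}; since derived $p$-completion commutes with derived base change in the relevant sense, the right-hand side is $p$-completely trivial, which proves the first assertion.

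\emph{Second assertion.} I would run the same reduction twice. Transitivity for $(W,\triv)\to(O_K,\triv)\to(O_{\ol{K}},\can)$ exhibits the map $L_{(O_{\ol{K}},\can)/W}\to L_{(O_{\ol{K}},\can)/O_K}$ as part of a triangle whose third term is $L_{O_K/W}\otimes^{L}_{O_K}O_{\ol{K}}$. Since $W\to O_K$ is finite flat and a complete intersection, say $O_K=W[x]/(E(x))$ with $E$ the Eisenstein polynomial of $\varpi$, the complex $L_{O_K/W}$ is represented by $[\,O_K\xrightarrow{E'(\varpi)}O_K\,]$, with $E'(\varpi)$ a generator of the different; as $E'(\varpi)\neq 0$ this is contractible after inverting $p$ (equivalently $L_{O_K/W}\otimes_{\Z}\Q\simeq L_{K/W[1/p]}\simeq 0$, the extension $K/W[1/p]$ being finite separable), so the third term dies after $\hat{\otimes}\,\Q_p$ and the first map is a $\hat{\otimes}\,\Q_p$-equivalence. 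For the second map, transitivity for $(O_K,\triv)\to(O_K,\can)\to(O_{\ol{K}},\can)$ produces a triangle whose third term is $L_{(O_K,\can)/(O_K,\triv)}\otimes^{L}_{O_K}O_{\ol{K}}$. Writing $(O_K,\can)$ as the pushout of $(O_K,\triv)\leftarrow(\Z[\N],0)\to(\Z[\N],[\N])$ along $1\mapsto\varpi$ (again $n\mapsto\varpi^{n}$ has associated log structure $\can$), and computing $L_{(\Z[\N],[\N])/(\Z[\N],0)}$ from the log smoothness of $(\Z[\N],[\N])$ over $\Z$ — whence, by one more transitivity triangle, $L_{(\Z[\N],[\N])/(\Z[\N],0)}\simeq[\,\Z[\N]\xrightarrow{\,t\,}\Z[\N]\,]$, quasi-isomorphic to $\Z[\N]/(t)$ in degree $0$ — base change gives $L_{(O_K,\can)/(O_K,\triv)}\simeq[\,O_K\xrightarrow{\,\varpi\,}O_K\,]\simeq O_K/\varpi$, a bounded $p$-power torsion complex. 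Hence the third term is $p$-power torsion and dies after $\hat{\otimes}\,\Q_p$, so the second map is a $\hat{\otimes}\,\Q_p$-equivalence as well.

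The two elementary cotangent-complex computations ($L_{O_K/W}$ and $L_{(\Z[\N],[\N])/(\Z[\N],0)}$) and the bookkeeping with derived $p$-completion are routine. The load-bearing points — where I would be most careful — are the identification of $(O_{\ol{K}},\can)$ and $(O_K,\can)$ as pushouts of the standard diagrams (i.e.\ recognizing the associated log structures), and the base change property of Gabber's logarithmic cotangent complex along the resulting homotopy-cocartesian squares, combined with the relative perfectness modulo $p$ of the log-structure change on $\Z[\Q_{\geq 0}]$ recorded in the appendix. Granting these, the rest is formal.
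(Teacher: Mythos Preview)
Your argument is correct and follows the same blueprint as the paper: transitivity triangles reduce each map to a single ``third term'' which is identified via the standard pushout presentations and then killed, using in the first case the relative perfectness modulo $p$ of $(\Z[\Q_{\geq 0}],0)\to(\Z[\Q_{\geq 0}],[\Q_{\geq 0}])$ and in the second case the $p$-power torsion of $L_{O_K/W}$ and $L_{(O_K,\can)/O_K}\simeq O_K/\varpi$. The only notable difference is that for the first assertion the paper routes the computation through $A_\infi$ (using that $W\to A_\infi$ is relatively perfect modulo $p$ to replace $L_{O_{\ol{K}}/W}$ by $L_{O_{\C_p}/A_\infi}$, and then comparing a pushout square over $A_\infi$), whereas you work directly over $O_{\ol{K}}$ via the pushout $(O_{\ol{K}},\triv)\leftarrow(\Z[\Q_{\geq 0}],0)\to(\Z[\Q_{\geq 0}],[\Q_{\geq 0}])$; your route is a bit more direct and avoids the $A_\infi$ detour, though both hinge on the same relative-perfectness example. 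One cosmetic point: the transitivity triangle gives the map $L_{O_{\ol{K}}/W}\to L_{(O_{\ol{K}},\can)/W}$, so the arrow in the lemma's statement points the other way, but once the third term vanishes $p$-completely this is of course irrelevant.
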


\begin{proof}
    We start with the second assertion. The transitivity triangle for the short exact sequence of prelog rings
    \[
    O_K \to (O_K, \can) \to (O_{\ol{K}},\can),
    \]
    reads as follows
    \[
    L_{(O_K,\can)/O_K} \to L_{(O_{\ol{K}},\can)/O_K} \to L_{(O_{\ol{K}},\can)/(O_K,\can)}
    \xrightarrow{+1}.
    \]
    Note that all terms are concentrated in degree zero and $L_{(O_K,\can),O_K}$ is given by $O_K/\varpi \cdot \dlog \varpi$ and hence vanishes after $\hat{\otimes} \Q_p$. Similarly,  looking at the transitivity triangle for 
    \[
    W \to O_K \to (O_{\ol{K}},\can)
    \]
    gives the other quasi-isomorphism.
    
    For the first assertion, fix a compatible system of rational roots of $\varpi^\flat$ and consequently a compatible system of rational powers $[\varpi^\flat]^{\Q_{\geq 0}} \in A_\infi$. Now, consider the following diagram of prelog rings
    \[
    \begin{tikzcd}
        W \ar[r, "a"] & A_\infi \ar[r, "b"]  \ar[d, "\theta"] & (A_\infi, [\varpi]^{\Q_{\geq 0}}) \ar[d, "d"]\\
        &O_{\C_p} \ar[r, "e"] &(O_{\C_p},\can)
    \end{tikzcd}
    \]
    Since $a$ is relatively perfect modulo \(p\), we know that
    \[
    (L_{O_{\ol{K}}/W})^\wedge_{(p)} \simeq (L_{O_{\C_p}/A_\infi})^\wedge_{(p)},
    \]
    and hence it suffices to show that the map
    \[
    (L_{\theta})^\wedge_{(p)}  \to (L_{c \circ b})^\wedge_{(p)} 
    \]
    is a quasi-isomorphism. Since the square above is a pushout square after passing to log rings, by K\"unneth, it suffices to show that $(L_{b})^\wedge_{(p)} \simeq 0$. This follows by base change from the fact that
    \[
    (\Z[\Q_{\geq 0}],0) \to (\Z[\Q_{\geq 0}], [\Q_{\geq 0}])
    \]-
    is relatively perfect modulo \(p\), see \cite[Cor. 7.13]{bhatt2012padicderivedrhamcohomology}.
\end{proof}

\begin{remark}
    One can show by a direct computation that $L_{(O_{\ol{K}},\can)/W} \to L_{O_{\ol{K}}/W}$ is an equivalence before \(p\)-completion \cite[Lemma 6.7]{SzaBei18}.
\end{remark}

Two immediate consequences of the above lemma are:
\begin{enumerate}[i)]
    \item That we could equivalently define $A_\cris$ as the \(p\)-completed derived de Rham complex of $O_K \to (O_{\ol{K}},\can)$.
    \item That the natural map $A_\cris \to A_\dr$ becomes an isomorphism after Hodge completion and inverting \(p\).
\end{enumerate} 

\begin{lemma}\label{lem_cotangent_O_K}
    Let $K$ be a $p$-adic field.
    \begin{enumerate}
        \item The cotangent complex $L_{O_{\ol{K}}/O_K}$ is concentrated in degree zero, where it is given by $\Omega^1_{O_{\ol{K}}/O_K}$.
        \item The map $d \colon O_{\ol{K}} \to \Omega^1_{O_{\ol{K}}/O_K}$ is surjective. Moreover, $\Omega^1_{O_{\ol{K}}/O_K}$ is \(p\)-primary torsion and \(p\)-divisible.
        \item We have
        \[
            (L_{O_{\ol{K}}/O_K})^\wedge_{(p)} \simeq T_p(\Omega^1_{O_{\ol{K}}/O_K})[1].
        \]
    \end{enumerate}
\end{lemma}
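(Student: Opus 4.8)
The plan is to handle the three assertions in order, the first and third being essentially formal and the second containing the only real input. For (1), write $O_{\ol K}=\colim_L O_L$ over the finite subextensions $L\subset\ol K$ of $K$. Each $O_K\hookrightarrow O_L$ is a finite flat morphism between regular one-dimensional local rings, hence a local complete intersection morphism, so $L_{O_L/O_K}$ is a perfect complex of Tor-amplitude $[-1,0]$; represent it by a two-term complex $[P^{-1}\xrightarrow{\phi}P^0]$ of finite free $O_L$-modules. Inverting $\varpi$ gives $L_{\,\Frac(O_L)/\Frac(O_K)}=0$ because the extension is separable, so $\phi$ becomes an isomorphism after tensoring with $\Frac(O_L)$; as $O_L$ is a domain this forces $\phi$ injective, so $L_{O_L/O_K}$ is concentrated in degree $0$ and equals $\Omega^1_{O_L/O_K}$. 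Since the cotangent complex commutes with filtered colimits, $L_{O_{\ol K}/O_K}\simeq\colim_L\Omega^1_{O_L/O_K}=\Omega^1_{O_{\ol K}/O_K}$, concentrated in degree $0$.

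For (2), the torsion statement is immediate: each $\Omega^1_{O_L/O_K}$ is killed by the different $\frk{d}_{L/K}\neq 0$, which contains a power of $p$, so every element of the colimit is $p$-power torsion. For $p$-divisibility, observe that $\Omega^1_{O_L/O_K}$ is cyclic on $d\varpi_L$ for a uniformizer $\varpi_L$ (the maximal unramified subextension is étale and contributes nothing to $\Omega^1$). Given $\omega\in\Omega^1_{O_{\ol K}/O_K}$, represented by $a\,d\varpi_L$, pass to $L'=L(\varpi_L^{1/p})$, where $\varpi_L=\varpi_{L'}^{p}$ and hence $d\varpi_L=p\,\varpi_{L'}^{p-1}\,d\varpi_{L'}$; the image of $\omega$ in $\Omega^1_{O_{L'}/O_K}$ then lies in $p\,\Omega^1_{O_{L'}/O_K}$, so $\omega\in p\,\Omega^1_{O_{\ol K}/O_K}$.

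The remaining point of (2), surjectivity of $d\colon O_{\ol K}\to\Omega^1_{O_{\ol K}/O_K}$, is the main obstacle: one must show that the $O_K$-submodule spanned by exact forms is all of $\Omega^1_{O_{\ol K}/O_K}$, not merely a generating set, which amounts to Fontaine's structural identification of $\Omega^1_{O_{\ol K}/O_K}$ with $\ol K$ modulo a fractional ideal (see \cite{Fontaine81} and \cite{bhatt2012padicderivedrhamcohomology}). I would either quote this directly or derive it by a successive-approximation argument, using the Leibniz rule $a\,d\beta=d(a\beta)-\beta\,da$ to absorb coefficients into $d$ after passing to further ramified extensions where the error term becomes more divisible.

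For (3), combine (1) and (2). Put $M:=\Omega^1_{O_{\ol K}/O_K}$, so $L_{O_{\ol K}/O_K}\simeq M$. For each $n$ the complex $M\otimes^L\Z/p^n$ is computed by $[M\xrightarrow{p^n}M]$, whose $H_0=M/p^nM$ vanishes by $p$-divisibility and whose $H_1=M[p^n]$; thus $M\otimes^L\Z/p^n\simeq M[p^n][1]$, and the transition maps in $n$ are identified with the multiplications $M[p^{n+1}]\xrightarrow{p}M[p^n]$, which are surjective, once more by $p$-divisibility. Hence $R^1\lim$ vanishes and
\[
(L_{O_{\ol K}/O_K})^\wedge_{(p)}=R\lim_n\bigl(M[p^n][1]\bigr)\simeq\Bigl(\lim_n M[p^n]\Bigr)[1]=T_p(\Omega^1_{O_{\ol K}/O_K})[1],
\]
as claimed.
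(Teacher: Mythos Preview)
The paper states this lemma without proof (it is a standard fact going back to Fontaine), so there is no argument to compare against. Your proof is correct. Parts (1) and (3) are handled cleanly; in (1) you might note explicitly that in the two-term model $[P^{-1}\to P^0]$ the ranks agree (e.g.\ from the explicit presentation $O_L=O_{L_0}[X]/(f)$ with $f$ Eisenstein), so that injectivity of $\phi$ after inverting $p$ really forces injectivity over $O_L$. For (2), your arguments for $p$-primary torsion and $p$-divisibility are fine, and you are right that surjectivity of $d$ is the only nontrivial input; citing Fontaine's computation (or the account in Bhatt) is the standard move here and is exactly what the paper's level of detail would suggest. Note also that (3) only uses $p$-divisibility and $p$-torsion, not the surjectivity of $d$, so your logical dependencies are even cleaner than you indicate.
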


\subsection{The logarithm map}\label{sect_log_map}
In this section, we recall the classical construction of the logarithm map. It is a $G_K$-equivariant monoid map
\[
\log \colon O_{\C_p}^\flat \setminus \{0\} \to B_\dr^+,
\]
which is trivial on $\ol{k}^\times$ and extends Fontaine's logarithm $\log \colon \Z_p(1) \to B_\dr^+$.
The construction depends on a (fixed) choice of a pseudo-uniformizer $\varpi^\flat = (\varpi,\varpi_1, \dots) \in O_{\C_p}^\flat$.
Moreover, we will give an alternative construction using the derived de Rham construction of $B_\dr$.

\begin{lemma}[{{\cite[Proposition 6.5]{FontaineOuyangGPAdic}}}]
    The kernel of the map
    \[
    \ol{\theta} \colon A_\cris \to O_{\C_p}/\varpi
    \]
    is a divided power ideal in $A_\cris$.
\end{lemma}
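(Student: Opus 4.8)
The plan is to exploit the description of $A_\cris$ as a divided power envelope and to reduce the statement to the standard fact that the sum of two divided power ideals whose divided powers are compatible is again a divided power ideal. By Theorem~\ref{thm_period_ring_comp} we may identify $A_\cris$ with the $p$-adic completion of the divided power envelope $D(A_\infi,\ker\theta)$ of $\ker\theta\subset A_\infi=W(O_{\C_p}^\flat)$, where $\theta\colon A_\infi\to O_{\C_p}$ is Fontaine's map. In particular $A_\cris$ carries a canonical divided power structure $\gamma$ on the ideal $I:=\ker(\theta\colon A_\cris\to O_{\C_p})$, the closure of the divided power ideal generated by $\ker\theta$, and $\theta$ induces an isomorphism $A_\cris/I\cong O_{\C_p}$. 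Since $\ol{\theta}$ is the composite of $\theta$ with the reduction $O_{\C_p}\to O_{\C_p}/\varpi$, we get $\ker\ol{\theta}=\theta^{-1}(\varpi O_{\C_p})\supseteq I$, with $\ker\ol{\theta}/I\cong\varpi O_{\C_p}\subset O_{\C_p}$.

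First I would describe $\ker\ol{\theta}$ explicitly. Using the tilting identification $O_{\C_p}/\varpi\cong O_{\C_p}^\flat/\varpi^\flat$ for the pseudo-uniformizer $\varpi^\flat$, together with the fact that $\ol{\theta}$ restricted to $A_\infi$ is the reduction modulo $\varpi^\flat$ of the zeroth Witt component $W(O_{\C_p}^\flat)\to O_{\C_p}^\flat$, one sees that $\ker\ol{\theta}$ is generated over $I$ by the Teichm\"uller lift $[\varpi^\flat]$ together with $p$ (recall $p\in\varpi O_{\C_p}$). Thus $\ker\ol{\theta}=I+K$, where $K$ is the ideal generated by $[\varpi^\flat]$ and $p$. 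Next I would check that $K$ is itself a divided power ideal of $A_\cris$: the ideal $(p)$ carries its standard divided power structure because $p^n/n!\in\Z_p$, and divided powers of $[\varpi^\flat]$ are obtained from a relation of the form $[\varpi^\flat]^e=[\tilde u]\,p+\xi$, with $\xi$ a distinguished generator of $\ker\theta$ and $\tilde u$ a Teichm\"uller unit lifting $\varpi^e/p$, which expresses powers of $[\varpi^\flat]$ through the divided powers of $\xi$ and of $p$ already present in $A_\cris$. Finally, since $A_\cris$ is $p$-torsion free, a divided power structure on an ideal is unique when it exists, so $\gamma$ on $I$ and the structure on $K$ automatically agree on $I\cap K$; the standard lemma on sums of compatible divided power ideals (Berthelot--Ogus; see e.g.\ \cite{stacks-project}) then shows that $\ker\ol{\theta}=I+K$ is a divided power ideal in $A_\cris$.

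The main obstacle is the second step, namely producing the divided powers of the uniformizer $[\varpi^\flat]$ inside $A_\cris$ and checking their compatibility with $\gamma$: this is where the precise normalization of the period rings and the interplay between the divided power filtration on $A_\cris$ and the Teichm\"uller elements have to be used carefully, the remainder being formal manipulation with divided powers.
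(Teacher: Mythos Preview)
The paper does not give its own proof of this lemma; it is stated purely as a citation to Fontaine--Ouyang, so there is nothing in the paper to compare your argument against.

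On the mathematics: your outline is the standard one and it is correct when $\varpi O_{\C_p}=pO_{\C_p}$, i.e.\ when $K/\Q_p$ is unramified. In that case $\ker\ol\theta=I+pA_\cris$ with $I=\ker\theta$, and the sum of the PD ideal $I$ with $(p)$ (carrying its canonical PD structure $\gamma_n(p)=p^n/n!\in\Z_p$) is again a PD ideal by $p$-torsion-freeness of $A_\cris$. This is what the cited reference establishes.

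When $K/\Q_p$ is ramified ($e>1$), however, the step you flag as the main obstacle genuinely cannot be completed. Your relation $[\varpi^\flat]^e=[\tilde u]\,p+\xi$ shows only that $[\varpi^\flat]^e\in I+(p)$, hence $[\varpi^\flat]^{em}/m!\in A_\cris$ for all $m$; it does \emph{not} yield $[\varpi^\flat]^n/n!\in A_\cris$ for general $n$. Concretely, for $p=3$ and $e=2$ one has
\[
\frac{[\varpi^\flat]^{6}}{6!}=\frac{(\xi+3)^3}{720}=\frac{\gamma_3(\xi)}{120}+(\text{element of }A_\cris),
\]
and $\gamma_3(\xi)/120\notin A_\cris$: the graded pieces $\gr^n_F A_\cris\cong O_{\C_p}$ are $p$-torsion free with $\gamma_n(\xi)$ mapping to a generator, so $\gamma_n(\xi)\notin pA_\cris$, while $v_3(120)=1$. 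Since $A_\cris$ is $p$-torsion free, any PD structure on $\ker\ol\theta$ would be forced to be $x\mapsto x^n/n!$, and this computation rules it out. So the relation you propose does not ``express powers of $[\varpi^\flat]$ through the divided powers of $\xi$ and of $p$'' in the required sense.

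In summary, your proposal proves the version with $O_{\C_p}/p$ in place of $O_{\C_p}/\varpi$ (equivalently, the unramified case). For the application that follows in the paper one may simply start from $x\in 1+(p^\flat)$ rather than $1+(\varpi^\flat)$; the subsequent extension to $1+\frk{m}^\flat$ by taking $p$-th powers is unaffected.
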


\begin{construction}[Fontaine]\label{constr_classical_log}
Let $x \in 1 + (\varpi^\flat) \subset O_{\C_p}^\flat$, then $[x]-1 \in \ker(\ol{\theta}) \in A_\cris$ and therefore
\[
\frac{([x]-1)^n}{n} = (n-1)! \gamma_n([x]-1) \in A_\cris.
\]
We define the logarithm of $x$ as
\[
\log [x] := \sum_{n \geq 1} (-1)^{n+1} \frac{([x]-1)^n}{n}.
\]
The sum converges in the \(p\)-adic topology on $A_\cris$.
For example, if $x = \epsilon$ is a compatible system of \(p\)'th roots of unity, then $\log [\epsilon] = t$
is by definition Fontaine's element. 

Next, we extend the logarithm to $x \in 1 + \frk{m}^\flat$
by noting that for every such $x$ there exists $m \geq 0$ such that $x^{p^m} \in 1 + (\varpi^\flat)$ and setting
\[
\log [x] = \frac{1}{p^m} \log [x^{p^m}].
\]
Next, we can directly extend the logarithm to $(O_{\C_p}^\flat)^\times$ using the decomposition
\[
(O_{\C_p}^\flat)^\times = \ol{k}^\times \times (1+\frk{m}^\flat)
\]
and imposing the condition that $\log$ is trivial on $\ol{k}^\times$. 

Finally, to extend the logarithm map to a monoid map $O_{\C_p}^\flat \setminus {0} \to B_\dr^+$, we only need to give its value at 
$\varpi^\flat$. 
Indeed, for any $x \in O_{\C_p}^\flat \setminus {0}$ there exists $s,r \geq 0$ and $y \in (O_{\C_p}^\flat)^\times$ such that
\[
x^s = y (\varpi^\flat)^{-r}
\]
and we set
\[
\log [x] = \frac{1}{s}(r \log [\varpi^\flat] + \log[y]).
\]
So far, we were working within $A_\cris$, but to define $\log [\varpi^\flat]$ we need to work in $B_\dr^+$. We note that 
\[
\frac{[\varpi^\flat]}{\varpi}-1 \in \ker( A_\infi [\frac{1}{p}] \to \C_p),
\]
and therefore we may define
\[
\log [\varpi^\flat] := \sum_{n \geq 1}(-1)^{n+1}\frac{(\frac{[\varpi^\flat]}{\varpi}-1)^n}{n}
= \sum_{n \geq 1}(-1)^n \frac{\xi^n}{n \varpi^{n}},
\]
with $\xi = [\varpi^\flat] - \varpi$ (which is also generator of $\ker(\theta)$).
The above series does not converge \(p\)-adically, but it does in the $\ker(\theta)$-adic topology! We verify that the resulting map is $G_K$-equivariant
\[
\sigma \log [\varpi^\flat] = \log([\varpi^\flat] [\epsilon^{\eta(\sigma)}]) 
 = \log[\varpi^\flat] + \eta(\sigma) \log[\epsilon].
\]

\end{construction}

The rest of this subsection is devoted to an alternative construction of the logarithm map in terms of the derived logarithmic de Rham complex.

\begin{construction}\label{constr_deRham_log}
    Recall that the derived de Rham complex comes with a monoid map
    \[
    d\log \colon O_{\ol{K}} \setminus \{0\} \to L\Omega^\bullet_{(O_{\ol{K}},\can)/W}[1]
    \]
    After derived $p$-completion, $d\log$ gives rise to a $G_K$-equivariant map
    \[
    \lambda \colon ((O_{\ol{K}} \setminus \{0\})^{\rom{gp}})^\wedge_{(p)} \cong \Z_p(1) \to A_\cris.
    \]
\end{construction}

\begin{prop}\label{prop_font_log}
    The map $\lambda \colon \Z_p(1) \to B_\cris^+$ coincides with Fontaine's logarithm map under the above identifications.
\end{prop}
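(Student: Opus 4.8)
The plan is to compare the two maps on a single topological generator of $\Z_p(1)$, to reduce that comparison to the first graded piece of the Hodge filtration, and then to promote the resulting congruence to an exact identity using the Galois action together with Tate's theorem. First I would reduce to a generator: both $\lambda$ and the restriction of Fontaine's logarithm to $\Z_p(1)\subset O_{\C_p}^\flat\setminus\{0\}$ are continuous $\Z_p$-linear maps $\Z_p(1)\to B_\cris^+$, and both take values in $A_\cris$ — for $\lambda$ by Construction \ref{constr_deRham_log}, and for Fontaine's map because $\log[\epsilon]=t\in A_\cris$ by Construction \ref{constr_classical_log}. Since $\Z_p(1)=T_p\mu_{p^\infty}$ is free of rank one, it then suffices to prove $\lambda(\epsilon)=t$ for a fixed generator $\epsilon=(\zeta_n)_n$ with $\zeta_0=1$ and $\zeta_n^p=\zeta_{n-1}$, where $t=\log[\epsilon]$.

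Next I would check that $\lambda(\epsilon)\equiv t\pmod{F^2 B_\dr^+}$. The first graded piece of the Hodge filtration is $\gr^1_F A_\cris\simeq\bigl(L_{(O_{\ol{K}},\can)/W}[-1]\bigr)^\wedge_{(p)}$, which after inverting $p$ is identified with $T_p(\Omega^1_{O_{\ol{K}}/O_K})\otimes\Q_p\cong\C_p(1)$ via Lemma \ref{lem_cotangent_O_K}, the base-change lemmas of this appendix, and Fontaine's isomorphism (\ref{eq_fontaine_tate}); by Theorem \ref{thm_period_ring_comp} this is compatible with $F^1 B_\dr^+/F^2 B_\dr^+$. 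Because $\lambda$ is built from the monoid map $\dlog$, which lands in $F^1 L\Omega^\bullet[1]$, the composite $\Z_p(1)\xrightarrow{\lambda}A_\cris\twoheadrightarrow\gr^1_F A_\cris$ sends $\epsilon$ to the class of $(\dlog\zeta_n)_n$, i.e. to the canonical generator of $\C_p(1)$ under (\ref{eq_fontaine_tate}). On the other hand, differentiating the defining series $t=\sum_{k\geq1}(-1)^{k+1}([\epsilon]-1)^k/k$ term by term inside the divided-power de Rham complex of Theorem \ref{thm_period_ring_comp} gives $dt=\tfrac{d[\epsilon]}{[\epsilon]}=\dlog[\epsilon]$, so the image of $t$ in $\gr^1_F$ is again the class of $\dlog[\epsilon]$, the same generator. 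This yields the congruence, which is also the content of \ref{cor_graded_log}.

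Finally I would upgrade the congruence to an equality. Put $\delta:=\lambda(\epsilon)-t\in F^2 B_\dr^+$. For $\sigma\in G_K$ one has $\sigma\cdot\epsilon=\chi(\sigma)\epsilon$ with $\chi$ the cyclotomic character, so $\Z_p$-linearity and $G_K$-equivariance of both maps give $\sigma(\lambda(\epsilon))=\chi(\sigma)\lambda(\epsilon)$ and $\sigma(t)=\chi(\sigma)t$, whence $\sigma(\delta)=\chi(\sigma)\delta$ for every $\sigma$. Since $F^i B_\dr^+=t^i B_\dr^+$ and $t$ transforms by $\chi$, the element $\delta/t$ lies in $F^1 B_\dr^+$ and is $G_K$-invariant; but $0\to F^1 B_\dr^+\to B_\dr^+\to\C_p\to0$ induces an isomorphism $K=(B_\dr^+)^{G_K}\xrightarrow{\sim}(\C_p)^{G_K}=K$, so $(F^1 B_\dr^+)^{G_K}=0$ by Tate's theorem. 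Therefore $\delta=0$ and $\lambda(\epsilon)=\log[\epsilon]=t$.

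The main obstacle is the normalization in the middle step: one must make sure the identification $\gr^1_F A_\cris\otimes\Q_p\cong\C_p(1)$ carries the class of Fontaine's $t$ to exactly $(\dlog\zeta_n)_n$ and not to a $\Z_p^\times$-multiple of it. An alternative that circumvents (\ref{eq_fontaine_tate}) altogether is to compute $\lambda(\epsilon)$ modulo $p^n$ directly in the explicit two-term model for $A_\cris\otimes^L\Z/p^n$ used in Construction \ref{rigid_construction_abelian} — the pair representing $\lambda(\epsilon)\bmod p^n$ being $(\dlog\zeta_n,0)$, the ``witness'' vanishing because multiplication by $p$ on $\mu_{p^\infty}$ has no correction term — then transport this to the divided-power envelope $D_{A_\infi}(\ker\theta)$ via Theorem \ref{thm_crystalline_comp} and recognise it as $t\bmod p^n$ using $\theta([\epsilon])=1$ and the identity $d(\log[\epsilon])=\dlog[\epsilon]$; this is essentially Fontaine's classical computation.
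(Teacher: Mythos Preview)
Your overall strategy matches the paper's: reduce to a generator $\epsilon$, show the two images agree in $\gr^1_F$, then promote the congruence to an equality via $G_K$-equivariance and Tate's theorem. The reduction step and the final Tate argument are exactly those used in the paper (the paper delegates them to Example~\ref{ex_tate_curve}, where the identical manipulation $(\lambda(\epsilon)-t)/t\in(F^1)^{G_K}=0$ appears).

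The gap is in your middle step, and you are right to flag it. Saying that both $\lambda(\epsilon)$ and $t$ project to ``the canonical generator of $\C_p(1)$'' via (\ref{eq_fontaine_tate}) does not settle the normalization: you are computing the two images in two different models of $\gr^1_F$ --- $(\dlog\zeta_n)_n\in T_p(\Omega^1_{O_{\ol K}/O_K})$ versus the leading term $[\epsilon]-1\in\ker\theta/(\ker\theta)^2$ --- and the heuristic ``$dt=\dlog[\epsilon]$'' does not by itself identify these under the comparison isomorphism. The paper fills exactly this gap with a direct computation: it writes down the connecting map $c_n\colon\Omega^1_{O_{\C_p}/W}[p^n]\to I/(I^2,p^n)$ coming from the Snake Lemma for the transitivity triangle of $W\to A_\infi\to O_{\C_p}$ (using that $(L_{A_\infi/W})^\wedge_{(p)}\simeq 0$), and then checks explicitly that $c_n(\dlog\zeta_n)=[\epsilon]-1$ by lifting $\dlog\zeta_n$ to $\dlog[\epsilon^{p^{-n}}]$ and observing $p^n\dlog[\epsilon^{p^{-n}}]=\dlog[\epsilon]=d([\epsilon]-1)$ in $\Omega^1_{A_\infi/W}\otimes O_{\C_p}$. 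Your ``alternative'' via the two-term model and Theorem~\ref{thm_crystalline_comp} points in the right direction but still leaves the transport step unspecified; the Snake Lemma computation is precisely what makes it rigorous.
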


\begin{proof}
    Let $\epsilon = (1,\zeta_1,\zeta_2,\dots)$ be a generator of $\Z_p(1)$.
    This is a continuation of the computation of $\langle d\log x, \epsilon \rangle_\dr$ from Example \ref{ex_tate_curve}. We have already reduced to proving the claim for $\gr_F^1(\lambda)$. From the transitivity triangle for $W \to A_\infi \to O_{\C_p}$ we get a short exact sequence
    \[
    0 \to I/I^2 \to \Omega^1_{A_\infi/W} \otimes_{A_\infi} O_{\C_p} \to \Omega^1_{O_{\C_p}/W} \to 0, 
    \]
    where $I = \ker (\theta)$.
    The Snake Lemma gives maps
    \[
    c_n \colon \Omega^1_{O_{\C_p}/W}[p^n] \to I/(I^2,p^n) 
    \]
    for all $n \geq 0$.
    Since $W \to A_\infi$ is relatively perfect modulo \(p\), the cotangent complex $(L_{A_\infi/W})^\wedge_{(p)}$ vanishes, and therefore the maps $c_n$ induce an isomorphism
    \[
    c \colon (L_{O_{\C_p}/W})^\wedge_{(p)}[-1] \simeq T_p(\Omega^1_{(O_{\C_p},\can)/W}) \xrightarrow{\sim}  \ker \theta/ (\ker(\theta)^2 \cong \gr_F^1 A_\cris.
    \]
    We have to show that
    \[
    c((d\log \zeta_n)_n) = [\epsilon] - 1.
    \]
    The Snake Lemma gives a recipe to compute $c_n(d \log \zeta_n)$. First, lift $d\log \zeta_n$ to $\omega \in \Omega^1_{A_\infi/W} \otimes_{A_\infi} O_{\C_p}$, then $p^n \omega 
    \in \ker (\Omega^1_{A_\infi/W}  \otimes_{A_\infi} O_{\C_p} \to \Omega^1_{O_{\C_p}/W})$, hence there exists $f \in I$ such that $p^n \omega = df$, and $c_n(d \log \zeta_n)$ is given by the class of $f$ in $I/(I^2,p^n)$.
    One easily checks that $f = [\epsilon]-1$ works using that $\zeta_n = \theta([\epsilon^{p^{-n}}])$.
\end{proof}

\begin{remark}
     Bhatt gives the following general formula for $c_n$, which can be verified using the same method as above. Write $f \in \ker(\theta)$ as 
    \[
    f = \sum_{k = 0}^{n-1} [f_k] p^k + p^n g
    \]
    for some $g \in A_\cris$ and $f_k \in O_{\C_p}^\flat$. Using that $O_{\C_p}^\flat$ is perfect and that $[-]$ is multiplicative we can rewrite $f$ as
    \[
    f = \sum_{k = 0}^{n-1} [f_k^{p^{-n+k}}]^{p^{n-k}} p^k + p^n g.
    \]
    Then, 
    \[
    c_n(f) = \sum_{k=0}^{n-1} (f_k^{(n-k)})^{p^{n-k}-1} df_k^{(n-k)} + d\theta(g) \stackrel{(*)}{=} 
    \sum_{k=0}^{n-1} f_k^{(0)} d\log f_k^{(n-k)} + d\theta(g) ,
    \]
    where $f_k = (f_k^{(0)}, f_k^{(1)}, \dots ) \in \lim_{x \mapsto x^p} O_{\C_p}$ and $(*)$ holds if $f_k \in O_{\C_p}^\times$ for all $k$. 
\end{remark}

We continue by discussing Kato's version of the semistable period ring. Its definition
also depends on a choice of (pseudo-) uniformizers $\varpi \in O_K$ and $\varpi^\flat \in O_{\C_p}^\flat$.
One can show that the resulting ring is independent of the last choice up to a transitive system of isomorphisms \cite[Section 5]{breuil_crystalline_coh}. 

\begin{construction}
    The ring $\hat{A_\stb}$ is defined as $A_\cris\langle X\rangle$, the free \(p\)-adically complete PD-polynomial ring in one variable $X$.
    We extend the $G_K$-action on $A_\cris$ to $\hat{A_\stb}$ by
    \[
    \sigma(X+1) = \frac{[\varpi^\flat]}{\sigma([\varpi^\flat])} (X+1).
    \]
    We equip $\hat{A_\stb}$ with the minimal pd-multiplicative Hodge filtration $F_H$ which extends the one on $A_\cris$ and satisfies $X \in F^1_H(\hat{A_\stb})$.
    We define $\phi \colon \hat{A_\stb} \to \hat{A_\stb}$ to be  the unique extension of $\phi$ from $A_\cris$ satisfying $\phi(X+1) = (X+1)^p$.
    Finally, we define $N \colon \hat{A_\stb} \to \hat{A_\stb}$ to be the continuous $A_\cris$-linear pd-derivation $N(X) = X+1$.
\end{construction}

\begin{prop}[{}{\cite[Proposition 9.21]{bhatt2012padicderivedrhamcohomology}}]
    Let $C$ be the prelog ring \[(A_\infi [y,y^{-1}], [\varpi^\flat]^{\Q_{\geq 0}} \cdot y^\Z)\] with $G_K$-action defined by
    \[
    \sigma(y) = \frac{[\varpi^\flat]}{\sigma([\varpi^\flat])} y.
    \]
    View $(O_{\C_p},\can)$ as a $C$-algebra via the usual map and $y \mapsto 1$. Then there is a natural isomorphism
    \[
    \hat{A_\stb} \cong (L\Omega^\bullet_{(O_{\C_p},\can)/C})^\wedge_{(p)}.
    \]
\end{prop}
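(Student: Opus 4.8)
The plan is to reproduce, with $C$ in place of the base $W$, the computation of $A_\cris$ and $A_\dr$ carried out in the proof of Theorem \ref{thm_period_ring_comp}; the Laurent variable $y$ in $C$ is precisely what will produce Kato's divided-power variable $X$, via $X = y-1$. All rings below are understood to be derived $p$-completed.

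\emph{Step 1: discard the log structures.} I would first check that computing $(L\Omega^\bullet_{(O_{\C_p},\can)/C})^\wedge_{(p)}$ with the prelog structure $[\varpi^\flat]^{\Q_{\geq 0}}\cdot y^\Z$ on $C$ gives the same answer as with the trivial prelog structure $A_\infi[y^{\pm 1}]$. The factor $y^\Z$ maps into $C^\times$, so it contributes nothing to the logarithmic de Rham complex (it only adds exact $d\log$'s). The factor $[\varpi^\flat]^{\Q_{\geq 0}}$ is removed by Lemma \ref{lem_rel_perf}: the map $(C,\mathrm{triv})\to(C,[\varpi^\flat]^{\Q_{\geq 0}})$ is the base change of $(\Z[\Q_{\geq 0}],0)\to(\Z[\Q_{\geq 0}],[\Q_{\geq 0}])$ along $q\mapsto[\varpi^\flat]^q$, hence is relatively perfect modulo $p$ by \cite[Cor.~7.13]{bhatt2012padicderivedrhamcohomology}. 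Finally the canonical log structure on $O_{\C_p}$ is stripped exactly as in the proof of Theorem \ref{thm_period_ring_comp}: $L_{(O_{\C_p},\can)/O_{\C_p}}$ is $p$-adically trivial (the argument there for $O_{\ol K}$ does not use the base), so applying the transitivity property \cite[Prop.~7.8]{bhatt2012padicderivedrhamcohomology} --- in the derived $p$-complete form obtained by derived Nakayama, as in the proof of Lemma \ref{lem_rel_perf} --- to $C\to O_{\C_p}\to(O_{\C_p},\can)$ reduces the problem to computing $(L\Omega^\bullet_{O_{\C_p}/C})^\wedge_{(p)}$ with $C=A_\infi[y^{\pm 1}]$ and the trivial log structure on both sides.

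\emph{Step 2: the surjection and the PD-envelope.} Via $\theta$ and $y\mapsto 1$ one has $O_{\C_p}=C/(\xi,y-1)$, where $\xi=[\varpi^\flat]-\varpi$ generates $\ker\theta$; the two elements form a regular sequence ($\xi$ is a non-zero-divisor in $C$ since $C$ is free over $A_\infi$, and $y-1$ is a non-zero-divisor in $C/(\xi)=O_{\C_p}[y^{\pm 1}]$), and all the rings in sight are $\Z/p^n$-flat. Applying Theorem \ref{thm_dr_surjective} to the two principal-kernel surjections $C\to C/(\xi)$ and $C/(\xi)\to O_{\C_p}$ and assembling them by transitivity identifies $(L\Omega^\bullet_{O_{\C_p}/C})^\wedge_{(p)}$ with the derived $p$-completion of the divided-power envelope $D(C,(\xi,y-1))$, the Hodge filtration corresponding to the PD-filtration. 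Since $\xi\in A_\infi$, this is $\bigl(D(A_\infi,\ker\theta)[y^{\pm 1}]\bigr)\langle y-1\rangle$; setting $X=y-1$ and invoking Theorem \ref{thm_period_ring_comp} (the $p$-completion of $D(A_\infi,\ker\theta)$ is $A_\cris$) yields $A_\cris\langle X\rangle[(X+1)^{-1}]$. As $X$ lies in the PD-ideal, $X^{p^k}\to 0$ $p$-adically, so $X+1$ is already invertible after $p$-completion, and we obtain a filtered isomorphism $(L\Omega^\bullet_{(O_{\C_p},\can)/C})^\wedge_{(p)}\cong A_\cris\langle X\rangle^\wedge_{(p)}=\hat{A_\stb}$, the PD-filtration matching Kato's minimal pd-multiplicative Hodge filtration $F_H$ with $X\in F^1_H$.

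\emph{Step 3 and the main obstacle.} The isomorphism just constructed is functorial, hence $G_K$-equivariant once the diagram $A_\infi\to C\to(O_{\C_p},\can)$ is: $\theta$ is equivariant, $\sigma(y)=\frac{[\varpi^\flat]}{\sigma([\varpi^\flat])}y$ by definition, and $y\mapsto 1$ is compatible because $\varpi\in K$ is $G_K$-fixed; transporting $X=y-1$ recovers Kato's formula $\sigma(X+1)=\frac{[\varpi^\flat]}{\sigma([\varpi^\flat])}(X+1)$. One checks similarly --- or records as a remark, since the statement only asserts an isomorphism of filtered $G_K$-rings --- that the natural Frobenius on $A_\cris$ extended by $\phi(X+1)=(X+1)^p$ and the $A_\cris$-linear pd-derivation $N(X)=X+1$ are the images of the Frobenius and monodromy coming, via Theorem \ref{thm_crystalline_comp}, from the crystalline side. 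I expect the main work to be Step 1: keeping track of the various (pre)log structures and, above all, justifying the transitivity computations $p$-adically over the non-$\FF_p$ ring $C$, since \cite[Prop.~7.8]{bhatt2012padicderivedrhamcohomology} is stated in characteristic $p$ and must be promoted to the derived $p$-complete setting. Once the base is reduced to $A_\infi[y^{\pm 1}]$ with trivial log structure, Step 2 is a direct elaboration of the proof of Theorem \ref{thm_period_ring_comp}.
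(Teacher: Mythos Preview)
The paper does not give its own proof of this proposition: it is stated with a bare citation to \cite[Proposition~9.21]{bhatt2012padicderivedrhamcohomology} and no argument is supplied. So there is nothing in the paper to compare against beyond the pointer to Bhatt.

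Your sketch is the right one, and it is essentially the argument in Bhatt's paper: reduce to a strict surjection from a $p$-torsion-free base by peeling off the log structures via relatively-perfect-mod-$p$ maps (exactly paralleling the proof of Theorem~\ref{thm_period_ring_comp} here), then invoke the PD-envelope computation for a regular sequence and identify $X=y-1$. Two small comments. First, rather than applying Theorem~\ref{thm_dr_surjective} twice and gluing by transitivity, it is cleaner to go through the crystalline comparison (Theorem~\ref{thm_crystalline_comp}): the map $C\to O_{\C_p}$ is $G$-lci, so its derived log de Rham cohomology is the log-crystalline cohomology, which is the PD-envelope $D(C,(\xi,y-1))$ directly; this sidesteps the issue you flag about Proposition~7.8 being stated only in characteristic $p$. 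Second, your handling of the $y^\Z$ factor is fine but the justification should be that adding a monoid mapping into units does not change the associated log ring (hence does not change Gabber's $L\Omega^\bullet$), not that it ``only adds exact $d\log$'s''. With those adjustments your outline is correct and matches the cited source.
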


The reason we introduced the ring $\hat{A_\stb}$ is that the $G_K$-extension of $A_\cris$ constructed below vanishes in $\hat{A}_\stb$ which then leads to an integral comparison theorem over $\hat{A}_\stb$ between log crystalline cohomology and \'etale cohomology, see \cite[Chapter 10]{bhatt2012padicderivedrhamcohomology}.

\begin{construction}
    Restricting the $d\log$ map to $\varpi^\N \subset O_{\ol{K}} \setminus \{0\}$ gives a monoid map
    \[
    \varpi^\N \to L\Omega^\bullet_{(O_{\ol{K}},\can)/W}[1]
    \]
    which induces after \(p\)-completion on the target and group completion on the source a  $G_K$-equivariant map
    \[
    \stb_\varpi \colon \Z \cong (\varpi^\N)^{\rom{gp}} \to A_\cris[1],
    \]
    representing a $1$-cocycle $\rom{cl}(\stb_\varpi) \in H^1(G_K,A_\cris)$. 
    The proof of Proposition \ref{prop_obstr_R_stb} shows that this class corresponds to the map
    \[
    \sigma \mapsto \log\big(\frac{\sigma [\varpi^\flat]}{[\varpi^\flat]}\big) = \log ([\sigma \varpi^\flat/\varpi^\flat]) = \log[\epsilon^{\eta(\sigma)}] = \eta(\sigma)t,
    \]
    for $\eta$ as in Example \ref{ex_tate_curve}. It follows immediately that $\rom{cl}(\stb_\varpi)$ vanishes in $B_\dr^+$ because
    \[
    \sigma \log [\varpi^\flat] - \log [\varpi^\flat] = \eta(\sigma) t.
    \]
    Similarly, $\rom{cl}(\stb_\varpi)$ vanishes after composition with $A_\cris \to \hat{A}_\stb$, because by construction
    \[
    \log X+1 := \sum_{n \geq 1} (-1)^{n+1}\frac{X^n}{n} \in \hat{A}_\stb
    \]
    converges in the \(p\)-adic topology and satisfies
    \[
    \sigma \log (X+1) - \log (X+1) =  \log\big(\frac{\sigma [\varpi^\flat]}{[\varpi^\flat]}\big) + \log (X+1) - \log (X+1) = \eta(\sigma) t.
    \]
\end{construction}

The remainder of this section (except for Corollary \ref{cor_graded_log}) is not directly relevant to the thesis. However, it gives a more conceptual reason for defining $A_\stb$.

\begin{prop}
    The class $\rom{cl}(\stb_\varpi)$ is the obstruction to $G_K$-equivariantly extending Fontaine's logarithm 
    \[
    \log \colon \Z_p(1) \to A_\cris\
    \]
    to a logarithm map
    \[
    \log \colon (\C_p^\flat)^\times \to A_\cris.
    \]
\end{prop}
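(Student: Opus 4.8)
The plan is to split the extension problem into a part that carries no obstruction and a part governed by a single cohomology class, and then to identify that class with $\rom{cl}(\stb_\varpi)$. First I would recall from Construction \ref{constr_classical_log} that Fontaine's logarithm already extends, $G_K$-equivariantly and with values in $A_\cris$, from $\Z_p(1)$ to $(O_{\C_p}^\flat)^\times = \ol{k}^\times \times (1 + \frk{m}^\flat)$: on $1 + (\varpi^\flat)$ one has $[x]-1 \in \ker(\ol{\theta})$, which is a divided power ideal, so $([x]-1)^n/n = (n-1)!\,\gamma_n([x]-1)$ tends to $0$ $p$-adically in $A_\cris$ and $\log[x]$ is defined there; the passage to $1 + \frk{m}^\flat$ (via $p^m$-th powers) and to $(O_{\C_p}^\flat)^\times$ (triviality on $\ol{k}^\times$) keeps the values in $A_\cris$, and equivariance is built into the formulas. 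Consequently, a $G_K$-equivariant monoid map $\log\colon (\C_p^\flat)^\times \to A_\cris$ extending Fontaine's logarithm exists if and only if the single value $\ell := \log[\varpi^\flat]$ can be chosen in $A_\cris$ compatibly with the Galois action (the remaining $\Q_{\geq 0}$-powers of $\varpi^\flat$ only force $\Q$-linear rescalings of $\ell$, an inessential point one may sidestep by extending $\log$ merely to the submonoid generated by $(O_{\C_p}^\flat)^\times$ and $\varpi^\flat$).

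Next I would phrase the remaining extension as a coboundary problem. For $\sigma \in G_K$, equivariance of $\log$ forces
\[
\sigma(\ell) - \ell \;=\; \log[\sigma\varpi^\flat] - \log[\varpi^\flat] \;=\; \log\!\Big[\tfrac{\sigma\varpi^\flat}{\varpi^\flat}\Big],
\]
and since $v(\sigma\varpi^\flat) = v(\varpi^\flat)$ the ratio $\sigma\varpi^\flat/\varpi^\flat$ lies in $1 + \frk{m}^\flat$, so by the previous paragraph the right-hand side is a well-defined element of $A_\cris$; as $\sigma$ varies it defines a continuous $1$-cocycle $c_\varpi \colon G_K \to A_\cris$ that does not depend on the choices made above. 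An element $\ell \in A_\cris$ with $\sigma(\ell) - \ell = c_\varpi(\sigma)$ for all $\sigma$ exists if and only if $[c_\varpi] = 0$ in $H^1(G_K, A_\cris)$. Thus the extension problem is solvable precisely when $[c_\varpi]$ vanishes, so $[c_\varpi] \in H^1(G_K, A_\cris)$ is exactly the obstruction.

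It then remains to identify $[c_\varpi]$ with $\rom{cl}(\stb_\varpi)$. By Kummer theory $\sigma\varpi^\flat/\varpi^\flat = \epsilon^{\eta(\sigma)}$, where $\eta \cdot \epsilon \in H^1(G_K,\Z_p(1))$ is the Kummer class of $\varpi$ as in Example \ref{ex_tate_curve}; hence, using that $\log$ restricted to $\Z_p(1)$ is the map $\lambda$ of Construction \ref{constr_deRham_log} and that $\lambda$ is $\Z_p$-linear with $\lambda(\epsilon) = t$ (Proposition \ref{prop_font_log}), we get $c_\varpi(\sigma) = \lambda(\epsilon^{\eta(\sigma)}) = \eta(\sigma)\,t$. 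On the other side, $\stb_\varpi$ is by definition the $p$-completion of $\varpi^\N \xrightarrow{d\log} L\Omega^\bullet_{(O_{\ol{K}},\can)/W}[1]$: the element $d\log\varpi$ is a $G_K$-invariant class in the $p$-divisible torsion group $H^1(L\Omega^\bullet_{(O_{\ol{K}},\can)/W})$, and, exactly as in Lemma \ref{lem_cotangent_O_K} and the snake-lemma computation carried out in the proof of Proposition \ref{prop_font_log}, derived $p$-completion turns such an invariant torsion class into its Tate module; the class $\rom{cl}(\stb_\varpi)$ is the one obtained by picking the compatible system of $p$-power roots $\varpi^\flat$. Since $\varpi^\flat$ is $G_K$-invariant only up to the cocycle $\sigma \mapsto \epsilon^{\eta(\sigma)}$, the defect is $d\log(\epsilon^{\eta(\sigma)})$, whose image under $p$-completion is $\lambda(\epsilon^{\eta(\sigma)}) = \eta(\sigma)\,t$; hence $\rom{cl}(\stb_\varpi) = [\sigma \mapsto \eta(\sigma)\,t] = [c_\varpi]$, which finishes the proof.

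The main obstacle is this last identification: making precise and correct the assertion that derived $p$-completion of $L\Omega^\bullet_{(O_{\ol{K}},\can)/W}$ sends the invariant torsion class $d\log\varpi$ to the Tate-module class represented by $\varpi^\flat$, with coboundary defect $d\log$ of the Kummer cocycle. Concretely this is a connecting-map computation via the Snake Lemma, identical in spirit to the $c_n$-computation for roots of unity in the proof of Proposition \ref{prop_font_log} (with $\zeta_n = \theta([\epsilon^{p^{-n}}])$ replaced by $\varpi_n = \theta([{\varpi^\flat}^{\,p^{-n}}])$), so I would either reproduce that argument or invoke it directly; everything else is bookkeeping, modulo the harmless divisibility caveat noted in the first paragraph.
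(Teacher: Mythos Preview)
Your proposal is correct and essentially matches the paper's approach. The paper does not give a self-contained proof of this proposition; instead, the construction immediately preceding it already asserts (invoking the proof of Proposition \ref{prop_obstr_R_stb}) that $\rom{cl}(\stb_\varpi)$ is represented by the cocycle $\sigma \mapsto \log([\sigma\varpi^\flat/\varpi^\flat]) = \eta(\sigma)t$, which is exactly your $c_\varpi$. Your reduction to defining $\ell = \log[\varpi^\flat]$ with $\sigma(\ell)-\ell = c_\varpi(\sigma)$ is the missing link the paper leaves implicit, and your identification $[c_\varpi] = \rom{cl}(\stb_\varpi)$ is precisely the content of the null-homotopy computation $H(\sigma) = \sigma H_{\varpi^\flat} - H_{\varpi^\flat} = \eta(\sigma)t$ in the proof of Proposition \ref{prop_obstr_R_stb}, part (3); you could invoke that directly rather than redoing the Snake Lemma argument from Proposition \ref{prop_font_log}.
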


By Proposition \ref{prop_obstr_R_stb}, there exists an extension of the logarithm map to a map with values in $A_\stb$.
Moreover, the computation of the following corollary shows that it coincides with Fontaine's construction after composition with the map $A_\stb \to B_\dr$ induced by $x \mapsto \varpi$.

We can finally finish the computation from Example \ref{ex_tate_curve}.

\begin{cor}\label{cor_graded_log}
    With the notations from above. The image of $\log[\varpi^\flat]$ in $gr^1_F B_\dr^+$ corresponds to 
    $(d\log \varpi_n)_n \in V_p(\Omega^1_{O_{\C_p}/W})$
    under the isomorphism
    \[
    c \colon  V_p(\Omega^1_{(O_{\C_p},\can)/(W,\can)}) \to  \ker \theta/(\ker \theta)^2[1/p] \cong gr^1_F B_\dr^+.
    \]
    Moreover, 
    the image of $A_\stb[\frac{1}{p}]$ inside $B_\dr^+$ under the natural map contains the classical semistable period ring $B_\stb^+$.
\end{cor}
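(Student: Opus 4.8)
The plan is to prove the two statements separately, in both cases reusing the description of the map $c$ obtained in the proof of Proposition \ref{prop_font_log} via the Snake lemma.

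\textbf{The graded logarithm.} I would first read off the image of $\log[\varpi^\flat]$ in $\gr^1_F B_\dr^+ = F^1/F^2$ from its defining series. Writing $\xi := [\varpi^\flat] - \varpi$, a generator of $\ker(\theta)$, Construction \ref{constr_classical_log} gives $\log[\varpi^\flat] = \sum_{n \geq 1} (-1)^{n+1}\frac{(\xi/\varpi)^n}{n}$, so its class in $\ker(\theta)/(\ker(\theta))^2[\tfrac1p] \cong \gr^1_F B_\dr^+$ is $[\xi/\varpi] = \varpi^{-1}[\xi]$. On the other side, I would compute $c\big((d\log\varpi_n)_n\big)$ with the recipe of Proposition \ref{prop_font_log}: lift $d\log\varpi_n$ to $d\log[\varpi^\flat]^{p^{-n}} \in \Omega^1_{A_\infi/W}\otimes_{A_\infi} O_{\C_p}$, whose $p^n$-th multiple is $d\log[\varpi^\flat] = d[\varpi^\flat]/[\varpi^\flat]$; since $[\varpi^\flat] = \varpi(1 + \xi/\varpi)$ and $d\varpi$ vanishes relative to $O_K$ (which, by the lemma of Subsection \ref{appendix_period_rings} comparing $L_{(O_{\ol K},\can)/W}$ with $L_{(O_{\ol K},\can)/(O_K,\can)}$, is all that matters after inverting $p$), this form agrees with $d(\xi/\varpi)$ modulo $\ker(\theta)\cdot\Omega^1$. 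Thus the required primitive is $f = [\varpi^\flat]/\varpi - 1 = \xi/\varpi$, and $c\big((d\log\varpi_n)_n\big) = [\xi/\varpi]$ as well. As in Proposition \ref{prop_font_log}, the cleanest way to organise this is to work in the free rank-one module $\ker(\theta)/(\ker(\theta))^2$ on the generator $[\xi]$ and just track the coefficient, so that the denominator $\varpi$ (a unit in $B_\dr^+$ but not in $A_\infi$) causes no difficulty.

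\textbf{The semistable period ring.} For the "moreover", note that functoriality of the derived logarithmic de Rham complex along $W \to (W[x],x) \to (O_K,\can)$, $x \mapsto \varpi$, together with Hodge completion, produces natural maps $A_\cris \to A_\stb \to A_\dr$, hence after inverting $p$ a map $A_\stb[\tfrac1p] \to A_\dr[\tfrac1p] = B_\dr^+$ through which $A_\cris[\tfrac1p] = B_\cris^+$ factors. So the image contains $B_\cris^+$, and it remains to put $\log[\varpi^\flat]$ in the image. Here the defining feature of $A_\stb$ enters: relative to $(W[x],x)$ the form $d\log x = d\log\varpi$ is zero, so the $1$-cocycle $\stb_\varpi$ of the construction preceding Proposition \ref{prop_obstr_R_stb} becomes a coboundary in $A_\stb$; concretely, the element $\log(X+1) = \sum_{n\geq 1}(-1)^{n+1}\frac{X^n}{n} \in \hat{A_\stb}$ (which converges $p$-adically) comes from $A_\stb[\tfrac1p]$ and, under $A_\stb[\tfrac1p] \to B_\dr^+$, is sent to $\log([\varpi^\flat]/\varpi)$, hence to $\log[\varpi^\flat]$ up to the image of $\log\varpi \in K \subset B_\cris^+$. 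Since the image of $A_\stb[\tfrac1p]$ is a $B_\cris^+$-subalgebra of $B_\dr^+$ containing $\log[\varpi^\flat]$, and $B_\stb^+$ is by definition the $B_\cris^+$-subalgebra of $B_\dr^+$ generated by $\log[\varpi^\flat]$ under the embedding attached to $\varpi$, the inclusion $B_\stb^+ \subseteq \mathrm{image}\big(A_\stb[\tfrac1p] \to B_\dr^+\big)$ follows.

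\textbf{Expected obstacle.} The first part is essentially a transcription of the computation in Proposition \ref{prop_font_log}; the only care needed is in the bookkeeping of log structures and in the reduction from the $W$-linear to the $O_K$-linear cotangent complex, handled by Lemma \ref{lem_cotangent_O_K} and the lemma of Subsection \ref{appendix_period_rings}. The substantive point of the second part is the vanishing of $\mathrm{cl}(\stb_\varpi)$ in $A_\stb$ (Proposition \ref{prop_obstr_R_stb}); beyond that, the work is the notation-heavy but formal matching of the derived ring $A_\stb$ and its map to $B_\dr^+$ with Kato's semistable period ring and its $\varpi$-dependent embedding, which is where most of the attention goes.
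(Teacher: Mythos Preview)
Your approach to the first assertion is exactly the paper's: reduce to the Snake-Lemma recipe of Proposition~\ref{prop_font_log} and chase $(d\log\varpi_n)_n$ through it, landing on the class of $\xi/\varpi$ in $\ker\theta/(\ker\theta)^2[\tfrac1p]$. The paper's own proof is just a one-line pointer back to that computation, so you are more explicit than the source.

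For the ``moreover'' the paper gives no argument at all; your outline is the intended one (the image contains $B_\cris^+$ because $A_\cris \to A_\stb \to A_\dr$ factors the map, and then one needs $\log[\varpi^\flat]$ in the image, which comes from the vanishing of $\mathrm{cl}(\stb_\varpi)$ over $A_\stb$). Two small points to tighten: first, by Construction~\ref{constr_classical_log} one has $\log[\varpi^\flat] = \log([\varpi^\flat]/\varpi)$ \emph{by definition}, so there is no correction term ``$\log\varpi \in K$'' to worry about (and indeed no such element exists in $K$); second, you invoke $\log(X+1)\in \hat{A}_\stb$ but the claim is about $A_\stb$, and these are different rings---what you actually want is the remark after Proposition~\ref{prop_obstr_R_stb} that $\mathrm{cl}(\stb_\varpi)$ already vanishes in $H^1(G_K,A_\stb)$, which produces the required preimage directly in $A_\stb[\tfrac1p]$ without passing through Kato's $\hat{A}_\stb$.
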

\begin{proof}
    Since $\gr_F^1 B_\dr^+ \cong \C_p(1)$, it suffices to show the claim for $p d\log \varpi_n$. In this case, the claim follows from a very similar computation as in Proposition \ref{prop_font_log}.
\end{proof}

The class $\rom{cl}(\stb)$ also has an interpretation in terms of the so-called Faltings-Breul ring $R_{O_K}$. It is defined as
\[
R_{O_K} := (L\Omega_{(O_K,\can)/(W[x],x)}^\bullet)^\wedge_{(p)} \simeq W[x]\langle E(x)\rangle ^\wedge_{(p)},
\]
where $E(x)$ is the minimal polynomial of $\varpi$ over $W$.

\begin{prop}\label{prop_obstr_R_stb}
With notations as above.
\begin{enumerate}
    \item The class $\rom{cl}(\stb_\varpi)$ is the obstruction to factoring the natural map 
    \begin{equation}\label{eq_bf}
        (L\Omega_{(O_K,\can)/(W)}^\bullet)^\wedge_{(p)} \to A_\cris
    \end{equation}
    $G_K$-equivariantly through the projection map 
    \[
        L\Omega_{(O_K,\can)/W}^\bullet)^\wedge_{(p)} \to R_{O_K}
    \]
    \item Let $K_\infty = K(\mu_{p^\infty})$. The class $\rom{cl}(\stb_\varpi)$ vanishes in $H^1(G_{K_\infty},A_\cris)$.
    \item The class $\rom{cl}(\stb_\varpi)$ vanishes after composition with the natural map $A_\cris \to \hat{A_\stb}$. In particular, we can view $\hat{A_\stb}$ as a $R_{O_K}$-algebra.
\end{enumerate}
\end{prop}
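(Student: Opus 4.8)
The plan is to reduce all three assertions to a single computation: an explicit cocycle representative of $\rom{cl}(\stb_\varpi)\in H^1(G_K,A_\cris)$. First I would unwind Construction~\ref{constr_deRham_log}: $\stb_\varpi$ is the image of the generator $\varpi\in\varpi^\N$ under $d\log\colon\varpi^\N\to L\Omega^\bullet_{(O_{\ol K},\can)/W}[1]$, followed by derived $p$-completion of the target (which collapses it onto $A_\cris$ placed in degree $0$) and group completion of the source. Since $\varpi^\N\subset O_{\ol K}$ is pointwise $G_K$-fixed, $\stb_\varpi$ is $G_K$-equivariant, so it defines a class in $\Hom_{D(G_K)}(\Z,A_\cris[1])=H^1(G_K,A_\cris)$ although the underlying map is null-homotopic. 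A non-equivariant null-homotopy is the choice of a primitive $\ell\in A_\cris$ of $d\log\varpi$, and then $\sigma\mapsto\sigma\ell-\ell$ (which automatically lands in $\ker d=A_\cris$) represents $\rom{cl}(\stb_\varpi)$. I would then identify this cocycle with
\[
\sigma\ \longmapsto\ \log\!\big(\tfrac{\sigma[\varpi^\flat]}{[\varpi^\flat]}\big)=\log[\epsilon^{\eta(\sigma)}]=\eta(\sigma)\,t,
\]
where $\eta\colon G_K\to\Z_p$ is given by $\sigma\varpi^\flat=\epsilon^{\eta(\sigma)}\varpi^\flat$ and $t=\lambda(\epsilon)=\log[\epsilon]$. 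The natural candidate primitive is Fontaine's $\log[\varpi^\flat]$ of Construction~\ref{constr_classical_log}, which satisfies $\sigma\log[\varpi^\flat]-\log[\varpi^\flat]=\eta(\sigma)t$; but $\log[\varpi^\flat]$ lies in $B^+_\dr$, not in $A_\cris$, so the identification must be carried out on the graded pieces of the Hodge/PD filtration, re-running the Snake-Lemma recipe of Proposition~\ref{prop_font_log} and Corollary~\ref{cor_graded_log} step by step — on $\gr^1_F$ the class is $(d\log\varpi_n)_n\in V_p(\Omega^1_{O_{\C_p}/W})$, which the isomorphism $c$ of Corollary~\ref{cor_graded_log} carries to the class of $[\varpi^\flat]-\varpi$, matching $\gr^1_F$ of $\eta(\sigma)t$, and the higher steps are the same. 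This graded bookkeeping is where most of the work lies.

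Granting the cocycle, part~(1) is essentially the transitivity of the derived logarithmic de Rham complex applied to $W\to(W[x],x)\to(O_K,\can)$ with $x\mapsto\varpi$. Since $L_{(W[x],x)/W}$ is free of rank one (so its higher exterior powers vanish), this transitivity degenerates to a distinguished triangle relating $(L\Omega^\bullet_{(O_K,\can)/W})^\wedge_{(p)}$, the projection $\pi$ onto $R_{O_K}=(L\Omega^\bullet_{(O_K,\can)/(W[x],x)})^\wedge_{(p)}$, and a shifted copy of $R_{O_K}$, whose boundary map applied to $d\log x$ is, by naturality of $d\log$, precisely $\stb_\varpi$ after base change to $(O_{\ol K},\can)$ and $p$-completion. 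Hence a $G_K$-equivariant factorization of~(\ref{eq_bf}) through $\pi$ exists if and only if $\rom{cl}(\stb_\varpi)$ vanishes; a convenient non-equivariant factorization is the map $R_{O_K}\cong W[x]\langle E(x)\rangle^\wedge_{(p)}\to A_\cris$, $x\mapsto[\varpi^\flat]$, $E(x)^{[n]}\mapsto\gamma_n(E([\varpi^\flat]))$, well defined because $E([\varpi^\flat])$ generates $\ker\theta$ (using Theorems~\ref{thm_dr_surjective} and~\ref{thm_period_ring_comp}), whose defect of equivariance depends only on $\sigma[\varpi^\flat]/[\varpi^\flat]=[\epsilon^{\eta(\sigma)}]$ and visibly matches the cocycle of the first step.

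Parts~(3) and~(2) then follow by exhibiting, over the appropriate ring, a coboundary killing the cocycle $\eta(\sigma)t$. For~(3): by construction $\hat{A}_\stb=A_\cris\langle X\rangle$ carries the $G_K$-action $\sigma(X+1)=\tfrac{[\varpi^\flat]}{\sigma[\varpi^\flat]}(X+1)$, and $\log(X+1)=\sum_{n\ge1}(-1)^{n+1}X^n/n$ converges $p$-adically in $\hat{A}_\stb$ with $\sigma\log(X+1)-\log(X+1)=\log(\sigma[\varpi^\flat]/[\varpi^\flat])=\eta(\sigma)t$; thus the image of $\rom{cl}(\stb_\varpi)$ in $H^1(G_K,\hat{A}_\stb)$ is the coboundary of $-\log(X+1)$, hence zero, so the factorization of~(\ref{eq_bf}) composed with $A_\cris\to\hat{A}_\stb$ can be made $G_K$-equivariant, which is the claimed $R_{O_K}$-algebra structure. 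For~(2): over $K_\infty=K(\mu_{p^\infty})$ the cyclotomic character is trivial, so $t$ is $G_{K_\infty}$-fixed and $\eta|_{G_{K_\infty}}\colon G_{K_\infty}\to\Z_p$ is a continuous homomorphism (cutting out the Kummer tower $K_\infty(\varpi^{1/p^\infty})$), and the vanishing of $\rom{cl}(\stb_\varpi)$ in $H^1(G_{K_\infty},A_\cris)$ amounts to producing $b\in A_\cris$ with $\sigma b-b=\eta(\sigma)t$ for all $\sigma\in G_{K_\infty}$ — equivalently, that the Kummer class of $\varpi$ dies under $H^1(G_{K_\infty},\Z_p)\xrightarrow{\cdot t}H^1(G_{K_\infty},A_\cris)$. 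I expect this to come out of the perfectoidness of $\hat{K}_\infty$ and the standard description of $A_\cris$ over the cyclotomic tower; together with the graded Snake-Lemma computation of the first step, this is the part I anticipate being the most delicate.
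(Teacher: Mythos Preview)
Your treatment of (1) and (3) is essentially the paper's: transitivity along $W\to(W[x],x)\to(O_K,\can)$ for (1), and the coboundary $\log(X+1)\in\hat{A_\stb}$ for (3). The explicit non-equivariant section $R_{O_K}\to A_\cris$, $x\mapsto[\varpi^\flat]$, that you write down is a useful elaboration of what the paper leaves as a short diagram chase.

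Where you diverge is in the cocycle identification and in (2), and in both places you make things harder than necessary. You propose to take Fontaine's $\log[\varpi^\flat]$ as a primitive of $d\log\varpi$, notice it lies in $B^+_\dr\setminus A_\cris$, and then fall back to a Snake-Lemma computation on each Hodge-graded piece, calling this ``where most of the work lies.'' The paper never leaves the derived de Rham picture: its null-homotopy is the intrinsic one coming from $p$-divisibility of $d\log\varpi$, namely the system $H_{\varpi^\flat}$ recording $p^n\,d\log\varpi_n=d\log\varpi$. This sits inside $A_\cris$ by construction, and the cocycle is then the single line
\[
\sigma\ \longmapsto\ \sigma H_{\varpi^\flat}-H_{\varpi^\flat}
=\varprojlim_n d\log\zeta_n^{\eta(\sigma)}
=\eta(\sigma)\,\lambda(\epsilon)=\eta(\sigma)\,t,
\]
the last identification being exactly Proposition~\ref{prop_font_log}. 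No graded bookkeeping is needed; that work was done once, in Proposition~\ref{prop_font_log}, and is not repeated here.

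For (2), you translate the problem into exhibiting $b\in A_\cris$ with $(\sigma-1)b=\eta(\sigma)t$ for $\sigma\in G_{K_\infty}$, invoke perfectoidness of $\hat K_\infty$, flag this as the most delicate step, and leave it unfinished. The paper takes a different and shorter route: rather than passing through the explicit cocycle at all, it argues that the null-homotopy $H_{\varpi^\flat}$ is already $G_{K_\infty}$-equivariant (stated reason: $G_{K_\infty}$ acts trivially on $\varpi^\flat$), so $\stb_\varpi$ is null in the derived category of $G_{K_\infty}$-modules directly. Whatever one thinks of that justification, the point for you is that once the correct null-homotopy is in hand, part (2) is meant to be immediate and should not require perfectoid input; your proposal treats it as the hard step precisely because you chose the wrong primitive.
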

\begin{proof}
Recall that the derived de Rham complex can be computed using any free polynomial resolution.
\begin{enumerate}
    \item Computing $(L\Omega_{(O_K,\can)/(W)}^\bullet)^\wedge_{(p)}$ as the derived de Rham complex of the composition
    \[
    W \to (W[x],x) \to (O_K,\can)
    \]
    yields
    \[
    (L\Omega_{(O_K,\can)/(W)}^\bullet)^\wedge_{(p)} \simeq R_{O_K} \hat{\otimes} 
    \left[ W[x] \xrightarrow{d} W[x] d\log x\right ]
    \simeq \left[R_{O_K} \xrightarrow{d} R_{O_K} d\log x\right]
    \]
    using that $(W[x],x)$ is a free polynomial prelog ring over $W$. Since $d \log$ is functorial, $\stb_\varpi(\varpi)$ is given by the image of $x$ under the composition 
    \[
    x^\N \xrightarrow{d\log} (L\Omega_{(W[x],x)/W})^\wedge_{(p)}[1] \to (L\Omega_{(O_K,\can)/W}^\bullet)^\wedge_{(p)}[1],
    \]
    in other words, $\stb_\varpi(\varpi)$ is given by the image of $d\log(x)$ under (\ref{eq_bf}). The claim now follows a short diagram chase.
    \item Choose a compatible system of \(p\)'th-power roots $\varpi^\flat = (\varpi, \varpi_1,\varpi_2,\dots)$ of $\varpi$. Then we have
    \[
    p^n d\log \varpi_n = d\log \varpi
    \]
    which implies that $\stb_\varpi$ is null-homotopic as a map of $\Z$-modules.
    Moreover, $\stb_\varpi \simeq 0$ is null-homotopic in the derived category of 
    $G_{K_\infty}$-modules because $G_{K_\infty}$ acts trivially on $\varpi^\flat$. We denote the corresponding null-homotopy by $H_{\omega^\flat}$.
    \item 
    The null-homotopies from the previous step give rise to a map 
     \begin{align*}
     H \colon G_K &\to A_\cris \\
     \sigma &\mapsto H_{\sigma \varpi^\flat} - H_{\varpi^\flat}:= \sigma H_{\varpi^\flat} -H_{\varpi^\flat}.
     \end{align*}
     In fact, $H$ defines a $1$-cochain which
     tautologically represents $\stb_\varpi \in H^1(G_K,A_\cris)$. To verify this, identify $A_\cris$ with the limit $\varprojlim_n L\Omega_{(O_{\ol{K}}/\can)/W}\otimes \Z/p^n$ and compute that
     \[
     H(\sigma) = \varprojlim_n \dlog \zeta_n^{\eta(\sigma)} = \eta(\sigma) \log [\epsilon] = \log (\frac{[\sigma \varpi^\flat]}{[\varpi^\flat]}) \in A_\cris,
     \]
     for $\eta$ as before. The claim now follows from Construction \ref{constr_classical_log}, because in $\hat{A}_\stb$ we have
     \[
     \eta(\sigma) t = \sigma \log(X+1) - \log(X+1).
     \]
\end{enumerate}    
\end{proof}

\begin{remark}
Step 1. in the proof also shows that $\rom{cl}(\stb_\varpi)$ vanishes in $H^1(G_K,A_\stb)$.  
\end{remark}

\section*{Declaration of generative AI and AI-assisted technologies in the manuscript preparation process}

During the preparation of this work the author used ChatGPT in order to improve the languague and clarity of the text. After using this tool/service, the author reviewed and edited the content as needed and takes full responsibility for the content of the published article.

\bibliographystyle{elsarticle-num}
\bibliography{refs}

\end{document}